\newcounter{aah}
\newcounter{bbh}
\numberwithin{equation}{section}
\newtheorem{thm}[aah]{Theorem}
\newtheorem{prop}[aah]{Proposition}
\newtheorem{coro}[aah]{Corollary}
\newtheorem{lem}[aah]{Lemma}
\theoremstyle{definition}
\newtheorem{defi}[bbh]{Definition}
\newtheorem{propdef}[bbh]{Definition-Proposition}
\theoremstyle{remark}
\newtheorem{exemple}{Example}[section]
\newtheorem{rem}{Remark}[section]
\title[Stochastic parallel transport on the Wasserstein space]{Stochastic parallel transport on the Wasserstein space and equivariant diffusions on the group of diffeomorphisms over a closed Riemannian manifold}
\author{Aymeric Martin}
\date{February 2026}
\newcommand{\T}{\Bar{T}}
\newcommand{\grad}{\Bar{\nabla}}
\renewcommand{\P}{\mathscr{P}}
\newcommand{\SN}{\underset{i=1}{\overset{N}{\sum}}}
\newcommand{\C}{\mathcal{C}}
\renewcommand{\L}{\bar{\mathcal{L}}}
\newcommand{\vol}{\mathrm{vol}}
\newcommand{\h}{\mathfrak{h}}
\renewcommand{\d}{\mathrm{d}}
\newcommand{\Ad}{\mathrm{Ad}}
\newcommand{\V}{\mathscr{V}}
\newcommand{\dw}{\bar{\d}}
\newcommand{\dd}{\bm \d}
\newcommand{\he}{\mathrm{H}}
\renewcommand{\H}{\mathscr{H}}
\renewcommand{\div}{\mathrm{div}}
\newcommand{\Int}{\mathrm{Int}}
\newcommand{\Grad}{\mathrm{Grad}}
\begin{document}

\maketitle

\begin{abstract}
	In this work, we establish the existence of solutions to stochastic differential equations on the Wasserstein space over a closed Riemannian manifold, under suitable regularity assumptions on the driving vector fields. Interpreting the diffeomorphism group $\mathscr{D}$ as a Riemannian submersion onto the smooth Wasserstein space $\P_\infty$, we further prove the existence and uniqueness of the stochastic parallel transport along diffusions on $\P_\infty$. Finally, we show that equivariant diffusions on $\mathscr{D}$ endowed with a principal bundle structure over $\P_\infty$ admit a unique factorization into a horizontal diffusion and a vertical component expressed as a right exponential of a process taking values in the Lie algebra $\mathfrak{g}$ of the group $G$ of volume preserving diffeomorphisms.
\end{abstract}

\tableofcontents

\section{Introduction}

Over the past few years, optimal transport has enjoyed a significant resurgence in popularity, mainly due to the diversity of application fields it covers such as data sciences or image processing. Among the main actors, we can cite Brenier in \cite{brenier1991polar} with the proof of the fact that the optimal transport map solution to the Monge-Kantorovich problem with quadratic distance in the euclidean case is given by the gradient of a convex function, solution to a Monge-Ampère equation. Years later, this result is extended by McCann to the case of compact Riemannian manifolds in \cite{mccann2001polar} with the optimal transport map given by the Riemannian exponential of a gradient of function. In \cite{benamou2000computational}, Benamou and Brenier introduce a variational formulation of the Wasserstein distance, namely,

$$
W_2^2(\mu,\nu) = \inf \left \{ \int_0^1 \int_M |V_t|^2 \d\mu_t \, : \, \partial_t \mu_t + \nabla \cdot (V_t \mu_t) = 0 \right\},
$$ 
where the infimum is taken in the set of the measures $(\mu_t)_{t \geq 0}$ solution of the above equation in the sense of distribution.
In his seminal paper \cite{Otto31012001}, Otto introduces a Riemannian structure on the Wasserstein space which will be formalized in the more general context of metric spaces by Ambrosio, Gigli and Savaré in \cite{ambrosioGradientFlowsMetric2008}. In \cite{lott2007geometriccalculationswassersteinspace}, Lott introduces a lot of geometric material, including Levi-Civita connection, Riemann curvature tensor and the parallel transport equation. The reader is refered to the reference book \cite{villani2009optimal} by Villani for results about optimal transport and Otto's calculus.

The study of stochastic differential equations on Wasserstein space can be motivated by mean field games theory. Indeed, in the reference book \cite{carmona2018probabilistic2} by Carmona and Delarue, we can see that such SDEs on Wasserstein space can be seen as a limit of an interacting particles system with common noise, also known as non linear stochastic Fokker-Planck equation. Still in \cite{carmona2018probabilistic2}, it is shown that a solution is given by taking the data of a well chosen conditionnal McKean-Vlasov equation. In the article \cite{gassiat2022long}, Gassiat, Lions and Sougadinis suggest the idea of looking at such stochastic partial differential equations with coefficients depending in a local way on the density $(\rho_t)_{t \geq 0}$ of the solution. In \cite{ding2023stochasticdifferentialequationsstochastic} the authors prove the existence of solutions to SDEs on the Wasserstein space over a closed Riemannian manifold driven by constant vector fields, while, in \cite{wang2020}, the existence of solutions to a general class of conditional McKean-Vlasov SDEs is done on the Wasserstein space over the Euclidean space.

The origin of the manifold structure on the group of diffeomorphisms $\mathscr{D}$ over a compact manifold can be traced back in the late $1960s$. In the reference book \cite{Palais66}, Palais developed a general framework for the study of Banach space-valued section functors. A fundamental example within this framework is provided by the functor assigning to a given fiber bundle the space of its $H^s$ sections. These results are used in \cite{omori_group_1970}, where Omori presents the topology of $\mathscr{D}$ as an Inverse Limit Hilbert Lie group topology, i.e 

$$
	\mathscr{D} = \cap_{s} \mathscr{D}^s
$$
where $\mathscr{D}^s$ is the set of invertible maps in the sobolev space $H^s(M,M)$ with inverse in $H^s(M,M)$. Notice that even if $\mathscr{D}$ is a Lie group, this is not the case of $\mathscr{D}^s$ since the left multiplication is not smooth, but this point of view allows us to work on Hilbert manifolds. In \cite{EbinMarsden1970}, Ebin and Marsden exploited this framework to establish well-posedness for the classical Euler equations of an ideal fluid. Moreover, they showed that, with respect the Inverse Limit Hilbert topology, the diffeomorphism group $\mathscr{D}$ is diffeomorphic to $G \times \P_\infty$, where $G$ denotes the subgroup of volume-preserving diffeomorphisms.

Even if the Wasserstein space has a quasi-Riemannian structure, the lack of local coordinates is a huge problem to construct stochastic parallel transport. Indeed, on a Riemannian manifold, it is sufficient to place oneself in local coordinates and to solve a stochastic differential equation locally, see \cite{Ito75}, \cite{IkedaWatanabe81} for instance.
The question of stochastic parallel transport on the Wasserstein space has already been adressed in \cite{ding2023stochasticdifferentialequationsstochastic} by Ding, Fang and Li. In the latter, the stochastic parallel transport is formulated by means of a covariant SDE and existence and uniqueness are proved in the case of the smooth Wasserstein space over the torus along a certain type of diffusion. In order to prove the existence of stochastic parallel transport along a diffusion on $\P_\infty$ over a closed manifold, one could consider the approach used to prove the existence of parallel transport along regular curves in the reference work \cite{gigli2011second} by Gigli. However, this method does not fit well with the stochastic framework. The idea presented in the present article is to see the group of diffeomorphisms $\mathscr{D}$ as a Riemannian submersion over $\P_\infty$. The well-posedness of the stochastic parallel transport problem is a further step towards a better understanding of Wasserstein's stochastic differential structure, in particular, it might be possible to study some processes coupling to obtain stability results about diffusions.

The fact that $\mathscr{D}$ can be seen as a principal bundle above $\P_\infty$ endowed with a connection can be used to study a specific type of diffusions on $\mathscr{D}$, namely equivariant diffusions (see Subsection \ref{subsection 2.3} and Section \ref{Section 6}). We prove that we can write any equivariant diffusions $(\Phi_t)_{t \geq 0}$ in the form

$$
\Phi_t = h_t \cdot g_t, \quad \forall t \geq 0
$$
with $(h_t)_{t \geq 0}$ a horizontal equivariant diffusion and $(g_t)_{t \geq 0}$ a right exponential on the group of volume preserving diffeomorphisms $G$. Equivariant diffusions in the finite dimensional case have been studied by Elworthy, Le Jan and Li in \cite{ElLeLi04} and \cite{ELJW}. Note that the idea of looking at such diffusions is not new, for example, Itô investigated the case of the orthonormal frame bundle over a Riemannian manifold in \cite{Ito63}. The method used in the present paper is essentially the same as in the finite dimensional case, even if Elworthy, Le Jan and Li worked with the generators of the diffusions, whereas we tend to work directly on the SDEs. The advantage of our case lies in the fact that $\mathscr{D}$ is a Lie Group and $G$ is a subgroup of $\mathscr{D}$. However, we write the proof and the results using only the fact that $\mathscr{D}$ is a principal bundle over $\P_\infty$ to stay as general as possible. Lastly, we use the structure of Riemannian submersion to study Itô's stochastic differential equation driven by vertical vector fields on $\mathscr{D}$. 

\begin{figure}[t]
    \centering
    \includegraphics[width=0.8\textwidth]{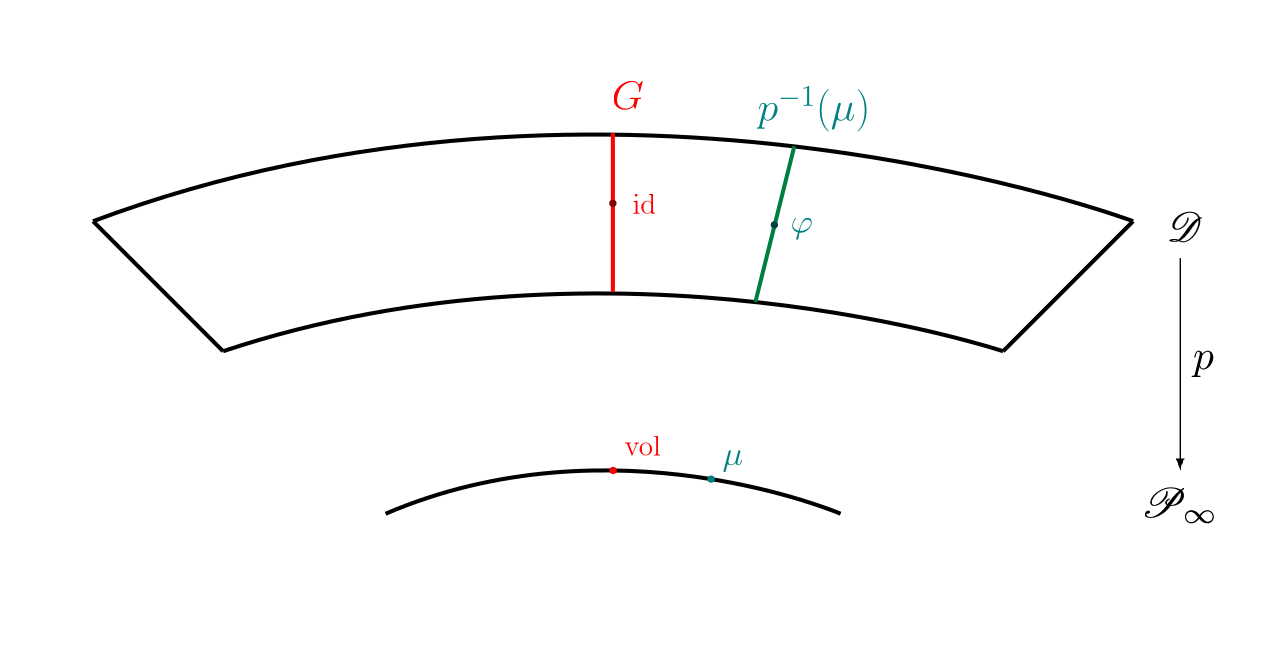}
    \caption{A way to visualize the $G$-principal bundle structure of $\mathscr{D}$ over $\P_\infty$.}
\end{figure}

The article is organized as follows: Section \ref{Section 2} is dedicated to the presentation of the arguments that will be used later in Sections \ref{Section 5} and \ref{Section 6} in the simpler case of finite-dimensional Riemannian manifolds. In Section \ref{Section 3} we present the geometry of the Wasserstein space, the one of the diffeomorphisms group as well as the Riemannian submersion structure that establish a link between these two spaces. Section \ref{Section 4} is about stochastic calculus on the two infinite-dimensional manifolds presented above, in particular, we prove the following:

\begin{thm}[Informal version of Theorem \ref{Existence Unicité}]
    Let $\bar{Z}_0, \bar{Z}_1, \dots, \bar{Z}_N$ be regular vector fields on $\P$. Then, the SDE
    
    $$
    \dw^{\grad} \mu_t = \SN \bar{Z}_i(\mu_t) \d W^i_t + \bar{Z}_0(\mu_t) \d t, \quad \mu_0 = \mu \in \P
    $$
    admits a solution on $\P$ given by $\mu_t =(X_t)_* \mu$ where $(X_t)_{t \geq 0}$ is the solution to a conditional McKean-Vlasov equation. Moreover, if $\mu \in \P_\infty$, $(\mu_t)_{t \geq 0}$ lies in $\P_\infty$.
\end{thm}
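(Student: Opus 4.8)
The plan is to lift the SDE on the Wasserstein space to a conditional McKean--Vlasov equation on the base manifold $M$ and transport the initial measure by the resulting stochastic flow. Concretely, I would first interpret each regular vector field $\bar Z_i$ on $\P$ as a measure-dependent vector field on $M$: by Otto's identification of tangent vectors at $\mu$ with gradient fields (or, more robustly, with vector fields in $L^2(\mu)$ satisfying the continuity equation), write $\bar Z_i(\mu) = Z_i(\cdot,\mu)$ where $Z_i : M \times \P \to TM$. The regularity hypothesis on the $\bar Z_i$ should translate precisely into enough spatial regularity (say Lipschitz in $x$ uniformly in $\mu$) and enough continuity in the measure variable (Lipschitz for $W_2$, or for a stronger metric on $\P_\infty$) to run a fixed-point argument. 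Then I would pose the conditional McKean--Vlasov SDE
$$
\d X_t = \SN Z_i(X_t, \mu_t)\,\d W^i_t + Z_0(X_t,\mu_t)\,\d t, \qquad \mu_t = \mathrm{Law}(X_t \mid \mathcal F^W_t), \qquad X_0 \sim \mu,
$$
where $(W^i)$ is the common noise and $\mu_t$ is the conditional law given the noise filtration; existence and uniqueness here follow from a Picard iteration on the space of $\mathcal F^W$-adapted measure-valued processes equipped with a sup-in-time $W_2$ distance, exactly as in Carmona--Delarue \cite{carmona2018probabilistic2}, using the contraction coming from the Lipschitz bounds and Gronwall.

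The second step is to verify that $\mu_t = (X_t)_\sharp \mu$ — equivalently $\mathrm{Law}(X_t\mid\mathcal F^W_t)$ — actually solves the original SDE on $\P$ in the sense of the covariant differential $\dw^{\grad}$. For this I would apply Itô's formula to $\varphi(X_t)$ for smooth test functions $\varphi$ on $M$, take conditional expectations given $\mathcal F^W_t$, and recognize the resulting identity as the weak (distributional) form of the stochastic continuity equation $\d\mu_t = -\nabla\cdot\big(\SN Z_i(\cdot,\mu_t)\mu_t\big)\d W^i_t - \nabla\cdot\big(Z_0(\cdot,\mu_t)\mu_t\big)\d t$ plus the Stratonovich/Itô correction terms; matching this against the definition of $\dw^{\grad}\mu_t$ given earlier in the paper identifies it as a solution. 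One must be careful that the drift produced by the Itô correction is itself of the required form (a gradient field, or at least a legitimate tangent vector), which is where the specific structure of $\dw^{\grad}$ and the regularity of the $Z_i$ must be used.

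The third step is propagation of smoothness: if $\mu \in \P_\infty$ (i.e.\ $\mu$ has a smooth positive density), then $\mu_t \in \P_\infty$ for all $t$. Here I would use that the conditional density $\rho_t$ of $\mu_t$ solves the stochastic Fokker--Planck equation obtained above, a linear (given the flow) SPDE of parabolic type with smooth coefficients; by stochastic parabolic regularity estimates, or alternatively by differentiating the stochastic flow $x \mapsto X_t(x)$ in the spatial variable and writing $\rho_t$ via the change-of-variables formula $\rho_t(X_t(x))\,|\det DX_t(x)| = \rho_0(x)$, one propagates $C^k$ bounds for every $k$. I expect this step, together with the translation of ``regular vector field on $\P$'' into usable analytic estimates on $Z_i(\cdot,\mu)$, to be the main obstacle: the Picard argument is standard once the estimates are in place, but controlling the measure-dependence of the coefficients in a norm strong enough to both close the fixed point and preserve smoothness — and handling the common-noise conditioning rigorously — is the delicate part. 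The smooth case likely requires working on the scale of Sobolev spaces $H^s(M)$ and exploiting the Riemannian submersion picture (writing $X_t$ as the action of a diffeomorphism-valued process) to get the needed a priori bounds uniformly in $s$.
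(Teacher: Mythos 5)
Your proposal follows essentially the same route as the paper: reduce to a conditional McKean--Vlasov SDE on $M$ (with $\mu_t = (X_t)_\sharp\mu$, which is the pushforward of $\mu$ by the random flow, i.e.\ the conditional law given the common noise), solve it by Picard iteration with Lipschitz/Gr\"onwall estimates in the spirit of Carmona--Delarue and Wang, verify the Wasserstein SDE by testing against potential energy functionals $F_f$, and obtain persistence of $\P_\infty$ from the fact that $X_t$ is a stochastic flow of diffeomorphisms combined with the change-of-variables formula for the density. The only notable technical divergences are that the paper closes the fixed point after a Nash isometric embedding of $M$ into $\mathbb{R}^d$ (extending the $Z_i$ by a tubular-neighborhood cutoff and correcting by the second fundamental form), and establishes the diffeomorphism property not by parabolic regularity but by identifying the solution with that of the horizontal-lift SDE on $\mathscr{D}$ via Elworthy's theorem on $\mathscr{D}^s$ — the second of the two alternatives you sketch.
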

This theorem is an extension of \cite[Proposition 3.10]{ding2023stochasticdifferentialequationsstochastic} to the setting where the diffusion is not driven by constant vector fields, as well as a generalization of \cite[Theorem 2.1]{wang2020} to the context of closed manifolds. In Section \ref{Section 5} we prove the main theorem of the present paper, namely:

\begin{thm}[Informal version of Theorem \ref{Existence unicité transport parallèle}]
    Let $(\mu_t)_{t \geq 0}$ be a diffusion on $\P_\infty$ and $V \in T_{\mu_0}\P_\infty$. Then, there exists a unique stochastic parallel transport (see Definition \ref{Definition transport parallele}) on $T\P_\infty$ along $(\mu_t)_{t \geq 0}$.
\end{thm}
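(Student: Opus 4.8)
The plan is to use the Riemannian submersion $\pi:\mathscr{D}\to\P_\infty$ of Section~\ref{Section 3} to lift the problem to the total space $\mathscr{D}$, where the $L^2$-Levi-Civita connection is given pointwise on $M$ by the connection of $M$ (recalled in Section~\ref{Section 3}) and is therefore tractable, and then to push the solution back down. First, I would horizontally lift the diffusion: after the usual It\^o--Stratonovich conversion, write the SDE of $(\mu_t)_{t\ge0}$ in Stratonovich form $\delta\mu_t=\SN\bar{Z}_i(\mu_t)\,\delta W^i_t+\bar{Z}_0(\mu_t)\,\d t$, lift each $\bar{Z}_i$ to its $\pi$-horizontal lift $Z_i^{\he}$ on $\mathscr{D}$ (via the principal $G$-bundle connection of Section~\ref{Section 3}), and solve
\[
\delta\Phi_t=\SN Z_i^{\he}(\Phi_t)\,\delta W^i_t+Z_0^{\he}(\Phi_t)\,\d t,\qquad \Phi_0\in\pi^{-1}(\mu_0).
\]
Existence and uniqueness of $(\Phi_t)_{t\ge0}$ follow from the stochastic calculus of Section~\ref{Section 4} applied on the Hilbert approximations $\mathscr{D}^s$, together with compatibility of the solutions across Sobolev indices so that $(\Phi_t)$ stays in $\mathscr{D}=\cap_s\mathscr{D}^s$; by the Stratonovich chain rule $\pi(\Phi_t)=\mu_t$, so $(\Phi_t)$ is a horizontal diffusion above $(\mu_t)$.

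Next, I would lift $V$ to the horizontal vector $\tilde V_0\in\H_{\Phi_0}\mathscr{D}$ with $\d\pi_{\Phi_0}(\tilde V_0)=V$ and set up the parallel transport equation upstairs. By the O'Neill intertwining formulas for a Riemannian submersion, in their covariant Stratonovich form along $(\Phi_t)$, a process $(V_t)$ along $(\mu_t)$ is a stochastic parallel transport in the sense of Definition~\ref{Def Transport parallele sto sur P} if and only if its horizontal lift $(\tilde V_t)$ along $(\Phi_t)$ has the property that the horizontal part $\H$ of the covariant Stratonovich differential of $\tilde V_t$ along $\Phi_t$ vanishes. Written out with the Christoffel symbols of the $L^2$-connection on $\mathscr{D}$ and the horizontal projector, this is a closed \emph{linear} Stratonovich SDE for $(\tilde V_t)$ in $T\mathscr{D}$ restricted along $(\Phi_t)$; moreover its vertical part $\V(\tilde V_t)$ is seen to solve a homogeneous linear equation with zero initial datum, hence vanishes identically, so $\tilde V_t$ remains horizontal.

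Since the $L^2$-connection on $\mathscr{D}$ is pointwise the Levi-Civita connection of $M$ along $x\mapsto\Phi_t(x)$, the SDE of the previous step is an $M$-indexed family of finite-dimensional linear covariant SDEs along the curves $\Phi_\cdot(x)$, coupled only through the curvature and O'Neill tensors of the submersion and through the drift, all of which are bounded by compactness of $M$ and regular by the assumptions on the $\bar{Z}_i$. Its coefficients are therefore Lipschitz on bounded subsets of the relevant Hilbert spaces, and linearity yields a unique solution $(\tilde V_t)_{t\ge0}$, defined for all $t\ge0$. Setting $V_t:=\d\pi_{\Phi_t}(\tilde V_t)\in\T_{\mu_t}\P$ produces the desired stochastic parallel transport along $(\mu_t)$ with $V_0=V$, by the intertwining of the previous step. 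Uniqueness is then formal: any stochastic parallel transport along $(\mu_t)$ lifts horizontally along $(\Phi_t)$ to a solution of this same linear SDE with initial value $\tilde V_0$, hence coincides with $(\tilde V_t)$; and the construction does not depend on the choice of $\Phi_0\in\pi^{-1}(\mu_0)$ by $G$-equivariance of the horizontal lift.

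The main obstacle is the infinite-dimensional analysis behind the two existence steps: controlling the regularity of the horizontal-lift operator --- which entails inverting the elliptic, Hodge-type operator defining the connection --- as a map between the Hilbert manifolds $\mathscr{D}^s$, and checking that the resulting SDE coefficients are smooth enough, uniformly in $s$, to invoke the existence and uniqueness results of Section~\ref{Section 4} and to guarantee that both $(\Phi_t)$ and $(\tilde V_t)$ live at every Sobolev level, i.e. in $\mathscr{D}=\cap_s\mathscr{D}^s$ and its smooth tangent bundle. Once this regularity bookkeeping is carried out, the O'Neill reduction, the persistence of horizontality, and the uniqueness argument are routine.
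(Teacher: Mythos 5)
Your proposal follows essentially the same route as the paper: lift the diffusion horizontally to $\mathscr{D}$ via the Riemannian submersion $p$, use the O'Neill-type identity (Proposition \ref{Prop connexion levi civita releve}, with the normal tensor $\mathcal{N}$ playing the role of the O'Neill tensor) to turn the parallel transport equation into a closed linear Stratonovich SDE for the horizontal lift along $(\Phi_t)$, solve that SDE on the Hilbert space $\T_\varphi\mathscr{D}$ after conjugating by the pointwise parallel transport $\tau_{0,t}$, check $G$-right invariance so that the projection is independent of the fibre point, and push down by $Tp$. That is exactly Propositions \ref{Existence et unicité transport parallele horizontal}, \ref{P Equivariance transport horizontal D} and Theorem \ref{Existence unicité transport parallèle}.

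There are, however, two places where you declare ``routine'' precisely the steps the paper identifies as the delicate ones. First, the persistence of horizontality: you assert that the vertical part $P_{\V}(\tilde V_t)$ satisfies a homogeneous linear equation with zero initial datum. In finite dimensions this follows from the identities \eqref{E 12003}--\eqref{E 12002}, but the authors state explicitly that they were unable to derive their infinite-dimensional analogues; Proposition \ref{Existence et unicité transport parallele horizontal} instead proves horizontality by pairing $\tilde V_t$ against $TL_{\varphi_t}(Y)=T\varphi_t(Y)$ for $Y\in\V_{\mathrm{id}}\mathscr{D}$ and exploiting the symmetry of the Hessian together with the stochastic Schwarz theorem. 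Without this (or some substitute), your closed SDE is not justified, since Proposition \ref{Prop connexion levi civita releve} only identifies the vertical part of the covariant derivative with $\mathcal{N}$ for \emph{horizontal} arguments. Second, your uniqueness argument -- ``any parallel transport lifts to a solution of the same linear SDE'' -- needs the identity $P_{\V}(\circ\bm D_t U_t)=\mathcal{N}(U_t,\circ\dd\varphi_t)$ for an arbitrary horizontal semimartingale $U_t$, which is again the missing identity. The paper avoids this entirely: uniqueness is obtained in Remark \ref{Remarque unicité et isométrie du transport para} from metric compatibility alone (the transport is an isometry, so the difference of two transports with equal initial data has constant zero norm). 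You should either adopt that argument or supply a proof of the horizontality identities you are implicitly using.
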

This theorem extends the main result of \cite{ding2023stochasticdifferentialequationsstochastic} to the setting where the diffusion is not driven by constant vector fields and the underlying manifold is an arbitrary closed manifold. The main difference with the result of \cite{ding2023stochasticdifferentialequationsstochastic} is that the stochastic parallel transport preserves the smoothness of the initial condition. To the best of our knowledge, this property is new, even in the deterministic setting. To establish the well-posedness of the stochastic parallel transport, we adapt an argument due to Ebin and Marsden and obtain the following result, which we believe to be of independent interest:

\begin{thm}(Informal version of Theorem \ref{Dérivées covariantes projection horizontale et verticale})
    The horizontal and vertical projections given by the Hodge decomposition are smooth bundle morphisms from $T\mathscr{D}$ to $T\mathscr{D}$.
\end{thm} 

Finally, in Section \ref{Section 6}, we study some decompositions of equivariant diffusions on $\mathscr{D}$.

\vspace{10pt} 

\noindent \textbf{Notations.} Throughout the paper we consider $(\Omega, \mathcal{F}, (\mathcal{F}_t)_{t \geq 0}, \mathbb{P})$ a filtered probability space which supports a $N$-dimensional Brownian motion $W=(W^1,W^2, \dots, W^N)$ in the canonical basis. Let $k \in \mathbb{N}^*$ and $\mathscr{E}_1, \mathscr{E}_2, \dots, \mathscr{E}_k$ be some sets. The \textit{$k$-th symmetric product} is defined as follows:

\begin{equation} \label{k symmetric product}
    \mathrm{SP}^k \left( \prod_{1 \leq j \leq k} \mathscr{E}_j \right)  = \bigcup_{\sigma \in \mathscr{S}_k} \left( \prod_{1 \leq j \leq k} \mathscr{E}_{\sigma(j)} \right).
\end{equation}
Let $(M, \langle \bullet, \bullet \rangle)$ be a smooth Riemannian manifold and $(X_t)_{t \geq 0}$ be solution to

$$
\circ \d X_t = \SN A_i(X_t) \circ \d W^i_t + A_0(X_t) \d t, \quad X_0 = x \in M,
$$
where $A_i$ is a smooth vector field for all $i=0,1, \dots, N$. Since Stratonovich's differentials preserves the chain rule, we can see Stratonovich's differential of $M$-valued semimartingales as elements of the tangent bundle. We will often write geometric expressions with Stratonovich's differential, for example, let $\Xi$ be a linear connection or a $(1,2)$-tensor and $B \in \Gamma(TM)$ a smooth vector field, the following expression

$$
\Xi(\circ \d X_t, B(X_t))
$$
has to be understood as

\begin{equation} \label{Notation intro}
\SN \Xi(A_i(X_t), B(X_t)) \circ \d W^i_t + \Xi(A_0, B(X_t)) \d t.
\end{equation}
For instance, let $C \in \Gamma(TM)$, if $\Xi$ is the Levi-Civita connection $\nabla$, we have:

\begin{equation} \label{Notation intro 2}
\circ \d \langle B(X_t), C(X_t) \rangle_{X_t} = \langle \nabla_{\circ \d X_t} B(X_t), C(X_t) \rangle_{X_t} + \langle B(X_t), \nabla_{\circ \d X_t} C(X_t) \rangle_{X_t},
\end{equation}
in the usual sense, namely,
\begin{align*}
    \langle B(X_t), C(X_t) \rangle - \langle B(x), C(x) \rangle & = \SN \int_0^t \left( \langle \nabla_{A_i} B(X_s), C(X_s) \rangle + \langle B(X_s), \nabla_{A_i} C(X_s) \rangle \right) \circ \d W^i_t \\
    & +\int_0^t \left( \langle \nabla_{A_0} B(X_s), C(X_s) \rangle + \langle B(X_s), \nabla_{A_0} C(X_s) \rangle \right) \d s.
\end{align*}
This is due to the fact that Stratonovich's calculus behave as usual differential calculus. We will use this convention on usual Riemannian manifolds but also on $\P$ and $\mathscr{D}$ (see Section \ref{Section 5} and \ref{Section 6}). We will also use the following notation to denote the quadratic variation:

$$
\Xi (\circ \d X_t, \circ \d X_t) = \SN \Xi(A_i(X_t), A_i(X_t)) \d t.
$$

Let $f \in \C^\infty(M,N)$ a smooth map between two manifolds. Let $x \in M$ and $V \in T_xM$, we adopt the notation $T_xf(V)$ to denote the differential of $f$ in the direction $V$. This choice is made to avoid confusions with the Stratonovich's or Itô's differentials (respectively $\circ \d$ and $\d^\nabla$) or covariant differentials (respectively $\circ D_t$ and $D^\nabla_t$).

Let $\pi : F \rightarrow M$ be a fiber bundle over $M$. We will denote by $\Gamma^k(F)$ the set of $\C^k$ sections of $F$ while $\Gamma^{(s)}(F)$ denote the set of sections of $F$ with Sobolev regularity $H^s$.

From Section \ref{Section 3}, the reader will encounter both over-lined characters and bold characters. The former are objects of the Wasserstein space when the latter belong to the Diffeomorphisms group. 
\vspace{10pt}

\noindent \textbf{Acknowledgements.} The author is grateful to his PhD advisors, Marc Arnaudon and Michel Bonnefont, for their valuable support and insightful suggestions, which have been of great help throughout this work.

\section{Finite-dimensional case} \label{Section 2}

\subsection{Stochastic differential equations on Riemannian manifolds}
In this subsection we recall some results about stochastic differential equations on Riemannian manifolds. The reader is refered to \cite{Elworthy1982}, \cite{Emery89}, \cite{hsu2002stochastic}, \cite{IkedaWatanabe81} or \cite{kunita1997stochastic} for a complete presentation of the theory of stochastic differential equations on manifolds. Let $A_0,A_1, \dots, A_N$ be smooth vector fields on a smooth manifold $M$. A continuous $M$-valued stochastic process $(X_t)_{t \geq 0}$ is said to be solution to the following Stratonovich's SDE

$$
\circ \d X_t = \SN A_i(X_t) \circ \d W^i_t + A_0(X_t) \d t, \quad X_0 = x \in M,
$$
if for all $f \in \C^\infty(M)$,

$$
f(X_t) =f(x) + \int_0^t \SN A_i f(X_s) \circ \d W^i_s + \int_0^t A_0f(X_s) \d s,
$$
or, in an usual Itô formulation,

$$
f(X_t) =f(x) + \int_0^t \SN A_i f(X_s)  \d W^i_s + \int_0^t \left( A_0f(X_s) + \frac{1}{2} \SN A_i^2 f(X_s) \right)\d s.
$$
If one wants to study Itô's stochastic differential equations on $M$, a connection on $TM$ is required. For instance, if $M$ is a Riemannian manifold, one may consider the Levi-Civita connection $\nabla$ induced by the metric. In this case, a continuous $M$-valued stochastic process $(X_t)_{t \geq 0}$ is solution to the following Itô's SDE

$$
\d^\nabla X_t = \SN A_i(X_t) \d W^i_t + A_0(X_t) \d t, \quad X_0 = x \in M,
$$
if for all $f \in \C^\infty(M)$,

$$
f(X_t) = f(x) + \SN \int_0^t A_i f(X_s) \d W^i_s + \int_0^t \left( A_0f(X_s) + \frac{1}{2} \SN \he f(A_i(X_s),A_i(X_s)) \right) \d s,
$$
where $\he f$ denotes the Hessian associated to $\nabla$, namely:

$$
\he f(A,B) = \mathcal{L}_A\mathcal{L}_B f - \nabla_A B f = \langle \nabla_A \nabla f, B \rangle_x, \quad \forall A,B  \in \Gamma(TM).
$$

\begin{prop} \label{Conversion Ito strato dimension finie}
    The Stratonovich's SDE

$$
\circ \d X_t = \SN A_i(X_t) \circ \d W^i_t + A_0(X_t) \d t, \quad X_0 = x \in M,
$$
is equivalent to the following Itô's one:

$$
\d^\nabla X_t = \SN A_i(X_t) \d W^i_t + \left( A_0(X_t) + \frac{1}{2} \SN \nabla_{A_i} A_i(X_t) \right) \d t, \quad X_0 = x \in M.
$$
\end{prop}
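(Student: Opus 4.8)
The plan is to test both stochastic differential equations against an arbitrary $f \in \CI{M}$ and to reduce the asserted equivalence to a single pointwise identity relating $A_i^2 f$, $\cov{A_i}{A_i} f$ and the Hessian $\he f(A_i,A_i)$. Since the notion of solution for each SDE is defined precisely through its action on test functions, it suffices to show that, after rewriting both characterizations in standard Itô form, they impose exactly the same requirement on $(X_t)_{t \geq 0}$; uniqueness of the decomposition of a continuous semimartingale into a local martingale part and a finite-variation part then closes the argument, and makes both implications symmetric so that "equivalent" is justified in both directions.

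Concretely, I would start from the defining property of the Stratonovich SDE: for every $f \in \CI{M}$,
$$
f(X_t) = f(x) + \int_0^t \SN A_i f(X_s) \circ \d W^i_s + \int_0^t A_0 f(X_s)\,\d s .
$$
Because $(X_t)_{t\geq 0}$ is a semimartingale and $f$, $A_i f$ are smooth, the process $s \mapsto A_i f(X_s)$ is itself a semimartingale, so each Stratonovich integral is well defined and converts to an Itô integral with correction term $\tfrac12\,\d\langle A_i f(X_\cdot), W^i\rangle_t$. Applying the Stratonovich SDE once more to the semimartingale $A_i f(X_\cdot)$ identifies this bracket: $\d\langle A_i f(X_\cdot), W^i\rangle_t = A_i\!\left(A_i f\right)(X_t)\,\d t = A_i^2 f(X_t)\,\d t$. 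Hence the Stratonovich SDE is equivalent to the statement that, for all $f$,
$$
f(X_t) = f(x) + \SN\int_0^t A_i f(X_s)\,\d W^i_s + \int_0^t \Bigl( A_0 f(X_s) + \tfrac12 \SN A_i^2 f(X_s)\Bigr)\,\d s .
$$

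It then remains to compare this with the defining property of the Itô SDE $\d^\nabla X_t = \SN A_i(X_t)\,\d W^i_t + B_0(X_t)\,\d t$ with $B_0 := A_0 + \tfrac12\SN \cov{A_i}{A_i}$, namely that
$$
f(X_t) = f(x) + \SN\int_0^t A_i f(X_s)\,\d W^i_s + \int_0^t \Bigl( B_0 f(X_s) + \tfrac12\SN \he f(A_i(X_s),A_i(X_s))\Bigr)\,\d s .
$$
The martingale parts already coincide, so the two equations define the same process if and only if the finite-variation parts agree for every $f$, i.e. if and only if $\SN A_i^2 f = \SN \cov{A_i}{A_i} f + \SN \he f(A_i,A_i)$ pointwise on $M$. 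This is immediate termwise from the definition of the Hessian recalled above, $\he f(A_i,A_i) = A_i(A_i f) - \cov{A_i}{A_i} f = A_i^2 f - \cov{A_i}{A_i} f$. I expect the only genuinely non-trivial point to be the identification of the Stratonovich-to-Itô correction $\d\langle A_i f(X_\cdot), W^i\rangle_t = A_i^2 f(X_t)\,\d t$, which relies on feeding the SDE into itself; once this standard fact is in place, everything reduces to the algebraic identity for the Hessian and the uniqueness of the semimartingale decomposition.
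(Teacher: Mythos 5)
Your proof is correct and follows the same route as the paper, which simply observes that the equivalence is a direct consequence of the definition of the Hessian $\he f(A,B) = \mathcal{L}_A\mathcal{L}_B f - \nabla_A B f$ once both SDEs are written in their test-function (Itô) form. The extra detail you supply — the identification of the Stratonovich correction $\tfrac12\,\d\langle A_i f(X_\cdot),W^i\rangle_t = \tfrac12 A_i^2 f(X_t)\,\d t$ by feeding the SDE into itself — is exactly the step the paper absorbs into its definition of a Stratonovich solution ("or, in an usual Itô formulation"), so there is no substantive difference.
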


\begin{proof}
The proof is a direct consequence of the definition of the Hessian $\he$.
\end{proof}

\begin{thm}[Stochastic Cauchy-Lipschitz]
	Let $A_0, A_1, \dots, A_N$ be smooth vector fields on a compact manifold $M$. Then, there exists a unique solution to the Itô's SDE defined for all $t \geq 0$:
	
	$$
\circ \d X_t = \SN A_i(X_t) \circ \d W^i_t + A_0(X_t) \d t, \quad X_0 = x \in M.
$$
\end{thm}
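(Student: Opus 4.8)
The plan is to reduce the problem on the compact manifold $M$ to the classical existence-and-uniqueness theorem for SDEs on $\mathbb{R}^n$ via an embedding, and then globalize. First I would invoke Whitney's (or Nash's) embedding theorem to realize $M$ as a closed submanifold of some $\mathbb{R}^n$; since $M$ is compact, its image is a compact subset of $\mathbb{R}^n$. Using a tubular neighborhood $\mathcal{U}$ of $M$ in $\mathbb{R}^n$ and the smooth nearest-point projection $\pi \colon \mathcal{U} \to M$, I would extend each vector field $A_i$ to a smooth compactly supported vector field $\tilde{A}_i$ on all of $\mathbb{R}^n$ — for instance by setting $\tilde{A}_i = (T\pi)^{*}A_i$ on $\mathcal{U}$ and multiplying by a bump function supported in $\mathcal{U}$ and equal to $1$ near $M$. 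These extensions are smooth with bounded derivatives of all orders, hence globally Lipschitz with linear growth.

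Next I would apply the classical Itô theory on $\mathbb{R}^n$ (see \cite{IkedaWatanabe81}, \cite{kunita1997stochastic}): the Stratonovich SDE
$$
\circ \d X_t = \SN \tilde{A}_i(X_t) \circ \d W^i_t + \tilde{A}_0(X_t) \d t, \quad X_0 = x \in M \subset \mathbb{R}^n
$$
is equivalent, by the finite-dimensional Itô–Stratonovich conversion (the Euclidean analogue of Proposition \ref{Conversion Ito strato dimension finie}), to an Itô SDE with smooth globally Lipschitz coefficients of linear growth, which therefore admits a unique strong solution $(X_t)_{t \geq 0}$ defined for all $t \geq 0$ and with continuous paths. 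The key point is then that this solution actually stays on $M$: this follows because $M$ is invariant under the flow generated by vector fields that are everywhere tangent to $M$. Concretely, for any $f \in \CI{\mathbb{R}^n}$ vanishing on $M$, one has $\tilde{A}_i f = 0$ on $M$ for all $i$, so applying Itô's formula to $f(X_t)$ and using a localization/Gronwall argument (or directly the support theorem, or the fact that $d(X_t, M)^2$ satisfies an SDE whose coefficients vanish on $M$) shows $X_t \in M$ almost surely for all $t$. A cleaner route: approximate $X_t$ by the Wong–Zakai / Stratonovich limit of ODE solutions driven by piecewise-linear paths, each of which stays on $M$ because the $\tilde{A}_i$ are tangent to $M$ there; passing to the limit gives $X_t \in M$.

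Finally I would check that $(X_t)_{t \geq 0}$, viewed as an $M$-valued process, solves the intrinsic Stratonovich SDE in the sense defined above: for $f \in \CI{M}$, extend $f$ to $\tilde{f} \in \CI{\mathbb{R}^n}$ with $\tilde{f}|_M = f$, apply the Euclidean Stratonovich chain rule to $\tilde{f}(X_t)$, and note that on $M$ one has $\tilde{A}_i \tilde{f} = A_i f \circ (\text{identification})$, so the intrinsic equation holds; the result is independent of the chosen extension since two extensions agree to first order along $M$ in the tangent directions. Uniqueness transfers in the same way: any $M$-valued solution is an $\mathbb{R}^n$-valued solution of the extended equation (because the coefficients agree on $M$), and uniqueness there forces uniqueness here. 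The main obstacle — and the only genuinely non-formal step — is establishing invariance of $M$ under the solution flow; everything else is a routine application of classical theory, and the compactness of $M$ is exactly what guarantees both the global-in-time existence (via the bounded extensions) and the absence of explosion.
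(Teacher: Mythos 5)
Your proposal is correct and is essentially the proof the paper points to: the paper simply cites \cite{hsu2002stochastic} and \cite{kunita1997stochastic}, and the argument there is exactly your embedding--extension--invariance scheme (indeed the paper reuses this very strategy later, in Proposition \ref{Résultat régularité solutions} and the proof of Theorem \ref{TheoremeExistenceUniciteMcKean}). The only step requiring care, the invariance of $M$ under the extended flow, is the one you correctly isolate and for which you give standard valid routes.
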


\begin{proof}
This result is classic and a proof can be found in \cite{hsu2002stochastic} or \cite{kunita1997stochastic} for example.
\end{proof}

Let $\pi : \mathscr{F} \rightarrow M$ be a smooth vector bundle endowed with a connection $\nabla$ and $J^0, J^1, \dots, J^N$ be $\C^2$ vector bundle morphisms from $\mathscr{F}$ to itself. A $\mathscr{F}$-valued stochastic process $(A_t)_{t \geq 0}$ is said to be solution to the following covariant SDE on $\mathscr{F}$:

\begin{equation} \label{E covariant M}
    \circ D_t A_t = \SN J^i_{X_t}(A_t) \circ \d W^i_t + J^0_{X_t}(A_t) \d t
\end{equation}
if 

\begin{equation*}
    \circ \d \tau_{0,t}^{-1} A_t = \SN \tau_{0,t}^{-1}J^i_{X_t}(A_t) \circ \d W^i_t + \tau_{0,t}^{-1}J^0_{X_t}(A_t) \d t,
\end{equation*}
where $(\tau_{0,t})$ is the stochastic parallel transport along $(X_t)_{t \geq 0}$ (see Proposition-Definition \ref{Prop def transport parallele stochastique M} below for the definition in the case $\mathscr{F}= TM$).

\begin{prop}[\cite{Elworthy1982}, Theorem 13E] 
    A stochastic process $(A_t)_{t \geq 0}$ on $\mathscr{F}$ is solution to \eqref{E covariant M} if and only if for all linear form $\Phi : \mathscr{F} \rightarrow \mathbb{R}$, $(\Phi_{X_t}(A_t))_{t \geq 0}$ is solution to:
    
    \begin{equation*}
        \circ \d \Phi_{X_t}(A_t) = (\nabla \Phi_{X_t})(\circ \d X_t)(A_t) + \Phi_{X_t}(\circ D_t A_t).
    \end{equation*}
\end{prop}

\begin{coro} \label{Chain rule dim finie}
    Let $(A_t)_{t \geq 0}$ be solution to \eqref{E covariant M} and $L : \mathscr{F} \rightarrow \mathscr{F}$ be a $\C^2$ vector bundle morphism. Then, $(L_{X_t}(A_t))_{t \geq 0}$ is solution to:
    
    \begin{equation*}
        \circ D_t L_{X_t}(A_t) = (\nabla L_{X_t})(\circ \d X_t)(A_t) + L_{X_t}(\circ D_t A_t).
    \end{equation*}
\end{coro}

\subsection{Stochastic parallel transport and Riemannian submersion} \label{Subsection 2.2}

In the case of a Riemannian submersion $p : N \rightarrow M$ between two Riemannian manifolds, one can establish a link between the Levi-Civita connection on $M$ and the one on $N$ for horizontal lift of vector fields on $M$, see Definition \ref{D définition horizontal lift}. As the Levi-Civita connection is closely related to the parallel transport, it is natural to expect an analogous relation between the two parallel transports. This link is presented in this section. None of these results are new and can be found in \cite[Chapter 3]{cheeger1975comparison} or in \cite{oneill1966fundamental} for example.

    \begin{propdef}
        Let $M$ be a Riemannian manifold with $\nabla$ the associated Levi-Civita connection and $(\gamma_t)_{t \geq 0}$ a smooth curve on $M$. There exists a unique solution to the following covariant equation:
        
        \begin{equation} \label{Equation transport parallèle dim finie}
            \nabla_{\dot{\gamma_t}}A_t = 0, \quad A_0 \in T_{\gamma_0}M.
        \end{equation}
        This unique solution is called the \textit{parallel transport} along $(\gamma_t)_{t \geq 0}$ with initial condition $A_0 \in T_{\gamma_0}M$.
    \end{propdef}

        \begin{rem} \label{Remarque transport flot}
            Since the covariant equation defining the parallel transport is linear, one may also see the parallel transport along a smooth curve $(\gamma_t)_{t \geq 0}$ as a flow of linear maps $(\tau_{0,t})_{t \geq 0}$ such that, $\tau_{0,t} : T_{\gamma_0}M \rightarrow T_{\gamma_t}M$, for all $t \geq 0$, mapping each initial condition $A_0 \in T_{\gamma_0}M$ to the associated solution to \eqref{Equation transport parallèle dim finie}. We will rather adopt this point of view throughout this paper.
        \end{rem}

    Let $N,M$ be smooth Riemannian manifolds and $p : N \rightarrow M$ be a Riemannian submersion with $\V_uP$ being $\ker T_u p$, namely the \textit{vertical space} and $\H_uN = \V_uN^\perp$ the \textit{horizontal space} for all $u \in N$. 
    
    \begin{defi} \label{D définition horizontal lift}
        The \textit{horizontal lift} $\mathfrak{h}_u$ at the point $u \in P$ is defined as the inverse of $T_u p|_{\H_u N}$. Let $\bar{A} \in \Gamma(TM)$, the horizontal lift of $\bar{A}$ is defined as $\mathfrak{h}(\bar{A})(u) = \mathfrak{h}_u(\bar{A}(p(u)))$.
    \end{defi}

    \begin{defi}
        Two vector fields $A,\bar{A}$ respectively on $N$ and $M$ are said to be \textit{$p$-related} if $Tp(A(u)) = \bar{A}(p(u))$ for all $u \in N$. 
    \end{defi}
    
     For $u \in N$, let $P_{\V_u}$ and $P_{\H_u}$ denote respectively the orthogonal projection on the vertical and the horizontal space. Moreover, let $\nabla, \bar{\nabla}$ denote respectively the Levi-Civita connection on $N$ and $M$. Note that $A = \h(\bar{A})$ if and only if $A$ and $\bar{A}$ are $p$-related and $A$ is horizontal.
    
    \begin{rem}
    Notice that we have inverted the notations used in \cite{cheeger1975comparison}. The reason is that we want to apply this theory to the context of the diffeomorphism group seen as a Riemannian submersion over Wasserstein space, and overlined symbols are common in the literature of Wasserstein space.
\end{rem}
    
    \begin{lem} \label{p-related crochet de lie}
    	Let $A,B \in \Gamma(TN)$ and $\bar{A},\bar{B} \in \Gamma(TM)$ be $p$-related vector fields. Then, $[\bar{A}, \bar{B}]$ and $[A,B]$ are $p$-related.
    \end{lem}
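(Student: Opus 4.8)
The plan is to reduce the statement to a purely algebraic identity about the action of vector fields on functions pulled back along $p$. First I would record the functional reformulation of $p$-relatedness: a vector field $A$ on $N$ is $p$-related to $\bar{A}$ on $M$ if and only if $A(f \circ p) = (\bar{A}f)\circ p$ for every $f \in \mathcal{C}^\infty(M)$. Indeed, by the chain rule one has $T_u p(A(u))(f) = A(u)(f\circ p)$ for all $u \in N$, so the defining condition $T_u p(A(u)) = \bar{A}(p(u))$ is equivalent, after testing against every $f$, to $A(u)(f\circ p) = (\bar{A}f)(p(u))$, which is exactly $A(f\circ p) = (\bar{A}f)\circ p$ evaluated at $u$.

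Next, assuming $A,B$ are $p$-related to $\bar{A},\bar{B}$ respectively, I would apply this characterization twice in succession to compute, for $f \in \mathcal{C}^\infty(M)$,
$$
[A,B](f\circ p) = A\bigl(B(f\circ p)\bigr) - B\bigl(A(f\circ p)\bigr) = A\bigl((\bar{B}f)\circ p\bigr) - B\bigl((\bar{A}f)\circ p\bigr) = (\bar{A}\bar{B}f)\circ p - (\bar{B}\bar{A}f)\circ p = ([\bar{A},\bar{B}]f)\circ p .
$$
Finally, reading this identity pointwise and using again the chain rule, for every $u \in N$ and every $f \in \mathcal{C}^\infty(M)$ we get $T_u p([A,B](u))(f) = [A,B](u)(f\circ p) = ([\bar{A},\bar{B}]f)(p(u)) = [\bar{A},\bar{B}](p(u))(f)$; since a tangent vector at $p(u)$ is determined by its action on smooth functions, this forces $T_u p([A,B](u)) = [\bar{A},\bar{B}](p(u))$, i.e. $[A,B]$ and $[\bar{A},\bar{B}]$ are $p$-related.

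There is no genuine obstacle here: the only point requiring a word of care is the equivalence between the geometric definition of $p$-relatedness and its functional version, which rests on nothing beyond the chain rule and the fact that tangent vectors separate smooth germs. No use is made of the submersion hypothesis, of the metrics, or of compactness, so that the very same computation will transfer verbatim to the Fréchet setting of the diffeomorphism group over $\P_\infty$ in Section \ref{Section 3}, once the corresponding differential calculus is in place.
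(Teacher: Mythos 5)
Your proof is correct and follows essentially the same route as the paper's: both arguments rest on the functional characterization $A(f\circ p)=(\bar A f)\circ p$ of $p$-relatedness, iterate it to compute $[A,B](f\circ p)=([\bar A,\bar B]f)\circ p$, and conclude pointwise. Your version merely spells out more explicitly the equivalence between the geometric and functional definitions and the final separation-of-points step, which the paper leaves implicit.
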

    
    \begin{proof}
    Let $f \in \C^\infty(M)$. Observe that for all $u \in N$ we have
    
    $$
    A(f \circ p)(u) = T_up(A) f(p(u)) = (\bar{A} f)(p(u)).
    $$
    Hence, iterating it with the function $\bar{A} f$ we get
    
    $$
    B(\bar{A}f \circ p)(u) = T_up(B) (\bar{A}f)(p(u)) = (\bar{B} \bar{A} f)(p(u)).
    $$
    Thus, we obtain the result :
    
    $$
    [A,B](f\circ p)(u) = T_up ([A,B]) f(p(u)) = ([\bar{A}, \bar{B}]f)(p(u)).
    $$
    \end{proof}
    
    \begin{prop} \label{Connexion Levi-Civita Horizontale dim finie}
    	Let $\bar{A}, \bar{B} \in \Gamma(TM)$ and $A,B$ their horizontal lifts. Then, $\nabla_AB$ and $\bar{\nabla}_{\bar{A}} \bar{B}$ are $p$-related. Moreover, we have
    	
    	\begin{equation} \label{E22}
    	    \nabla_A B(u) = \h_u \left( \grad_{\bar{A}} \bar{B}(p(u)) \right) + \frac{1}{2}[A,B]^\V(u),
    	\end{equation}
    	where $[A,B]^\V(u) = P_{\V_u}([A,B](u))$ and $[A,B]^\V(u)$ depends only on the values of $A,B$ at $u$.
    \end{prop}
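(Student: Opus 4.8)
The plan is to split $\nabla_A B(u)=P_{\H_u}\big(\nabla_A B(u)\big)+P_{\V_u}\big(\nabla_A B(u)\big)$ and to treat the two summands separately: I will identify the horizontal part with $\h_u\big(\grad_{\bar A}\bar B(p(u))\big)$ via the Koszul formula, and the vertical part with $\tfrac12[A,B]^{\V}(u)$ via the O'Neill tensor. Since $T_up$ annihilates $\V_uN$, the identification of the horizontal part already gives $T_up\big(\nabla_A B(u)\big)=\grad_{\bar A}\bar B(p(u))$, i.e. that $\nabla_A B$ and $\grad_{\bar A}\bar B$ are $p$-related; it then only remains to compute the vertical part to obtain \eqref{E22}.

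\emph{Horizontal part.} Let $\bar C\in\Gamma(M)$ be arbitrary and $C=\h(\bar C)$. Apply the Koszul formula on $N$:
\begin{equation*}
2\langle\nabla_A B,C\rangle=A\langle B,C\rangle+B\langle A,C\rangle-C\langle A,B\rangle+\langle[A,B],C\rangle-\langle[A,C],B\rangle-\langle[B,C],A\rangle .
\end{equation*}
Because $p$ is a Riemannian submersion, each $\h_u$ is a linear isometry onto $\H_uN$, so $\langle A,B\rangle=\langle\bar A,\bar B\rangle\circ p$ and likewise for the other pairs; hence each of the first three terms at $u$ equals the corresponding term of the Koszul formula on $M$ at $p(u)$, using the identity $X(f\circ p)(u)=(\bar Xf)(p(u))$ for $p$-related $X,\bar X$ (exactly as in the proof of Lemma \ref{p-related crochet de lie}). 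For the bracket terms, Lemma \ref{p-related crochet de lie} gives that $[A,B]$ is $p$-related to $[\bar A,\bar B]$, so $P_{\H_u}\big([A,B](u)\big)=\h_u\big([\bar A,\bar B](p(u))\big)$, and since $C(u)$ is horizontal, $\langle[A,B],C\rangle(u)=\langle[\bar A,\bar B],\bar C\rangle(p(u))$; the same holds for $\langle[A,C],B\rangle$ and $\langle[B,C],A\rangle$. Summing, $\langle\nabla_A B,C\rangle(u)=\langle\grad_{\bar A}\bar B,\bar C\rangle(p(u))=\langle\h_u(\grad_{\bar A}\bar B(p(u))),C(u)\rangle$. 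Letting $\bar C$, hence $C(u)$, range over all horizontal vectors at $u$ forces $P_{\H_u}\big(\nabla_A B(u)\big)=\h_u\big(\grad_{\bar A}\bar B(p(u))\big)$.

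\emph{Vertical part.} For horizontal vector fields $X,Y$ set $S_XY:=P_{\V}(\nabla_XY)$. Then $S$ is $\CI{N}$-bilinear: $S_{fX}Y=fS_XY$ is clear, and $S_X(fY)=P_\V\big(f\nabla_XY+(Xf)Y\big)=fS_XY$ since $P_\V Y=0$; in particular $S_XY(u)$ depends only on $X(u)$ and $Y(u)$. Next, $S_XX=0$ for every basic field $X=\h(\bar X)$: for a vertical field $U$ one has, by compatibility of $\nabla$ with the metric, $\langle\nabla_XX,U\rangle=X\langle X,U\rangle-\langle X,\nabla_XU\rangle=-\langle X,\nabla_XU\rangle$, and writing $\nabla_XU=\nabla_UX+[X,U]$ with $[X,U]$ vertical (Lemma \ref{p-related crochet de lie}, as $X$ is basic and $U$ is $p$-related to $0$) we get $\langle X,\nabla_XU\rangle=\langle X,\nabla_UX\rangle=\tfrac12U\langle X,X\rangle=\tfrac12U\big(\|\bar X\|^2\circ p\big)=0$ because $T_up(U)=0$; hence $P_{\V}(\nabla_XX)=0$. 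Polarizing (using $X+Y=\h(\bar X+\bar Y)$) gives $S_XY=-S_YX$ for horizontal $X,Y$, and since $\nabla$ is torsion-free, $S_AB-S_BA=P_\V(\nabla_AB-\nabla_BA)=P_\V[A,B]=[A,B]^\V$; therefore $S_AB=\tfrac12[A,B]^\V$, i.e. $P_{\V_u}\big(\nabla_AB(u)\big)=\tfrac12[A,B]^\V(u)$. Adding the two parts yields \eqref{E22}, and since $[A,B]^\V(u)=2S_AB(u)$, the $\CI{N}$-bilinearity of $S$ shows it depends only on $A(u),B(u)$.

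\emph{Main obstacle.} The horizontal part is essentially bookkeeping once one records that inner products of horizontal lifts descend through $p$; the only genuinely geometric input is the vertical identity $S_XX=0$ (equivalently the alternating property of the O'Neill tensor), which is where the submersion hypothesis enters, through the constancy of $\|\bar X\|^2\circ p$ along fibres and the verticality of $[X,U]$ for $X$ basic and $U$ vertical. A small point requiring care is that $[A,B]$ is not horizontal in general, so in the Koszul bracket terms one must go through $P_\H[A,B]=\h([\bar A,\bar B])$ rather than treating $[A,B]$ as the lift of $[\bar A,\bar B]$.
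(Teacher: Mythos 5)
Your proof is correct. The horizontal part is the same Koszul-formula computation as the paper's (and your remark that the bracket terms must be handled through $P_{\H_u}([A,B](u))=\h_u([\bar A,\bar B](p(u)))$, rather than by pretending $[A,B]$ is horizontal, is a point the paper leaves implicit in the line $\langle Tp([A,B]),\bar C\rangle\circ p$). For the vertical part you take a genuinely different, though equally classical, route: the paper runs Koszul a second time against a vertical test field $W$ and lets the six terms vanish one by one, whereas you first establish the alternation identity $P_{\V}(\nabla_XX)=0$ for basic $X$ from metric compatibility (using the verticality of $[X,U]$ and the constancy of $\|\bar X\|^2\circ p$ along the fibres), then polarize over basic fields and invoke torsion-freeness to get $P_{\V}(\nabla_AB)=\tfrac12[A,B]^{\V}$. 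The two arguments use the same geometric inputs, but yours isolates the $\CI{N}$-bilinearity and antisymmetry of $S_XY=P_{\V}(\nabla_XY)$ as standalone facts — essentially O'Neill's original presentation of the $\mathcal{A}$-tensor, which the paper only introduces after the proposition — and in particular gives the tensoriality claim about $[A,B]^{\V}(u)$ for free, where the paper's justification ("tensorial in the first component, then antisymmetry") is terser. The paper's version is marginally shorter because Koszul hands it the factor $\tfrac12$ directly.
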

    
    \begin{proof}
    Let $\bar{C} \in \Gamma(TM)$ and $C$ its horizontal lift. By using Koszul formula, Lemma \eqref{p-related crochet de lie} and the fact that $p$ is a Riemannian submersion we get
    
    \begin{align*}
    2\left \langle \nabla_AB, C \right \rangle & = A \langle B,C \rangle + B\langle C,A \rangle - C\langle A,B \rangle + \left \langle [A,B], C \right \rangle - \left \langle [A,C], B \right \rangle - \left \langle [B,C],A \right \rangle \\
    &= A \left( \langle \bar{B},\bar{C} \rangle \circ p \right) + B \left(\langle \bar{C},\bar{A} \rangle \circ p \right) - C \left(\langle \bar{A},\bar{B} \rangle \circ p \right) \\
    & \quad + \left \langle Tp([A,B]), \bar{C} \right \rangle \circ p - \left \langle Tp([A,C]), \bar{B} \right \rangle \circ p - \left \langle Tp([B,C]), \bar{A} \right \rangle \circ p \\
    & = \bar{A} \langle \bar{B}, \bar{C} \rangle \circ p + \bar{B} \langle \bar{C}, \bar{A} \rangle \circ p - \bar{C} \langle \bar{A}, \bar{B} \rangle \circ p + \langle [\bar{A}, \bar{B} ], \bar{C} \rangle \circ p - \langle [\bar{A}, \bar{C}], \bar{B} \rangle \circ p - \langle [\bar{B}, \bar{C} ], \bar{A} \rangle \circ p \\
    & = 2\left \langle \grad_{\bar{A}} \bar{B}, \bar{C} \right \rangle \circ p.
    \end{align*}
    Then, we obtain
    
    \begin{equation} \label{E23}
        \nabla_A B(u) = \h_u \left( \grad_{\bar{A}} \bar{B}(p(u)) \right) + P_{\V_u}( \nabla_A B(u)).
    \end{equation}
    Let $W$ be a vertical vector field on $N$, using once again Koszul formula we have
    
    $$
    2\left \langle \nabla_AB, W \right \rangle  = A \langle B,W \rangle + B\langle W,A \rangle - W\langle A,B \rangle + \left \langle [A,B], W \right \rangle - \left \langle [A,W], B \right \rangle - \left \langle [B,W], A \right \rangle. 
    $$
    The first two terms on the right hand-side are $0$ by orthogonality. The third vanishes since $\langle A,B \rangle$ is constant in vertical directions. Since $W$ is $p$-related with the null vector field on $M$, the last two terms vanish using Lemma \ref{p-related crochet de lie}. Thus we obtain
    
    $$
    \left \langle \nabla_AB, W \right \rangle = \left \langle \frac{1}{2}[A,B], W \right \rangle.
    $$
    To see that $[A,B]^\V|_u$ depends only on the values of $A,B$ at $u$, notice that $[A,B]^\V = P_\V(\nabla_{A}B)$ which is tensorial in the first component. Using the antisymmetry of the Lie bracket we get the announced property.
    \end{proof}
    
    We are now interested in the link between parallel transports on $N$ and $M$. Let $U \in \Gamma(\H N)$ be a horizontal vector field and $B \in \Gamma(TN)$. We define the \textit{O'Neill tensor} as in \cite{oneill1966fundamental} to be:
    
    \begin{equation*}
        \mathcal{A}_U(B) = P_{\V} \left( (\nabla U) \cdot P_{\H}(B) \right).
    \end{equation*}
    One can easily check that its adjoint operator is given by:
    
    \begin{equation*}
        \mathcal{A}^T_U(B) = P_{\H} \left( (\nabla U)^T \cdot P_{\V}(B) \right).
    \end{equation*}
    Moreover, by Proposition \ref{Connexion Levi-Civita Horizontale dim finie}, if $B \in \H_u N$, then 
    $$
    \mathcal{A}_U(B) = \frac{1}{2}[U,B]^\V = -\frac{1}{2}[B,U]^\V=  -\mathcal{A}_B(U).
    $$

    \begin{defi} \label{Lift chemin}
        Let $p : N \rightarrow M$ be a Riemannian submersion and $(\bar{\gamma}_t)_{t \geq 0}$ be a smooth curve on $M$ such that $\bar{\gamma}_0 = x \in M$. Then, for $u \in p^{-1}(x)$, the \textit{horizontal lift} of $\bar{\gamma}$ in $u$ is the solution of the following ODE:
        
        $$
        \dot{\gamma}_t = \mathfrak{h}_{\gamma_t}(\dot{\bar{\gamma}}_t), \quad \gamma_0 = u.
        $$
    \end{defi}
    
    Let $x \in M$ and $u \in p^{-1}(x)$. Suppose that $(\bar{\gamma}_t)_{t \geq 0}$ is a smooth path on $M$ with $\bar{\gamma}_0 =x$ and let $(\gamma_t)_{t \geq 0}$ be its horizontal lift starting from $u$. Let $(\tau_{0,t})_{t \geq 0}$ and $(\bar{\tau}_{0,t})_{t \geq 0}$ be respectively parallel transports on $N$ and $M$ along $(\gamma_t)_{t \geq 0}$ and $(\bar{\gamma}_t)_{t \geq 0}$ (with the point of view of Remark \ref{Remarque transport flot}).  The following proposition introduces the (deterministic version of the) strategy for proving Theorem \ref{Existence unicité transport parallèle}. The idea is that, when we consider a Riemannian submersion $p : N \rightarrow M$, we can construct the parallel transport on $M$ by using only the parallel transport on $N$ and Proposition \ref{Connexion Levi-Civita Horizontale dim finie}. We will also see that the horizontal lift of the parallel transport is the parallel transport of the \textit{right-exponential of the (inverse parallel transport of the) O'Neill tensor}.
    
    \begin{prop} \label{T parallel transport dim finie}
    The parallel transport on $M$ writes
    
    $$
    \bar{\tau}_{0,t}(\bar{U}) = T_{\gamma_t}p(\tau_{0,t}^h(\mathfrak{h}_u(\bar{U})), \quad \forall t \geq 0,
    $$
    where $(\tau_{0,t}^h)_{t \geq 0}$ is the so-called horizontal transport, namely the horizontal lift of $(\bar{\tau}_{0,t})_{t \geq 0}$,
    
    $$
    \tau_{0,t}^h(U) = \mathfrak{h}_{\gamma_t(u)}(\bar{\tau}_{0,t}(T_up(U))),\quad \forall U \in \H_u N,
    $$
    which is also the unique solution to 
    
    \begin{equation}\label{Equation transport parallele horizontal dim finie}
        D_t \tau^h_{0,t}(U) = -\mathcal{A}_{\dot{\gamma}_s}( \tau_{0,t}^h(U)), \quad \tau_{0,0}^h(U) = U, \, \forall U \in \H_u N. 
    \end{equation}
    Moreover, $\tau_{0,t}^h(U) = \tau_{0,t}(Q_t(U))$ where 
    
    \begin{equation*}
        Q_t(U) = \mathcal{E}\left( \int_0^\cdot \tau_{0,s}^{-1} \left(-\mathcal{A}_{\dot{\gamma}_s}( \tau_{0,s}(U)) \right) \right)_t,
    \end{equation*}
    with $\mathcal{E}$ the right-exponential map on $GL(T_uN)$.
    \end{prop}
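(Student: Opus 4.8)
The plan is to take $\tau^h_{0,t}(U):=\h_{\gamma_t}\bigl(\bar\tau_{0,t}(T_up(U))\bigr)$, for $U\in\H_uN$, as the \emph{definition} of the horizontal transport and then to check the three assertions of the statement in turn. The first identity is immediate from this definition: if $U=\h_u(\bar U)$ then $T_up(U)=\bar U$, so $\tau^h_{0,t}(\h_u(\bar U))=\h_{\gamma_t}(\bar\tau_{0,t}(\bar U))$, and applying $T_{\gamma_t}p$ recovers $\bar\tau_{0,t}(\bar U)$ since $T_{\gamma_t}p\circ\h_{\gamma_t}=\mathrm{id}$ on $\H_{\gamma_t}N$. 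The initial condition $\tau^h_{0,0}(U)=\h_u(T_up(U))=U$ follows in the same way.

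The core of the argument is that $t\mapsto\tau^h_{0,t}(U)$ solves the covariant equation \eqref{Equation transport parallele horizontal dim finie}. First I would compute $D_t\tau^h_{0,t}(U)$ directly. Since covariant differentiation along $\gamma$ only sees the restriction of a field to $\gamma$, I fix a time $t_0$ and choose local vector fields $\bar A,\bar B$ on $M$ near $\bar\gamma_{t_0}$ with $\bar A(\bar\gamma_t)=\dot{\bar\gamma}_t$ and $\bar B(\bar\gamma_t)=\bar\tau_{0,t}(T_up(U))$ for $t$ near $t_0$, and set $A=\h(\bar A)$, $B=\h(\bar B)$, so that along $\gamma$ one has $A=\dot\gamma$ and $B=\tau^h_{0,\cdot}(U)$. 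Proposition \ref{Connexion Levi-Civita Horizontale dim finie} then gives $D_t\tau^h_{0,t}(U)=(\nabla_AB)(\gamma_t)=\h_{\gamma_t}\bigl((\grad_{\dot{\bar\gamma}_t}\bar B)(\bar\gamma_t)\bigr)+\tfrac12[A,B]^\V(\gamma_t)$, whose first term vanishes because $\bar B$ restricts to the $\grad$-parallel vector field $\bar\tau_{0,t}(T_up(U))$ along $\bar\gamma$, and whose second term equals $-\mathcal A_{\dot\gamma_t}(\tau^h_{0,t}(U))$ by the description of the O'Neill tensor $\mathcal A$ on pairs of horizontal vectors given before Definition \ref{Lift chemin}. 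Neither term depends on the chosen extensions — the first because $\grad$-parallelism is intrinsic, the second because $[A,B]^\V$ is tensorial — so, together with the initial condition, this establishes the existence statement.

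Uniqueness, and hence the last displayed formula, comes from linearising \eqref{Equation transport parallele horizontal dim finie}. Given any solution $Y$, the curve $y_t:=\tau_{0,t}^{-1}Y_t\in T_uN$ satisfies, using $\tfrac{\d}{\d t}(\tau_{0,t}^{-1}Y_t)=\tau_{0,t}^{-1}D_tY_t$, the linear ODE $\dot y_t=\Theta_t y_t$ on the fixed finite-dimensional space $T_uN$, where $\Theta_t:=-\,\tau_{0,t}^{-1}\circ\mathcal A_{\dot\gamma_t}\circ\tau_{0,t}$ is a smooth curve of endomorphisms of $T_uN$; since $y_0=U$ this determines $y$, hence $Y$, uniquely, and the unique solution necessarily coincides with the explicitly given — thus horizontal — $\tau^h_{0,\cdot}(U)$. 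Applying this with $Y_t=\tau^h_{0,t}(U)$: the curve $\tilde V_t:=\tau_{0,t}^{-1}\tau^h_{0,t}(U)$ solves $\dot{\tilde V}_t=\Theta_t\tilde V_t$, $\tilde V_0=U$, while by definition of the right-exponential on $GL(T_uN)$ the curve $Q_t:=\mathcal E\bigl(\int_0^\cdot\Theta_s\,\d s\bigr)_t$ solves $\dot Q_t=\Theta_tQ_t$, $Q_0=\mathrm{id}$; hence $t\mapsto Q_t(U)$ solves the same ODE as $\tilde V_t$ with the same initial value, so $\tilde V_t=Q_t(U)$ and $\tau^h_{0,t}(U)=\tau_{0,t}(Q_t(U))$, which is the claimed identity.

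The step I expect to be the main obstacle is the local extension used to compute $D_t\tau^h_{0,t}(U)$: Proposition \ref{Connexion Levi-Civita Horizontale dim finie} is stated for genuine vector fields on $N$ and $M$, whereas the objects at hand live only along $\gamma$ and $\bar\gamma$, so one must work locally along the curve, where the needed extensions exist, and carefully argue that the two contributions of the submersion formula are, respectively, the covariant derivative of a genuinely parallel field and a tensorial quantity, hence insensitive to the extension. Everything downstream — the two identities and the two uniqueness arguments — is then purely formal.
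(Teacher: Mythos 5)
Your proposal is correct and follows essentially the same route as the paper's proof: both obtain the covariant equation \eqref{Equation transport parallele horizontal dim finie} for the horizontal lift of $(\bar{\tau}_{0,t})_{t\ge 0}$ from Proposition \ref{Connexion Levi-Civita Horizontale dim finie}, and both settle uniqueness and the exponential formula by conjugating with $\tau_{0,t}^{-1}$ to reduce to a linear ODE on $T_uN$ (resp.\ $GL(T_uN)$) solved by the right exponential. The only difference is that you make explicit the local-extension and tensoriality argument needed to apply Proposition \ref{Connexion Levi-Civita Horizontale dim finie} along the curve, a point the paper leaves implicit.
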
 
    \begin{proof}
    Let $(\tau^h_{0,t})_{t \geq 0}$ be the horizontal lift of $(\bar{\tau}_{0,t})_{t \geq 0}$ starting from $u$. By Proposition \ref{Connexion Levi-Civita Horizontale dim finie}, we have
    
    $$
    D_t \tau^h_{0,t}(U) = \frac{1}{2} \left[ \dot{\gamma}_t, \tau^h_{0,t}(U) \right ]^\V =  -\mathcal{A}_{\dot{\gamma}_s}( \tau_{0,t}^h(U)), \quad \forall U \in \H_u N.
    $$
    Let $L_t(\cdot)$ denote the linear map $\tau_{0,t}^{-1}\left(\frac{1}{2}[\dot{\gamma}_t,\tau_{0,t}(\cdot)]^\V \right)$ for all $t \geq 0$. Then, letting $(Q_t)_{t \geq 0} = (\tau_{0,t}^{-1} \tau_{0,t}^h)_{t \geq 0}$, the above equation is equivalent to the following $GL(T_uN)$-valued equation
    
    $$
	\dot{Q}_t = L_t(Q_t), \quad Q_0 = id.
    $$
    Indeed, 
    
    \begin{equation}\label{Trick finite dimension}
        \dot{Q}_t = \frac{\d}{\d t} \tau_{0,t}^{-1} \tau_{0,t}^h = \tau_{0,t}^{-1} \nabla_{\dot{\gamma}_t} \tau_{0,t}^h = \frac{1}{2}\tau_{0,t}^{-1}\left[ \dot{\gamma}_t, \tau_{0,t} (\tau_{0,t}^{-1} \tau^h_{0,t}) \right ]^\V =  L_t(Q_t).
    \end{equation}
    There exists a unique solution given by the associated exponential map on $GL(T_uN)$, namely, $Q_t = \mathcal{E}\left(\int_0^\cdot L_s(\cdot)ds \right)_t$ for all $t \geq 0$. Thus, the solution to \eqref{Equation transport parallele horizontal dim finie} is unique and is given by $(\tau_{0,t}^h)_{t \geq 0} = (\tau_{0,t} Q_t)_{t \geq 0}$.
    \end{proof}

        \begin{rem} \label{Remarque caractérosation du transport}
        The first part of this theorem is trivial, it says that the projection of the horizontal lift of the parallel transport is the parallel transport. The important point in this result is the fact that $(\tau_{0,t}^h)_{t \geq 0}$ is entirely characterized by equation \eqref{Equation transport parallele horizontal dim finie}. Thus, in order to obtain the parallel transport on $M$, one can solve \eqref{Equation transport parallele horizontal dim finie} on $N$ and project onto $M$ in order to obtain the solution.
    \end{rem}

    Let us apply this reasoning to the stochastic setting.

    \begin{propdef}[\cite{Ito75}] \label{Prop def transport parallele stochastique M}
        Let $M$ be a Riemannian manifold with $\nabla$ the associated Levi-Civita connection, $A_0,A_1, \dots, A_N \in \Gamma(TM)$. Let $(X_t)_{t \geq 0}$ be the unique solution to:
        
            $$
    \circ \d X_t = \SN A_i(X_t) \circ \d W^i_t + A_0(X_t) \d t, \quad X_0 \in M.
    $$
    There exists a unique solution to the following stochastic covariant equation
    
    \begin{equation*}
        \nabla_{\circ \d X_t}A_t = 0, \quad A_0 \in T_{X_0}M,
    \end{equation*}
    in the following sense:
    
    $$
    \circ \d \langle A_t, B(X_t) \rangle_{X_t} = \SN \langle A_t, \nabla_{A_i(X_t)} B(X_t) \rangle_{X_t} \circ \d W^i_t + \langle A_t, \nabla_{A_0(X_t)}B(X_t) \rangle_{X_t} \d t, \quad \forall B \in \Gamma(TM),
    $$
    consistently with \eqref{Notation intro 2}. This unique solution is called the \textit{stochastic parallel transport} along $(X_t)_{t \geq 0}$ with initial condition $A_0 \in T_{X_0}M$.
    \end{propdef}

    \begin{propdef}
        A $TM$-valued stochastic process $(U_t)_{t \geq 0}$ along a semimartingale $(X_t)_{t \geq 0}$ is said to be a semimartingale if $(\tau_{0,t}^{-1}U_t)_{t \geq 0}$ is a semimartingale on $T_{U_0}M$, where $(\tau_{0,t})_{t \geq 0}$ denotes the stochastic parallel transport along $(X_t)_{t \geq 0}$. The Stratonovich covariant derivative of $U_t$ is defined as the following formal element of $T_{U_t}M$:
        
        \begin{equation*}
            \circ D_t U_t = \tau_{0,t} \circ \d \tau_{0,t}^{-1} U_t.
        \end{equation*}
        Equivalently, $\circ D_t U_t$ can be defined as the unique formal element of $\T_{X_t} M$ such that:
        
        \begin{equation*}
            \circ \d \langle U_t, \tau_{0,t}(B) \rangle_{X_t} = \langle \circ D_t U_t, \tau_{0,t}(B) \rangle_{X_t},
        \end{equation*}
        for all $B \in T_{X_0}M$.
    \end{propdef}
    
    \begin{prop}[Theorem 2.4, \cite{KunitaHiroshi1981}] \label{Bonne forme dérivée covariante}
        Let $B \in \Gamma(TM)$ and $(X_t)_{t \geq 0}$ be a semimartingale solution to 
        
    $$
    \circ \d X_t = \SN A_i(X_t) \circ \d W^i_t + A_0(X_t) \d t, \quad X_0 \in M.
    $$
    Then, 
    
    \begin{equation*}
        \circ \d \tau_{0,t}^{-1} B(X_t) = \SN \tau_{0,t}^{-1}(\nabla_{A_i(X_t)} B(X_t)) \circ \d W^i_t + \tau_{0,t}^{-1}(\nabla_{A_0(X_t)} B(X_t)) \d t.
    \end{equation*}
    \end{prop}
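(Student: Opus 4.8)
The plan is to reduce the identity to ordinary real-valued Stratonovich calculus by working in a parallel orthonormal moving frame along $(X_t)_{t \geq 0}$, which is the most transparent way to package the fact that Stratonovich differentials obey the classical Leibniz and chain rules.

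First I would fix an orthonormal basis $(e_1, \dots, e_n)$ of $T_{X_0}M$ and set $e_j(t) = \tau_{0,t}(e_j)$. Since the defining covariant equation of the stochastic parallel transport is metric-compatible, a short computation using \eqref{Notation intro 2} for two parallel fields shows $(\tau_{0,t})_{t \geq 0}$ is an isometry, so $(e_1(t), \dots, e_n(t))$ is an orthonormal basis of $T_{X_t}M$ for all $t$, and by construction each $e_j(t)$ is parallel along $(X_t)_{t \geq 0}$, i.e. $\nabla_{\circ \d X_t} e_j(t) = 0$. Writing $B(X_t) = \sum_j b^j(t)\, e_j(t)$ with $b^j(t) = \langle B(X_t), e_j(t)\rangle_{X_t}$, we get $\tau_{0,t}^{-1} B(X_t) = \sum_j b^j(t)\, e_j$, and since the $b^j$ are real semimartingales this gives $\circ \d \tau_{0,t}^{-1} B(X_t) = \sum_j (\circ \d b^j(t))\, e_j$, reducing the problem to computing each $\circ \d b^j(t)$.

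Next I would apply the Stratonovich metric-compatibility rule \eqref{Notation intro 2} to the pair of $M$-valued semimartingales $B(X_t)$ and $e_j(t)$; because $\nabla_{\circ \d X_t} e_j(t) = 0$, only one term survives and $\circ \d b^j(t) = \langle \nabla_{\circ \d X_t} B(X_t), e_j(t)\rangle_{X_t}$, which by the convention \eqref{Notation intro} reads $\SN \langle \nabla_{A_i(X_t)} B(X_t), e_j(t)\rangle_{X_t} \circ \d W^i_t + \langle \nabla_{A_0(X_t)} B(X_t), e_j(t)\rangle_{X_t}\, \d t$. Substituting back, using $\langle \nabla_{A_i(X_t)} B(X_t), e_j(t)\rangle_{X_t} = \langle \tau_{0,t}^{-1} \nabla_{A_i(X_t)} B(X_t), e_j\rangle_{X_0}$ and that $(e_j)_j$ is orthonormal, summation over $j$ yields exactly $\circ \d \tau_{0,t}^{-1} B(X_t) = \SN \tau_{0,t}^{-1}(\nabla_{A_i(X_t)} B(X_t)) \circ \d W^i_t + \tau_{0,t}^{-1}(\nabla_{A_0(X_t)} B(X_t))\, \d t$.

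The only genuine obstacle is the justification of the Stratonovich Leibniz rule \eqref{Notation intro 2} for a pair of $M$-valued \emph{semimartingales} (here $B(X_t)$ and $e_j(t)$) rather than for deterministic vector fields, together with the assertion that $e_j(t)$ is parallel in the Stratonovich sense of the preceding definition-proposition; both follow from the principle that Stratonovich integration obeys the ordinary rules of differential calculus and from the defining equation of $(\tau_{0,t})_{t \geq 0}$, and one can alternatively invoke \cite{KunitaHiroshi1981} directly or embed $M$ isometrically into a Euclidean space and argue there. Everything else is bookkeeping with the notational conventions \eqref{Notation intro}--\eqref{Notation intro 2}.
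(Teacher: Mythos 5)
Your argument is correct. Note, however, that the paper does not prove this proposition at all: it is quoted verbatim as Theorem 2.4 of \cite{KunitaHiroshi1981}, so your moving-frame derivation is a genuinely self-contained alternative rather than a reproduction of the paper's reasoning. Two remarks on your own assessment of where the difficulty lies. First, the step $\circ \d b^j(t) = \langle \nabla_{\circ \d X_t} B(X_t), e_j(t)\rangle_{X_t}$ requires no extension of the Leibniz rule to pairs of path-dependent fields: with $A_t = e_j(t)$ it is \emph{literally} the paper's defining property of the stochastic parallel transport, namely $\circ \d \langle A_t, B(X_t)\rangle_{X_t} = \SN \langle A_t, \nabla_{A_i(X_t)}B(X_t)\rangle_{X_t}\circ \d W^i_t + \langle A_t, \nabla_{A_0(X_t)}B(X_t)\rangle_{X_t}\,\d t$ for every smooth $B$, so that pairing comes for free. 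Second, the only place where you genuinely leave the scope of that definition is the isometry of $\tau_{0,t}$, which pairs two parallel (hence path-dependent) fields; this is classical and can be checked by expanding both parallel fields in a local smooth orthonormal frame $(E_k)$, computing $\circ\d\langle A_t, E_k(X_t)\rangle$ from the definition, and using the antisymmetry of $\langle \nabla_v E_k, E_l\rangle$, or, as you suggest, by invoking \cite{KunitaHiroshi1981} or a Nash embedding. With the isometry in hand, your expansion $\tau_{0,t}^{-1}B(X_t) = \sum_j \langle B(X_t), e_j(t)\rangle_{X_t}\, e_j$ and the final resummation over the orthonormal basis are routine, and the identity follows. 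What your approach buys is a proof internal to the paper's own conventions \eqref{Notation intro}--\eqref{Notation intro 2}; what the citation buys is brevity and an externally verified statement at full generality.
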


    From now on, we will write either $\circ D_t B(X_t)$ or $ \nabla_{\circ \d X_t} B(X_t)$ to denote the Stratonovich covariant derivative of $ B$ along $(X_t)_{t \geq 0}$, consistently with \eqref{Notation intro 2}.
    
    Let $\bar{A}_0, \bar{A}_1, \dots, \bar{A}_N \in \Gamma(TM)$ and $A_0,A_1, \dots, A_N \in \Gamma(TN)$ their horizontal lifts. Consider a diffusion $(\bar{X}_t)_{t \geq 0}$ on $M$ starting from $x \in M$ solution to
    
    \begin{equation} \label{E 152904}
        \circ \d \bar{X}_t = \SN \bar{A}_i(\bar{X}_t) \circ \d W^i_t + \bar{A}_0(\bar{X}_t) \d t, \quad \bar{X}_0 = x.
    \end{equation}
    Let us define the \textit{horizontal lift} of $(\bar{X}_t)_{t \geq 0}$ starting from $u \in p^{-1}(x)$ similarly as in the deterministic case (see Definition \ref{Lift chemin}) as the process $(X_t)_{t \geq 0}$ given by:
    
    \begin{equation*}
    \circ \d X_t = \mathfrak{h}_{X_t}(\circ \d_t \bar{X}_t) = \SN A_i(X_t) \circ \d W^i_t + A_0(X_t) \d t, \quad X_0 = u
    \end{equation*} 
    Let $(\bar{\tau}_{0,t})_{t \geq 0}$ and $(\tau_{0,t})_{t \geq 0}$ be the stochastic parallel transports respectively on $M$ and $N$ along $(\bar{X}_t)_{t \geq 0},(X_t)_{t \geq 0}$ (once again with the point of view of Remark \ref{Remarque transport flot} but with stochastic flows) and $(\tau^h_{0,t})_{t \geq 0}$ the horizontal lift of $(\bar{\tau}_{0,t})_{t \geq 0}$. As in the deterministic case, the horizontal lift of the stochastic parallel transport is the stochastic parallel transport of the \textit{right-exponential of the (inverse stochastic parallel transport of the) O'Neill tensor}.

    \begin{thm} \label{Transport parallel en dim finie}
    The stochastic parallel transport on $M$ writes
    
    $$
    \bar{\tau}_{0,t}(\bar{U}) = T_{X_t}p(\tau_{0,t}^h(\mathfrak{h}_u(\bar{U})), \quad \forall t \geq 0,
    $$
    with $(\tau_{0,t}^h)_{t \geq 0}$ the so-called horizontal stochastic transport, namely the horizontal lift of $(\bar{\tau}_{0,t})_{t \geq 0}$, 
    
    $$
    \tau_{0,t}^h(U) = \mathfrak{h}_{X_t(u)}(\bar{\tau}_{0,t}(T_up(U))), \quad \forall U \in \H_u N,
    $$
    which is also the unique solution to 
    
        \begin{equation} \label{Horizontal parallel transport finite dimension}
        \circ D_t \tau^h_{0,t}(U) = - \mathcal{A}_{\circ \d X_t(u)}(\tau_{0,t}^h(U)), \quad \tau_{0,0}^h(U) = U, \, \forall U \in \H_u N.
    \end{equation}
    Moreover, $\tau_{0,t}^h = \tau_{0,t}(Q_t)$ where
    
    \begin{equation*}
        Q_t(U) = \mathcal{E} \left(- \SN \int^\cdot_0 \tau_{0,s}^{-1}\left( \mathcal{A}_{A_i(X_s(u))}(\tau_{0,t}(U)) \right) \circ \d W^i_s + \int_0^\cdot \tau_{0,s}^{-1}\left(\mathcal{A}_{A_0(X_s(u))}(\tau_{0,t}(U))  \right) \d s \right)_t,
    \end{equation*}
    with $\mathcal{E}$ the so-called stochastic right-exponential map on $GL(T_uN)$ (see \cite{Hakim1986}). 
    \end{thm}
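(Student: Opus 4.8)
The plan is to transport the deterministic argument of Proposition~\ref{T parallel transport dim finie} to the Stratonovich setting, using throughout that (by our convention~\eqref{Notation intro}) Stratonovich calculus obeys the ordinary rules of differential calculus and that parallel transport is an isometry. Three statements must be proved: (i) the projection identity $\bar\tau_{0,t}(\bar U) = T_{X_t}p\bigl(\tau^h_{0,t}(\h_u\bar U)\bigr)$; (ii) that the horizontal lift $\tau^h_{0,t}(U) := \h_{X_t}\bigl(\bar\tau_{0,t}(T_up(U))\bigr)$ solves the covariant SDE~\eqref{Horizontal parallel transport finite dimension}; (iii) that $\tau^h_{0,t} = \tau_{0,t}Q_t$ with $Q_t$ the announced stochastic right-exponential, which in particular yields uniqueness in~\eqref{Horizontal parallel transport finite dimension}. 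Statement (i) is immediate: $\tau^h_{0,t}(U)$ is horizontal and $T_{X_t}p\circ\h_{X_t}=\mathrm{id}$ on $T_{\bar X_t}M$; moreover $\tau^h_{0,0}(U)=\h_u(T_up(U))=U$, so the initial conditions also match.

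For (ii) I would first note that $\tau^h_{0,t}(U)$ is a $TN$-valued semimartingale along $X$, being a smooth function of the semimartingales $X_t$ and $\bar\tau_{0,t}(\bar U)$; hence $\circ D_t\tau^h_{0,t}(U)$ is well-defined, and it suffices to identify it by testing against $B(X_t)$ for $B\in\Gamma(N)$ ranging over local horizontal lifts $B=\h(\bar B)$, $\bar B\in\Gamma(M)$, and over vertical vector fields $W$, since these exhaust $T_{X_t}N$. Using the metric property of $\circ D_t$ together with Proposition~\ref{Bonne forme dérivée covariante}, $\circ\d\langle\tau^h_{0,t}(U),B(X_t)\rangle_{X_t}=\langle\circ D_t\tau^h_{0,t}(U),B(X_t)\rangle_{X_t}+\langle\tau^h_{0,t}(U),\nabla_{\circ\d X_t}B(X_t)\rangle_{X_t}$. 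For $B=\h(\bar B)$ the left-hand side equals $\circ\d\langle\bar\tau_{0,t}(\bar U),\bar B(\bar X_t)\rangle_{\bar X_t}=\langle\bar\tau_{0,t}(\bar U),\grad_{\circ\d\bar X_t}\bar B(\bar X_t)\rangle$ because $\grad_{\circ\d\bar X_t}\bar\tau_{0,t}(\bar U)=0$, while Proposition~\ref{Connexion Levi-Civita Horizontale dim finie} and horizontality of $\tau^h_{0,t}(U)$ give $\langle\tau^h_{0,t}(U),\nabla_{\circ\d X_t}\h(\bar B)(X_t)\rangle=\langle\bar\tau_{0,t}(\bar U),\grad_{\circ\d\bar X_t}\bar B(\bar X_t)\rangle$; comparing shows $\circ D_t\tau^h_{0,t}(U)$ is vertical. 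For $B=W$ vertical the left-hand side vanishes, and choosing a local projectable horizontal field $\h(\bar B)$ with value $\tau^h_{0,t}(U)$ at $X_t$, metric compatibility on $N$ together with Proposition~\ref{Connexion Levi-Civita Horizontale dim finie}, the tensoriality of $(A,B)\mapsto[A,B]^\V$ and the definition of $\mathcal{A}$ give $\langle\tau^h_{0,t}(U),\nabla_{\circ\d X_t}W(X_t)\rangle$ in terms of $\langle\mathcal{A}_{\circ\d X_t(u)}(\tau^h_{0,t}(U)),W(X_t)\rangle$. Combining the two cases yields $\circ D_t\tau^h_{0,t}(U)=-\mathcal{A}_{\circ\d X_t(u)}(\tau^h_{0,t}(U))$, i.e.~\eqref{Horizontal parallel transport finite dimension}.

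For (iii) I would reproduce the trick~\eqref{Trick finite dimension} stochastically: set $Q_t=\tau_{0,t}^{-1}\tau^h_{0,t}$, a $GL(T_uN)$-valued semimartingale, and use $\circ D_t\tau^h_{0,t}(U)=\tau_{0,t}\circ\d\bigl(\tau_{0,t}^{-1}\tau^h_{0,t}(U)\bigr)$ to rewrite~\eqref{Horizontal parallel transport finite dimension} as the linear Stratonovich SDE on $\mathrm{End}(T_uN)$
\begin{equation*}
\circ\d Q_t=-\SN\tau_{0,t}^{-1}\bigl(\mathcal{A}_{A_i(X_t(u))}(\tau_{0,t}Q_t)\bigr)\circ\d W^i_t-\tau_{0,t}^{-1}\bigl(\mathcal{A}_{A_0(X_t(u))}(\tau_{0,t}Q_t)\bigr)\d t,\quad Q_0=\mathrm{id}.
\end{equation*}
Its coefficients are continuous adapted processes depending linearly on $Q_t$, so it has a unique solution, given by the stochastic right-exponential $\mathcal{E}$ on $GL(T_uN)$ (see~\cite{Hakim1986}) of the indicated integrand — that is, the formula in the statement; pulling back, the solution of~\eqref{Horizontal parallel transport finite dimension} is unique and equals $\tau^h_{0,t}=\tau_{0,t}Q_t$, which by (ii) is the horizontal lift, and with (i) this proves the theorem.

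\textbf{Main obstacle.} The delicate step is (ii). In the deterministic Proposition~\ref{T parallel transport dim finie} one simply extends $\tau^h_{0,t}(U)$ and $\dot\gamma_t$ to horizontal vector fields and applies Proposition~\ref{Connexion Levi-Civita Horizontale dim finie}; this is no longer legitimate in the stochastic case, because the process $\tau^h$ carries its own stochastic time-dynamics on top of being transported by $X$, so no fixed vector field reproduces it along the diffusion. One must therefore go through the bilinear characterization of $\circ D_t$ tested against honest vector fields $B\in\Gamma(N)$, which feels only the pointwise value of $\tau^h$, and use crucially the tensoriality of $[\,\cdot\,,\,\cdot\,]^\V$ from Proposition~\ref{Connexion Levi-Civita Horizontale dim finie} to pass from local extensions back to the process. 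The remaining point — well-posedness of the linear SDE on $GL(T_uN)$ and its identification with the stochastic right-exponential — is routine.
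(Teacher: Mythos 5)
Your proposal is correct and follows essentially the same route as the paper: obtain the covariant Stratonovich equation \eqref{Horizontal parallel transport finite dimension} for the horizontal lift of $(\bar{\tau}_{0,t})_{t \geq 0}$ from Proposition \ref{Connexion Levi-Civita Horizontale dim finie}, pass to the linear $GL(T_uN)$-valued SDE for $Q_t = \tau_{0,t}^{-1}\tau_{0,t}^h$ as in \eqref{Trick finite dimension}, identify its unique solution with the stochastic right-exponential, and project. The only difference is that you justify the key step — that Proposition \ref{Connexion Levi-Civita Horizontale dim finie}, a statement about vector fields, applies to the process $\tau^h_{0,t}(U)$ along the diffusion — by testing $\circ D_t$ against horizontal lifts and vertical fields and invoking the tensoriality of $[\,\cdot\,,\,\cdot\,]^\V$, whereas the paper asserts this directly; your added care here is legitimate and strengthens the argument without changing it.
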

    
    \begin{proof}
       Let $(\tau^h_{0,t})_{t \geq 0}$ be the horizontal lift of $(\bar{\tau}_{0,t})_{t \geq 0}$ starting from $u$. By Proposition \ref{Connexion Levi-Civita Horizontale dim finie}, we have
       
       $$
       \nabla_{\circ \d X_t} \tau^h_{0,t}(U) = \frac{1}{2} \left[ \circ \d X_t, \tau^h_{0,t}(U) \right ]^\V, \quad \forall U \in \H_u N.
       $$
       Let $L^i_t$ denote the linear map $\tau_{0,t}^{-1}\left(\frac{1}{2}[A_i( X_t),\tau_{0,t}(\cdot)]^\V \right)$ for all $t \geq 0$ and $i=0,1,\dots,N$. Proceeding as in the proof of Proposition \ref{T parallel transport dim finie}, we study the following equivalent $GL(T_uN)$-valued stochastic differential equation
    
    $$
    \circ \d Q_t = -\SN \mathcal{A}_{A_i(X_t(u))}( \tau_{0,t}Q_t(\cdot)) \circ \d W^i_t - \mathcal{A}_{A_0(X_t(u))}(\tau_{0,t}Q_t(\cdot)) \d t.
    $$
    There exists a unique solution, this is the so-called stochastic exponential map on $GL(T_uN)$, 
    $$
    Q_t = \mathcal{E} \left( -\SN \mathcal{A}_{A_i(X_s(u))}( \tau_{0,s}(\cdot)) \circ \d W^i_s - \int_0^\cdot \mathcal{A}_{A_0(X_s(u))}(\tau_{0,s}(\cdot)) \d s \right)_s.
    $$ 
    
    Finally $(\tau_{0,t}^h)_{t \geq 0}$ is the unique solution to \eqref{Horizontal parallel transport finite dimension} and is given by $\tau_{0,t}^h = \tau_{0,t} Q_t$ for all $t \geq 0$ and we get the stochastic parallel transport on $M$ by taking the projection.
    \end{proof} 
    As already mentioned in Remark \ref{Remarque caractérosation du transport}, the important point of this proposition lies in the fact that $(\tau_{0,t}^h)_{t \geq 0}$ is characterized by Equation \eqref{Horizontal parallel transport finite dimension}. We will use this reasoning in Section \ref{Section 5} to prove the existence and uniqueness of the stochastic parallel transport on $\P_\infty$. 
    
    \begin{rem}[The stochastic horizontal transport is horizontal]
        Let $U_t = \tau_{0,t}^h(U)$ denote the solution to \eqref{Horizontal parallel transport finite dimension} with initial condition $U \in \H_u N$. By construction, the horizontal transport preserves horizontality. Nevertheless, this property can in fact be deduced directly from \eqref{Horizontal parallel transport finite dimension}. Indeed, by Proposition \ref{Chain rule dim finie}, one can show that $(U_t^\V)_{t \geq 0}$ is the unique solution to:
        
        \begin{equation} \label{E 12002}
            \circ D_t U_t^\V = \mathcal{A}_{\circ \d X_t(u)}^T(U_t^\V) - \mathcal{A}_{\circ \d X_t(u)}(U_t^\V), \quad U_0^\V = 0.
        \end{equation}
        Thus, the horizontality of $(U_t)_{t \geq 0}$ follows from the uniqueness of the solution and the fact that $0$ is solution to this covariant SDE. 
        
    \end{rem}
    
    \begin{rem}
        In order to highlight the arguments of the proofs in our infinite-dimensional setting, we have stated the results within the (more restrictive) framework of semimartingales solutions to SDEs of the form \eqref{E 152904}. All of these results remains true for general semimartingales.
    \end{rem}

    \subsection{Equivariant diffusions on principal bundles} \label{subsection 2.3}
	In this subsection we leave the context of Riemannian submersions to study principal bundles. Note that these notions are not decorrelated, see Remark \ref{Lien entre fibré principaux et submerisons R} and Subsection \ref{Subsection 2.4} below. For instance, we will see in Section \ref{Section 3} that the group of diffeomorphisms $\mathscr{D}$ can be seen as a Riemannian submersion over $\P_\infty$ but also as a principal bundle over the latter. 
	    
    Let $G$ be a Lie group with identity element $e$. In this subsection we first introduce some definitions and basic results about $G$-principal bundles, and then we show that any equivariant diffusion on a principal bundle admits a decomposition as a product of a horizontal equivariant diffusion $(h_t)_{t \geq 0}$ and a vertical process $(g_t)_{t \geq 0}$ given by a right exponential of a stochastic process in the Lie algebra $\mathfrak{g}$ of $G$.
    
    Let $p : P \rightarrow M$ be a $G$-principal bundle over a smooth manifold $M$. Let $g \in G$ and let us denote $R_g : P \rightarrow P$ the \textit{right action} by $g$ on $P$ and also the \textit{right multiplication} by $g$ in $G$. The notation $L_g$ will be used for the \textit{left multiplication} by $g$ in $G$. As before, we define $\V_uP = \ker{T_up}$. A vector field $A$ on $P$ is said to be \textit{$G$-right invariant} if for all $u \in P$ and for all $g \in G$ we have
    
    $$
    A(u \cdot g) = TR_g( A(u)).
    $$
    Let $u \in G$, we define the map $\iota_u : G \rightarrow P$ to be $\iota_u(g) = u \cdot g$. A vector field $B$ on $P$ is said to be a \textit{vertical left invariant vector field} if there exists $\tilde{B} \in \mathfrak{g}$ such that for all $u \in P$
    
    $$
    B(u) = T\iota_u \tilde{B}.
    $$
    One can easily prove that a vertical left invariant vector field is vertical. The reader is referred to \cite{Sontz2015} for a detailed presentation of principal bundles. 
    
    \begin{defi}
        Let $g \in G$, the \textit{adjoint representation} $\mathrm{Ad}(g) : \mathfrak{g} \rightarrow \mathfrak{g}$ is the differential of the inner automorphism $ h \in G \mapsto g \cdot h \cdot g^{-1}$ in $h=e$.
    \end{defi}
    
    \begin{defi} \label{D Ehresmann connection}
        An \textit{Ehresmann connection} on $P$ is a subbundle $\H P$ of $TP$ such that
    
    \begin{enumerate}
    \item For all $u \in P$, $\H_uP \oplus \V_uP = T_uP$.
    \item For all $u \in P$, $T_up$ is an isomorphism between $\H_uP$ and $T_p(u)M$, the inverse is the horizontal lift $\h_u : T_{p(u)}M \rightarrow \H_uP$.
    \item For all $g \in G$ and $u \in P$, $\H P$ is $G$-invariant i.e
    
    $$
    TR_g(\H_uP) = \H_{u \cdot g}P.
    $$
    \end{enumerate}
    \end{defi}

    \begin{rem} \label{R invariance à droite relevé horizontal}
    Note that points $(2)$ and $(3)$ in Definition \ref{D Ehresmann connection} gives that
    
    $$
    \mathfrak{h}_{u \cdot g}(v) = TR_g( \mathfrak{h}_u(v)), \quad \forall v \in T_{p(u)}M.
    $$
    Moreover, one can check that $TR_g(\V_uP) = \V_{u \cdot g} P$. In particular, if a vector $A \in T_uP$ admits the following unique decomposition $A = A^\H + A^\V \in \H_u P \oplus \V_u P$, then, $TR_g(A)$ admits the following one
    
    $$
    TR_g(A) = TR_g(A^\H) + TR_g(A^\V) \in \H_{u \cdot g}P \oplus \V_{u \cdot g} P.
    $$
    \end{rem}
    
    \begin{rem} \label{Lien entre fibré principaux et submerisons R}
        Suppose that $P$ is endowed with a Riemannian metric which is right invariant for the action of $G$. Then, $P$ admits an Ehresmann connection given by the horizontal space $\H P = \V P^\perp$. In this case, $p : P \rightarrow M$ defines a Riemannian submersion. Conversely, let $p : N \rightarrow M$ be a Riemannian submersion, if there exists a Lie group $G$ acting freely and transitively on the fibers, $N$ defines a principal bundle over $M$. 
    \end{rem}
    Suppose that $P$ is provided with an Ehresmann connection $\H P$, and let $A \in \Gamma(TP)$. Since for all $u \in P$, $T_uP = \H_uP \oplus \V_uP$, we have a unique decomposition $A = A^\H \oplus A^\V \in HP \oplus VP$. We say that $A^\H$ (resp. $A^\V$) is the horizontal (resp. vertical) part of $A$.

    In this context, we have a one-to-one correspondence between vertical left invariant vector fields and elements of the lie algebra $\mathfrak{g}$ being given by
    
    $$
    A \in \mathfrak{g} \longmapsto T \iota_u (A) \in \V_uP.
    $$
    The inverse is given by the so called connection form $\varpi$ which is characterized by the following conditions.
    
    \begin{enumerate}
    \item For all $g \in G$, $u \in P$, $\varpi(T \iota_u (U)) = U$ for all $U \in \mathfrak{g}$.
    \item The connection form $\varpi$ vanishes on $\H P$.
    \end{enumerate}
    
    \begin{prop} \label{P varpi}
        Let $A \in \Gamma(TP)$. Then, for all $g \in G$, $\varpi(TR_g A) = \Ad (g^{-1}) \varpi(A)$.
    \end{prop}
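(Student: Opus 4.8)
The plan is to reduce the identity to its vertical part, and then to compute how the embeddings $\iota_u$ intertwine with the right action. Since $\varpi$ is a $\mathfrak{g}$-valued $1$-form, it suffices to argue pointwise, so I would fix $u \in P$ and write $X := A(u) = X^\H + X^\V \in \H_uP \oplus \V_uP$. Because $\varpi$ vanishes on horizontal vectors (second defining property of the connection form), one has $\varpi(X) = \varpi(X^\V)$, and by the one-to-one correspondence between $\V_uP$ and $\mathfrak{g}$ recalled above --- whose inverse is precisely $\varpi$ --- one has $X^\V = T\iota_u(\varpi(X))$. On the other hand, Remark \ref{R invariance à droite relevé horizontal} gives $TR_g(X^\H) \in \H_{u\cdot g}P$ and $TR_g(X^\V) \in \V_{u\cdot g}P$, so that, using once more that $\varpi$ kills horizontal vectors,
$$
\varpi(TR_g X) = \varpi\big(TR_g(X^\V)\big) = \varpi\big(TR_g(T\iota_u(\varpi(X)))\big).
$$

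The heart of the matter is then the identity
$$
TR_g \circ T\iota_u = T\iota_{u\cdot g} \circ \Ad(g^{-1}) : \mathfrak{g} \longrightarrow \V_{u\cdot g}P,
$$
which I would establish by a curve computation: given $Y \in \mathfrak{g}$, pick a smooth curve $(c(t))_t$ in $G$ with $c(0) = e$, $\dot c(0) = Y$; then for every $t$,
$$
R_g\big(\iota_u(c(t))\big) = u\cdot c(t)\cdot g = (u\cdot g)\cdot\big(g^{-1}c(t)g\big) = \iota_{u\cdot g}\big(g^{-1}c(t)g\big),
$$
and differentiating at $t=0$, while noting that $s \mapsto g^{-1}c(s)g$ is a curve through $e$ whose velocity at $0$ is $\Ad(g^{-1})Y$ by the very definition of the adjoint representation, gives $TR_g(T\iota_u(Y)) = T\iota_{u\cdot g}(\Ad(g^{-1})Y)$.

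Finally, I would combine the two displays with $Y = \varpi(X)$ and invoke the first defining property of the connection form, $\varpi(T\iota_{u\cdot g}(Z)) = Z$ for all $Z \in \mathfrak{g}$, to conclude
$$
\varpi(TR_g X) = \varpi\big(T\iota_{u\cdot g}(\Ad(g^{-1})\varpi(X))\big) = \Ad(g^{-1})\varpi(X).
$$
I do not anticipate any real obstacle here; the only place calling for a bit of care is the curve computation above, specifically keeping track of the conjugation $c(t)\mapsto g^{-1}c(t)g$ --- which is what produces the $\Ad(g^{-1})$ factor rather than $\Ad(g)$ --- together with the preliminary bookkeeping that lets one discard the horizontal components via Remark \ref{R invariance à droite relevé horizontal}.
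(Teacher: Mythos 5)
Your proof is correct and follows essentially the same route as the paper's: decompose $A$ into horizontal and vertical parts, discard the horizontal part using that $\varpi$ vanishes on $\H P$ and that $TR_g$ preserves the splitting, and reduce to the intertwining identity $TR_g \circ T\iota_u = T\iota_{u\cdot g}\circ \Ad(g^{-1})$. The only cosmetic difference is that you establish this identity by differentiating a curve, whereas the paper factors it algebraically as $T\iota_u\, TL_g\, TL_{g^{-1}}\, TR_g$; both yield the same conclusion.
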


    \begin{rem}
    	In the case where $P$ is a Lie Group and $G$ is a sub-Lie group of $P$ one can check that $\iota_u = TL_u$ and the connection form is just given by $\varpi(u) = TL_{u^{-1}} \circ P_{\V_u}$.
    \end{rem}
    
    \begin{defi} \label{Définition équivariance}
        A map $U : P \rightarrow P$ is said to be \textit{equivariant} if
        
        $$
        U(u \cdot g) = U(u) \cdot g, \quad \forall u \in P, \forall g \in G.
        $$
        A flow of maps $(U_t)_{t \geq 0}$ such that $U_t : P \rightarrow P$ for all $t\geq 0$ is said to be equivariant if $U_t$ is equivariant for all $t \geq 0$.
    \end{defi}
    
    \begin{defi}
        A smooth path $(\gamma_t)_{t \geq 0}$ on $P$ is said to be \textit{horizontal} (resp. \textit{vertical}) if for all $t \geq 0$, $\dot{\gamma}_t \in \H_{\gamma_t}P$ (resp. $ \dot{\gamma}_t \in \V_{\gamma_t} P$).
    \end{defi}
    
    We define the \textit{horizontal lift} of a path $(\bar{\gamma}_t)_{t \geq 0}$ on $M$ as in Definition \ref{Lift chemin} with $\mathfrak{h}$ the horizontal lift associated to the considered Ehresmann connection. 
    
    \begin{exemple}[The horizontal lift is equivariant] \label{Ex Horizontal lift equivariant}
    Let $\bar{A} \in \Gamma(TM)$ and $(\bar{\gamma}_t)_{t \geq 0}$ be solution to
    
    $$
    \dot{\bar{\gamma}}_t(x) = \bar{A}(\bar{\gamma}_t(x)), \quad \bar{\gamma}_0(x) \in M.
    $$
    Then, for all $u \in P_{\bar{\gamma}_0(x)}$, the horizontal lift $(\gamma_t)_{t \geq 0}$ is, by definition, solution to
    
    $$
    \dot{\gamma}_t(u) = A(\gamma_t(u)), \quad \gamma_0(u)= u,
    $$
    where $A = \mathfrak{h}(\bar{A})$. In particular, this is the simplest example of equivariant diffusion that one could think of. To see that $(\gamma_t)_{t \geq 0}$ is equivariant, let $g \in G$. We have
    
    $$
    \frac{\d}{\d t} \gamma_t(u) \cdot g =\frac{\d}{\d t} R_g(\gamma_t(u)) = TR_g( \dot{\gamma}_t(u)) = TR_g( A(\gamma_t(u))) = A(\gamma_t(u) \cdot g),
    $$
    where the last equality is due to the fact that $A$ is $G$-right invariant (see Remark \ref{R invariance à droite relevé horizontal}). In particular, $(\gamma_t(u) \cdot g)_{t \geq 0}$ is solution to
    
    $$
    \dot{\gamma}_t(u \cdot g) = A (\gamma_t(u) \cdot g), \quad \gamma_0(u \cdot g) = u \cdot g,
    $$
    and, by uniqueness of the solution, $\gamma_t(u \cdot g) = \gamma_t(u) \cdot g$.
    \end{exemple}
    
    \begin{rem} \label{Remarque identification fibre et groupe}
        Notice that a vertical path $(\gamma_t)_{t \geq 0}$ starting in $\gamma_0 =u$ stays in the fiber $p^{-1}(p(u))$. In particular, since $G$ acts freely and transitively on the fibers, $(\gamma_t)_{t \geq 0}$ can always be written as $\gamma_t = \gamma_0 \cdot g_t$, and then it may be identified as the path $(g_t)_{t \geq 0}$ in $G$. In the remainder of this subsection we will identify $P_{\gamma_0}$ and $G$ and consequently vertical paths starting at $\gamma_0$ with paths in $G$.
    \end{rem}
    
    Let $(\gamma_t)_{t \geq 0}$ be a smooth path on $P$, $(\bar{\gamma}_t)_{t \geq 0} = (p(\gamma_t))_{t \geq 0}$ the projected path and $(h_t)_{t \geq 0}$ the horizontal lift of $\bar{\gamma}$ defined as in Definition \ref{Lift chemin} with $\mathfrak{h}$ the horizontal lift associated to the Ehresmann connection. Since $G$ acts freely and transitively on the fibers of $P$, it is clear that $(\gamma_t)_{t \geq 0}$ decomposes itself in a unique way as a product
    
    \begin{equation} \label{decomposition}
        \gamma_t = h_t \cdot g_t,
    \end{equation}
    where $(g_t)_{t \geq 0}$ is a path on $G$ with $g_0 = e$. In the case of equivariant flow of maps, this decomposition has more properties. 
    
    \begin{prop} \label{P D1}
    	Let $A$ be a smooth right invariant vector field on $P$ with $A = A^\H + A^\V \in \H P \oplus \V P$. Let $(U_t)_{t \geq 0}$ be the solution to
    	
    	    \begin{equation}\label{EDO principal bundle}
        \dot{U}_t(u) = A(U_t(u)), \quad U_0(u) \in P,
    \end{equation}
    with $U_0$ an equivariant map and let $(X_t)_{t \geq 0} = (p(U_t))_{t \geq 0}$. Then, $(X_t)_{t \geq 0}$ is solution to the following autonomous equation
    
    \begin{equation} \label{Equation X_t}
        \dot{X}_t(x) = \bar{A}^\H(X_t(x)), \quad X_0(x) = p(U_0(u)), \,  u \in p^{-1}(x),
    \end{equation}
    where $\bar{A}^\H = Tp(A^\H)$. Moreover, the flow $(U_t)_{t \geq 0}$ is equivariant and the decomposition \eqref{decomposition} takes the form $U_t = h_t \cdot g_t$ where $(h_t)_{t \geq 0}$ is the horizontal lift of $(X_t)_{t \geq 0}$ and $(g_t)_{t \geq 0}$ is a $G$-valued path such that
    		
    	\begin{equation*} 
    		\dot{h}_t(u) = \h_{h_t(u)}(\dot{X}_t(p(u))) = A^H (h_t(u)), \quad h_0(u) = U_0(u),
    	\end{equation*}
    	and
    	
    \begin{equation} \label{E g_t dim finie}
    	\dot{g}_t(u) = TR_{g_t(u)} \varpi\left (A^\V(h_t(u)) \right) , \quad g_0(u) =e.
    \end{equation}
    	In particular, $(h_t)_{t \geq 0}$ is a horizontal equivariant autonomous flow  and $g_t(u \cdot g) = g^{-1} \cdot g_t(u) \cdot g$ and is a right exponential of a path in the Lie algebra $\mathfrak{g}$ of $G$, namely:
    	
    	\begin{equation}\label{E exponentielle droite dim finie}
    		g_t(u) = \mathcal{E}^R\left( \int_0^\cdot \varpi \left( A^\V(h_s(u)) \right) ds  \right)_t.
    	\end{equation}
    \end{prop}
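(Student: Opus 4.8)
The plan is to reduce the whole statement to two elementary principles: that the horizontal and vertical parts of a $G$-right invariant vector field are themselves $G$-right invariant, and that the flow of a right invariant field with an equivariant initial condition is equivariant, by uniqueness of solutions of ODEs. First I would record the invariance of the parts: for $u \in P$, $g \in G$ one has $TR_g(A(u)) = TR_g(A^\H(u)) + TR_g(A^\V(u))$, and by Remark \ref{R invariance à droite relevé horizontal} the two summands lie in $\H_{u\cdot g}P$ and $\V_{u\cdot g}P$ respectively, so uniqueness of the splitting $T_{u\cdot g}P = \H_{u\cdot g}P \oplus \V_{u\cdot g}P$ forces $A^\H(u\cdot g) = TR_g(A^\H(u))$ and the analogous identity for $A^\V$. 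Two consequences are used repeatedly: $\bar{A}^\H := Tp(A^\H)$ is a well-defined vector field on $M$ because $p \circ R_g = p$, and $p(U_0(u))$ depends only on $x = p(u)$ because $U_0$ is equivariant.

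Next I would prove equivariance of the flows and derive the autonomous equation for $(X_t)$. As in Example \ref{Ex Horizontal lift equivariant}, since $A$ is right invariant and $U_0$ equivariant, $t \mapsto U_t(u)\cdot g$ solves \eqref{EDO principal bundle} with initial value $U_0(u\cdot g)$, hence by uniqueness $U_t(u\cdot g) = U_t(u)\cdot g$; the same argument applied to the right invariant field $A^\H$ with initial condition $U_0$ makes the flow of $\dot h_t = A^\H(h_t)$ equivariant (and it is visibly autonomous and horizontal). Differentiating $X_t = p\circ U_t$ and using that $Tp$ kills vertical vectors gives $\dot X_t(x) = T_{U_t(u)}p\big(A^\H(U_t(u))\big) = \bar{A}^\H(X_t(x))$, which is \eqref{Equation X_t}; by equivariance of $U_t$ this is a genuine equation on $M$. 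Defining $(h_t)$ as the horizontal lift of $(X_t)$ starting at $U_0(u)$, uniqueness for $\dot y = \bar{A}^\H(y)$ gives $p(h_t(u)) = X_t(p(u))$, and since $A^\H$ is horizontal and $p$-related to $\bar{A}^\H$ one gets $\h_{h_t(u)}(\dot X_t(p(u))) = A^\H(h_t(u))$, which is the stated equation for $(h_t)$. Because $G$ acts freely and transitively on the fibres and $p(U_t(u)) = X_t(p(u)) = p(h_t(u))$, there is a unique $g_t(u)\in G$ with $U_t(u) = h_t(u)\cdot g_t(u)$, and $g_0(u) = e$.

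Then I would differentiate $U_t(u) = h_t(u)\cdot g_t(u)$, i.e. apply the chain rule to the action map $(q,a)\mapsto q\cdot a$, to obtain $\dot U_t(u) = TR_{g_t(u)}(\dot h_t(u)) + T_{g_t(u)}\iota_{h_t(u)}(\dot g_t(u))$. The first term equals $TR_{g_t(u)}(A^\H(h_t(u))) = A^\H(U_t(u))$ by right invariance of $A^\H$, and since $\dot U_t(u) = A^\H(U_t(u)) + A^\V(U_t(u))$ this forces $T_{g_t(u)}\iota_{h_t(u)}(\dot g_t(u)) = A^\V(U_t(u))$. Writing $\dot g_t(u) = TR_{g_t(u)}\xi_t$ with $\xi_t\in\mathfrak{g}$ and using $\iota_{h_t(u)}\circ R_{g_t(u)} = R_{g_t(u)}\circ\iota_{h_t(u)}$ (associativity of the action), the left side becomes $TR_{g_t(u)}\big(T_e\iota_{h_t(u)}\xi_t\big)$; comparing with $A^\V(U_t(u)) = TR_{g_t(u)}(A^\V(h_t(u)))$ and cancelling the injective $TR_{g_t(u)}$ gives $T_e\iota_{h_t(u)}\xi_t = A^\V(h_t(u))$, i.e. $\xi_t = \varpi(A^\V(h_t(u)))$ by the defining property of the connection form. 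This is \eqref{E g_t dim finie}, and by the very definition of the right exponential $\mathcal{E}^R$ it integrates to \eqref{E exponentielle droite dim finie}. For the conjugation rule, comparing $U_t(u\cdot g) = U_t(u)\cdot g = h_t(u)\cdot(g_t(u)\cdot g)$ with $U_t(u\cdot g) = h_t(u\cdot g)\cdot g_t(u\cdot g) = (h_t(u)\cdot g)\cdot g_t(u\cdot g)$ and using freeness of the action yields $g_t(u\cdot g) = g^{-1}\cdot g_t(u)\cdot g$ (equivalently this follows from Proposition \ref{P varpi}, which gives $\varpi(A^\V(h_s(u\cdot g))) = \Ad(g^{-1})\varpi(A^\V(h_s(u)))$, together with the behaviour of $\mathcal{E}^R$ under $\Ad$).

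The uniqueness-of-ODE arguments and the well-definedness checks are routine; the delicate point is the chain-rule computation for the action map $(q,a)\mapsto q\cdot a$ together with the bookkeeping that identifies $A^\V(U_t(u))$ first with $T\iota_{h_t(u)}(\dot g_t(u))$ and then, through the connection form, with $\dot g_t(u) = TR_{g_t(u)}\varpi(A^\V(h_t(u)))$. I expect essentially all the real work to lie there, though it is more a matter of keeping track of tangent maps and their base points than a genuine difficulty.
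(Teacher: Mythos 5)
Your proposal is correct and follows essentially the same route as the paper: equivariance of $(U_t)$ by uniqueness of the ODE, projection to get the autonomous equation for $(X_t)$, identification of horizontal and vertical parts in $\dot U_t = TR_{g_t}(\dot h_t) + T\iota_{h_t}(\dot g_t)$, and extraction of $\dot g_t$ via the connection form. The only cosmetic difference is that you write $\dot g_t = TR_{g_t}\xi_t$ and solve for $\xi_t$ by cancelling $TR_{g_t}$, whereas the paper rewrites $A^\V(U_t) = T\iota_{h_t}(TR_{g_t}\varpi(A^\V(h_t)))$ directly through $\mathrm{Ad}(g_t^{-1})$; the two computations are equivalent.
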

    
    \begin{proof}
    The fact that $(X_t)_{t \geq 0}$ is solution to \eqref{Equation X_t} is due to the fact that
    
	$$
	\frac{\d}{\d t}p(U_t(u)) = T_{U_t(u)}p \left(A^\H(U_t(u)) + A^\V(U_t(u)) \right) = \bar{A}^\H(p(U_t(u))),
	$$    
    where the first equality is an application of the chain rule and the second is due to the fact that $\ker Tp = \V P$. The fact that Equation \eqref{Equation X_t} is autonomous is due to the fact that $A^\H$ is $G$-right invariant since $A$ is $G$-right invariant. 
    
     Let $u \in P$, $g \in G$. We have:

$$
\frac{\d}{\d t} (U_t(u) \cdot g) =TR_g(\dot{U}_t(u)) = TR_g(A(U_t(u))) = A(U_t(u) \cdot g).
$$
In particular, $(U_t(u) \cdot g)_{t \geq 0}$ is solution to

$$
	\dot{U}_t(u \cdot g) = A(U_t(u \cdot g)), \quad U_0(u \cdot g) \in P,	
$$
by uniqueness of the solution to \eqref{EDO principal bundle}, we obtain that $U_t(u) \cdot g = U_t(u \cdot g)$ which is the equivariance property.
     
Let $(h_t)_{t \geq 0}$ be the unique solution to
    
    	$$
    	\dot{h}_t(u) = \h_{h_t(u)}(\dot{X}_t(p(u))), \quad h_0(u) = U_0(u).
    	$$
    	As noticed before, the projected flow $(X_t)_{t \geq 0}$ is autonomous, consequently, $(h_t)_{t \geq 0}$ is a horizontal equivariant autonomous flow as a horizontal lift of an autonomous flow (see Example \ref{Ex Horizontal lift equivariant}). Set $g_t$ such that $U_t = h_t \cdot g_t$ (it exists and is unique since $G$ acts freely and transitively on the fibers of $P$). Since $(U_t)_{t \geq 0}$ is equivariant, we have $U_t(u \cdot g) ) = U_t(u) \cdot g$ and, by construction, it is clear that $(h_t)_{ \geq 0}$ is equivariant. In particular:
    
    $$
    U_t(u \cdot g) = h_t(u \cdot g ) \cdot g_t( u \cdot g) = h_t(u) \cdot g \cdot g_t(u \cdot g) \qquad \text{and} \qquad U_t( u \cdot g) = U_t(u) \cdot g = h_t(u) \cdot g_t(u) \cdot g.
    $$
    Then, since $G$ acts freely on the fibers of $P$ we get $g_t(u \cdot g) = g^{-1} \cdot g_t(u) \cdot g$. To get the equation of $(g_t)_{t \geq 0}$, let $A = A^\H \oplus A^\V \in \H P \oplus \V P$ be the direct sum decomposition. By uniqueness of this decomposition we have
    
    \begin{equation*}
    \dot{U}_t = A^\H(U_t) + A^\V(U_t) = TR_{g_t}(\dot{h}_t) + T\iota_{h_t}(\dot{g}_t).
    \end{equation*}
    By identifying the horizontal and vertical parts, it is clear that $TR_{g_t}(\dot{h}_t) = A^\H(U_t)$ and $T\iota_{h_t}(\dot{g}_t) = A^\V(U_t)$. Moreover, we have
    
    $$
    A^\V(U_t) = T\iota_{h_t \cdot g_t} \left( \varpi \left( A^\V (h_t \cdot g_t)\right) \right) = T\iota_{h_t} TL_{g_t} Ad(g_t^{-1}) \left( \varpi \left( A^\V (h_t)\right) \right) = T\iota_{h_t} \left ( TR_{g_t} \varpi\left( A^\V (h_t)\right)\right).
    $$
    By identification we obtain
    
    $$
    	\dot{g}_t = TR_{g_t} \varpi \left( A^\V (h_t)\right).
    $$
    Formula \eqref{E exponentielle droite dim finie} is a direct consequence of Equation \eqref{E g_t dim finie}.
    
    \end{proof}
    
 Let us introduce the notion of equivariant diffusions, the stochastic analogous of equivariant flows defined in Definition \ref{Définition équivariance}.

\begin{defi}
	A solution to a Stratonovich's SDE of the following form:
	
	$$
	\circ \d U_t = \SN A_i(U_t) \circ \d W^i_t + A_0(U_t)dt, \quad U_0(u) \in P,
	$$
	is said to be an \textit{equivariant diffusion} if for all $g \in G$, $U_t(u \cdot g) = U_t(u) \cdot g$ for all $t \geq 0$.
\end{defi}

 The following theorem is the stochastic analogous of Proposition \ref{P D1}. This result is proved in \cite{ELJW}, \cite{ElLeLi04} by considering the associated infinitesimal generators. 
    
    \begin{prop} \label{P D2} 
    		Let $A_0,A_1, \dots, A_N$ be smooth right invariant vector fields on $P$ with $A_i = A_i^\H + A_i^\V \in \H P \oplus \V P$ for all $i=0,1,\dots, N$. Let $(U_t)_{t \geq 0}$ be the diffusion solution to 
    		
    		\begin{equation}\label{Equation equivariante sur N}
        \circ \d U_t = \SN A_i(U_t) \circ \d W^i_t + A_0(U_t)dt, \quad U_0(u) \in P,
    \end{equation}
    with $U_0$ an equivariant map and let $(X_t)_{t \geq 0} = (p(U_t))_{t \geq 0}$. Then, $(X_t)_{t \geq 0}$ is a diffusion solution to the following Stratonovich's SDE
    
    \begin{equation*} 
        \circ \d X_t(x) = \SN \bar{A}^\H_i(X_t(x)) \circ \d W^i_t + \bar{A}^\H_0(X_t(x)) \d t, \quad X_0(x) = p(U_0(u)), \, u \in p^{-1}(x),
    \end{equation*}    
    where $\bar{A}_i^\H =Tp(A_i^\H)$ for all $i=0,1,\dots,N$. Moreover, the stochastic flow $(U_t)_{t \geq 0}$ is equivariant and the decomposition \eqref{decomposition} takes the form $U_t = h_t \cdot g_t$ where $(h_t)_{t \geq 0}$ is the horizontal lift of $(X_t)_{t \geq 0}$ and $(g_t)_{t \geq 0}$ is a $G$-valued path such that for all $u \in P$ :
	
	\begin{equation*}
	\circ \d h_t(u) = \h_{h_t(u)}(\circ \d X_t(p(u))) = \SN A_i^\H(h_t(u)) \circ \d W^i_t + A_0^\H(h_t(u)) \d t, \quad h_0(u) = U_0(u)
	\end{equation*}
	and 
	
	\begin{equation*}
	\circ \d g_t(u) = \SN TR_{g_t(u)} \varpi\left (A^\V_i(h_t(u)) \right) \circ \d W^i_t + TR_{g_t(u)} \varpi\left (A^\V_0(h_t(u)) \right) \d t, \quad g_0(u) = e.
	\end{equation*}
	In particular, $(h_t)_{t \geq 0}$ is a horizontal equivariant diffusion and $g_t(u \cdot g) = g^{-1} \cdot g_t(u) \cdot g$ for all $g \in G$ and is a right stochastic exponential of a stochastic process in the Lie algebra $\mathfrak{g}$ of $G$, namely:
	
	\begin{equation*}
	    g_t(u) = \mathcal{E}^R \left( \int_0^\cdot \SN \varpi\left (A^\V_i(h_s(u)) \right) \circ \d W^i_s + \varpi\left (A^\V_0(h_s(u)) \right) \d s \right)_t.
	\end{equation*}
    \end{prop}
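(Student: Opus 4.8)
The plan is to transcribe the proof of Proposition~\ref{P D1} into the stochastic setting, using the convention recalled in the Notations that Stratonovich differentials obey the ordinary rules of calculus. Since the Ehresmann connection $\H P$ is a smooth subbundle and $TR_g$ preserves both $\H P$ and $\V P$ (Remark~\ref{R invariance à droite relevé horizontal}), each $A_i^\H$ and $A_i^\V$ is a smooth $G$-right invariant vector field; in particular $A_i^\H=\h(\bar{A}_i^\H)$ with $\bar{A}_i^\H=Tp(A_i^\H)$ a well-defined vector field on $M$. Applying the Stratonovich chain rule to $p(U_t)$ and using $\ker Tp=\V P$ together with $Tp(A_i^\H(u))=\bar{A}_i^\H(p(u))$ then yields the announced (autonomous) SDE for $(X_t)_{t\ge0}$.

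Next I would establish the equivariance of $(U_t)_{t\ge0}$ exactly as in the deterministic case: $G$-right invariance gives $TR_g(A_i(u))=A_i(u\cdot g)$, so $(R_g(U_t(u)))_{t\ge0}$ solves the same Stratonovich SDE with initial value $U_0(u)\cdot g=U_0(u\cdot g)$, and \emph{pathwise uniqueness} forces $U_t(u\cdot g)=U_t(u)\cdot g$. The same uniqueness argument applied to the horizontal-lift SDE $\circ\d h_t=\h_{h_t}(\circ\d X_t)=\SN A_i^\H(h_t)\circ\d W^i_t+A_0^\H(h_t)\d t$ with $h_0=U_0$ shows that $(h_t)_{t\ge0}$ is a horizontal equivariant diffusion projecting onto $(X_t)_{t\ge0}$ (the stochastic analogue of Example~\ref{Ex Horizontal lift equivariant}). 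Since $G$ acts freely and transitively on the fibres and $p(h_t)=X_t=p(U_t)$, there is a unique $G$-valued process $(g_t)_{t\ge0}$ with $g_0=e$ and $U_t=h_t\cdot g_t$; combining the equivariance of $U_t$ and of $h_t$ gives $g_t(u\cdot g)=g^{-1}\cdot g_t(u)\cdot g$ as in Proposition~\ref{P D1}.

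To identify the SDE satisfied by $(g_t)_{t\ge0}$ I would differentiate $U_t=h_t\cdot g_t$ with the Stratonovich product rule, $\circ\d U_t=TR_{g_t}(\circ\d h_t)+T\iota_{h_t}(\circ\d g_t)$, and match this against $\circ\d U_t=\SN(A_i^\H(U_t)+A_i^\V(U_t))\circ\d W^i_t+(A_0^\H(U_t)+A_0^\V(U_t))\d t$. By uniqueness of the splitting $T_{U_t}P=\H_{U_t}P\oplus\V_{U_t}P$, the horizontal components match automatically (this is where right invariance of $A_i^\H$ and the choice of $h_t$ enter), while the vertical components give $T\iota_{h_t}(\circ\d g_t)=\SN A_i^\V(h_t\cdot g_t)\circ\d W^i_t+A_0^\V(h_t\cdot g_t)\d t$. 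Using $A_i^\V(h_t\cdot g_t)=TR_{g_t}(A_i^\V(h_t))$, Proposition~\ref{P varpi} in the form $\varpi(A_i^\V(h_t\cdot g_t))=\Ad(g_t^{-1})\varpi(A_i^\V(h_t))$, the identities $T\iota_{h_t\cdot g_t}=T\iota_{h_t}\circ TL_{g_t}$ and $TL_{g_t}\circ\Ad(g_t^{-1})=TR_{g_t}$, and the injectivity of $T\iota_{h_t}$, one arrives at
\[
\circ\d g_t(u)=\SN TR_{g_t(u)}\varpi\left(A_i^\V(h_t(u))\right)\circ\d W^i_t+TR_{g_t(u)}\varpi\left(A_0^\V(h_t(u))\right)\d t,\quad g_0(u)=e,
\]
which is exactly the equation defining the right stochastic exponential $\mathcal{E}^R$ of the $\mathfrak{g}$-valued semimartingale $\int_0^\cdot\SN\varpi(A_i^\V(h_s))\circ\d W^i_s+\varpi(A_0^\V(h_s))\d s$; this yields the last displayed formula.

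I expect the main obstacle to be the careful bookkeeping of Stratonovich differentials under the nonlinear maps $(h,g)\mapsto h\cdot g$, $\iota_h$ and $\h$, in particular the justification that a Stratonovich differential may be split into its horizontal and vertical components and each component treated separately, together with the appeal to pathwise uniqueness in the equivariance arguments, which in this abstract statement is taken for granted (and, in the finite-dimensional illustration considered here, follows from the Stochastic Cauchy-Lipschitz theorem after localization, the coefficients being smooth). Everything else is a direct transcription of the deterministic computations of Proposition~\ref{P D1}.
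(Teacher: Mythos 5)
Your proposal is correct and follows exactly the route the paper takes: the paper's proof is a one-line remark that one repeats the argument of Proposition \ref{P D1} with time derivatives replaced by Stratonovich differentials, which is precisely what you carry out in detail (chain rule for $p(U_t)$, equivariance by pathwise uniqueness, splitting $\circ\d U_t$ into horizontal and vertical parts, and the $\varpi$--$\Ad$ computation for $g_t$). No gap; your write-up is in fact more explicit than the paper's.
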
 
    
    \begin{proof}
    The proof can be performed exactly in the same way as the proof of Proposition \ref{P D1} by replacing time derivative by Stratonovich's derivative. See also \cite{ELJW}, \cite{ElLeLi04} for another proof.
    \end{proof}
    
    \begin{rem} \label{R Horizontal lift equivariant}
	As seen in Example \ref{Ex Horizontal lift equivariant}, the horizontal lift of a diffusion on $M$ is an equivariant diffusion. In particular, the vertical part in the decomposition \eqref{decomposition} is given by $g_t = e$.
\end{rem}

\subsection{Principal bundle with a Riemannian submersion structure} \label{Subsection 2.4}
As mentioned in Remark \ref{Lien entre fibré principaux et submerisons R}, a $G$-principal bundle $p : N \rightarrow M$ endowed with a Riemannian metric which is right invariant for the action of $G$ on $N$ has a canonical Ehresmann connection given by the horizontal space $\H N = \V N^\perp$. This is not uncommon for these two structures to coexist, the (infinite-dimensional) example of the group of diffeomorphisms will be studied in Section \ref{Section 3}. We still denote by $\nabla$ the Levi-Civita connection on $N$ and by $\bar{\nabla}$ the one on $M$. In this case, the results presented in Subsections \ref{Subsection 2.2} and \ref{subsection 2.3} obviously apply, but we also obtain specific results to this context.

This subsection is divided into two paragraphs: We show in the first one that the horizontal stochastic transport is right invariant (see Proposition \ref{P Equivariance transport paralelle horizontal dim finie}). We will see that this property may be interpreted as an equivariance property on a well chosen principal bundle, see Remark \ref{R Equivariance of the horizontal stochastic parallel transport dim finie}. In Proposition \ref{P Processus equivariant TD}, we will show that this lift extends to an equivariant one on the $G \rtimes \mathfrak{g}$-principal bundle $TN$. The second paragraph is the continuation of Subsection \ref{subsection 2.3}. More precisely, we study decomposition \eqref{decomposition} of Itô's equivariant diffusions driven by respectively horizontal (Proposition \ref{P D 4}) and vertical (Proposition \ref{P D3}) vector fields.  

\vspace{10pt}

\noindent \textbf{Right invariance of the horizontal stochastic transport.}
    
    \begin{lem} \label{Invariance à droite dérivée covariante dim finie}
        Let $A, B \in \Gamma(TN)$, then
        
        \begin{equation*}
            TR_g(\nabla_B A) = \nabla_{TR_g(B)}TR_g(A), \quad \forall g \in G.
        \end{equation*}
		In particular, if $A, B \in \Gamma(TN)$ are $G$-right invariant vector fields, $\nabla_{A}B$ is a $G$-right invariant vector field.
	\end{lem}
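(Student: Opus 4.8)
The plan is to invoke the classical fact that isometries preserve the Levi-Civita connection, applied here to the right translations $R_g : N \to N$, which are isometries of $N$ precisely because the metric is right-invariant for the $G$-action, the standing hypothesis of this subsection. First I would make the statement precise: since $R_g$ is a diffeomorphism of $N$, the expression $TR_g(B)$ must be read as the pushforward vector field $(R_g)_*B$, defined by $(R_g)_*B(u\cdot g) = TR_g(B(u))$, and likewise for $A$; with this reading the claimed identity is the pointwise equality $TR_g\big((\nabla_B A)(u)\big) = \big(\nabla_{(R_g)_*B}(R_g)_*A\big)(u\cdot g)$ for every $u \in N$.

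The core computation I would carry out is via the Koszul formula. Fixing $g \in G$ and an arbitrary $C \in \Gamma(N)$, I would write out the Koszul formula for $2\langle \nabla_{(R_g)_*B}(R_g)_*A, (R_g)_*C\rangle$ and simplify each of its six terms using three elementary facts: (i) $R_g$ is an isometry, so $\langle (R_g)_*X, (R_g)_*Y\rangle = \langle X,Y\rangle \circ R_g^{-1}$ for all $X,Y \in \Gamma(N)$; (ii) for a smooth function $f$ and a vector field $X$, one has $\big((R_g)_*X\big)f = \big(X(f\circ R_g)\big)\circ R_g^{-1}$; (iii) pushforward by a diffeomorphism commutes with the Lie bracket, $(R_g)_*[X,Y] = [(R_g)_*X,(R_g)_*Y]$. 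Combining (i)--(iii), each term of the Koszul expansion equals the corresponding term of $2\langle \nabla_B A, C\rangle$ precomposed with $R_g^{-1}$, whence $\langle \nabla_{(R_g)_*B}(R_g)_*A, (R_g)_*C\rangle = \langle \nabla_B A, C\rangle \circ R_g^{-1} = \langle TR_g(\nabla_B A), (R_g)_*C\rangle$, the last equality being one more application of (i). Since $C$ is arbitrary, $(R_g)_*$ is a linear isomorphism on each tangent space, and the metric is nondegenerate, this forces $TR_g(\nabla_B A) = \nabla_{(R_g)_*B}(R_g)_*A$, which is the asserted identity. Alternatively, one could set $\nabla^g_X Y := (R_g^{-1})_*\big(\nabla_{(R_g)_*X}(R_g)_*Y\big)$, check directly that it is torsion-free and compatible with the metric --- unchanged by the isometry $R_g$ --- and invoke uniqueness of the Levi-Civita connection; this avoids writing the six terms but amounts to the same argument.

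For the ``in particular'' statement I would simply observe that a vector field $A$ is $G$-right invariant exactly when $(R_g)_*A = A$ for all $g \in G$. Hence, if both $A$ and $B$ are $G$-right invariant, the identity just established gives $(R_g)_*(\nabla_B A) = \nabla_{(R_g)_*B}(R_g)_*A = \nabla_B A$ for every $g \in G$, that is, $(\nabla_B A)(u\cdot g) = TR_g\big((\nabla_B A)(u)\big)$ for all $u \in N$ and $g \in G$, which says precisely that $\nabla_B A$ is $G$-right invariant.

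The main obstacle is not conceptual --- the result is standard --- but bookkeeping: one must carefully distinguish ``evaluate then translate'' from ``translate the vector field then evaluate'', track the compositions with $R_g$ and $R_g^{-1}$ appearing in the Koszul terms, and be sure to use the right-invariance of the metric, which is exactly what makes each $R_g$ an isometry in the first place.
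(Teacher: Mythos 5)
Your proof is correct and follows the same route as the paper, which simply remarks that the identity can be checked via the Koszul formula together with the fact that $R_g$ is an isometry of $N$ (a consequence of the $G$-right invariance of the metric). Your write-up fills in the bookkeeping the paper leaves implicit, and the alternative via uniqueness of the Levi-Civita connection is a valid shortcut of the same argument.
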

	
	\begin{proof}
	One can check this result by using Koszul's formula and the fact that $R_g$ is an isometry from $N$ to $N$. 
	\end{proof}

The next proposition is about the right invariance property of the horizontal stochastic transport. Note that this proposition is trivial since, by Theorem \ref{Transport parallel en dim finie}, we know that $\tau_{0,t}^h = \mathfrak{h}(\tau_{0,t})$. Here, our goal is to recover this right invariance property directly from \eqref{E 229}. The same arguments will be used in the proof of Proposition \ref{P Equivariance transport horizontal D}.

\begin{prop} \label{P Equivariance transport paralelle horizontal dim finie}
	Let $\bar{A}_0, \bar{A}_1, \dots, \bar{A}_N \in \Gamma(TM)$, and $(\bar{X}_t)_{t \geq 0}$ be a stochastic process on $M$ solution to
    
    \begin{equation*}
    \circ \d \bar{X}_t = \SN \bar{A}_i(\bar{X}_t) \circ \d W^i_t + \bar{A}_0(\bar{X}_t) \d t, \quad \bar{X}_0 = x
    \end{equation*}
    and consider its horizontal lift:
    
    \begin{equation*}
    \circ \d X_t(u) = \mathfrak{h}_{X_t(u)}(\circ \d_t \bar{X}_t(u)) = \SN A_i(X_t(u)) \circ \d W^i_t + A_0(X_t(u)) \d t, \quad X_0(u) = u,
    \end{equation*}
    with $A_i = \mathfrak{h}(\bar{A}_i)$ for all $i=0,1, \dots, N$. Then, for $U \in \H_uN$, the process $(\tau_{0,t}^h(u, U))_{t \geq 0}$ solution to
    
    \begin{equation}\label{E 229}
    \circ D_t \tau^h_{0,t}(u,U) = \frac{1}{2} \left[ \circ \d X_t(u), \tau^h_{0,t}(u,U) \right ]^\V, \quad \tau_{0,0}^h(u,U) = U
\end{equation}
	is $G$-right invariant on $N$, namely,
	
	\begin{equation*} 
	TR_g (\tau_{0,t}^h(u,U)) = \tau_{0,t}^h(u \cdot g, TR_g(U)), \quad \forall g \in G.
	\end{equation*}
    
\end{prop}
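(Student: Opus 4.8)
The plan is to argue by uniqueness of the solution to the covariant SDE \eqref{E 229}. I would fix $g \in G$, set $Y_t = TR_g\big(\tau_{0,t}^h(u,U)\big)$, and show that $(Y_t)_{t\geq 0}$ solves \eqref{E 229} along the lifted diffusion $(X_t(u\cdot g))_{t\geq 0}$ with starting vector $TR_g(U)$; since that equation admits a unique solution (Theorem \ref{Transport parallel en dim finie}), this forces $Y_t = \tau_{0,t}^h(u\cdot g, TR_g(U))$, which is exactly the asserted identity. Note first that $TR_g(U) \in \H_{u\cdot g}N$ by point $(3)$ of the Ehresmann connection (Remark \ref{R invariance à droite relevé horizontal}), so the right-hand side is well defined.

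Two preliminary facts make the computation go through. First, since the horizontal lift $(X_t)_{t\geq 0}$ is driven by the $G$-right invariant vector fields $A_i = \mathfrak{h}(\bar A_i)$, it is an equivariant diffusion (the stochastic form of Example \ref{Ex Horizontal lift equivariant}), so $X_t(u\cdot g) = X_t(u)\cdot g$ and $A_i(X_t(u\cdot g)) = TR_g\big(A_i(X_t(u))\big)$ for $i=0,1,\dots,N$; in particular $Y_t = TR_g(\tau_{0,t}^h(u,U))$ is a semimartingale along $(X_t(u\cdot g))_{t\geq 0}$, so $\circ D_t Y_t$ is well defined. Second, the O'Neill term is $G$-right invariant: since $R_g$ is an isometry its differential commutes with $\nabla$ (Lemma \ref{Invariance à droite dérivée covariante dim finie}) and with the orthogonal projections $P_\H,P_\V$ (Remark \ref{R invariance à droite relevé horizontal}), and by Proposition \ref{Connexion Levi-Civita Horizontale dim finie} the expression $\frac12[V,W]^\V = \mathcal{A}_V(W)$ (for $V$ horizontal) is tensorial, hence
$$
TR_g\Big(\tfrac12[V,W]^\V\Big) = \tfrac12[TR_g V, TR_g W]^\V = \mathcal{A}_{TR_g V}(TR_g W), \qquad V \in \H_x N,\ W \in T_x N.
$$

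With these in hand I would compute $\circ D_t Y_t$. By Lemma \ref{L Commutation dérivée covariante Dim Finie}$(2)$ the covariant Stratonovich derivative commutes with $TR_g$ — the left-hand derivative taken along $(X_t(u\cdot g))_{t\geq 0}$, the right-hand one along $(X_t(u))_{t\geq 0}$ — so, using \eqref{E 229}, then expanding $[\circ\d X_t(u),\,\cdot\,]^\V$ term by term according to the convention \eqref{Notation intro}, and finally the right-invariance of $\mathcal{A}$ together with $A_i(X_t(u\cdot g)) = TR_g(A_i(X_t(u)))$:
$$
\circ D_t Y_t = TR_g\big(\circ D_t \tau_{0,t}^h(u,U)\big) = TR_g\Big(\tfrac12[\circ\d X_t(u),\tau_{0,t}^h(u,U)]^\V\Big) = \tfrac12\big[\circ\d X_t(u\cdot g),\,Y_t\big]^\V .
$$
Together with $Y_0 = TR_g(\tau_{0,0}^h(u,U)) = TR_g(U)$, this shows $(Y_t)_{t\geq 0}$ solves \eqref{E 229} along $(X_t(u\cdot g))_{t\geq 0}$ with initial vector $TR_g(U)$, and uniqueness (Theorem \ref{Transport parallel en dim finie}) yields $TR_g(\tau_{0,t}^h(u,U)) = \tau_{0,t}^h(u\cdot g, TR_g(U))$.

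As the remark preceding the statement indicates, there is no deep difficulty here: the identity is already contained in $\tau_{0,t}^h = \mathfrak{h}(\tau_{0,t})$, and the only purpose of this proof is to recover it intrinsically from \eqref{E 229}. The genuine care needed is purely bookkeeping — checking that $Y_t$ is a semimartingale along the correct base diffusion $(X_t(u\cdot g))_{t\geq 0}$, that the two occurrences of $\circ D_t$ in Lemma \ref{L Commutation dérivée covariante Dim Finie}$(2)$ are read along the matching diffusions, and that the $[\,\cdot\,,\cdot\,]^\V$ term is treated through its tensorial (O'Neill) interpretation so that applying $TR_g$ only ever touches pointwise values. This is precisely the argument that will be reused, verbatim up to the infinite-dimensional technicalities, in the proof of Proposition \ref{P Equivariance transport horizontal D}.
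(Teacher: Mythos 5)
Your proposal is correct and follows essentially the same route as the paper's own proof: commute $\circ D_t$ with $TR_g$ via Lemma \ref{L Commutation dérivée covariante Dim Finie}$(2)$, use the tensorial (O'Neill) form of $\frac12[\cdot,\cdot]^\V$ together with Lemma \ref{Invariance à droite dérivée covariante dim finie} and the commutation of $TR_g$ with $P_\V$ to push $TR_g$ inside the bracket term, and conclude by uniqueness of the solution to \eqref{E 229}. Your write-up is if anything slightly more careful than the paper's about which base diffusion each covariant derivative is taken along.
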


\begin{proof}
Let $u \in N$, $U \in T_u N$ and $U_t = \tau_{0,t}^h(u,U)$. By applying Lemma \ref{Invariance à droite dérivée covariante dim finie} and Corollary \ref{Chain rule dim finie}, for any semimartingale $(A_t)_{t \geq 0}$ along the equivariant diffusion $(X_t(u))_{t \geq 0}$ (see Remark \ref{R Horizontal lift equivariant}) we have:

\begin{equation} \label{E 231}
\circ D_t TR_g(A_t) = TR_g (\circ D_t A_t) = \frac{1}{2} TR_g \left(  \left[\circ \d X_t(u \cdot g), TR_g(A_t) \right]^\V \right).
\end{equation}
By Lemma \ref{Invariance à droite dérivée covariante dim finie}, the fact that

$$
\frac{1}{2} \left[ \circ \d X_t(u), U_t \right ]^\V =- P_{\V_{X_t(u)}} \left( \nabla_{U_t } \circ \d X_t(u)  \right),
$$
and since $TR_g$ commutes with the vertical projection, we obtain:

$$
\frac{1}{2} TR_g \left(  \left[\circ \d X_t(u), A_t \right]^\V \right) = \frac{1}{2} [TR_g(\circ \d X_t(u)), TR_g(A_t)]^\V = \frac{1}{2} \left[ \circ dX_t(u\cdot g), TR_g(A_t) \right]^\V.
$$
By uniqueness of the solution to the horizontal transport equation (see Theorem \ref{Transport parallel en dim finie}), we obtain that

$$
\tau_{0,t}^h(u \cdot g, TR_g(U)) = TR_g (\tau_{0,t}^h(u,U)).
$$
\end{proof}

\begin{rem}[Equivariance of the horizontal stochastic parallel transport] \label{R Equivariance of the horizontal stochastic parallel transport dim finie}
	The map $Tp|_{\H N} : \H N \rightarrow TM $ can be seen as a $G$-principal bundle where the action of $G$ on $\H N$ is given by the differential of the right multiplication $TR_g$ for $g \in G$. Indeed, it can be easily seen that this action is free and transitive on the fibers $T_u p|_{\H N}^{-1}(A)$ where $ A \in T_{p(u)}M$. In particular, the result of Proposition \ref{P Equivariance transport paralelle horizontal dim finie} can be interpreted as follows: the process $(\tau_{0,t}^h)_{t \geq 0}$ is equivariant on the principal bundle $\H N$.
	\end{rem}
	
	Note that there exists a natural group acting on the whole tangent space $TN $, this is the semidirect product group $G \rtimes \mathfrak{g}$. The action of $G \rtimes \mathfrak{g}$ on $T N$ is given by the differential of the action of $G$ on $N$, namely:
	
	\begin{equation} \label{E Action de groupe}
	\left( u, A \right) \cdot \left( g, Y \right) = \left( u \cdot g, TR_g(A) + TR_g T\iota_{u}  (Y) \right),
	\end{equation}
	for $u \in N$, $A \in T_uN$, $g \in G$ and $Y \in \mathfrak{g}$. This action provides the tangent bundle $TN$ with a principal bundle structure above $TM$. One can construct an equivariant process on this principal bundle which coincides with $\tau_{0,t}^h$ on $\H P$, this is the subject of the next proposition.
	
	\begin{prop}[Equivariant lift of the stochastic parallel transport] \label{P Processus equivariant TD}
	    Let $(U_t)_{t \geq 0}$ be the horizontal lift of a $M$-valued diffusion $(X_t)_{t \geq 0}$. For all $u \in P_{X_0}$, we define $\tilde{\tau}_{0,t}(u, \bullet)$ to be the stochastic flow of linear maps $\tilde{\tau}_{0,t}(u, \bullet) : T_{U_0(u)} P \rightarrow T_{U_t(u)} P$ such that
	    
	    \begin{equation*}
	        \tilde{\tau}_{0,t}(u,A^\H) = \tau_{0,t}^h(u,A^\H) \text{ and } \tilde{\tau}_{0,t}(u,A^\V) = T \iota_{U_t(u)} (\varpi(A^\V)), \quad \forall A^\H \in \H_u P, \, \forall A^\V \in \V_u P, \, \tilde{\tau}_{0,0}(u, \cdot) = \mathrm{id}_{T_{U_0(u)}P}.
	    \end{equation*}
	    This process is equivariant on $TP$ seen as a $G \rtimes \mathfrak{g}$ principal bundle, namely,
	    
	    \begin{equation*}
	        \tilde{\tau}_{0,t}(u,A) \cdot (g, Y) = \tilde{\tau}_{0,t}( (u, A) \cdot (g,Y)), \quad \forall A \in T_{U_0(u)}P, \, \forall (g, Y) \in G \rtimes \mathfrak{g}.
	    \end{equation*}
	    Moreover, $(\tilde{\tau}_{0,t})_{t \geq 0}$ is a lift of $(\bar{\tau}_{0,t})_{t \geq 0}$, i.e. for any $\bar{A} \in T_{X_0}M$:
        
        \begin{equation} \label{E236}
             Tp (\tilde{\tau}_{0,t}(u,A))= \bar{\tau}_{0,t}(\bar{A}) , \quad  \forall A \in Tp^{-1}(\bar{A})
        \end{equation}
	    
	\end{prop}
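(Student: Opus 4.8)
The plan is to argue in three steps: first check that the prescribed formula indeed defines a linear map $T_{U_0(u)}P \to T_{U_t(u)}P$; then establish the equivariance by splitting a tangent vector into its horizontal and vertical parts; and finally read off the lift property \eqref{E236} by projecting through $Tp$. Throughout, I would use that the horizontal lift $(U_t)_{t\geq 0}$ is an equivariant diffusion (Remark \ref{R Horizontal lift equivariant}), so that $U_t(u\cdot g) = U_t(u)\cdot g$ for all $t \geq 0$ and $g\in G$, and in particular $U_0(u) = u$ on the fibre $P_{X_0}$.

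For well-definedness, since $T_{U_0(u)}P = \H_{U_0(u)}P \oplus \V_{U_0(u)}P$, it suffices to prescribe $\tilde\tau_{0,t}(u,\bullet)$ on each summand and extend by linearity. On the horizontal summand it is the horizontal stochastic transport $\tau_{0,t}^h(u,\bullet)$, which takes values in $\H_{U_t(u)}P$ because the horizontal stochastic transport preserves horizontality (as established in the remark after Theorem \ref{Transport parallel en dim finie}); on a vertical vector $A^\V$ it is $T\iota_{U_t(u)}(\varpi(A^\V)) \in \V_{U_t(u)}P$, a well-defined semimartingale vector field since $U_t(u)$ is a semimartingale and $\iota$ is smooth. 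At $t=0$ one recovers the identity: on $\H_{U_0(u)}P$ because $\tau_{0,0}^h = \mathrm{id}$, and on $\V_{U_0(u)}P$ because every vertical vector satisfies $A^\V = T\iota_{U_0(u)}(\varpi(A^\V))$ by the characterising property of the connection form $\varpi$. I will also use repeatedly that $\varpi(A) = \varpi(A^\V)$ for $A = A^\H + A^\V$, since $\varpi$ vanishes on $\H P$.

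For the equivariance, I would fix $A \in T_{U_0(u)}P$, write $A = A^\H + A^\V$, and fix $(g,Y) \in G \rtimes \mathfrak{g}$. Both sides of the claimed identity then lie in $T_{U_t(u\cdot g)}P = T_{U_t(u)\cdot g}P$ (using equivariance of $U_t$), so it is enough to compare their horizontal and vertical components there. By Remark \ref{R invariance à droite relevé horizontal}, $TR_g$ respects the horizontal/vertical splitting, so $(u,A)\cdot(g,Y)$ has horizontal part $TR_g(A^\H)$ and vertical part $TR_g(A^\V) + TR_g T\iota_{U_0(u)}(Y)$. On the horizontal side, Proposition \ref{P Equivariance transport paralelle horizontal dim finie} gives $\tau_{0,t}^h(u\cdot g, TR_g(A^\H)) = TR_g(\tau_{0,t}^h(u,A^\H))$, which is exactly the horizontal part of $\tilde\tau_{0,t}(u,A)\cdot(g,Y)$. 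On the vertical side, the two vectors to be compared are vertical at $U_t(u\cdot g)$, where $\varpi$ is a linear isomorphism onto $\mathfrak{g}$, so it suffices to compare their images under $\varpi$; using Proposition \ref{P varpi} together with $\varpi(T\iota_{U_0(u)}(Y)) = Y$ on one side and $\varpi(T\iota_{U_t(u)}(\varpi(A))) = \varpi(A)$ on the other, both images reduce to $\Ad(g^{-1})\big(\varpi(A) + Y\big)$. Summing the horizontal and vertical components yields the equivariance.

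Finally, for \eqref{E236}: if $Tp(A) = \bar A$, then the horizontal part of $A$ is the horizontal lift $A^\H = \h_{U_0(u)}(\bar A)$, and since $\tilde\tau_{0,t}(u,A) = \tau_{0,t}^h(u,A^\H) + T\iota_{U_t(u)}(\varpi(A))$ with the second summand vertical, $Tp$ annihilates it, whence $Tp(\tilde\tau_{0,t}(u,A)) = Tp(\tau_{0,t}^h(u,\h_{U_0(u)}(\bar A))) = \bar\tau_{0,t}(\bar A)$ by Theorem \ref{Transport parallel en dim finie}. The only genuinely delicate point is the vertical part of the equivariance: one must keep careful track of the interplay between $TR_g$, the two maps $T\iota$ evaluated at the distinct base points $U_0(u)$ and $U_t(u)$, the connection form, and the adjoint representation; reducing this to a one-line identity in $\mathfrak{g}$ by applying $\varpi$ and invoking Proposition \ref{P varpi} together with the fact that $\varpi$ restricts to an isomorphism on each vertical space is what makes it tractable.
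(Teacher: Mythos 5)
Your proof is correct and follows essentially the same route as the paper: decompose into horizontal and vertical parts, invoke Proposition \ref{P Equivariance transport paralelle horizontal dim finie} for the horizontal component, and reduce the vertical component to Proposition \ref{P varpi} together with the equivariance of $(U_t)_{t\geq 0}$. If anything, you are slightly more explicit than the paper, which leaves the identity $\tilde{\tau}_{0,t}(u\cdot g, TR_g(A)) = TR_g(\tilde{\tau}_{0,t}(u,A))$ implicit after handling the $Y$-term, whereas you verify both components in full by passing through $\varpi$.
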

	
	\begin{proof}
	Let $u \in P_{X_0}$, $A \in T_uP$ and $(g,Y) \in G \times \mathfrak{g}$. We have to show that
	
		    \begin{equation*}
	        \tilde{\tau}_{0,t}(u \cdot g, TR_g(A) + TR_g(T\iota_u(Y))) = TR_g(\tilde{\tau}_{0,t}(u, A)) + TR_g T \iota_{U_t(u)}(Y).
	    \end{equation*}
	    By linearity of $\tilde{\tau}_{0,t}(u \cdot g, \bullet)$ we have:
	    
	    \begin{align}
	        \tilde{\tau}_{0,t}(u \cdot g, TR_g(A) + TR_g(T\iota_u(Y))) & = \tilde{\tau}_{0,t}(u \cdot g, TR_g(A)) + \tilde{\tau}_{0,t}(u \cdot g, TR_g ( T\iota_{u} (Y))) \nonumber \\
	        & = \tilde{\tau}_{0,t}(u \cdot g, TR_g(A)) + T \iota_{U_t(u \cdot g)}( \varpi(TR_g(T \iota_u Y))) \nonumber \\
	        & =\tilde{\tau}_{0,t}(u \cdot g, TR_g(A)) + T \iota_{U_t(u \cdot g)}( \mathrm{Ad}(g^{-1})(Y)) \label{E238} \\
	        & =\tilde{\tau}_{0,t}(u \cdot g, TR_g(A)) + T \iota_{U_t(u)}TL_{g}( \mathrm{Ad}(g^{-1})(Y)) \label{E239} \\
	        & =\tilde{\tau}_{0,t}(u \cdot g, TR_g(A)) + TR_g T \iota_{U_t(u)} (Y), \nonumber
	    \end{align}
	    where \eqref{E238} comes from Proposition \ref{P varpi} and \eqref{E239} follows from the equivariance property of $(U_t)_{t \geq 0}$. Let $A^\H$ and $A^\V$ be respectively the horizontal and the vertical part of $A$. The fact that $(\tilde{\tau}_{0,t})_{t \geq 0}$ is a lift of $(\bar{\tau}_{0,t})_{t \geq 0}$ comes from:
	    
	    \begin{align}
	        Tp( \tilde{\tau}_{0,t}(u,A)) & = Tp \left( \tilde{\tau}_{0,t}(u, A^\H) \right) + Tp \left( \tilde{\tau}_{0,t}(u, A^\V) \right) \label{E240} \\
	        & = Tp \left( \tilde{\tau}_{0,t}(u, A^\H) \right) \label{E241} \\
	        &= Tp (\tau_{0,t}^h(u, \mathfrak{h}_{U_0(u)}(\bar{A}))) = \bar{\tau}_{0,t}(\bar{A}), \label{E242}
	    \end{align}
	    where \eqref{E240} is due to the linearity of $\tilde{\tau}_{0,t}$, \eqref{E241} is due to the fact that $\tilde{\tau}_{0,t}(u,A^\V)$ is a vertical vector and \eqref{E242} comes by definition of $(\tau^h_{0,t})_{t \geq 0}$.
	\end{proof}

\vspace{10pt}

\noindent \textbf{Decompositions of Itô's equivariant diffusions.}

    \begin{prop} \label{P D 4}
    Let $(U_t)_{t \geq 0}$ be a diffusion on $N$ such that
	
	\begin{equation*}
	\d^{\nabla}U_t = \SN A_i^{\H}(U_t) \d W^i_t + A_0^{\H}(U_t) \d t, \quad U_0(u) \in N,
	\end{equation*}
	where $A_i^{\H}$ is horizontal and right invariant for all $i=0,1, \dots, N$ with $U_0$ an equivariant map and let $(X_t)_{t \geq 0} = (p(U_t))_{t \geq 0}$. Then, $(X_t)_{t \geq 0}$ is a diffusion solution to the following Itô's SDE
    
    \begin{equation*} 
        \d^{\bar{\nabla}} X_t(x) = \SN \bar{A}^\H_i(X_t(x)) \d W^i_t + \bar{A}^\H_0(X_t(x)) \d t, \quad X_0(x) = p(U_0(u)), \, u \in p^{-1}(x),
    \end{equation*}    
    where $\bar{A}_i^\H =Tp(A_i^\H)$ for all $i=0,1,\dots,N$. Moreover, the stochastic flow $(U_t)_{t \geq 0}$ is an equivariant horizontal diffusion. In particular, decomposition \eqref{decomposition} writes $U_t = h_t$.
    \end{prop}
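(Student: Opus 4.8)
The strategy is to pass to the Stratonovich picture, apply Proposition~\ref{P D2}, and convert back, the two conversions being handled by Proposition~\ref{Conversion Ito strato dimension finie}; the only point requiring care is that the Itô–Stratonovich corrections descend correctly through the submersion $p$.

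First I would record a preliminary observation: a horizontal, $G$-right invariant vector field $A$ on $N$ descends to a well-defined smooth vector field $\bar A := Tp(A)$ on $M$, because $Tp(A(u\cdot g)) = T(p\circ R_g)(A(u)) = Tp(A(u))$, and then $A = \h(\bar A)$ by Definition~\ref{D définition horizontal lift}. In particular $A_i^\H = \h(\bar A_i^\H)$ with $\bar A_i^\H = Tp(A_i^\H)\in\Gamma(M)$ for every $i$. By Proposition~\ref{Conversion Ito strato dimension finie}, the Itô SDE for $(U_t)_{t\geq0}$ is equivalent to the Stratonovich SDE
$$
\circ\d U_t = \SN A_i^\H(U_t)\circ\d W^i_t + \Big(A_0^\H(U_t) + \tfrac12\SN \nabla_{A_i^\H}A_i^\H(U_t)\Big)\d t .
$$
Now apply Proposition~\ref{Connexion Levi-Civita Horizontale dim finie} with $\bar A = \bar B = \bar A_i^\H$: the O'Neill term $\tfrac12[A_i^\H,A_i^\H]^\V$ is zero, so $\nabla_{A_i^\H}A_i^\H = \h(\bar\nabla_{\bar A_i^\H}\bar A_i^\H)$ is again horizontal, and it is $G$-right invariant by Lemma~\ref{Invariance à droite dérivée covariante dim finie}. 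Hence the Stratonovich drift $\tilde A_0 := A_0^\H + \tfrac12\sum_i \nabla_{A_i^\H}A_i^\H$ is horizontal and $G$-right invariant, and its vertical part vanishes.

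It then suffices to feed this Stratonovich SDE, whose coefficients $A_1^\H,\dots,A_N^\H,\tilde A_0$ are all horizontal and right invariant, into Proposition~\ref{P D2}. That proposition immediately gives that $(U_t)_{t\geq0}$ is equivariant, that $(X_t)_{t\geq0} = (p(U_t))_{t\geq0}$ solves
$$
\circ\d X_t = \SN \bar A_i^\H(X_t)\circ\d W^i_t + \Big(\bar A_0^\H(X_t) + \tfrac12\SN \bar\nabla_{\bar A_i^\H}\bar A_i^\H(X_t)\Big)\d t ,
$$
and that in the decomposition~\eqref{decomposition} the vertical factor $(g_t)_{t\geq0}$ is the right stochastic exponential of the process obtained by applying $\varpi$ to the vertical parts of the coefficients; since those vertical parts all vanish, $g_t\equiv e$, so $U_t = h_t$ is the horizontal lift of $(X_t)_{t\geq0}$ and hence a horizontal equivariant diffusion. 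Finally, converting the last display back to Itô form on $M$ via Proposition~\ref{Conversion Ito strato dimension finie}, the term $\tfrac12\sum_i\bar\nabla_{\bar A_i^\H}\bar A_i^\H$ is precisely cancelled by the Itô–Stratonovich correction on $M$, leaving $\d^{\bar\nabla}X_t = \SN \bar A_i^\H(X_t)\d W^i_t + \bar A_0^\H(X_t)\d t$, as announced. The only non-routine step — and what I expect to be the crux — is this cancellation, i.e.\ the vanishing of the diagonal O'Neill term, which is exactly what makes the Itô–Stratonovich conversions on $N$ and on $M$ compatible with $p$; everything else is bookkeeping on top of Proposition~\ref{P D2}.
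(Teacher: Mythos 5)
Your approach is exactly the paper's: convert the It\^o equation to Stratonovich form, feed the resulting right-invariant horizontal coefficients into Proposition~\ref{P D2}, use $\varpi(\H N)=\{0\}$ together with $P_\V(\nabla_{A_i^\H}A_i^\H)=\frac12[A_i^\H,A_i^\H]^\V=0$ to kill the vertical factor, and convert back on $M$. The one genuine problem is a sign error in the first conversion. Proposition~\ref{Conversion Ito strato dimension finie} says that a Stratonovich drift $A_0$ corresponds to the It\^o drift $A_0+\frac12\sum_i\nabla_{A_i}A_i$; hence, starting from the It\^o equation with drift $A_0^\H$, the equivalent Stratonovich drift is $A_0^\H-\frac12\sum_i\nabla_{A_i^\H}A_i^\H$, not $A_0^\H+\frac12\sum_i\nabla_{A_i^\H}A_i^\H$ as you wrote (the paper's proof has the minus sign). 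This matters precisely at the step you identify as the crux: with your sign, projecting and converting back to It\^o on $M$ yields the drift $\bar A_0^\H+\sum_i\bar\nabla_{\bar A_i^\H}\bar A_i^\H$ --- the correction terms add instead of cancelling --- so the announced equation for $(X_t)_{t\geq0}$ is not obtained. With the minus sign everything you wrote goes through verbatim and the cancellation is exactly as you describe. (The verticality/equivariance part of your argument is unaffected, since it only uses that the Stratonovich coefficients are horizontal and right invariant, which holds with either sign.)
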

    
    \begin{proof}
    By applying Proposition \ref{Conversion Ito strato dimension finie} we have
    
    $$
    \circ \d U_t = \SN A_i^{\H}(U_t) \circ \d W^i_t + A_0^{\H}(U_t) \d t - \frac{1}{2} \SN \nabla_{A_i^{\H}}A_i^{\H}(U_t) \d t.
    $$
    Since $A_i^{\H}$ is $G$-right invariant for all $i=0,1, \dots, N$, this is also the case for $\nabla_{A_i^{\H}}A_i^{\H}$ by Lemma \ref{Invariance à droite dérivée covariante dim finie}. Thus, we can apply Proposition \ref{P D2}. By using the fact that $\varpi(\H N) = \{0 \}$ and the fact that $P_{\V}(\nabla_{A_i^{\H}} A_i^{\H}) = \frac{1}{2}[A_i^{\H}, A_i^{\H}] = 0$ (see Proposition \ref{Connexion Levi-Civita Horizontale dim finie}) we obtain that the vertical process $(g_t)_{t \geq 0}$ in the decomposition \eqref{decomposition} is $g_t= e$ for all $t \geq 0$. We obtained the result.
    \end{proof}
    
    As seen in Proposition \ref{P D 4}, the decomposition \ref{decomposition} in the case of Itô's horizontal diffusions is trivial. This is not the case for Itô's vertical diffusions and this is the subject of the next proposition.
    
    We denote by $\nabla^{P_x}(u) = P_{\V_u}(\nabla(u))$ the induced Levi-Civita connection at the point $u$ of the fiber $P_x$. We also define the second fundamental form as:
    
    $$
    \mathrm{I\!I}^{P_x}(A_1,A_2)(u) = P_{\H_uP}\left( \nabla_{A_1}A_2(u) \right), \quad \forall u \in P_x, \, A_1, A_2 \in \Gamma(TP_x).
    $$
    Note that the second fundamental form is actually a tensor, see for example \cite{Lee1997}.
    
    \begin{prop} \label{P D3}
    Let $(U_t)_{t \geq 0}$ be a diffusion on $N$ such that
	
	\begin{equation*}
	\d^{\nabla}U_t = \SN A_i^{\V}(U_t) \d W^i_t + A_0^{\V}(U_t) \d t, \quad U_0(u) \in P,
	\end{equation*}
	where $A_i^{\V}$ is vertical and $G$-right invariant for all $i=0,1, \dots, N$ with $U_0$ an equivariant map. Then, $(U_t)_{t \geq 0}$ is an equivariant diffusion. Moreover, the decomposition \eqref{decomposition} is given by a finite variation process $(h_t)_{t \geq 0}$ solution to:
	
	\begin{equation*}
	\d h_t = -\frac{1}{2} \SN \mathrm{I\!I}^{P_{p(h_t)}}(A_i^{\V}, A_i^{\V})(h_t) \d t, \quad h_0(u) = U_0(u) \in N,
	\end{equation*}
	and a $G$-valued process process $(g_t)_{t \geq 0}$ solution to:
	
	\begin{equation*}
	\circ \d g_t  = \SN TR_{g_t}(\varpi (A_i^{\V}(h_t))) \circ \d W^i_t + TR_{g_t}(\varpi( A_0^{\V}(\Phi_t))) \d t - \frac{1}{2} \SN TR_{g_t}\left( \varpi \left( \nabla^{N_{p(h_t)}}_{A_i}A_i(h_t) \right) \right) \d t, \quad g_0(u) =e.
	\end{equation*}
	
\end{prop}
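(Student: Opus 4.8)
The plan is to reduce the statement to Proposition~\ref{P D2}: I would pass from the It\^o equation to its Stratonovich form, observe that the correction drift that appears is no longer vertical, split it into its horizontal and vertical components, and then apply Proposition~\ref{P D2} to the Stratonovich SDE thus obtained. By Proposition~\ref{Conversion Ito strato dimension finie}, the diffusion $(U_t)_{t\ge 0}$ solves
\begin{equation*}
\circ\d U_t = \SN A_i^\V(U_t)\circ\d W^i_t + \Bigl(A_0^\V - \tfrac12\SN \nabla_{A_i^\V}A_i^\V\Bigr)(U_t)\,\d t, \qquad U_0(u)\in N .
\end{equation*}

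The key geometric point is to decompose $\nabla_{A_i^\V}A_i^\V$. Restricting $A_i^\V$ to a fibre $P_x=p^{-1}(x)$, it is tangent to $P_x$, and by construction of the canonical Ehresmann connection $\H N|_{P_x}$ is the normal bundle of $P_x$ in $N$. Hence the Gauss formula for the submanifold $P_x\subset N$, together with the definitions $\nabla^{P_x}=P_\V\circ\nabla$ of the induced Levi-Civita connection and $\mathrm{I\!I}^{P_x}(A_1,A_2)=P_\H(\nabla_{A_1}A_2)$ of the second fundamental form, yields
\begin{equation*}
\nabla_{A_i^\V}A_i^\V = \mathrm{I\!I}^{P_{p(\cdot)}}(A_i^\V,A_i^\V) + \nabla^{P_{p(\cdot)}}_{A_i^\V}A_i^\V ,
\end{equation*}
the first summand horizontal, the second vertical. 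I would then check that $\nabla_{A_i^\V}A_i^\V$ is $G$-right invariant by Lemma~\ref{Invariance à droite dérivée covariante dim finie}, and that, since the splitting $TN=\H N\oplus\V N$ is smooth and $G$-invariant, both of its horizontal and vertical parts are again smooth $G$-right invariant vector fields on $N$.

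Consequently the Stratonovich SDE above is driven by the smooth $G$-right invariant vector fields $B_i:=A_i^\V$ ($1\le i\le N$) and $B_0:=A_0^\V-\tfrac12\SN\nabla_{A_i^\V}A_i^\V$, whose horizontal parts are $B_i^\H=0$ and $B_0^\H=-\tfrac12\SN \mathrm{I\!I}^{P_{p(\cdot)}}(A_i^\V,A_i^\V)$, and whose vertical parts are $B_i^\V=A_i^\V$ and $B_0^\V=A_0^\V-\tfrac12\SN\nabla^{P_{p(\cdot)}}_{A_i^\V}A_i^\V$. Applying Proposition~\ref{P D2} to this SDE then gives at once that $(U_t)_{t\ge 0}$ is an equivariant diffusion and that, in the decomposition~\eqref{decomposition} $U_t=h_t\cdot g_t$, the factor $(h_t)_{t\ge 0}$ is the horizontal lift of $(X_t):=(p(U_t))$, hence solves $\circ\d h_t=\SN B_i^\H(h_t)\circ\d W^i_t+B_0^\H(h_t)\,\d t = -\tfrac12\SN \mathrm{I\!I}^{P_{p(h_t)}}(A_i^\V,A_i^\V)(h_t)\,\d t$. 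This has no martingale part, so $(h_t)$ is of finite variation and its Stratonovich and It\^o differentials coincide, which is the claimed equation for $(h_t)$. Feeding $B_i^\V,B_0^\V$ into the equation for $(g_t)$ supplied by Proposition~\ref{P D2}, and using that $\varpi$ annihilates horizontal vectors — so that $\varpi(\nabla_{A_i^\V}A_i^\V)=\varpi(\nabla^{P_{p(\cdot)}}_{A_i^\V}A_i^\V)$ — produces the announced Stratonovich equation for $(g_t)$; the conjugation relation $g_t(u\cdot g)=g^{-1}\cdot g_t(u)\cdot g$ and the right stochastic exponential representation of $(g_t)$ likewise come straight out of Proposition~\ref{P D2}.

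The hard part will be the geometric decomposition above: recognising that the horizontal component of $\nabla_{A_i^\V}A_i^\V$ is precisely the fibre's second fundamental form and the vertical component the induced fibre connection, and verifying that both are globally smooth and $G$-right invariant on $N$, so that the hypotheses of Proposition~\ref{P D2} are met verbatim. Everything after that is bookkeeping: the vanishing of the martingale part of $(h_t)$ is exactly what forces it to be a finite variation process, and the equation for $(g_t)$ is read off directly.
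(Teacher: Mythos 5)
Your proposal is correct and follows essentially the same route as the paper: Itô-to-Stratonovich conversion via Proposition \ref{Conversion Ito strato dimension finie}, right invariance of the correction drift via Lemma \ref{Invariance à droite dérivée covariante dim finie}, and application of Proposition \ref{P D2} after splitting $\nabla_{A_i^\V}A_i^\V$ into its horizontal part (the fibre's second fundamental form) and vertical part (the induced fibre connection). The paper's proof is terser but relies on exactly the same decomposition; your additional remarks on smoothness, $G$-invariance of the splitting, and the vanishing martingale part of $(h_t)$ merely make explicit what the paper leaves implicit.
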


\begin{proof}
By using Itô-Stratonovich equivalence on $N$ (see Proposition \ref{Conversion Ito strato dimension finie}), $(U_t)_{t \geq 0}$ is solution to the following Itô's SDE:

$$
\circ \d U_t = \SN A_i^\V(U_t) \circ \d W^i_t + A_0^\V(U_t) \d t - \frac{1}{2} \SN \nabla_{A_i^\V}A_i^\V(U_t) \d t.
$$
Since $A_i^\V$ is right invariant for all $i=0,1, \dots, N$, it is also the case for $\nabla_{A_i^\V} A_i^\V$ by Lemma \ref{Invariance à droite dérivée covariante dim finie}. In particular, we can apply Proposition \ref{P D2} and we deduce directly the equation for $(h_t)_{t \geq 0}$ since $A_i^\V$ is vertical for all $i=0,1, \dots, N$. For $(g_t)_{t \geq 0}$ we obtain the following Stratonovich's SDE:

$$
\circ \d g_t = \SN TR_{g_t}(\varpi(A_i^\V(h_t))) \d W^i_t + TR_{g_t}(\varpi(A_0^\V(h_t))) \d t - \frac{1}{2} \SN TR_{g_t}\left( \varpi \left( \nabla^{N_{p(h_t)}}_{A_i^\V}A_i^\V(h_t) \right) \right) \d t.
$$
The proof is done.
\end{proof}

\begin{rem}\label{Remarque fibres totalement géodésiques dim finie}
    Proposition \ref{P D3} tells us that an Itô's diffusion on $N$ driven by vertical vector fields is vertical if the second fundamental form vanishes on each fiber $N_x$. Note that, for $A_1,A_2$ vertical vector fields and $u \in N_x$:
    
    $$
    \nabla_{A_1}A_2(u) = \nabla^{N_x}_{A_1}A_2(u) + \mathrm{I\!I}^{N_x}(A_1, A_2)(u).
    $$
    In particular, the second fundamental form vanishes if and only if $\nabla^{N_x} = \nabla |_{N_x}$. This is the case if and only if $N_x$ is a totally geodesic submanifold of $N$.
\end{rem} 
    
\section{Infinite-dimensional geometric setting} \label{Section 3}

Let $M$ be a connected closed Riemannian manifold with dimension $m$, $\pi : TM \rightarrow M$ its tangent bundle with a Riemannian metric denoted by $\langle \bullet, \bullet \rangle$. Let $d_M$ denotes the Riemannian distance and $\vol$ the renormalized volume measure.  This section is devoted to the study of the infinite-dimensional manifolds that will play a central role in this work.
In Subsection \ref{Subsection 3.1}, we recall some geometric tools on the Wasserstein space $\P$.
Subsection \ref{Subsection 3.2} is concerned with the geometric structure of the diffeomorphism group $\mathscr{D}$.
Finally, in Subsection \ref{Subsection 3.3}, we show that $\mathscr{D}$ can be viewed as a Riemannian submersion onto $\P_\infty$, and we derive a few simple consequences of this observation.

\subsection{The Wasserstein space} \label{Subsection 3.1}

In this subsection we present the geometric structure on $\P$ introduced by Otto in \cite{Otto31012001}; see \cite{lott2007geometriccalculationswassersteinspace} and \cite{gigli2011second} for further details. Let $\P$ denote the \textit{Wasserstein space} over $M$ i.e. the space of probability measures endowed with the Wasserstein distance $W_2$:

\begin{equation*}
    W_2^2(\mu, \nu) = \inf \left \{ \int_{M \times M} d_M^2(x,y) \d\pi(x,y) \, : \, \pi \in \mathcal{C}(\mu,\nu) \right \},
\end{equation*}
where $\mathcal{C}(\mu,\nu)$ denotes the set of \textit{transport plans} between $\mu$ and $\nu$. Let $s>m/2$ and let:

$$
\P^s := \left\{ \mu \in \P \, : \, \frac{\d \mu}{\d \vol} = \rho \in H^s(M,\mathbb{R}),\, \rho > 0 \right\}.
$$
Moreover, let $\P_\infty \subset \P$ denote the \textit{smooth Wasserstein space} over $M$, namely:

$$
\P_\infty = \left\{ \mu \in \P \, : \, \frac{\d \mu}{\d \vol} = \rho \in \C^\infty(M),\, \rho >0 \right \}
$$
equipped with the same distance. Additionally, let us mention the following fact which is proved using topological arguments in probability spaces (see for example \cite[Remark 6.19]{villani2009optimal}).

\begin{prop}
	If $M$ is a compact metric space, the associated Wasserstein space is compact too.
\end{prop}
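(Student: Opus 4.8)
The plan is to reduce compactness of $(\P, W_2)$ to the classical weak sequential compactness of the space of Borel probability measures on the compact space $M$, by showing that on $\P$ the topology induced by $W_2$ coincides with the weak topology. Since $M$ is compact it has finite diameter, so $W_2$ is a genuine (finite) distance and $(\P, W_2)$ is a metric space; hence it suffices to establish sequential compactness.

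First I would recall Prokhorov's theorem: because $M$ is compact, the family of all Borel probability measures on $M$ is tight, so any sequence $(\mu_n)_n$ in $\P$ admits a subsequence $(\mu_{n_k})_k$ converging weakly to some $\mu \in \P$. (Equivalently, one may invoke Banach--Alaoglu for the weak-$*$ topology on $\mathcal{C}(M)^*$, which is metrizable and compact on bounded sets since $\mathcal{C}(M)$ is separable, together with the fact that $\P$, viewed via Riesz representation as the set of positive normalized functionals, is a weak-$*$ closed subset of the unit ball.) It then remains to upgrade this weak convergence to convergence in $W_2$.

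The key step --- and essentially the only place where compactness of $M$ is used --- is the lemma: if $\mu_{n_k} \rightharpoonup \mu$ weakly in $\P$, then $W_2(\mu_{n_k}, \mu) \to 0$. To prove it I would invoke Skorokhod's representation theorem to realize $\mu_{n_k}$ and $\mu$ as the laws of $M$-valued random variables $X_{n_k}$ and $X$ on a common probability space with $X_{n_k} \to X$ almost surely; then $d_M(X_{n_k}, X)^2 \to 0$ a.s., and since $d_M(X_{n_k},X)^2 \le \operatorname{diam}(M)^2 < \infty$ is a bounded, hence integrable, dominating function, dominated convergence gives $\mathbb{E}[d_M(X_{n_k},X)^2] \to 0$. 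As the joint law of $(X_{n_k},X)$ is a coupling of $\mu_{n_k}$ and $\mu$, this yields $W_2^2(\mu_{n_k},\mu) \le \mathbb{E}[d_M(X_{n_k},X)^2] \to 0$. (Alternatively, one can avoid Skorokhod and argue via Kantorovich duality: $W_1 \le W_2$, $W_1$-convergence is equivalent to weak convergence, and the quadratic cost is controlled by the bounded cost on a compact space; but the Skorokhod argument is the most economical.)

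Combining the two steps, every sequence in $\P$ has a subsequence that is $W_2$-convergent in $\P$, so $(\P, W_2)$ is sequentially compact, and being a metric space it is compact. The main obstacle is the lemma identifying weak convergence with $W_2$-convergence: once one observes that a bounded cost function on a compact space renders the quadratic Wasserstein distance a purely topological (``soft'') object, the rest is routine. An alternative route --- which I would mention but not adopt --- is to prove total boundedness directly by discretizing $M$ with finitely many balls of radius $\varepsilon$, pushing each $\mu$ onto atoms at their centers (cost $\le \varepsilon^2$) and approximating the resulting weights in the finite-dimensional simplex, and to prove completeness via Prokhorov together with the same lemma; the sequential-compactness presentation above is cleaner.
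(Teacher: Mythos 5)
Your proof is correct and follows essentially the same route as the paper, which gives no argument of its own but simply cites the standard "topological" proof (Villani, Remark 6.19): weak sequential compactness of probability measures on a compact space via Prokhorov, upgraded to $W_2$-convergence because the bounded quadratic cost makes $W_2$ metrize the weak topology. The Skorokhod/dominated-convergence step you use for that upgrade is a valid and standard way to carry it out.
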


The \textit{tangent space} of $\P$ in $\mu \in \P$ is defined to be:

$$
\T_\mu \P = \overline{ \left \{ \nabla \phi \, : \, \phi \in \C^\infty(M) \right \} }^{L^2(\mu)},
$$
and the \textit{tangent bundle} is $ \T \P = \underset{\mu \in \P}{\Pi}\T_\mu \P$. We also introduce

$$
T_\mu \P^s = \left \{ \nabla \phi \, : \, \phi \in H^{s+2}(M,\mathbb{R}) \right \}, \quad \mu \in \P^s
$$
and

$$
T_\mu \P_\infty = \left \{ \nabla \phi \, : \, \phi \in \C^\infty(M) \right \}, \quad \mu \in \P_\infty.
$$
We denote by $T\P^s$ and $T\P_\infty$ the associated tangent bundles. The \textit{Otto's metric} on $\T\P$ is defined to be

$$
\langle Z_1, Z_2 \rangle_\mu = \int_M \langle Z_1(x), Z_2(x) \rangle \d\mu,
$$
for all $Z_1, Z_2 \in \T_\mu \P$.

\begin{defi}
	A \textit{vector field} on $\P$ is a map $\bar{Z} : \P \rightarrow \T \P$ such that $\bar{Z}(\mu) \in \T_\mu \P$. The notion of vector fields on $\P^s$ and $\P_\infty$ are defined analogously. 
\end{defi}

\begin{exemple}
	Let $f \in \C^\infty(M)$. Let $V_f$ denotes the vector field such that for all $\mu \in \P$, $V_f(\mu) = \nabla f$ seen as an element of $\T_{\mu} \P$. Such vector fields are often called constant vector fields even if, as we will see later, their covariant derivatives do not vanish in general.
\end{exemple}

\vspace{10pt}

\noindent \textbf{Lie derivative.} Let $\mu \in \P$, we define the \textit{Lie derivative} of a functional $F$ in the direction $V_\phi \in T_\mu \P$ to be

$$
\bar{\mathcal{L}}_{Z} F(\mu) = \left. \frac{\d}{\d t} \right|_{t=0} F(\mu_t),
$$
where $(\mu_t)_{t \geq 0} =((U_t)_*\mu)_{t \geq 0}$ with

\begin{equation*}
    \partial_t U_t(x) = \nabla \phi(U_t(x)), \quad U_0(x) = x.
\end{equation*}
In particular, $(\mu_t)_{t \geq 0}$ satisfies

\begin{equation} \label{Equation dérivée mesure}
\left. \frac{\d}{\d t} \right|_{t=0} \mu_t = V_\phi, \quad \mu_0 = \mu,
\end{equation}
in the sense of distribution, i.e. 

$$
\left. \frac{\d}{\d t} \right|_{t=0} \int_M f(x) \d\mu_t(x) = \int_M \langle \nabla f(x), \nabla \phi(x) \rangle \d\mu(x), \quad \forall f \in \C^\infty(M).
$$
A functional $F$ is said to be \textit{differentiable} in $\mu$ if the map 

\begin{align*}
    T_\mu \P &\longrightarrow \mathbb{R} \\
    V_\phi &\longmapsto \bar{\mathcal{L}}_{V_\phi} F(\mu)
\end{align*}
is a bounded linear functional. In this case, this map extends by density to a unique bounded linear functional:

\begin{align*}
    \T_\mu \P &\longrightarrow \mathbb{R} \\
    Z &\longmapsto \bar{\mathcal{L}}_{Z} F(\mu)
\end{align*}
The \textit{gradient} of a differentiable functional in $\mu \in \P$ is the unique vector $\grad F(\mu)$ (the existence and uniqueness being guaranteed by Riesz representation theorem) such that for all $\bar{Z} \in \T_\mu \P$:

$$
\bar{\mathcal{L}}_{Z}F(\mu) = \langle \grad F(\mu), Z \rangle_\mu.
$$
We say that $F \in \C^1_w(\P)$ (weakly-$\C^1$) whenever $F$ is differentiable, the map

$$
\mu \in \P \longmapsto \bar{\mathcal{L}}_{\bar{Z}}F (\mu),
$$
is continuous for all $\bar{Z}=\nabla \phi$ with $\phi \in \C^{\infty,0}(M \times \P)$ and $\sup_{\mu \in \P}|\grad F(\mu)|_\mu < \infty$ (consistently with the compactness of $\P$). These notions extend immediately to the case $F : \P \rightarrow E$ with $E$ a real vector space.

A map $F \in \C^0(M \times \P)$ is said to belong to  $C_w^{\infty,1}(M \times \P)$ if: 

\begin{enumerate}
    \item The map $F$ belongs to $\C^{\infty,0}(M \times \P)$. Moreover, for any $x \in M$, $F(x,\cdot)$ is of class $\C^{1}_w$, and the map 
    
    $$
    (x,\mu) \longmapsto \grad F(x,\mu)
    $$ 
    is continuous.
    \item For all $x \in M$, $T^k_x F(\cdot,\cdot) : \P \rightarrow \mathrm{Sym}^k(T_x^*M)$ is of class $\C^1_w$ for any $k \geq 1$.
    \item For all $\bar{Z}=\nabla \phi$ with $\phi \in \C^{\infty,0}(M \times \P)$, the map $(x,\mu) \mapsto \bar{\mathcal{L}}_{\bar{Z}}F (x,\mu)$ is $\C^{\infty,0}(M\times \P)$ and
    
    $$
\bar{\mathcal{L}}_{\bar{Z}} T^k F(x,\mu) = T^k \bar{\mathcal{L}}_{\bar{Z}}F(x,\mu), \quad \forall k \geq 1.
$$
\end{enumerate}

\vspace{10pt}

\noindent \textbf{Observables.} Let $f \in \C^\infty(M)$, and let $F_f : \P  \rightarrow \mathbb{R}$ be the so called \textit{potential energy functional}, namely

\begin{equation} \label{E energie potentielle}
    F_f(\mu) = \int_M f(x) \d\mu(x).
\end{equation}
One can easily check that $F_f \in \C^1_w(\P)$.
In this article, we will consider only such functionals, but we present two more functionals which are intensively studied. The first one is the $H$-functional of Boltzmann:

\begin{equation*}
    H(\mu) = 
\begin{cases}
\int_M \rho \log(\rho) \d\vol & \text{if } \d\mu = \rho \d\vol \\
+ \infty & \text{else},
\end{cases}
\end{equation*}
and the second one is the interaction energy functional:

\begin{equation} \label{E Energie interaction}
    \mathcal{W}(\mu) = \int_{M \times M} E(x,y) \d\mu(x) \d\mu(y),
\end{equation}
where $E$ is $\C^\infty(M \times M)$. Note that the latter is also of class $\C^1_w$ since 

$$
\bar{\mathcal{L}}_{V_\phi} \mathcal{W}(\mu) = \int_M \langle \nabla e(x,\mu), \nabla \phi(x) \rangle \d \mu(x), \quad \forall \phi \in \C^\infty(M),
$$
with

$$
e(x,\mu) = \int_M E(x,y) + E(y,x) \d \mu(y).
$$

One of the conclusion of the Otto's formalism is the fact that, by considering the density $\rho_t = \d \mu_t / \d \vol$ of the measure $\mu_t$ evolving according to the $H$-gradient flow on the Wasserstein space, namely

$$
\frac{\d}{\d t} \mu_t = \grad H(\mu_t), \quad \mu_0 = \mu \in \P_\infty,
$$
one recovers the heat equation

$$
\frac{\d}{\d t} \rho_t = - \Delta \rho_t.
$$
This result follows directly from the identity $\grad H(\mu_t) = V_{\log \rho_t}$. Otto extended this result in \cite{Otto31012001} by solving the porous medium equation 

$$
\frac{\d}{\d t}\rho_t = - \Delta \rho_t^m
$$
with gradient flows on Wasserstein. For more details about gradient flows in the Wasserstein space, the reader is refered to \cite{villani2009optimal}, \cite{ambrosioGradientFlowsMetric2008} and references therein.

\begin{exemple}
	Let $f \in \C^\infty(M)$, we have for all $\phi \in \C^\infty(M)$:
	
	$$
	\bar{\mathcal{L}}_{V_\phi} F_f(\mu) = \int_M \langle \nabla f, \nabla \phi \rangle \d\mu.
	$$
	In particular,
	
	$$
	\grad F_f = V_f.
	$$
\end{exemple}

We define the set of $\C^1_w$ vector fields on $\P$ to be:
	
	\begin{equation*}
	    \Gamma^1_w(T\P) = \left \{ \nabla \phi(\cdot) \, : \, \phi \in \C_w^{\infty,1}(M \times \P) \right\}.
	\end{equation*}
	Throughout this article we will consider such vector fields rather than general vector fields. This choice makes simpler the introduction of essential geometric objects such as Levi-Civita connection or Lie bracket. We will also consider $\mathcal{C}^1$ vector fields on $\P_\infty$, namely,
	
	\begin{equation*}
	    \Gamma^1_w(T\P_\infty) = \left \{ \nabla \phi(\cdot) \, : \, \phi \in \C_w^{\infty,1}(M \times \P_\infty) \right\}.
	\end{equation*}
	
    \vspace{10pt}
\noindent \textbf{Levi-Civita connection.} Let $\bar{Z} = \nabla \phi(\cdot) \in \Gamma^1_w(T\P)$ and $v \in \T_\mu \P$. The \textit{Levi-Civita connection} $\grad$ on $\P$ is defined as in \cite{lott2007geometriccalculationswassersteinspace} in the following way:

\begin{equation*}
    \grad_{v}\bar{Z}(\mu) = \Pi_\mu \left( \nabla_{v}\nabla \phi(\mu) \right) + \nabla \L_{v} \phi(\mu),
\end{equation*}
where $\Pi_\mu$ is the orthogonal projection from $L^2(M,TM,\mu)$ to $\T_\mu \P$.
The Levi-Civita connection is obviously tensor in the first component and it defines a metric connection, i.e. for $\bar{Z}_1 = \nabla \phi_1(\cdot), \bar{Z}_2 = \nabla \phi_2(\cdot) \in \Gamma^1_w(TP)$:

$$
\L_{v} \langle \bar{Z}_1(\mu), \bar{Z}_2(\mu) \rangle_\mu = \langle \grad_{v} \bar{Z}_1(\mu), \bar{Z}_2(\mu) \rangle_\mu + \langle  \bar{Z}_1(\mu), \grad_{v} \bar{Z}_2(\mu) \rangle_\mu.
$$
Indeed, let $f \in \C^\infty(M)$, $\dot{U}_t = \nabla f(U_t), U_0 = \mathrm{id}$ and $\mu_t = (U_t)_* \mu$, we have

\begin{align*}
\left. \frac{d}{dt} \right|_{t=0} \int_M \left\langle \nabla \phi_1(\mu_t)(x), \nabla \phi_2(\mu_t)(x) \right\rangle \d\mu_t(x) & = \int_M \left. \frac{d}{dt} \right|_{t=0} \left\langle \nabla \phi_1(\mu_t)(U_t(x)), \nabla \phi_2(\mu_t)(U_t(x)) \right \rangle \d\mu(x) \\
& = \int_M \left \langle \nabla_{\nabla f} \nabla \phi_1(\mu)(x) + \nabla \L_{V_f} \phi_1(\mu)(x), \nabla \phi_2(\mu)(x) \right \rangle \d\mu(x) \\
& + \int_M \left \langle \nabla \phi_1(\mu)(x), \nabla_{\nabla f} \nabla \phi_2(\mu)(x) + \nabla \L_{V_f} \phi_2(\mu)(x) \right \rangle \d\mu(x)\\
& = \int_M \left \langle \Pi_\mu \left(\nabla_{\nabla f} \nabla \phi_1(\mu)\right)(x) + \nabla \L_{V_f} \phi_1(\mu)(x), \nabla \phi_2(\mu)(x) \right \rangle \d\mu(x) \\
& + \int_M \left \langle \nabla \phi_1(\mu)(x),  \Pi_\mu \left(\nabla_{\nabla f} \nabla \phi_2(\mu)\right)(x) + \nabla \L_{V_f} \phi_2(\mu)(x) \right \rangle \d\mu(x).
\end{align*}
Since this property is true for all $f \in \C^\infty(M)$, we can extend it to general vectors $v \in \T_\mu \P$ by density.

\begin{defi}
    A vector field $\bar{Z}$ is said to belong to $\Gamma_w^2(T\P)$ if, for every $\bar{Z}_1 \in \Gamma_w^1(T\P)$, the covariant derivative $\grad_{\bar{Z}_1} \bar{Z}$ exists and lies in $\Gamma_w^1(T\P)$. Higher regularity classes $\Gamma_w^k(T\P)$ are defined recursively according to the same principle.
\end{defi}

\begin{rem}[Hodge decomposition] \label{Remarque décompo hodge} 
Note that for $\mu = \rho \vol \in \P_\infty$, by Hodge decomposition (see Appendix \ref{Annexe Hodge}), any smooth vector field $A$ on $M$ can be written in a unique way as $A = \nabla \phi(\mu) + Y(\mu)$, where $\phi(\mu) \in \C^\infty(M)$ and $Y(\mu)$ is a free $\mu$-divergence smooth vector field for all $\mu \in \P_\infty$. In particular, for $\bar{Z}_1, \bar{Z}_2 \in \Gamma_w^1(TP)$, the covariant derivative $\grad_{\bar{Z}_1} \bar{Z}_2(\mu)$ is in $T_\mu \P$. This decomposition also holds for $L^2$ vector fields, see for example \cite{Schwarz1995}.
\end{rem} 

\begin{defi}
    A functional $F : \P \rightarrow \mathbb{R}$ is said to belong to $\C^2_w(\P)$ if for all $\bar{Z} \in \Gamma^1_w(T\P)$, $\bar{\mathcal{L}}_{\bar{Z}}F \in \C^1_w(\P)$. Higher regularity classes $\C^k_w(\P)$ are defined recursively following the same principle.
    
    A map $F : M \times \P \rightarrow \mathbb{R}$ is said to belong to  $C_w^{\infty,2}(M \times \P)$ if: 

\begin{enumerate}
    \item The map $F$ belongs to $\C^{\infty,1}_w(M \times \P)$. Moreover, for all $x \in M$, $F(x,\cdot)$ is of class $\C^{2}_w$.
    \item For all $x \in M$, $T^k_x F(\cdot,\cdot) : \P \rightarrow \mathrm{Sym}^k(T_x^*M)$ is of class $\C^2_w$ for any $k \geq 1$.
    \item For all $\bar{Z} \in \Gamma^1_w(T\P)$, the map $(x,\mu) \mapsto \bar{\mathcal{L}}_{\bar{Z}}F (x,\mu)$ is $\C^{\infty,1}_w(M\times \P)$ and
    
    $$
\bar{\mathcal{L}}_{\bar{Z}} T^k F(x,\mu) = T^k \bar{\mathcal{L}}_{\bar{Z}}F(x,\mu), \quad \forall k \geq 1.
$$
Higher regularity classes $\C^{\infty,k}_w(M \times \P)$ are defined recursively following the same principle.
\end{enumerate}
\end{defi}

\vspace{10pt}

\noindent \textbf{Lie Bracket.} Let $\bar{Z}_1 = \nabla \phi_1(\cdot),\bar{Z}_2 = \nabla \phi_2(\cdot) \in \Gamma_w^1(T\P)$. We then have

$$
\L_{\bar{Z}_1} \L_{\bar{Z}_2} F_f(\mu) = \L_{\bar{Z}_1} \langle \grad F_f(\mu), \bar{Z}_2 \rangle_\mu.
$$
By setting $\dot{U}_t = \nabla \phi_1(\mu)(U_t)$ with $U_0 = id$, we have:

\begin{align*}
\L_{\bar{Z}_1} \langle \grad F_f(\mu), \bar{Z}_2 \rangle_\mu &= \left. \frac{\d}{\d t}\right|_{t=0} \int_M \langle \nabla f(x), \nabla \phi_2(\mu_t)(x) \rangle \d\mu_t(x) \\
& = \int_M \left. \frac{\d}{\d t} \right|_{t=0} \langle \nabla f (U_t(x)), \nabla \phi_2(\mu_t)(U_t(x)) \rangle \d\mu(x) \\
& =  \int_M \left \langle \nabla f(x), \nabla \L_{\bar{Z}_1} \phi_2(\mu)(x) \right \rangle + \left \langle  \nabla f(x),\nabla_{\nabla \phi_1(\mu)(x)} \nabla \phi_2(\mu)(x) \right \rangle \d\mu(x) \\
& + \int_M \he f(\nabla \phi_1(\mu), \nabla \phi_2(\mu)) \d\mu(x),
\end{align*}
 where $\he f$ is the Riemannian Hessian of $f$. Thus, using the fact that $\he f$ is symmetric, we obtain

$$
[\bar{Z}_1, \bar{Z}_2]F_f(\mu) = \left(\L_{\bar{Z}_1} \L_{\bar{Z}_2} - \L_{\bar{Z}_2} \L_{\bar{Z}_1} \right) F_f(\mu) = \left\langle \grad_{\bar{Z}_1} \bar{Z}_2(\mu) -\grad_{\bar{Z}_2} \bar{Z}_1(\mu), \grad F_f(\mu) \right\rangle_\mu.
$$
In particular, $\grad$ is torsion-free. We then obtain the Koszul formula for the Levi-Civita connection on $\P$:

\begin{align*}
    2 \langle \grad_{\bar{Z}_1}\bar{Z}_2 , \bar{Z}_3 \rangle &= \L_{\bar{Z}_1}\langle \bar{Z}_2, \bar{Z}_3 \rangle + \L_{\bar{Z}_2}\langle \bar{Z}_3, \bar{Z}_1 \rangle - \L_{\bar{Z}_3}\langle \bar{Z}_1, \bar{Z}_2 \rangle \\
    & + \langle \bar{Z}_3, [\bar{Z}_1, \bar{Z}_2] \rangle - \langle \bar{Z}_2, [\bar{Z}_1, \bar{Z}_3] \rangle - \langle \bar{Z}_1, [\bar{Z}_2, \bar{Z}_3] \rangle.
\end{align*}

\vspace{10pt}

\noindent \textbf{Hessian.} Let $F \in \C^2_w(\P)$ and let $\bar{Z}_1, \bar{Z}_2 \in \Gamma^1(T\P)$, the quantity
    
    $$
    \bar{\he } F(\bar{Z}_1, \bar{Z}_2) = \L_{\bar{Z}_1} \L_{\bar{Z}_2} F -  \grad_{\bar{Z}_1} \bar{Z}_2 F
    $$
    is well defined. We call this quantity the \textit{Hessian} of $F$. Note that, as in the finite-dimensional case, we have the following identity:
    
    $$
    \bar{\he } F(\mu)(\bar{Z}_1, \bar{Z}_2) = \left\langle \grad_{\bar{Z}_1(\mu)}\grad F(\mu), \bar{Z}_2(\mu) \right \rangle_\mu.
    $$
    Moreover, since
    
    $$
    \bar{\he } F(\bar{Z}_1, \bar{Z}_2) - \bar{\he } F(\bar{Z}_2, \bar{Z}_1) = [\bar{Z}_1, \bar{Z}_2] - [\bar{Z}_1,\bar{Z}_2] = 0,
    $$
    $\bar{\he }F$ is cleary symmetric. Furthermore, since $\bar{\he }F$ is tensorial in the first component and symmetric, we deduce that it is tensorial in both components. In particular, since $\grad F_f = V_f$ for $f \in \C^\infty(M)$, it is easy to see that
    
    \begin{equation} \label{Equation hessienne constant}
        \bar{\he }F_f(\mu)(\bar{Z}_1(\mu),\bar{Z}_2(\mu)) = \left \langle \grad_{\bar{Z}_1(\mu)}V_f, \bar{Z}_2(\mu) \right \rangle_\mu = \int_M \left \langle \nabla_{Z_1} \nabla f, Z_2 \right \rangle \d\mu(x) = \int_M \he f(Z_1(\mu)(x),Z_2(\mu)(x)) \d\mu(x).
    \end{equation}
    
    \vspace{10pt}

\noindent \textbf{Optimal transport maps.} The Monge problem concerns the existence an optimal transport map between two probability measure $\mu,\nu \in \P$. On a compact manifold, McCann showed in \cite{mccann2001polar} that such a map exists and is unique provided that $\mu$ is absolutely continuous with respect to the volume measure. In this case, the transport map is given by $\exp(\nabla \phi)$ for a suitable function $\phi$. When both $\mu$ and $\nu$ are absolutely continuous with respect to the volume measure, the function $\phi$ is the unique solution to the so called Monge-Ampère equation. Moreover, the curve $t \in [0,1] \mapsto \left(\exp(t\nabla \phi)\right)_* \mu$ defines the unique minimizing geodesic joining $\mu$ and $\nu$. Since both $M$ and $\P$ are compact spaces, we get easily the following result:

      	\begin{prop} \label{Proposition fonctions lipschitz}
  	Let $\Phi \in \C_w^1(\P)$. Then, the functional $\Phi$ is Lipschitz:
  	
  	$$
  	|\Phi(\mu)- \Phi(\nu)| \leq | \grad \Phi|_\infty \, W_2(\mu,\nu), \quad \forall \mu, \nu \in \P,
  	$$
  	where $|\grad \Phi|_\infty = \sup_{\mu \in \P}|\grad \Phi(\mu)|_\mu$.
  	\end{prop}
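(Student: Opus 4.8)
The plan is to reduce the estimate to a one-dimensional mean value inequality along McCann's geodesics, and then to remove the absolute-continuity requirement by approximation. First I would treat the case where $\mu\in\P_\infty$ (or merely $\mu\ll\vol$) and $\nu\in\P$ is arbitrary: by McCann's theorem there is a function $\phi\in\C^\infty(M)$ such that the curve $\mu_t=(\exp(t\nabla\phi))_\sharp\mu$, $t\in[0,1]$, is the unique minimizing geodesic from $\mu_0=\mu$ to $\mu_1=\nu$. Being a constant-speed minimizing geodesic parametrized on the unit interval, it has velocity $\dot\mu_t\in\T_{\mu_t}\P$ with $|\dot\mu_t|_{\mu_t}=W_2(\mu,\nu)$ for every $t$.

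Next, since $\Phi\in\C_w^1(\P)$ is differentiable at every point of $\P$ and the velocity of a Wasserstein geodesic is, at each time, a gradient vector field, the chain rule shows that $t\mapsto\Phi(\mu_t)$ is differentiable on $[0,1]$ with $\frac{\d}{\d t}\Phi(\mu_t)=\L_{\dot\mu_t}\Phi(\mu_t)=\langle\grad\Phi(\mu_t),\dot\mu_t\rangle_{\mu_t}$. Hence $\bigl|\frac{\d}{\d t}\Phi(\mu_t)\bigr|\leq|\grad\Phi|_\infty\,|\dot\mu_t|_{\mu_t}=|\grad\Phi|_\infty\,W_2(\mu,\nu)$ for all $t$, and the mean value inequality applied to the real-valued function $t\mapsto\Phi(\mu_t)$ gives $|\Phi(\mu)-\Phi(\nu)|=|\Phi(\mu_0)-\Phi(\mu_1)|\leq|\grad\Phi|_\infty\,W_2(\mu,\nu)$.

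To conclude for arbitrary $\mu,\nu\in\P$, I would invoke the density of $\P_\infty$ in $(\P,W_2)$ — for instance by approximating any measure by its image under the heat semigroup, which has a smooth strictly positive density, together with the fact that on the compact space $M$ weak convergence is equivalent to $W_2$-convergence — to pick $\mu_n\in\P_\infty$ with $W_2(\mu_n,\mu)\to0$. Applying the previous step twice, with the absolutely continuous measure $\mu_n$ against the arbitrary measures $\mu$ and $\nu$, yields $|\Phi(\mu)-\Phi(\nu)|\leq|\Phi(\mu)-\Phi(\mu_n)|+|\Phi(\mu_n)-\Phi(\nu)|\leq|\grad\Phi|_\infty\bigl(W_2(\mu,\mu_n)+W_2(\mu_n,\nu)\bigr)$, and letting $n\to\infty$ (using $W_2(\mu_n,\nu)\to W_2(\mu,\nu)$) proves the claim; note that this also yields the continuity of $\Phi$, which was not assumed a priori.

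The step I expect to be the main obstacle is the chain rule along McCann's geodesic: differentiability of $\Phi$ at a point is phrased through flows of time-independent gradient vector fields, so one must verify that the time-dependent velocity field of the geodesic — which is a gradient at each fixed time — indeed produces the derivative $\langle\grad\Phi(\mu_t),\dot\mu_t\rangle_{\mu_t}$, and that this holds at every $t\in[0,1]$, endpoints included; equivalently, one must check that the derivative of $\Phi$ along a curve depends only on the curve's velocity, not on the particular curve realizing it. Once this is granted, the rest is routine, compactness of $M$ (hence of $\P$) entering only to ensure $|\grad\Phi|_\infty<\infty$ and to make the approximation argument work.
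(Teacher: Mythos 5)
Your proof follows essentially the same route as the paper's: integrate $\langle\grad\Phi(\mu_t),\dot\mu_t\rangle_{\mu_t}$ along the minimizing geodesic, bound by $|\grad\Phi|_\infty$, and extend by density of $\P_\infty$ in $\P$ (the paper uses Cauchy--Schwarz with the Benamou--Brenier identity where you invoke the constant-speed parametrization, which is equivalent). Your version is in fact slightly more careful on two points the paper glosses over — the justification of the chain rule along the geodesic, whose velocity is only a time-dependent gradient field in $\T_{\mu_t}\P$, and the fact that the density argument needs the one-endpoint-smooth case rather than continuity of $\Phi$ a priori.
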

  	
  	\begin{proof}
  	Let $\mu, \nu \in \P_\infty$ and $(\mu_t)_t$ be the unique minimizing geodesic from $\mu$ to $\nu$. By the Benamou-Brenier formulation of the Wasserstein distance we have the equality
  	
  	$$
  	W_2^2(\mu,\nu) = \int_0^1 |\dot{\mu}_t|^2 \d t.
  	$$
  	Then, by Cauchy-Schwarz inequality
  	
  	$$
  	|\Phi(\mu) -\Phi(\nu) | = \left| \int_0^1 \langle \grad \Phi(\mu_t), \dot{\mu}_t \rangle_{\mu_t} \d t \right| \leq |\grad \Phi|_\infty \int_0^1 |\dot{\mu}_t|_{\mu_t} \d t  \leq |\grad \Phi|_\infty \, \left(\int_0^1 |\dot{\mu}_t|_{\mu_t}^2 \d t \right)^{1/2} = | \grad \Phi|_\infty W_2(\mu,\nu).
  	$$
  	Since $\P_\infty$ is dense in $\P$, we obtain the result on $\P$.
  	\end{proof}
  	
  	\begin{coro} \label{PropLipschitzGeneralVectorField}

  		Let $A \in \C_w^{\infty,1}(M \times \P, TM)$ such that for all $\mu \in \P$, $A(\cdot, \mu)$ is a vector field. Then $A$ is globally Lipschitz in $\mu$, i.e. there exist a constant $L \geq 0$, independent of $x \in M$ such that:

  	$$
  	|A(x,\mu)- A(x,\nu)|_x \leq L \, W_2(\mu,\nu), \quad \forall \mu,\nu \in \P, \quad \forall x \in M.
  	$$
  	
  	\end{coro}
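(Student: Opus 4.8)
The plan is to reduce the statement to Proposition \ref{Proposition fonctions lipschitz} by pairing $A$ against directions in the (finite-dimensional) fibres of $TM$, and then to upgrade the per-point Lipschitz bounds to a bound uniform in $x\in M$ by compactness.

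First, for fixed $x\in M$ and $w\in T_xM$ I would introduce the scalar functional $F_{x,w}:\P\to\mathbb{R}$ defined by $F_{x,w}(\mu)=\langle A(x,\mu),w\rangle_x$. The hypothesis $A\in\C_w^{\infty,1}(M\times\P,TM)$ is designed precisely so that, for every fixed $x$, the map $A(x,\cdot):\P\to T_xM$ is of class $\C^1_w$ with values in the fixed vector space $T_xM$; hence $F_{x,w}\in\C^1_w(\P)$ with $\L_{\bar Z}F_{x,w}(\mu)=\langle\L_{\bar Z}A(x,\cdot)(\mu),w\rangle_x$ for all $\bar Z\in\T_\mu\P$. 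Proposition \ref{Proposition fonctions lipschitz} then yields $|F_{x,w}(\mu)-F_{x,w}(\nu)|\le|\grad F_{x,w}|_\infty\,W_2(\mu,\nu)$. Writing $|A(x,\mu)-A(x,\nu)|_x=\sup_{|w|_x\le1}|F_{x,w}(\mu)-F_{x,w}(\nu)|$ (equivalently, summing over an orthonormal basis of $T_xM$), I get $|A(x,\mu)-A(x,\nu)|_x\le\|\grad A(x,\cdot)\|_\infty\,W_2(\mu,\nu)$, where $\|\grad A(x,\cdot)\|_\infty:=\sup_{\mu\in\P}$ of the operator norm of the differential $\bar Z\mapsto\L_{\bar Z}A(x,\cdot)(\mu)$ with respect to the $\mu$-metric on $\T_\mu\P$ and the metric on $T_xM$. (Alternatively one can bypass the reduction and argue directly, integrating $\tfrac{\d}{\d t}A(x,\mu_t)=\L_{\dot\mu_t}A(x,\cdot)(\mu_t)$ along a minimizing geodesic $(\mu_t)$ and applying Cauchy--Schwarz and Benamou--Brenier exactly as in the proof of Proposition \ref{Proposition fonctions lipschitz}.)

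It then remains to set $L:=\sup_{x\in M}\|\grad A(x,\cdot)\|_\infty$ and to check $L<\infty$: this is where the actual content lies. One must see that the supremum over the compact manifold $M$ of the quantities $\|\grad A(x,\cdot)\|_\infty$, each already finite for fixed $x$, stays bounded. This follows from the definition of $\C_w^{\infty,1}(M\times\P,TM)$, which bundles together the uniform-in-$x$ boundedness of the Wasserstein gradient of $A$ and the joint continuity of $(x,\mu)\mapsto\L_{\bar Z}A(x,\mu)$, together with the compactness of $M$ and of $\P$. Finally, since Proposition \ref{Proposition fonctions lipschitz} is first obtained on $\P_\infty$ via minimizing geodesics, the inequality is first valid for $\mu,\nu\in\P_\infty$ and then extends to all $\mu,\nu\in\P$ by density of $\P_\infty$ in $\P$ and continuity of $A(x,\cdot)$ on $\P$, $L$ being independent of $x$. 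The main obstacle is isolating the uniform-in-$x$ gradient bound from the definition of $\C_w^{\infty,1}(M\times\P,TM)$; the rest is a mechanical application of the already-established scalar result.
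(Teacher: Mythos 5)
Your proposal is correct and follows essentially the same route as the paper: the paper fixes $x$, expands $A(x,\cdot)$ in an orthonormal basis of $T_xM$, applies Proposition \ref{Proposition fonctions lipschitz} to each scalar coordinate functional, sums in quadrature, and takes the supremum over $x\in M$ — which is the same reduction you perform by pairing against unit vectors $w\in T_xM$. Your additional care in justifying the finiteness of $L=\sup_{x\in M}\|\grad A(x,\cdot)\|_\infty$ and the density step $\P_\infty\subset\P$ only makes explicit what the paper leaves implicit.
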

  	
  	\begin{proof}
  		Let $(\mu_t)_t$ be the minimizing geodesic from $\mu$ to $\nu$. Let $(e_1, \dots, e_n)$ be an orthonormal basis of $T_xM$, we denote $A^i$ the $i$-th coordinate in this basis. Then we have by Proposition \ref{Proposition fonctions lipschitz}:
 
  	$$
  	|A^i(x,\mu) - A^i(x,\nu) |_x \leq  |\grad A^i(x,\cdot)|_\infty \, W_2(\mu,\nu).
  	$$
  	Then,
  	
  	$$
  	|A(x,\mu) - A(x,\nu)|^2_x = \sum_{i=1}^n |A^i(x,\mu) - A^i(x,\nu)|^2_x \leq W_2^2(\mu,\nu)  \sum_{i=1}^n |\grad A^i(x,\cdot)|_\infty^2.
  	$$
  	We conclude by taking the supremum over all $x \in M$ in the right-hand side term.
  	\end{proof}
  	
  	\begin{rem}[On the smoothness of geodesics]
  	It is important to be careful when handling geodesics on $\P$. Indeed, if we take $\mu, \nu \in \P_\infty$, we are not guaranteed that the minimizing geodesic $(\mu_t)_t$ joining $\mu$ to $\nu$ stays in $\P_\infty$. A necessary condition for the smoothness of optimal transport map is the so called MTW (standing for Ma-Trudinger-Wang) condition introduced in \cite{MaTrudingerWang2005}. The reader is referred for instance to \cite{Fig2010}, \cite{KimMcCann2008}, \cite{Loep2006}, \cite{LoeperVillani2010} for sufficient conditions to the smoothness of the optimal transport map.
  	\end{rem}
    
	\vspace{10pt}
	
	\noindent \textbf{Ordinary differential equations.} The notion of ordinary differential equation is well understood on $\P$, indeed, we have the following result:
	
	\begin{prop} \label{ODE on P}
		Let $\bar{Z} = \nabla \phi (\cdot) \in \Gamma^1_w(T\P)$ and $\mu \in \P$. Then, the ordinary differential equation
		
		\begin{equation*}
		\dot{M}_t(\mu) = \bar{Z}(M_t(\mu)), \quad M_0(\mu) = \mu
		\end{equation*}
		admits a solution given by $M_t(\mu) = (U_t)_* \mu$, where $(U_t)_{t \geq 0}$ is the solution flow to:
		
		$$
		\dot{U}_t(x) = \nabla \phi(M_t(\mu), U_t(x)), \quad U_0(x) = x, \text{ and } M_t(\mu) = (U_t)_* \mu.
		$$
	\end{prop}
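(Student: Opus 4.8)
The statement is the deterministic, noise-free instance of the self-consistent (McKean--Vlasov type) equation, so the plan is: (i) solve the coupled system $\big(U_t\big)_{t\ge0}\leftrightarrow \big(M_t(\mu)=(U_t)_\sharp\mu\big)_{t\ge0}$ by a Banach fixed-point argument in a space of measure-valued curves; (ii) check that the resulting curve satisfies the continuity equation encoding $\dot M_t(\mu)=\bar{Z}(M_t(\mu))$; and (iii) propagate smoothness of the density when $\mu\in\P_\infty$.

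For the fixed point, fix $T>0$ and work in the complete metric space $\mathcal{X}_T=C([0,T],\P)$ equipped with the uniform $W_2$-distance. To $\nu_\cdot\in\mathcal{X}_T$ associate the flow $U^\nu_t$ on $M$ solving $\dot U^\nu_t(x)=\nabla\phi(\nu_t,U^\nu_t(x))$, $U^\nu_0=\mathrm{id}$; since $\phi\in\C^{\infty,0}(M\times\P)$ and $\P$ is compact, the fields $x\mapsto\nabla\phi(\mu,x)$ have $\C^1$-norms bounded uniformly in $\mu$, so this flow exists on all of $[0,T]$, is $\C^1$ in $t$, and $t\mapsto(U^\nu_t)_\sharp\mu$ is $W_2$-continuous; hence $\Psi(\nu)_t:=(U^\nu_t)_\sharp\mu$ defines a map $\mathcal{X}_T\to\mathcal{X}_T$ whose fixed points are exactly the solutions of the coupled system. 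Given $\nu,\nu'\in\mathcal{X}_T$, pushing $\mu$ forward by $x\mapsto(U^\nu_t(x),U^{\nu'}_t(x))$ yields a transport plan between $\Psi(\nu)_t$ and $\Psi(\nu')_t$, so
\[
W_2^2(\Psi(\nu)_t,\Psi(\nu')_t)\le\int_M d_M\big(U^\nu_t(x),U^{\nu'}_t(x)\big)^2\,\d\mu(x).
\]
Estimating the time-derivative of the right-hand side (working, if one wishes, in a fixed isometric embedding of the compact $M$ in a Euclidean space) and splitting $\nabla\phi(\nu_t,U^\nu_t)-\nabla\phi(\nu'_t,U^{\nu'}_t)$ into a term controlled by the (uniform in $\mu$) Lipschitz constant of $\nabla\phi(\mu,\cdot)$ on $M$ and a term controlled by $L\,W_2(\nu_t,\nu'_t)$ via Corollary \ref{PropLipschitzGeneralVectorField}, a Grönwall estimate gives $\sup_{t\le T}W_2(\Psi(\nu)_t,\Psi(\nu')_t)\le C(T)\sup_{t\le T}W_2(\nu_t,\nu'_t)$ with $C(T)\to0$ as $T\to0$. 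Thus $\Psi$ is a contraction on $\mathcal{X}_T$ for $T$ small; since all constants entering $C(T)$ are global (they do not degrade along the solution), the local solution is extended to $[0,\infty)$ by concatenating intervals of a fixed length.

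It remains to identify the equation and to track regularity. For $M_t(\mu)=(U_t)_\sharp\mu$ and any $f\in\C^\infty(M)$,
\[
\frac{\d}{\d t}\int_M f\,\d M_t(\mu)=\int_M\langle\nabla f(U_t(x)),\dot U_t(x)\rangle\,\d\mu(x)=\int_M\langle\nabla f(y),\nabla\phi(M_t(\mu),y)\rangle\,\d M_t(\mu)(y),
\]
which is exactly the assertion that $\dot M_t(\mu)=\bar{Z}(M_t(\mu))$ in the distributional sense of \eqref{Equation dérivée mesure}, i.e. that $\partial_t M_t(\mu)+\nabla\cdot\!\big(\nabla\phi(M_t(\mu),\cdot)\,M_t(\mu)\big)=0$. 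Finally, if $\mu=\rho_0\,\vol$ with $\rho_0\in\C^\infty(M)$, $\rho_0>0$: continuity of $t\mapsto M_t(\mu)$ into $\P$ together with $\phi\in\C_w^{\infty,1}(M\times\P)$ makes $(t,x)\mapsto\nabla\phi(M_t(\mu),x)$ a time-dependent vector field that is $\C^\infty$ in $x$ and continuous in $t$, so its flow $U_t$ is a smooth family of diffeomorphisms of the compact $M$ with $\C^\infty$, strictly positive Jacobian $J_t$ relative to $\vol$; hence $M_t(\mu)$ has density $\rho_t=(\rho_0/J_t)\circ U_t^{-1}\in\C^\infty(M)$, $\rho_t>0$, that is $M_t(\mu)\in\P_\infty$.

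I expect the only real difficulty to be the contraction step — reconciling the self-consistency constraint $M_t=(U_t)_\sharp\mu$ with a Picard iteration — which forces one to combine, uniformly in $t$ and $x$, the $M$-Lipschitz bound on $\nabla\phi(\nu_t,\cdot)$ with the $W_2$-Lipschitz bound of $\nabla\phi$ in the measure variable; once this is in hand, the identification of the continuity equation and the propagation of smoothness are routine.
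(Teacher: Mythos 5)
Your proposal is correct and is essentially the paper's intended argument: the paper's proof simply points to \cite{ding2021geometrywassersteinspacecompact} and to adapting the proof of Theorem \ref{Existence Unicité} to the ODE setting, and your fixed-point scheme on $C([0,T],\P)$ with the transport-plan bound (Lemma \ref{LemmeTransportOptimal}), the uniform Lipschitz estimates of Corollary \ref{PropLipschitzGeneralVectorField}, Gr\"onwall, and the pushforward-by-diffeomorphism argument for smoothness (Proposition \ref{Processus mesure lisse}) is exactly that deterministic adaptation. The only cosmetic difference is that you run a Banach fixed point on measure curves where the paper's stochastic proof iterates Picard approximations of the flows, which amounts to the same contraction estimate.
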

	
	\begin{proof}
	See for example Theorem 2.2 in \cite{ding2021geometrywassersteinspacecompact} or adapt the proof of Theorem \ref{Existence Unicité} to the framework of ODEs.
	\end{proof}

\subsection{The group of diffeomorphisms} \label{Subsection 3.2}

Let $(\mathscr{M}^s)_{s \in \mathbb{N}}$ be a Hilbert manifold such that $\mathscr{M}^{s+1} \subset \mathscr{M}^s$ for all $s \in \mathbb{N}$ and consider the following Inverse Limit Hilbert (ILH for short) manifold:

\begin{equation*}
	\mathscr{M} = \bigcap_{s \geq 0} \mathscr{M}^s.
\end{equation*}
Let $\mathscr{N} = \cap_{s \geq 0} \mathscr{N}^s$ be another ILH manifold.
A map $f : \mathscr{M} \rightarrow \mathscr{N}$ is said to be of class $\C^k$ for the ILH topology if there exists $s_0 \in \mathbb{N}$ such that for all $s>s_0$ there exists $j(s)$ such that $f$ admits a $\C^k$ extension $f^s : \mathscr{M}^{j(s)} \rightarrow \mathscr{N}^s$. A map $f : \mathscr{M} \rightarrow \mathscr{M}$ is said to be a $\C^k$ stratified map if for all $s$ there exists a $\C^k$ extension $f^s : \mathscr{M}^s \rightarrow \mathscr{M}^s$. We denote by $\C^k_S(\mathscr{M},\mathscr{M})$ the set of $\C^k$ stratified maps.

Let $M$ be a closed manifold and let $\pi : E$ be a fiber bundle over $M$. 
The set $\Gamma^{(s)}(E)$ of $H^s$ sections of $E$ has a natural structure of Hilbert manifold, see \cite{Palais66} for instance.
This is the case for the bundle $M \times N$ where $N$ is another Riemannian manifold.
Note that if $E$ is a vector bundle, $\Gamma^{(s)}(E)$ is a Hilbert space. 
Classical examples are given by $E= M \times \mathbb{R}$ or $E= TM$.
The group of (orientation-preserving) diffeomorphisms $\mathscr{D} := \mathrm{Diff}(M,M)$ is an ILH manifold. 
More precisely, for each $s> m/2$, we consider the following Hilbert manifold:

\begin{equation*}
	\mathscr{D}^s = \left\{ \varphi \in H^s(M,M) \, : \, \varphi \text{ is bijective and } \varphi^{-1} \in H^s(M,M) \right\},
\end{equation*}
and the inverse limit description is given by:

\begin{equation*}
	\mathscr{D} = \bigcap_{s >m/2} \mathscr{D}^s.
\end{equation*}
 The reader is referred to \cite{Angst2020}, \cite{Ebin70}, \cite{EbinMarsden1970}, \cite{omori_group_1970} and references therein for a detailed presentation of the ILH structure of $\mathscr{D}$. We can also endow $\mathscr{D}$ with a ILH Lie group structure. Indeed, let $\varphi \in \mathscr{D}$, the following maps
 
 \begin{align*}
    & R_\varphi : \psi \longmapsto \psi \cdot \varphi \\
    & L_{\varphi} : \psi \longmapsto \varphi \cdot \psi \\
    & \mathrm{Inv} : \psi \longmapsto \psi^{-1},
 \end{align*}
 are smooths maps from $\mathscr{D}$ to $\mathscr{D}$ (see \cite[\S 3]{Ebin1970}.

We start this section by establishing the surjection property of the projection

\begin{align*}
    p : \mathscr{D} &\longrightarrow \P_\infty \\
     \varphi &\longmapsto (\varphi)_* \vol,
\end{align*}
namely the map which sends a diffeomorphism to the pushforward of the volume measure by this diffeomorphism.

\begin{prop}
	Let $s>m/2$. The projection $p : \mathscr{D}^{s+1} \rightarrow \P^s$ is surjective and continuous. As a consequence, $p : \mathscr{D} \rightarrow \P_\infty$ is surjective and continuous.
\end{prop}

\begin{proof}
The surjectivity is done in \cite[Lemma 5.2]{EbinMarsden1970}. To prove the continuity, let $(\varphi_n)_{n \geq 0}$ be a sequence of elements of $\mathscr{D}^{s+1}$ converging toward $\varphi \in \mathscr{D}^{s+1}$. Then, we have:

\begin{equation} \label{E 3.6}
W_2^2(p(\varphi_n), p(\varphi)) = W_2^2((\varphi_n)_* \vol, (\varphi)_* \vol) \leq \int_M d_M^2(\varphi_n(x), \varphi(x)) \d \vol(x) \leq \sup_{x \in M} d_M^2(\varphi_n(x), \varphi(x)),
\end{equation}
and, since $(\varphi_n)_n$ converges toward $\varphi$ for the $H^{s+1}$ topology, by Sobolev injection, $(\varphi_n)_n$ converges toward $\varphi$ in $\C^0$ topology. Consequently, the right hand-side of \eqref{E 3.6} goes to $0$ when $n \to + \infty$. Since this result is true for all $s > m/2$, the projection $p : \mathscr{D} \rightarrow \P$ is continuous.
\end{proof}

Throughout the rest of the article we will let $\mathscr{D}_\mu$ denotes the \textit{fiber} above $\mu$, i.e $\mathscr{D}_\mu = p^{-1}(\mu)$.

\vspace{10pt}
    
\noindent \textbf{Riemannian structure on $\mathscr{D}$.} The \textit{tangent space} of $\mathscr{D}$ in $\varphi \in \mathscr{D}$ is defined to be:
    
    $$
    T_{\varphi} \mathscr{D} := \Gamma(\varphi^*TM),
    $$
    and the tangent space of $\mathscr{D}^s$ in $\varphi \in \mathscr{D}^s$ is defined to be:
    
    $$
    T_{\varphi} \mathscr{D}^s := \Gamma^{(s)}(\varphi ^*TM),
    $$
    In particular, for $\varphi \in \mathscr{D}$, $T_\varphi \mathscr{D} = \cap_{s > m/2} T_\varphi \mathscr{D}^s$. Moreover, $T_{id}\mathscr{D}^s = \Gamma^{(s)}(TM)$ and $T_\varphi \mathscr{D}^s \cong T_{id} \mathscr{D}^s$ for all $\varphi \in \mathscr{D}^s$, the isomorphism being given by $\bm T R_\varphi : A \mapsto A \circ \varphi$ or $\bm T L_{\varphi}$. The set $T \mathscr{D} =  \underset{\varphi \in \mathscr{D}}{\sqcup} T_{\varphi}\mathscr{D}$ is the tangent bundle of $\mathscr{D}$ and it admits a structure of smooth ILH vector bundle over $\mathscr{D}$.
    
    \begin{defi}
    A \textit{vector field} on $\mathscr{D}$ is a map $\bm A : \mathscr{D} \rightarrow T \mathscr{D}$ such that $\bm A(\varphi) = A(\varphi) \circ \varphi$ with $A(\varphi) \in \Gamma(TM)$ for all $\varphi \in \mathscr{D}$.
    \end{defi}  
    
    We can build \textit{right invariant vector fields} on $\mathscr{D}$ similarly as on Lie groups by setting $ \check{A}(\varphi) = A \circ \varphi$ for $A \in T_{\mathrm{id}}\mathscr{D}$.
    
    \begin{defi}\begin{enumerate}
        \item A vector field $\bm A$ on $\mathscr{D}$ is said to be of class $\C^k$ if $\bm A : \mathscr{D} \rightarrow T\mathscr{D}$ is $\C^k$ as a ILH manifold map. We denote by $\Gamma^k(T\mathscr{D})$ the set of $\C^k$ vector fields on $\mathscr{D}$.
        \item A vector field $\bm A \in \Gamma^k(T\mathscr{D})$ on $\mathscr{D}$ is said to be a $\C^k$ stratified vector field if there exists $s_0$ such that for all $s > s_0$ there exists a $\C^k$ extension $A^s : \mathscr{D}^s \rightarrow T\mathscr{D}^s$. The set of $\C^k$ stratified vector fields is denoted by $\Gamma^k_S(T \mathscr{D})$. 
    \end{enumerate}
        
    \end{defi}
    
    Let $s \geq s' >m/2$, $\bm A \in \Gamma^k_{S}(T\mathscr{D})$, $\bm A^s \in \Gamma^k(T\mathscr{D}^s)$ and $\bm A^{s'} \in \Gamma^k(T\mathscr{D}^{s'})$ denote the $\C^k$ extensions of $\bm A$. Let also $\iota : \mathscr{D}^s \rightarrow \mathscr{D}^{s'}$ denote the smooth injection. Then, the definition of $\Gamma^k_S(T\mathscr{D})$ tells us that the following identity holds:
        
        \begin{equation} \label{Identite iota}
            \iota_*(\bm A^{s})(\varphi) = \bm A^{s'}(\varphi), \quad \forall \varphi \in \mathscr{D}^s.
        \end{equation}
    
    In the following we will only consider stratified vector fields. Let $s > m/2$, we consider the following $L^2$ \textit{Riemannian metric} on $T\mathscr{D}^s$:
    
    \begin{equation} \label{E Métrique Riemanienne}
        \langle A \circ \varphi, B \circ \varphi \rangle_{\varphi} = \int_M \langle A(\varphi(x)), B(\varphi(x)) \rangle \d\vol(x), \quad A \circ \varphi, B \circ \varphi \in T_\varphi \mathscr{D}^s.
    \end{equation}
    We also endow $T\mathscr{D}$ with this Riemannian metric.
    
    \begin{rem}[On the different structures on $\mathscr{D}$]
        Note that the topology induced by the Riemannian metric on $\mathscr{D}$ is strictly weaker than the ILH Lie group topology mentioned in the introduction of this subsection. In the literature we talk about weak Riemannian metric, see \cite{Angst2020}, \cite[Section 9]{EbinMarsden1970} for instance. On the one hand, the ILH Lie group structure is useful to prove results of existence and uniqueness. In particular, this point of view is especially powerful since it allows us to check that solutions to both ordinary differential equations (Proposition \ref{ODE D}) and to stochastic differential equations (Proposition \ref{Existence unicité Difféos}) lie in $\mathscr{D}$. We will use this structure mainly in Sections \ref{Section 3} and \ref{Section 4}. On the other hand, the Riemannian structure, although weaker, is useful since it allows us to consider $\mathscr{D}$ as a (quasi) Riemannian manifold and as a Riemannian submersion onto $\P_\infty$, see Subsection \ref{Subsection 3.3}. This structure will be central in Sections \ref{Section 5} and \ref{Section 6}.
    \end{rem}
    
\vspace{10pt}
    
\noindent    \textbf{Structure of principal bundle.} Let us define
    
    $$
    G := \left \{ g \in \mathscr{D} \, : \, p(g) = \vol \right \} = \mathscr{D}_{\vol}.
    $$
    The group $\mathscr{D}$ has a structure of $G$-principal bundle over $\P_\infty$. Indeed, since for all $g \in G$ and $\varphi \in \mathscr{D}$ we have $p(\varphi \circ g) = p(\varphi)$ it is clear that the right action of $G$ preserves the fiber. The fact that $G$ acts freely on the fibers is clear. To see that $G$ acts transitively, let $\varphi_1, \varphi_2 \in \mathscr{D}$ be such that $p(\varphi_1) = p(\varphi_2) = \mu$. The diffeomorphism given by $\varphi_1^{-1} \cdot \varphi_2$ is in $G$ since
    
    $$
	p(\varphi_1^{-1} \cdot \varphi_2) = (\varphi_1^{-1})_* \mu = \vol
    $$ 
    and we clearly have $\varphi_2 = \varphi_1 \cdot (\varphi_1^{-1} \cdot \varphi_2)$. Let $s>m/2$, we also define the following:
    
    \begin{equation*}
	G^s = \left\{ g \in H^s(M,M) \, : \, g \text{ is bijective, } g^{-1} \in H^s(M,M), \, p(g) = \vol \right\},
\end{equation*}
    
    With this principal bundle structure, we may define the counterpart of Definition \ref{Définition équivariance} in the setting of diffeomorphisms.
    
        \begin{defi} \label{Définition équivariance sur D}
        A map $\Phi : \mathscr{D} \rightarrow \mathscr{D}$ is said to be \textit{equivariant} if
        
        $$
        \Phi(\varphi \cdot g) = \Phi(\varphi) \cdot g, \quad \forall \varphi \in \mathscr{D}, \, \forall g \in G.
        $$
        A flow of maps $(\Phi_t)_{t \geq 0}$ such that $\Phi_t : \mathscr{D} \rightarrow \mathscr{D}$ for all $t\geq 0$ is said to be equivariant if $\Phi_t$ is equivariant for all $t \geq 0$.
    \end{defi} 
    
    \begin{rem} \label{R G right invariance métrique R}
        Let $s>m/2$. Note that the Riemannian metric \eqref{E Métrique Riemanienne} is $G$-right invariant, namely, for all $g \in G^s$ and all $\varphi \in \mathscr{D}^s$,
        
        \begin{equation*}
            \langle \bm T R_g(A \circ \varphi), \bm T R_g(B \circ \varphi) \rangle_{\varphi \cdot g} = \langle A \circ \varphi, B \circ \varphi \rangle_\varphi, \quad \forall A,B \in T_\varphi \mathscr{D}^s.
        \end{equation*}
        Indeed, let $A \circ \varphi, B \circ \varphi \in T_\varphi \mathscr{D}^s$, we have:
        
        \begin{align}
            \langle \bm T R_g(A \circ \varphi), \bm T R_g(B \circ \varphi) \rangle_{\varphi \cdot g} &= \int_M \langle A(\varphi(g(x))), B(\varphi(g(x))) \rangle \d \vol(x) \nonumber \\
            & = \int_M \langle A(\varphi(x)), B(\varphi(x)) \rangle \d \vol(x) \label{E 3.8} \\
            & = \langle A \circ \varphi, B \circ \varphi \rangle_\varphi \nonumber,
        \end{align}
        where \eqref{E 3.8} is due to the fact that $(g)_* \vol = \vol$.
    \end{rem}

    \vspace{10pt} 
    
    \noindent \textbf{Lie derivative.} Let $s>m/2+1$, $\bm F \in \C^1(\mathscr{D}^s)$, $\varphi \in \mathscr{D}^s$ and $A \circ \varphi \in T_\varphi \mathscr{D}^s$. We say that a path $(\varphi_t)_t$ on $\mathscr{D}^s$ with $\varphi_0 = \varphi$ satisfies $\partial_t |_{t=0} \varphi_t =A \circ \varphi$ if for all $x \in M$, $\partial_t|_{t=0} \varphi_t(x) = A(\varphi(x))$. Then, we can define the \textit{Lie derivative} of $\bm F$ in the direction $A \circ \varphi$ to be
    
    $$
    \mathcal{L}^{\mathscr{D}^s}_{A\circ \varphi} \bm F(\varphi) = \left.\frac{\d}{\d t} \right|_{t=0} \bm F(\varphi_t).
    $$
    The \textit{gradient} of $\bm F$ in $\varphi \in \mathscr{D}^s$ is the unique vector $\bm \nabla \bm F(\varphi) \in T_\varphi \mathscr{D}^s$ (the existence and uniqueness being guaranteed by Riesz's representation theorem) such that
    
    $$
    \mathcal{L}^{\mathscr{D}^s}_{A \circ \varphi}\bm F(\varphi) = \langle \bm \nabla \bm F(\varphi), A \circ \varphi \rangle_\varphi, \quad \forall A \circ \varphi \in T_\varphi \mathscr{D}^s.
    $$
    Moreover, for $\bm F \in \C^k(\mathscr{D}^s)$, $\bm \nabla \bm F \in \Gamma^{k-1}(T\mathscr{D}^s)$.
    
    Let $\bm F \in \C^1(\mathscr{D})$. The Lie derivative of $\bm F$ in the direction $A \circ \varphi$ is defined in the same way as in the $\mathscr{D}^s$ case. The gradient of $\bm F$ is obtained as the limit object of the one on $T \mathscr{D}^s$ and is also denoted by $\bm \nabla$. Moreover, if $\bm F \in \C^k(\mathscr{D})$, then $\bm \nabla \bm F \in \Gamma^{k-1}(T\mathscr{D})$.

    \begin{exemple}
    Let $f \in \C^\infty(M \times M)$. Let $\bm F_f$ be the functional on $\mathscr{D}$ defined by
    
    $$
    \bm F_f(\varphi) = \int_M f(\varphi(x),x) \d\vol(x).
    $$
    Let $A \circ \varphi \in T_\varphi \mathscr{D}$, we have
    
    $$
    \mathcal{L}^\mathscr{D}_{A \circ \varphi} \bm F_f(\varphi) = \int_M \langle \nabla_1 f(\varphi(x),x), A(\varphi(x)) \rangle \d\vol(x).
    $$
    In particular, we have:
    
    $$
    \bm \nabla \bm F_f (\varphi) = \nabla_1 f( \varphi(\cdot), \cdot).
    $$
\end{exemple}
    We also define the \textit{Lie bracket} between two vector fields $\bm A, \bm B \in \Gamma^k_S(T\mathscr{D})$ to be the unique vector field satisfying:
    
    $$
    [\bm A, \bm B] \bm F(\varphi) = \mathcal{L}^\mathscr{D}_{\bm A} \mathcal{L}^\mathscr{D}_{\bm B} \bm F(\varphi) - \mathcal{L}^\mathscr{D}_{\bm B} \mathcal{L}^\mathscr{D}_{\bm A} \bm F(\varphi).
    $$
    Note that $[\bm A,\bm B] \in \Gamma^{k-1}_S(T\mathscr{D})$. 
    
        \vspace{10pt}
    
    \noindent \textbf{Levi-Civita connection.} Let $s >m/2$, $\bm A_1, \bm A_2, \bm A_3 \in \Gamma^1(T \mathscr{D}^s)$. The Levi-Civita connection $\bm \nabla^s$ on $T\mathscr{D}^s$ is defined to be the unique torsion free linear connection, namely,
    
    \begin{equation} \label{Torsion free D}
        \bm \nabla^s_{\bm A_1} \bm A_2 - \bm \nabla^s_{\bm A_2} \bm A_1 = [\bm A_1, \bm A_2],
    \end{equation}
    and that is compatible with the Riemannian metric, i.e.
    
    \begin{equation} \label{Compatible with metric D}
        \mathcal{L}^{\mathscr{D}^s}_{\bm C \circ \varphi} \langle \bm A_1(\varphi), \bm A_2 (\varphi) \rangle_\varphi = \langle \bm \nabla^s_{C \circ \varphi} \bm A_1 (\varphi), \bm A_2 (\varphi) \rangle_\varphi + \langle \bm A_1(\varphi), \bm \nabla^s_{C \circ \varphi} \bm B_2(\varphi) \rangle_\varphi.
    \end{equation}
    The Levi-Civita connection is defined in a unique way by the Koszul formula:
    
        \begin{align}
    2 \langle \bm \nabla^s_{\bm A_1}\bm A_2 , \bm A_3 \rangle &= \mathcal{L}^{\mathscr{D}^s}_{\bm A_1}\langle \bm A_2, \bm A_3 \rangle + \mathcal{L}^{\mathscr{D}^s}_{\bm A_2}\langle \bm A_3, \bm A_1 \rangle - \mathcal{L}^{\mathscr{D}^s}_{\bm A_3}\langle \bm A_1, \bm A_2 \rangle \label{Koszul D} \\
    & + \langle \bm A_3, [\bm A_1, \bm A_2] \rangle - \langle \bm A_2, [\bm A_1, \bm A_3] \rangle - \langle \bm A_1, [\bm A_2, \bm A_3] \rangle. \nonumber
\end{align}

We will sometimes forget the exponent $s$ for the sake of readability. In a general weak Riemannian framework, the existence and uniqueness of such a connection need not hold, in this setting the result is due to Ebin and Marsden, see \cite[Theorem 9.1]{EbinMarsden1970}. The reader is also referred to \cite{Eliasson1967} for a general construction of connections on manifolds of maps. The Levi-Civita connection on $T\mathscr{D}$ is obtained as the limit object of the Levi-Civita connections on $T \mathscr{D}^s$, is also denoted by $\bm \nabla$ and acts on elements of $\Gamma^1_S(T\mathscr{D})$.

Let $\check{A}$ be a right invariant vector field on $\mathscr{D}$, i.e. a vector field defined as $\check{A}(\varphi) = A \circ \varphi$ with $A \in \Gamma(TM)$. One can easily check by applying \eqref{Compatible with metric D} that
    
    \begin{equation*}
        \bm \nabla_{C \circ \varphi}\check{A}(\varphi) = \left( \nabla_C A\right) \circ \varphi.
    \end{equation*}
Let $\bm A(\varphi) = \nabla \phi(\varphi) \circ \varphi$ for all $\varphi \in \mathscr{D}$. One can, by applying \eqref{Compatible with metric D}, show that

\begin{equation*}
    \bm \nabla_{C \circ \varphi} \bm A(\varphi) = \left( \nabla_C \nabla \phi(\varphi) \right) \circ \varphi + \left(\nabla \mathcal{L}^\mathscr{D}_{C \circ \varphi} \phi(\varphi) \right) \circ \varphi.
\end{equation*}

\begin{rem}\label{R derivee covariante stratifiee}
    If $\bm A, \bm B \in \Gamma^k_S(T\mathscr{D})$ for $k \geq 1$, then $\bm \nabla_{\bm B} \bm A \in \Gamma^{k-1}_S(T\mathscr{D})$, and $\C^{k-1}$ extensions are given by $\bm \nabla^s_{\bm B^s} \bm A^s$ where $\bm A^s, \bm B^s \in \Gamma^k(T\mathscr{D}^s)$ are the $\C^k$ extensions of $\bm A$ and $\bm B$.
\end{rem}

\vspace{10pt}

    \noindent \textbf{Parallel transport.} Let $s >m/2$ and $(\varphi_t)_{t \geq 0}$ be a smooth path on $\mathscr{D}^s$ where $\varphi_0 = \varphi$ and $A \circ \varphi \in T_\varphi \mathscr{D}^s$, we define the map
    
    $$
    \tau_{0,t} : A \circ \varphi \mapsto \sslash_{0,t}(A \circ \varphi),
    $$
    where for all $x \in M$, $\sslash_{0,t}(x)$ is the parallel transport along $(\varphi_s(x))_{0 \leq s \leq t}$. One can check that it is an isometry between $T_{\varphi} \mathscr{D}^s$ and $T_{\varphi_t}\mathscr{D}^s$. The parallel transport on $T\mathscr{D}$ along a smooth path $(\varphi_t)_t$ on $\mathscr{D}$ is obtained as the limit object of the parallel transports on $T\mathscr{D}^s$.
    
    \begin{defi}
        Let $(\varphi_t)_{t \geq 0}$ be a smooth curve on $\mathscr{D}^s$ and $(\tau_{0,t})_{t \geq 0}$ the parallel transport along $(\varphi_t)_{t \geq 0}$. A vector field $(\bm A_t)_{t \geq 0}$ along $(\varphi_t)_{t \geq 0}$ is said to be derivable if $t \mapsto \tau_{0,t}^{-1} \bm A_t$ is derivable on $T_{\varphi_0} \mathscr{D}^s$. The covariant derivative of $(\bm A_t)_{t \geq 0}$ is defined to be
        
        \begin{equation*}
            \bm D_t \bm A_t = \tau_{0,t} \frac{\d}{\d t} \tau_{0,t}^{-1}(\bm A_t).
        \end{equation*}
    \end{defi}

    \begin{prop} \label{Prop 3.7}
        Let $\bm A \in \Gamma^1(T\mathscr{D}^s)$ and $C \circ \varphi \in T_\varphi \mathscr{D}^s$, the following holds:
    
    $$
    \bm \nabla_{C \circ \varphi} \bm A(\varphi) = \left. \frac{\d}{\d t}\right|_{t=0} \tau_{0,t}^{-1} \bm A(\varphi_t),
    $$ 
    with $(\tau_{0,t})_{t \geq 0}$ the parallel transport along a smooth path $(\varphi_t)_{t \geq 0}$ such that $\partial_t|_{t =0} \varphi_t = C \circ \varphi$ and $\varphi_0 = \varphi$.
    \end{prop}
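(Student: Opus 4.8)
The plan is to reduce this identity to the classical fact on the finite-dimensional manifold $M$ that the Levi-Civita connection recovers covariant differentiation along curves, and then to transfer it pointwise to $\mathscr{D}$, exploiting that the parallel transport $\tau_{0,t}$ is by construction the fibrewise parallel transport $\sslash_{0,t}(x)$ along the curves $(\varphi_s(x))_{0\le s\le t}$ on $M$. Equivalently, since $\tau_{0,0}=\mathrm{id}$, the statement asserts that $\bm \nabla_{C\circ\varphi}\bm A(\varphi)$ coincides with the covariant derivative $\bm D_t|_{t=0}\bm A(\varphi_t)$ along $(\varphi_t)_{t\ge 0}$.

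First I would recall the Leibniz-type identity on $M$: for a smooth curve $(\gamma_t)_{t\ge 0}$, with parallel transport $\sslash_{0,t}$ along it, and a $\mathcal{C}^1$ family $t\mapsto Y_t$ of vector fields on $M$,
$$\frac{\d}{\d t}\Big|_{t=0}\sslash_{0,t}^{-1}\big(Y_t(\gamma_t)\big)=\nabla_{\dot\gamma_0}Y_0+\Big(\frac{\partial}{\partial t}\Big|_{t=0}Y_t\Big)(\gamma_0),$$
which follows from $\nabla_{\dot\gamma}Y=\sslash\,\tfrac{\d}{\d t}\sslash^{-1}Y$ together with the product rule applied to the two $t$-dependencies. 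Then I apply this pointwise. Writing $\bm A(\psi)=A(\psi)\circ\psi$ with $\psi\mapsto A(\psi)\in\Gamma(M)$ of class $\mathcal{C}^1$ (this is where $\bm A\in\Gamma^1(\mathscr{D})$ enters), and fixing $x\in M$, set $\gamma_t=\varphi_t(x)$, so that $\dot\gamma_0=C(\varphi(x))$, and $Y_t=A(\varphi_t)$, so that $\partial_t|_{t=0}Y_t=\mathcal{L}^{\mathscr{D}}_{C\circ\varphi}A(\cdot)(\varphi)$. Since $\big(\tau_{0,t}^{-1}\bm A(\varphi_t)\big)(x)=\sslash_{0,t}(x)^{-1}\big(A(\varphi_t)(\varphi_t(x))\big)$, the identity above gives
$$\frac{\d}{\d t}\Big|_{t=0}\tau_{0,t}^{-1}\bm A(\varphi_t)=\big(\nabla_C A(\varphi)\big)\circ\varphi+\big(\mathcal{L}^{\mathscr{D}}_{C\circ\varphi}A(\cdot)(\varphi)\big)\circ\varphi,$$
which in particular shows the right-hand side does not depend on the chosen path $(\varphi_t)$ with $\partial_t|_{t=0}\varphi_t=C\circ\varphi$.

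It then remains to identify this explicit operator, say $\widetilde\nabla_{C\circ\varphi}\bm A(\varphi)$, with $\bm \nabla_{C\circ\varphi}\bm A(\varphi)$. I would check that $\widetilde\nabla$ is a linear connection on $T\mathscr{D}$ (tensorial in $C\circ\varphi$; $\mathbb{R}$-linear and satisfying the Leibniz rule in $\bm A$), that it is metric-compatible — because $\tau_{0,t}$ is an isometry, so $\langle\bm A_1(\varphi_t),\bm A_2(\varphi_t)\rangle_{\varphi_t}=\langle\tau_{0,t}^{-1}\bm A_1(\varphi_t),\tau_{0,t}^{-1}\bm A_2(\varphi_t)\rangle_{\varphi}$ lies in the fixed inner-product space $\T_\varphi\mathscr{D}$, and differentiating at $t=0$ yields \eqref{Compatible with metric D} — and that it is torsion-free, using torsion-freeness of $\nabla$ on $M$ together with the explicit form of $[\bm A_1,\bm A_2]$. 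By uniqueness of the connection determined by Koszul's formula \eqref{Koszul D}, $\widetilde\nabla=\bm \nabla$. Alternatively, one can bypass the torsion computation by observing that $\widetilde\nabla$ agrees with the formulas already recorded in this subsection in the two generating cases: for right-invariant $\check A$ the Lie-derivative term drops and one recovers $(\nabla_C A)\circ\varphi$, and for $\bm A(\varphi)=\nabla\phi(\varphi)\circ\varphi$ one uses $\tfrac{\d}{\d t}|_{t=0}\nabla\phi(\varphi_t)=\nabla\,\mathcal{L}^{\mathscr{D}}_{C\circ\varphi}\phi(\varphi)$.

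The main obstacle is the two-variable differentiation hidden in $A(\varphi_t)(\varphi_t(x))$: one must cleanly separate the contribution of moving the base point $t\mapsto\varphi_t(x)$ on $M$ (which produces the $\nabla_C$ term) from that of moving the parameter $\psi\mapsto A(\psi)$ (which produces the Lie-derivative term), and verify that the resulting pointwise-in-$x$ derivatives patch together into a genuine element of $\T_{\varphi}\mathscr{D}$, uniformly in $x$; this is exactly where the $\Gamma^1$ regularity of $\bm A$ and the ILH smoothness framework are needed. A secondary point requiring care is matching $\widetilde\nabla$ with the Koszul-defined $\bm \nabla$ for a general $\bm A\in\Gamma^1(\mathscr{D})$, which is neither right-invariant nor of gradient type; this is handled by the metric-plus-torsion argument above.
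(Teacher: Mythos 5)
Your proposal is correct and takes essentially the same route as the paper: the paper's proof is exactly the pointwise computation $\frac{\d}{\d t}\big|_{t=0}\sslash_{0,t}^{-1}\,A(\varphi_t)(\varphi_t(x)) = (\mathcal{L}^{\mathscr{D}}_{C\circ\varphi}A(\varphi))(\varphi(x)) + (\nabla_C A)(\varphi(x))$, identified with $\bm\nabla_{C\circ\varphi}\bm A(\varphi)(x)$ via the explicit formulas for $\bm\nabla$ recorded earlier in the subsection. Your additional verification that the resulting operator is the Levi-Civita connection (via metric compatibility, torsion-freeness and Koszul uniqueness) is a more careful justification of a step the paper leaves implicit, not a different argument.
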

    
    \begin{proof}
    Let $\bm A(\varphi) = A(\varphi) \circ \varphi$ for all $\varphi \in \mathscr{D}^s$, $(\varphi_t)_{t \geq 0}$ as in the statement of the proposition and $x \in M$. Then,
    
    \begin{align*}
        \left.\frac{\d}{\d t} \right|_{t =0} (\tau_{0,t}^{-1}\bm A(\varphi_t))(x) &= \left.\frac{\d}{\d t} \right|_{t =0} (\sslash_{0,t}^{-1}  A(\varphi_t)(\varphi_t(x))) \\
        &= \left.\sslash_{0,t}^{-1}\left( (\partial_t A(\varphi_t))(\varphi_t(x) \right) +  \sslash_{0,t}^{-1} D_t A(\varphi_t)(\varphi_t(x))\right|_{t=0} \\
        & = \left( (\mathcal{L}^{\mathscr{D}}_{C \circ \varphi} A(\varphi))(\varphi(x)) \right)  + \left( (\nabla_{C}A)(\varphi(x)) \right) \\
        & = \left( \bm \nabla_{C \circ \varphi} \bm A (\varphi) \right)(x).
    \end{align*}
    The desired identity now follows.
    \end{proof}
    
    In the remainder of the article, we will write either $\bm D_t \bm A(\varphi_t)$ or $\bm \nabla_{\dot{\varphi}_t}\bm A(\varphi_t)$ to denote the covariant derivative of $\bm A$ along $(\varphi_t)_{t \geq 0}$.

    \vspace{10pt}
    
    \noindent \textbf{Hessian.} Let $\bm F \in \C^2(\mathscr{D}^s)$ and $\bm A,\bm B \in \Gamma^1(T\mathscr{D}^s)$. The quantity
    
    $$
    \bm \he \bm F(\bm A, \bm B) = \mathcal{L}^{\mathscr{D}^s}_{\bm A} \mathcal{L}^{\mathscr{D}^s}_{\bm B} \bm F - \bm \nabla_{\bm A} \bm B \bm F
    $$
    is called the \textit{Hessian} of $\bm F$ and it is easy to see that, as in the finite dimensional case, the following identity holds:
    
    $$
    \bm \he \bm F(\varphi)(\bm A(\varphi), \bm B(\varphi)) = \langle \bm \nabla_{\bm A(\varphi)} \bm \nabla \bm F(\varphi), \bm B(\varphi) \rangle_\varphi.
    $$
    The operator $\bm \he $ is a symmetric tensor since, by \eqref{Torsion free D}, $\bm \nabla_{\bm A} \bm B - \bm \nabla_{\bm B} \bm A = [\bm A, \bm B]$.
    
    The Hessian of $\bm F \in \C^2(\mathscr{D})$ is obtained as the limit object of the one on $\mathscr{D}^s$, is denoted by $\bm \he \bm F$.
    
    	\vspace{10pt}
	
	\noindent \textbf{Ordinary differential equations.}
    
    	\begin{prop} \label{ODE D}
		Let $\bm A \in \Gamma^1_S(T\mathscr{D})$ and $\varphi \in \mathscr{D}$. Then, the ordinary differential equation
		
		\begin{equation*}
		\dot{\Phi}_t(\varphi) = \bm A(\Phi_t(\varphi)), \quad \Phi_0(\varphi) = \varphi
		\end{equation*}
		admits a unique solution.
	\end{prop}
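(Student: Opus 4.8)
The plan is to reduce to the classical Cauchy--Lipschitz theorem on the Hilbert manifolds $\mathscr{D}^s$ and then transfer the conclusion to the ILH group $\mathscr{D} = \bigcap_{s > m/2} \mathscr{D}^s$. Fix $s > m/2$. Since $\bm A \in \Gamma^1(\mathscr{D})$, it admits, for a suitable $j(s) \geq s$, a $\C^1$ extension to a vector field $\bm A^{j(s)}$ on the Hilbert manifold $\mathscr{D}^{j(s)}$. A $\C^1$ vector field is locally Lipschitz, so the Picard--Lindel\"of theorem on Banach manifolds yields, for the initial point $\varphi \in \mathscr{D} \subset \mathscr{D}^{j(s)}$, a unique maximal solution $t \mapsto \Phi^s_t(\varphi)$ of $\dot{\Phi}^s_t = \bm A^{j(s)}(\Phi^s_t)$ with $\Phi^s_0 = \varphi$, defined on some $[0,T_s)$. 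Uniqueness at each level immediately gives the uniqueness assertion, once it is known that the solution stays in $\mathscr{D}$.

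It remains to show that these solutions coincide for all $s$ (as a single curve in the coarsest relevant Hilbert manifold), that this curve takes values in $\mathscr{D}$, and that it is global. For $s' \geq s$, the $\mathscr{D}^{j(s')}$-solution also solves the $\mathscr{D}^{j(s)}$-equation, so by uniqueness it agrees with $\Phi^s$ on the overlap of their intervals. The essential structural point is a no-loss-of-regularity statement: $T_s$ does not depend on $s$, and the limiting curve lies in $\mathscr{D}$. To see this, rewrite the equation pointwise on $M$: with $\psi_t := \Phi_t(\varphi)$ and $V_t := A(\psi_t) \in \Gamma(M)$, it reads $\partial_t \psi_t(x) = V_t(\psi_t(x))$, $\psi_0 = \varphi$, whence $\psi_t = \Psi_t \circ \varphi$ where $(\Psi_t)$ is the flow from $\mathrm{id}$ of the time-dependent vector field $t \mapsto V_t$ on $M$. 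Because $M$ is compact, this flow is complete and inherits the (smooth) regularity of $t \mapsto V_t$ that is guaranteed by $\bm A \in \Gamma^1(\mathscr{D})$; hence $\psi_t = \Psi_t \circ \varphi$ is as regular as $\varphi$, the maximal interval is the same at every Sobolev level, and the common solution lies in $\bigcap_s \mathscr{D}^{j(s)} = \mathscr{D}$ with $t \mapsto \Phi_t(\varphi) \in \mathscr{D}$ of class $\C^1$.

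For global existence one must rule out blow-up at the Hilbert level: using that $A$ is locally Lipschitz and bounded on bounded sets in its $\mathscr{D}$-variable, together with the completeness of flows of time-dependent vector fields on the compact manifold $M$, a Gronwall estimate on $\|\Phi^s_t(\varphi)\|_{H^{j(s)}}$ shows $T_s = +\infty$. An equivalent and perhaps more transparent route is a Picard iteration set directly on paths $(\psi_t)$ in $\mathscr{D}$ with $\psi_0 = \varphi$: map $(\psi_t)$ to $t \mapsto \Psi_t \circ \varphi$, where $(\Psi_t)$ is the flow from $\mathrm{id}$ of $t \mapsto A(\psi_t)$; each such image is globally defined, and the Lipschitz dependence of flows on their generating field combined with the Lipschitz character of $A$ makes the map a contraction on $\C([0,T], \mathscr{D}^s)$ for small $T$, whose fixed point is the desired local solution, extended to $[0,\infty)$ by the same a priori bound. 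I expect the main obstacle to be precisely this interplay between the loss of derivatives in the composition $(A, \varphi) \mapsto A \circ \varphi$ and the $\C^1$-regularity of $\bm A$ in its $\mathscr{D}$-argument --- i.e. verifying that the maximal interval of existence is independent of the Sobolev index --- after which the statement follows from standard Banach-manifold ODE theory, exactly as for Proposition \ref{ODE on P}.
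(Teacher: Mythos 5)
Your proposal is correct and follows the same route as the paper, which simply invokes the Cauchy--Lipschitz theorem on each Hilbert manifold $\mathscr{D}^s$ for $s > m/2$ and intersects. Your additional care about consistency of the solutions across Sobolev levels, the no-loss-of-regularity argument via the pointwise flow representation $\Phi_t(\varphi) = \Psi_t \circ \varphi$, and global existence on the compact $M$ fills in details the paper leaves implicit, but the underlying argument is the same.
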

	
	\begin{proof}
		This result is well known, it is sufficient to apply Cauchy-Lipschitz theorem on each $\mathscr{D}^s$ for $s$ large enough.
	\end{proof}

    \subsection{\texorpdfstring{$\mathscr{D}$}{D} as a Riemannian submersion onto \texorpdfstring{$\P_\infty$}{P}}\label{Subsection 3.3} This subsection is devoted to the presentation of the Riemannian submersion structure of $\mathscr{D}$ onto $\P_\infty$. The tools introduced here will be used intensively in Sections \ref{Section 5} and \ref{Section 6}.
    
    \noindent \textbf{Submersion.} The projection $p : \mathscr{D}^{s+1} \rightarrow \P^s$ is a \textit{submersion} in the following sense: for all $\varphi \in \mathscr{D}^{s+1}$, the map $\bm T p : T_{\varphi} \mathscr{D}^{s+1} \rightarrow T_{p(\varphi)} \P^s$ is surjective. Indeed, let $\mu \in \P^s$ and $V_\phi \in T_{\mu} \P^s$, it is clear that for all $ \varphi \in \mathscr{D}_\mu^{s+1}$, $\nabla \phi \circ \varphi$ is in $T_\varphi \mathscr{D}^s$. Let $\dot{\varphi}_t = \nabla \phi \circ \varphi_t$ with $\varphi_0 = \varphi$, we get (in the sense of distribution):
    
    $$
    T_{\varphi}p(\nabla \phi \circ \varphi) = \partial_t|_{t=0} p(\varphi_t) = V_\phi
    $$
    since for all $f \in \C^\infty(M)$:
    
    $$
    \left. \frac{\d}{\d t}\right|_{t=0} \left( \int_M f(\varphi_t(x)) \d \vol (x) \right) = \int_M \langle \nabla f(\varphi(x)), \nabla \phi(\varphi(x)) \rangle \d\vol(x) = \int_M \langle \nabla f, \nabla \phi \rangle \d\mu = \left \langle V_f, V_\phi \right \rangle_\mu.
    $$
    As a consequence, for $\varphi \in \mathscr{D}$, $\bm T p : T_\varphi\mathscr{D} \rightarrow T_{p(\varphi)}\P_\infty$ is surjective.
    
    \vspace{10pt}
    
	\noindent \textbf{Horizontal and vertical bundles.} Let us define the \textit{vertical tangent space} in $\varphi \in \mathscr{D}$ to be
	
	\begin{equation*}
	    \V_\varphi \mathscr{D} = \ker T_{\varphi} p.
	\end{equation*}
    Since $p$ is a submersion, the fiber $\mathscr{D}_{p(\varphi)}$ can be seen as a submanifold of $\mathscr{D}$ where $\mathscr{V}_\varphi T \mathscr{D}$ is the tangent space at $\varphi$. Given that $\mathscr{D}$ is equipped with a Riemannian metric we can define the \textit{horizontal tangent space} in $\varphi$ to be 
	
	\begin{equation*}
	    \H_\varphi \mathscr{D} = \V_\varphi \mathscr{D}^\perp \cap T_\varphi \mathscr{D}.
	\end{equation*}
	This construction defines a right inverse to $T_{\varphi}p : \H_\varphi \mathscr{D} \rightarrow T_{p(\varphi)} \P$, called the horizontal lift, we denote it by $\h_\varphi$. In particular, $\h_\varphi (V_\phi) =  \nabla \phi \circ \varphi$. To see that $p$ is a \textit{Riemannian submersion}, just notice that
    
    $$
    |V_\phi|_{\mu}^2 = \int_M | \nabla \phi(x) |^2 \d\mu(x) = \int_M | \nabla \phi(\varphi(x)) |^2 \d\vol = |\h_\varphi(V_\phi)|_\varphi^2.
    $$
    Note that the vertical component of the tangent space $T_\varphi \mathscr{D}$ is
    
    $$
    \V_\varphi \mathscr{D} = \left \{ Y \in \Gamma(\varphi^*TM), \, \text{div}_{p(\varphi)}(Y) = 0 \right \}
    $$
    while the horizontal one is
    
    $$
    \H_{\varphi} \mathscr{D} = \left \{ \nabla \phi \circ \varphi \, : \, \phi \in \C^\infty(M) \right \}.
    $$
    The horizontal (resp. vertical) bundle is denoted by $\mathscr{H}\mathscr{D}$ (resp. $\mathscr{V} \mathscr{D}$).

    \vspace{10pt}
    
    \noindent \textbf{Extensions of horizontal and vertical bundles.} Let $s > m/2 + 3$ and $\varphi \in \mathscr{D}^s$. By Appendix \ref{Annexe Hodge}, $T_{\varphi} \mathscr{D}^s = \V_{\varphi} \mathscr{D}^s \oplus \H_{\varphi} \mathscr{D}^s$ with
    
    $$
    \V_\varphi \mathscr{D}^s = \left \{  Y \in \Gamma^{(s)}(\varphi^*TM), \, \text{div}_{p(\varphi)}(Y\circ \varphi^{-1}) = 0 \right \}
    $$
    and
    
    $$
    \H_{\varphi} \mathscr{D}^s =\left \{ \nabla \phi \circ \varphi \, : \, \phi \in H^{s+1}(M,\mathbb{R}) \right \},
    $$
    where the orthogonal sum holds for the $L^2$ weak Riemannian metric. Moreover, the horizontal lift map is a linear continuous map $\h_\varphi : T_{p(\varphi)} \P^s \rightarrow \H_{\varphi} \mathscr{D}^{s+1}$.
    
        \begin{prop} \label{P Propriétés invariances à gauche et à droite des fibrés} 
    Let $s >m/2+3$. Then,
    
    \begin{enumerate}
    \item The map $\bm T R_\psi$ is an isomorphism between $\H_{\varphi} \mathscr{D}^s$ and $\H_{\varphi \cdot \psi} \mathscr{D}^s$ for all $\varphi, \psi \in \mathscr{D}^{s}$.
    \item The map $\bm T R_g$ is an isomorphism between $\V_\varphi \mathscr{D}^s$ and $\V_{\varphi \cdot g} \mathscr{D}^s$ for all $\varphi \in \mathscr{D}^{s}$ and $g \in G^{s}$.
    \item The map $\bm T R_g$ is an isometric isomorphism between $T_\varphi \mathscr{D}^s$ and $T_{\varphi \cdot g} \mathscr{D}^s$ for all $\varphi \in \mathscr{D}^{s}$ and $g \in G^{s}$.
    \item The map $\bm T L_\psi$ is an isomorphism between $\V_{\varphi} \mathscr{D}^s$ and $\V_{\psi \cdot \varphi}\mathscr{D}^s$ for all $\varphi \in \mathscr{D}^s, \psi \in \mathscr{D}^{s+1}$.
   
    \end{enumerate}
    \end{prop}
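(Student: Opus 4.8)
The plan is to deduce all four assertions from elementary manipulations at the level of smooth vector fields, after setting up the following identification once and for all. For every $\varphi\in\mathscr{D}$ the Riemannian metric \eqref{E Métrique Riemanienne} makes $A\circ\varphi\mapsto A$ an isometry from $T_\varphi\mathscr{D}$ onto the dense subspace of smooth sections inside $L^2(p(\varphi);TM)$, hence (extending by density) an isometric identification $\T_\varphi\mathscr{D}\cong L^2(p(\varphi);TM)$. The elementary observation I will lean on everywhere is that $p(\varphi)=\rho_\varphi\,\vol$ with $\rho_\varphi\in\C^\infty(M)$ strictly positive --- which is exactly why $p$ takes values in $\P_\infty$ --- so that, $M$ being compact, for any $\varphi,\chi\in\mathscr{D}$ the density $\rho_\varphi/\rho_\chi$ is bounded above and below by positive constants. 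Thus all the Hilbert spaces $\T_\varphi\mathscr{D}$ get identified with one and the same topological vector space $L^2(\vol;TM)$, carrying the mutually equivalent norms $\|\cdot\|_{L^2(p(\varphi))}$; under this identification $\bar{\H}_\varphi\mathscr{D}$ becomes the fixed closed subspace $\overline{\{\nabla\phi:\phi\in\C^\infty(M)\}}$, while $\bar{\V}_\varphi\mathscr{D}$ becomes $\overline{\{Y\in\Gamma(M):\divv_{p(\varphi)}(Y)=0\}}$, which depends on $\varphi$ only through the measure $p(\varphi)$.

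With this in place, I expect parts (1)--(3) to come for free. Since $TR_\psi$ sends $A\circ\varphi$ to $A\circ(\varphi\cdot\psi)$, under the identification it is simply the identity on sections, viewed as a map $L^2(p(\varphi);TM)\to L^2(p(\varphi\cdot\psi);TM)$; it is therefore a topological isomorphism (equivalent norms) with inverse $TR_{\psi^{-1}}$, and it carries $\{\nabla\phi\}$ onto itself, hence $\bar{\H}_\varphi\mathscr{D}$ onto $\bar{\H}_{\varphi\cdot\psi}\mathscr{D}$; this gives (1). When $\psi=g\in G$ one has moreover $p(\varphi\cdot g)=p(\varphi)$ (because $g_\sharp\vol=\vol$), so $TR_g$ is then literally the identity of $L^2(p(\varphi);TM)$ --- in particular an isometric isomorphism $\T_\varphi\mathscr{D}\to\T_{\varphi\cdot g}\mathscr{D}$, which is (3) (this is Remark~\ref{R G right invariance métrique R} extended by density); and since $\divv_{p(\varphi\cdot g)}=\divv_{p(\varphi)}$ it also fixes the subspace $\{Y:\divv_{p(\varphi)}(Y)=0\}$, hence its closure, which is (2).

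Part (4) is the only one that will require an actual computation. Here $TL_\psi$ sends $Y\circ\varphi$ to $(\psi_\ast Y)\circ(\psi\cdot\varphi)$, where $\psi_\ast Y$ is the pushforward vector field; under the identification $TL_\psi$ thus becomes $Y\mapsto\psi_\ast Y$, a map $L^2(p(\varphi);TM)\to L^2(p(\psi\cdot\varphi);TM)$, and with $C:=\sup_{y\in M}\|T_y\psi\|_{\mathrm{op}}<\infty$ I would check boundedness via
\[
\int_M\bigl|T_{\varphi(x)}\psi\bigl(Y(\varphi(x))\bigr)\bigr|^2\,\d\vol(x)\;\le\;C^2\int_M\bigl|Y(\varphi(x))\bigr|^2\,\d\vol(x)
\]
and symmetrically for $\psi^{-1}$, so that $TL_\psi$ extends to a topological isomorphism of the ambient spaces with inverse $TL_{\psi^{-1}}$. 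The substantive point is the naturality of the divergence under pushforward,
\[
\divv_{\psi_\sharp\mu}(\psi_\ast Y)=(\divv_\mu Y)\circ\psi^{-1},
\]
which follows from $\mathcal{L}_{\psi_\ast Y}(\psi_\sharp\mu)=\psi_\sharp(\mathcal{L}_Y\mu)$ together with $\psi_\sharp\bigl((\divv_\mu Y)\mu\bigr)=\bigl((\divv_\mu Y)\circ\psi^{-1}\bigr)(\psi_\sharp\mu)$. Since $p(\psi\cdot\varphi)=\psi_\sharp\bigl(p(\varphi)\bigr)$, this identity yields $\divv_{p(\varphi)}(Y)=0\iff\divv_{p(\psi\cdot\varphi)}(\psi_\ast Y)=0$; hence $TL_\psi$ maps $\V_\varphi\mathscr{D}$ bijectively onto $\V_{\psi\cdot\varphi}\mathscr{D}$, and being a topological isomorphism of the ambient Hilbert spaces it maps the closure $\bar{\V}_\varphi\mathscr{D}$ onto $\bar{\V}_{\psi\cdot\varphi}\mathscr{D}$.

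The only place where I expect any real care to be needed --- and it is bookkeeping rather than difficulty --- is keeping track, for each of the four maps, of \emph{which} $L^2$-measure the source and the target subspaces are closed in, and verifying that the map is a bounded linear isomorphism of the two ambient Hilbert spaces (so that it commutes with closure, $\overline{T(S)}=T(\overline{S})$). Both are settled once and for all by the mutual equivalence of the measures $\{p(\chi):\chi\in\mathscr{D}\}$, which itself rests only on the compactness of $M$; everything else reduces to the elementary identities on smooth vector fields recorded above.
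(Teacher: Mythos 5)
Your proof is correct. For items (1)--(3) it is in substance the paper's own argument: the paper simply records that $TR_\psi(A\circ\varphi)=A\circ(\varphi\cdot\psi)$, that $p(\varphi\cdot g)=p(\varphi)$ for $g\in G$, and invokes Remark \ref{R G right invariance métrique R}; your identification $\T_\varphi\mathscr{D}\cong L^2(p(\varphi);TM)$ merely makes explicit the mutual equivalence of the ambient norms that the paper leaves implicit, and this is precisely what legitimizes passing to the closures $\bar{\H}$ and $\bar{\V}$ --- a point the paper's one-line proofs gloss over. The genuine divergence is in item (4): the paper uses the characterization $\V_{\psi\cdot\varphi}\mathscr{D}=\ker T_{\psi\cdot\varphi}p$ and argues dynamically, flowing along the right-invariant field $\check{Y}$, observing that $\mathrm{div}_{p(\varphi)}(Y)=0$ forces $p(\varphi_t)\equiv p(\varphi)$, hence $p(\psi\cdot\varphi_t)$ is constant and $Tp(TL_\psi(Y\circ\varphi))=0$. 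You instead work with the divergence characterization of the vertical space and establish the change-of-variables identity $\mathrm{div}_{\psi_\sharp\mu}(\psi_\ast Y)=(\mathrm{div}_\mu Y)\circ\psi^{-1}$ directly; this is exactly the infinitesimal content of the paper's flow computation, so the two routes are equivalent. What your version buys is a clean treatment of the closure: the operator bound via $\sup_y\|T_y\psi\|_{\mathrm{op}}$ shows $TL_\psi$ is a topological isomorphism of the ambient Hilbert spaces, so it commutes with taking closures, whereas the paper's argument as written only handles smooth vertical vectors $Y\circ\varphi\in\V_\varphi\mathscr{D}$ and leaves the extension to $\bar{\V}_\varphi\mathscr{D}$ unstated.
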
 
    
    \begin{proof}
    \begin{enumerate}
        \item Let $\varphi, \psi \in \mathscr{D}$ and $Z \circ \varphi \in \H_\varphi \mathscr{D}^s$. We have $\bm T R_\psi (Z \circ \varphi) = Z \circ( \varphi \cdot \psi) \in \H_{\varphi \cdot \psi} \mathscr{D}^s$. The inverse map is given by $\bm T R_{\psi^{-1}}$.
        
    \item Let $\varphi \in \mathscr{D}$, $g \in G$, $Y \circ \varphi \in \V_\varphi \mathscr{D}^s$.  We have $\bm T R_g (Z \circ \varphi) = Z \circ (\varphi \cdot g) \in \V_{\varphi \cdot g} \mathscr{D}^s$. The inverse map is given by $\bm T R_{g^{-1}}$.
    
    \item This is a direct consequence of the fact that the Riemannian metric is $G$-right invariant, see Remark \ref{R G right invariance métrique R}.
    
    \item Let $Y \circ \varphi \in \V_\varphi \mathscr{D}^s$. Let $(\varphi_t)_{t \geq 0}$ such that $\varphi_0 = \varphi$ and $\dot{\varphi_t} = \check{Y}(\varphi_t)$, where $\check{Y}$ is the right invariant vector field associated to $Y$. Since $\mathrm{div}_{p(\varphi)}(Y)=0$ and the flow associated to a divergence free vector field is volume preserving, we have
    
    $$
    \bm T p (\bm T L_\psi(Y \circ \varphi)) = \left. \frac{\d}{\d t} \right|_{t=0}p(\psi \cdot \varphi_t) = \left. \frac{\d}{\d t} \right|_{t=0} p(\psi) = 0.
    $$
    Consequently, $\bm T L_\psi(Y \circ \varphi) \in \V_{\psi \cdot \varphi}  \mathscr{D}^s$. The inverse map is given by $\bm T L_{\psi^{-1}}$.
    \end{enumerate}

    \end{proof} 
    
    \begin{defi}
        Let $s >m/2+3$ and $\varphi \in \mathscr{D}^{j(s)}$. The \textit{vertical projection} (resp. \textit{horizontal projection}) is defined to be the orthogonal projection on the vertical space $P_{\V_\varphi} : T_{\varphi} \mathscr{D}^s \rightarrow \V_{\varphi}\mathscr{D}^s$ (resp. on the horizontal space $P_{\H_\varphi} : T_{\varphi} \mathscr{D}^s \rightarrow \H_{\varphi}\mathscr{D}^s$).
    \end{defi}
    
    \vspace{10pt}

    \vspace{10pt}
    \noindent \textbf{Stronger notions of regularity on $\P$.} Let us introduce additional notions of regularity on the Wasserstein space. Let $k \in \mathbb{N}$.
    	    \begin{enumerate}
    	        \item A functional $F : \P_\infty \rightarrow \mathbb{R}$ is said to be $\C^k(\P_\infty)$ if $F \circ p \in \C^k(\mathscr{D})$.
    	    \item A functional $F \in \C^k_w(\P)$ is said to be $\C^k(\P)$ if $F|_{\P_\infty} \in \C^k(\P_\infty)$.
    	    \item A vector field $\bar{Z} = \nabla \phi(\cdot) \in \Gamma_w^k(T\P)$ is said to belong to $\Gamma^k(T\P)$ if $\bm Z = \mathfrak{h}(\bar{Z}) \in \Gamma^k(T\mathscr{D})$.
    	    \item A vector field $\bar{Z} \in \Gamma^k(T\P)$ is said to belong to $\Gamma^k_S(T\P)$ if $\bm Z = \mathfrak{h}(\bar{Z}) \in \Gamma^k_S(T\mathscr{D})$.
    	    \end{enumerate}
    	    
    	    \begin{rem}
    	        One can check that both the potential energy functional \eqref{E energie potentielle} and the interaction energy functional \eqref{E Energie interaction} are of class $\C^2(\P)$.
    	    \end{rem}
    	    
    	    \begin{exemple}
    	    A representative example of a subspace of $\Gamma^\infty_S(\P)$ is the following:
    	    
    	    \begin{equation*}
    	        \left\{ \sum_{i=1}^l F_i V_{\phi_i}, \quad F_i \in \mathscr{A}, \, \phi_i \in \C^\infty(M), \, l \in \mathbb{N}^* \right\},
    	    \end{equation*}
    	     where $\mathscr{A}$ is the algebra of polynomial functions on $\P$, namely,
    	    
\begin{equation*}
\mathscr{A} := \left \{ F = \prod_{i=1}^k F_{f_i} \, : \, f_i \in \C^\infty(M), \, k \in \mathbb{N}^* \right \}.
\end{equation*}
    	    \end{exemple}
    	    
    	    \begin{rem}
    	        It should be emphasized that the present definitions of $\C^k$ functions and vector fields are weaker than the standard ones on a Riemannian manifold. This difference arises from the fact that equipping the tangent bundle $\T \P$ with a topology suitably compatible with the Riemannian metric $\langle \bullet, \bullet \rangle$ is a delicate problem.
    	    \end{rem}

    \vspace{10 pt}

    \noindent \textbf{Second fundamental form on G.} As mentioned before, $G$ can be seen as a Riemannian submanifold of $\mathscr{D}$, we denote by $\mathcal{L}^G$ the Lie derivative on $G$. Let $\bm Y_1,  \bm Y_2$ be vertical vector fields on $\mathscr{D}$. Then, using orthogonal decomposition, we have
    
    $$
    \bm \nabla_{\bm Y_1} \bm Y_2(g) = P_{\H_g}(\bm \nabla_{\bm Y_1} \bm Y_2(g)) + P_{\V_g}( \bm \nabla_{\bm Y_1} \bm Y_2(g)), \quad \forall g \in G.
    $$
    We define the map $(\bm Y_1, \bm Y_2) \in (\Gamma^1_S(TG))^2 \mapsto \mathrm{I\!I}^G(\bm Y_1,\bm Y_2) = P_{\H_g}(\nabla_{\bm Y_1} \bm Y_2) \in \Gamma^0_S(\H\mathscr{D})$ to be the \textit{second fundamental form on $G$} (as a submanifold of $\mathscr{D}$). Notice that for all $\varphi \in \mathscr{D}$, the set $\varphi \cdot G$ is the fiber above $p(\varphi)$. We define in a similar way the second fundamental form on any fiber, namely $\mathrm{I\!I}^{\varphi \cdot G}(\bm Y_1,\bm Y_2)$. 
    
    We also define the induced \textit{Levi-Civita connection} on $G$ as $\bm \nabla^G_{\bm Y_1} \bm Y_2 = P_{\V_g}( \bm \nabla_{\bm Y_1} \bm Y_2(g))$. It can be easily checked that this connection is actually the Levi-Civita connection on $G$ for the Riemannian metric on $\mathscr{D}$ restricted to $G$. In particular $\bm \nabla^G$ is torsion free, compatible with the metric and the Koszul's formula holds. The \textit{Hessian} of $\bm F \in \C^\infty(G)$ in $\varphi \in G$ is defined as
    
    $$
    \bm \he^G \bm F ( \bm Y_1(\varphi),\bm Y_2(\varphi)) = \mathcal{L}^G_{\bm Y_1} \mathcal{L}^G_{\bm Y_2} \bm F(\varphi) - \bm \nabla^G_{\bm Y_1}\bm Y_2 \bm F(\varphi),
    $$
    for $\bm Y_1, \bm Y_2$ vector fields on $G$. 
    \begin{prop}
    The second fundamental form $\mathrm{I\!I}^{\varphi \cdot G}$ is a symmetric tensor on $TG$ for all $\varphi \in \mathscr{D}$.
    \end{prop}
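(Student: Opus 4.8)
The plan is to verify separately the two assertions contained in the statement — that $\mathrm{I\!I}^{\varphi\cdot G}$ is tensorial in each argument, and that it is symmetric. Tensoriality in the first argument is immediate from the $\C^\infty(\mathscr{D})$-linearity of the connection $\bm\nabla$ in its direction slot: for $\bm Y_1,\bm Y_2\in\Gamma^1_{\V}(\mathscr{D})$ and a scalar function $\bm F$ one has $P_{\H_g}(\bm\nabla_{\bm F\bm Y_1}\bm Y_2)=\bm F(g)\,P_{\H_g}(\bm\nabla_{\bm Y_1}\bm Y_2)$, so the value $\mathrm{I\!I}^{\varphi\cdot G}(\bm Y_1,\bm Y_2)(g)$ depends on $\bm Y_1$ only through $\bm Y_1(g)$. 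For symmetry I would use that $\bm\nabla$ is torsion-free (equation \eqref{Torsion free D}): then
$$
\mathrm{I\!I}^{\varphi\cdot G}(\bm Y_1,\bm Y_2)-\mathrm{I\!I}^{\varphi\cdot G}(\bm Y_2,\bm Y_1)=P_{\H}\big(\bm\nabla_{\bm Y_1}\bm Y_2-\bm\nabla_{\bm Y_2}\bm Y_1\big)=P_{\H}\big([\bm Y_1,\bm Y_2]\big),
$$
and this vanishes as soon as one knows that the Lie bracket of two vertical vector fields is vertical. Finally, tensoriality in the second argument is a formal consequence of symmetry together with tensoriality in the first argument, via $\mathrm{I\!I}^{\varphi\cdot G}(\bm Y_1,\bm F\bm Y_2)=\mathrm{I\!I}^{\varphi\cdot G}(\bm F\bm Y_2,\bm Y_1)=\bm F\,\mathrm{I\!I}^{\varphi\cdot G}(\bm Y_2,\bm Y_1)=\bm F\,\mathrm{I\!I}^{\varphi\cdot G}(\bm Y_1,\bm Y_2)$.

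So the only step that requires a genuine argument is that $[\bm Y_1,\bm Y_2]\in\Gamma_{\V}(\mathscr{D})$ whenever $\bm Y_1,\bm Y_2\in\Gamma_{\V}(\mathscr{D})$. I would obtain this from the fact that a vertical vector field satisfies $Tp(\bm Y_i)=0$, i.e. is $p$-related to the zero vector field on $\P_\infty$; the purely algebraic computation of Lemma \ref{p-related crochet de lie}, which applies verbatim to the submersion $p:\mathscr{D}\to\P_\infty$, then shows that $[\bm Y_1,\bm Y_2]$ is $p$-related to $[0,0]=0$, so $Tp([\bm Y_1,\bm Y_2])=0$. Equivalently, the fibers $\mathscr{D}_{p(\varphi)}=\varphi\cdot G$ are the leaves of a foliation of $\mathscr{D}$, and the bracket of two vector fields tangent to the leaves is again tangent to the leaves. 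Either way $P_{\H}([\bm Y_1,\bm Y_2])=0$, which gives symmetry; and the whole argument does not depend on $\varphi$, so it also covers $\mathrm{I\!I}^{G}$ (the case $\varphi=\mathrm{id}$).

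I expect the main — and in fact rather mild — obstacle to be the infinite-dimensional bookkeeping rather than any geometric difficulty: one should make sure that torsion-freeness of $\bm\nabla$, the Leibniz rule, and involutivity of the vertical distribution are legitimate here. This is the case because all the identities used are pointwise and algebraic, so they can be read off on the Hilbert approximations $\mathscr{D}^s$, $s>m/2$, where each fiber $\mathscr{D}^s_{p(\varphi)}$ is an honest Hilbert submanifold of $\mathscr{D}^s$ with tangent space $\V\mathscr{D}^s$. With that understood, the proof is exactly the classical one showing that the second fundamental form of a Riemannian submanifold (here, the fiber sitting inside $\mathscr{D}$ with normal bundle $\H\mathscr{D}$) is a symmetric tensor.
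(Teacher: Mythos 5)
Your proposal is correct and its overall architecture coincides with the paper's: symmetry is reduced, via torsion-freeness, to the verticality of $[\bm Y_1,\bm Y_2]$ for vertical $\bm Y_1,\bm Y_2$, and tensoriality in both slots then follows formally from symmetry plus first-slot tensoriality (the paper states this last step in exactly the same words). Where you diverge is in the proof of the key fact that the vertical distribution is involutive. The paper pairs $[\bm Y_1,\bm Y_2](g)$ against the horizontal generators $\nabla f\circ g$, expands with metric compatibility, and concludes from the symmetry of the Hessian $\bm\he\bm F_f$ of the potential functional; you instead invoke $p$-relatedness, i.e.\ $Tp(\bm Y_i)=0$ implies $Tp([\bm Y_1,\bm Y_2])=[0,0]=0$, as in Lemma \ref{p-related crochet de lie}. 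Your route is the more elementary one: unwound, it amounts to noting that $\mathcal{L}^{\mathscr{D}}_{\bm Y_i}(F_f\circ p)=\int_M\langle\nabla f,Y_i\rangle\,\d p(\varphi)=0$ for every $f\in\C^\infty(M)$, so the iterated Lie derivatives and hence $[\bm Y_1,\bm Y_2]\bm F_f=\langle[\bm Y_1,\bm Y_2],\nabla f\circ g\rangle_g$ vanish, with no connection or Hessian needed. The one point you should make explicit is that "applies verbatim" requires choosing the right test class on $\P_\infty$: the pullbacks $F_f\circ p=\bm F_f$ are precisely the functionals whose gradients $\nabla f\circ\varphi$ span a dense subspace of $\bar{\H}_\varphi\mathscr{D}$, so annihilating all of them does force the horizontal part of the bracket to vanish. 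With that said, your argument and the paper's test against exactly the same family of functionals; the paper's version buys nothing extra here beyond displaying the identity $\langle\bm Y_2,\bm\nabla_{\bm Y_1}\nabla f\circ g\rangle=\bm\he\bm F_f(\bm Y_1,\bm Y_2)$, which it reuses elsewhere, while yours is shorter and makes the foliation structure of the fibers transparent.
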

    
    \begin{proof}
    We show the result for $\varphi = \mathrm{id}$, the proof is the same for any fiber $\varphi \cdot G$. Let $\bm Y_1, \bm Y_2$ be vertical vector fields on $\mathscr{D}$. The fact that $\mathrm{I\!I}^G(\bm Y_1, \bm Y_2) = \mathrm{I\!I}^G(\bm Y_2, \bm Y_1)$ is equivalent to the fact that $[\bm Y_1, \bm Y_2]$ is vertical. Let $g \in G$ and $f \in \C^\infty(M)$, we have
    
    \begin{align*}
    \left \langle [\bm Y_1, \bm Y_2](g), \nabla f \circ g \right \rangle_g &= \left \langle \bm \nabla_{\bm Y_1} \bm Y_2(g), \nabla f \circ g \right \rangle_g - \left \langle \bm \nabla_{\bm Y_2} \bm Y_1(g), \nabla f \circ g \right \rangle_g \\
    & =  \mathcal{L}^D_{\bm Y_1(g)} \langle \bm Y_2(g), \nabla f \circ g \rangle_g - \mathcal{L}^D_{\bm Y_2(g)} \langle \bm Y_1(g), \nabla f \circ g \rangle_g  \\
    &- \left \langle  \bm Y_2(g), \bm \nabla_{\bm Y_1} \nabla f \circ g \right \rangle_g + \left \langle  \bm Y_1(g), \bm \nabla_{\bm Y_2} \nabla f \circ g \right \rangle_g.
    \end{align*}
    The first two terms in the last equality are $0$ since $Y_i(g) \perp \nabla f \circ g$ for $i=1,2$. Using the fact that
    
    $$
    \left \langle  \bm Y_2(g), \bm \nabla_{\bm Y_1} \nabla f \circ g \right \rangle_g = \left \langle  \bm Y_1(g), \bm \nabla_{\bm Y_2} \nabla f \circ g \right \rangle_g = \bm \he F_f(g)(Y_1,Y_2)
    $$
    and the fact that $\bm \he F_f(g)$ is a symmetric tensor we get
    
    $$
    \left \langle [\bm Y_1, \bm Y_2](g), \nabla f \circ g \right \rangle_g = 0,
    $$
    and thus, $[\bm Y_1, \bm Y_2]$ is vertical. Since the second fundamental form is tensorial in the first coordinate and symmetric, it is tensorial in both components.
    
    \end{proof} 
    
    \noindent \textbf{$G$-Right invariant vector fields.} A vector field $\bm A \in \Gamma^0(T\mathscr{D}^s)$ is said to be \textit{$G^s$-right invariant} if for all $\varphi \in \mathscr{D}^s$ and all $g \in G^s$, 
    
    \begin{equation*}
    \bm T R_g(\bm A(\varphi)) = \bm A(\varphi \cdot g).
    \end{equation*}
    A vector field $\bm A \in \Gamma_S^0(T\mathscr{D})$ is said to be $G$-right invariant if its extensions $\bm A^s \in \Gamma^0_S(T\mathscr{D})$ are $G^s$-right invariant vector fields.
    
    \begin{rem}\label{G-invariance des lifts horizontaux}
        Let $\bar{Z} \in \Gamma^k_S(T\P)$. Then, $\bm Z = \mathfrak{h}(\bar{Z}) \in \Gamma^k_S(T\mathscr{D})$ is a $G$-right invariant vector field on $\mathscr{D}$.
    \end{rem}

    \begin{prop} \label{Invariance à droite connexion levi-civita difféos}
	\begin{enumerate}
	\item Let $\bm A, \bm B \in \Gamma_S^1(T\mathscr{D})$, then
        
        \begin{equation} \label{G invariance connexion levi civita}
            \bm T R_g(\bm \nabla_{\bm B} \bm A) = \bm \nabla_{\bm T R_g( \bm B)}\bm T R_g(\bm A), \quad \forall g \in G.
        \end{equation}
		In particular, if $\bm A, \bm B \in \Gamma(T\mathscr{D})$ are $G$-right invariant vector fields, $\bm \nabla_{\bm A}\bm B$ is a $G$-right invariant vector field. Moreover, as a straightforward consequence of \eqref{G invariance connexion levi civita}, the following holds:
        
        $$
        \bm \nabla \bm T R_g = 0.
        $$
	\item A vector field $\bm A$ on $\mathscr{D}$ is $G$-right invariant vector field if and only if $P_\H(\bm A)$ and $P_\V(\bm A)$ are $G$-right invariant vector fields.
	\item Let $\bm A \in \Gamma^1_S(T\mathscr{D})$ be a right invariant vector field for the action of $G$. Then we have
    
    $$
    \bm \nabla_{Y\circ \varphi} \bm A (\varphi) = \left( \nabla_{Y} A(p(\varphi)) \right) \circ \varphi, \quad \forall Y \circ \varphi \in \V_\varphi\mathscr{D}.
    $$
    
    \item Assume that $\bm Y_1, \bm Y_2$ are $G$-right invariant vertical vector fields, then, $\mathrm{I\!I}^G(\bm Y_1, \bm Y_2)$ is a $G$-right invariant vector field.
\end{enumerate}

    \end{prop}
    
    \begin{proof}
    \begin{enumerate}
    \item This result can be proved by using the same arguments as in Lemma \ref{Invariance à droite dérivée covariante dim finie}, namely, by using Koszul's formula and the fact $R_g$ is an isometry from $\mathscr{D}$ to $\mathscr{D}$.
    \item This is a direct consequence of Proposition \ref{P Propriétés invariances à gauche et à droite des fibrés} together with the uniqueness of the decomposition in $\H \mathscr{D} \oplus \V \mathscr{D}$.
    \item Let $(\varphi_t)_{t \geq 0}$ be such that $\partial_t|_{t=0} \varphi_t = Y \circ \varphi$. Since $\mathrm{div}_{p(\varphi)}(Y) =0$, we have $p(\varphi_t) = p(\varphi)$. Then, by Proposition \ref{Prop 3.7}, we have
    
    $$
    \bm \nabla_{Y \circ \varphi} \bm A (\varphi)(x) = \left.  \frac{\d}{\d t} \right|_{t=0} \tau_{t,0} A(p(\varphi))(\varphi_t)(x) = \left. \frac{\d}{\d t} \right|_{t=0} \sslash_{t,0} A(p(\varphi))(\varphi_t(x)) = \left( \nabla_Y A(p(\varphi) \right) \circ \varphi(x).
    $$
    \item This is a direct consequence of the first two points together.
    \end{enumerate}

    \end{proof}

\section{Infinite-dimensional stochastic calculus} \label{Section 4}

    \subsection{Stochastic differential equations on $\mathscr{D}$} \label{Subsection 4.1}
    
    Let $\bm A_0, \bm A_1, \dots, \bm A_N \in \Gamma^k_S(\mathscr{D})$ for $k \geq 0$. A $\mathscr{D}^s$-valued stochastic process $(\varphi_t)_{t \geq 0}$ adapted to $(\mathcal{F}_t)_{t \geq 0}$ is a solution to the SDE 

\begin{equation} \label{E17}
\dd^{\bm \nabla}\varphi_t = \SN \bm A_i(\varphi_t) \d W^i_t + \bm A_0(\varphi_t) \d t, \quad \varphi_0 = \varphi\in \mathscr{D}
\end{equation}
if for all $\bm F \in \mathcal{C}^\infty(\mathscr{D}^s)$, $(\varphi_t)_{t \geq 0}$ satisfies

$$
\bm F(\varphi_t) - \bm F(\varphi) = \SN \int_0^t \mathcal{L}^{\mathscr{D}^s}_{\bm A_i} \bm F(\varphi_s) \d W^i_s + \int_0^t \mathcal{L}^{\mathscr{D}^s}_{\bm A_0} \bm F(\varphi_s) \d s + \frac{1}{2} \SN \int_0^t \bm \he \bm F(\bm A_i, \bm A_i) \d s.
$$
In the same way, we say that $(\varphi_t)_{t \geq 0}$ is a solution to the Stratonovich's SDE in $\mathscr{D}^s$

\begin{equation} \label{EDS strato D}
\circ \dd\varphi_t = \SN \bm A_i(\varphi_t) \circ \d W^i_t + \bm A_0(\varphi_t) \d t, \quad \varphi_0 = \varphi
\end{equation}
if for all $\bm F \in \mathcal{C}^\infty(\mathscr{D}^s)$, $(\varphi_t)_{t \geq 0}$ satisfies

$$
\bm F(\varphi_t) - \bm F(\varphi) =\SN \int_0^t \mathcal{L}^{\mathscr{D}^s}_{\bm A_i} \bm F(\varphi_s) \d W^i_s + \int_0^t \mathcal{L}^\mathscr{D}_{\bm A_0} \bm F(\varphi_s) \d s + \frac{1}{2} \SN \int_0^t \mathcal{L}^{\mathscr{D}^s}_{\bm A_i} \mathcal{L}^\mathscr{D}_{\bm A_i} \bm F(\varphi_s) \d s.
$$
A process $(\varphi_t)_{t \geq 0}$ is said to be a solution in $\mathscr{D}$ of \eqref{E17} or of \eqref{EDS strato D} if there exists $s_0 \in \mathbb{N}$ such that it is a solution in all $\mathscr{D}^s$ for $s> s_0$.

A consequence of the definition is that if $(\varphi_t)_{t \geq 0}$ is solution to \eqref{EDS strato D}, then for any $\Psi \in \C^2_S(\mathscr{D},\mathscr{D})$, $(\Psi(\varphi_t))_{t \geq 0}$ is solution to:

\begin{equation*}
    \circ \dd \Psi(\varphi_t) = \SN (\Psi_*(\bm A_i))(\Psi(\varphi_t)) \circ \d W^i_t + (\Psi_*(\bm A_0))(\Psi(\varphi_t)) \d t, \quad \Psi(\varphi_0) = \Psi(\varphi).
\end{equation*}

\begin{rem}
    Note that this notion of solution is meaningful since, by Proposition \ref{P Evaluation map lisse}, the evaluation map $E_{x_0} : \varphi \mapsto \varphi(x_0)$ is smooth for all $x_0 \in M$. As a consequence, if $\bm A_i(\psi) = A_i\circ \psi$ for all $\psi \in \mathscr{D}$ and for all $i=0,1,\dots,N$, then:
    
    $$
    \circ \d \varphi_t(x_0) = \SN A_i(\varphi_t(x_0)) \circ \d W^i_t + A_0(\varphi_t(x_0)) \d t.
    $$
    This is in fact the approach used by Elworthy to show that a stochastic differential system on a compact manifold with smooth driving vector fields admits a unique solution flow, and that this flow is indeed a stochastic flow of diffeomorphisms, see \cite[Chapter VIII, Corollary 1C.2]{Elworthy1982}.
\end{rem} 

\begin{prop} \label{Equivalence Ito Strato difféos}
    Let $\bm A_0, \bm A_1, \dots, \bm A_N \in \Gamma^2_S(T\mathscr{D})$. The Stratonovich's SDE

$$
\circ \dd\varphi_t = \SN \bm A_i(\varphi_t) \circ \d W^i_t + \bm A_0(\varphi_t) \d t, \quad \varphi_0 = \varphi
$$
is equivalent to the following Itô's one:

$$
\dd^{\bm \nabla} \varphi_t = \SN \bm A_i(\varphi_t) \d W^i_t + \bm A_0(\varphi_t)\d t + \frac{1}{2}\SN \bm \nabla_{\bm A_i}\bm A_i(\varphi_t) \d t, \quad \varphi_0 = \varphi.
$$
\end{prop}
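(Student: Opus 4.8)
The plan is to reduce the equivalence to the very definition of the Hessian $\bm\he$ on $\mathscr{D}$, exactly mirroring the finite-dimensional Proposition \ref{Conversion Ito strato dimension finie}. Since a solution in $\mathscr{D}$ is by definition a solution in every $\mathscr{D}^s$ with $s>m/2$, it suffices to argue in a fixed $\mathscr{D}^s$. So I would fix $\bm F\in\C^\infty(\mathscr{D}^s)$ together with a $\mathscr{D}^s$-valued adapted process $(\varphi_t)_{t\ge0}$, and spell out what each of the two SDEs means when tested against $\bm F$.

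Being a solution of the Stratonovich equation $\circ\dd\varphi_t=\SN\bm A_i(\varphi_t)\circ\d W^i_t+\bm A_0(\varphi_t)\d t$ means, by definition,
\begin{equation*}
\bm F(\varphi_t)-\bm F(\varphi)=\SN\int_0^t\mathcal{L}^\mathscr{D}_{\bm A_i}\bm F(\varphi_s)\,\d W^i_s+\int_0^t\mathcal{L}^\mathscr{D}_{\bm A_0}\bm F(\varphi_s)\,\d s+\frac12\SN\int_0^t\mathcal{L}^\mathscr{D}_{\bm A_i}\mathcal{L}^\mathscr{D}_{\bm A_i}\bm F(\varphi_s)\,\d s.
\end{equation*}
On the other hand, being a solution of the Itô equation with diffusion fields $\bm A_i$ and drift $\bm A_0+\tfrac12\SN\bm\nabla_{\bm A_i}\bm A_i$ means, again by definition and using the linearity of $\mathcal{L}^\mathscr{D}$ in the drift vector field,
\begin{equation*}
\bm F(\varphi_t)-\bm F(\varphi)=\SN\int_0^t\mathcal{L}^\mathscr{D}_{\bm A_i}\bm F(\varphi_s)\,\d W^i_s+\int_0^t\mathcal{L}^\mathscr{D}_{\bm A_0}\bm F(\varphi_s)\,\d s+\frac12\SN\int_0^t\Bigl(\mathcal{L}^\mathscr{D}_{\bm\nabla_{\bm A_i}\bm A_i}\bm F+\bm\he\bm F(\bm A_i,\bm A_i)\Bigr)(\varphi_s)\,\d s.
\end{equation*}
Here the hypothesis $\bm A_i\in\Gamma^2(\mathscr{D})$ is used to guarantee $\bm\nabla_{\bm A_i}\bm A_i\in\Gamma^1(\mathscr{D})$, so that both $\mathcal{L}^\mathscr{D}_{\bm\nabla_{\bm A_i}\bm A_i}\bm F$ and $\bm\he\bm F(\bm A_i,\bm A_i)$ are well defined on $\mathscr{D}^s$ via the $\C^\infty$-extensions. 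Comparing the two identities, they hold simultaneously for every such $\bm F$ if and only if, for each $i$,
\begin{equation*}
\mathcal{L}^\mathscr{D}_{\bm A_i}\mathcal{L}^\mathscr{D}_{\bm A_i}\bm F=\mathcal{L}^\mathscr{D}_{\bm\nabla_{\bm A_i}\bm A_i}\bm F+\bm\he\bm F(\bm A_i,\bm A_i),
\end{equation*}
which is precisely the defining relation $\bm\he\bm F(\bm A,\bm B)=\mathcal{L}^\mathscr{D}_{\bm A}\mathcal{L}^\mathscr{D}_{\bm B}\bm F-\bm\nabla_{\bm A}\bm B\,\bm F$ specialised to $\bm A=\bm B=\bm A_i$. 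Intersecting over all $s>m/2$ then yields the statement on $\mathscr{D}$.

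I do not expect any serious obstacle here: the argument is a purely definitional matching, and all of the genuinely infinite-dimensional content has already been absorbed into the setup of Section \ref{Section 3} — namely the fact that $\bm\he\bm F$ and the covariant derivatives $\bm\nabla_{\bm A_i}\bm A_i$ make sense once the fields are of class $\Gamma^2$, and the reduction of "solution on $\mathscr{D}$" to "solution on each $\mathscr{D}^s$". The only point deserving a line of care is the consistency of the Hessian computed on $\mathscr{D}^s$ through the extensions of $\bm F$ and of the $\bm A_i$ with the ILH-defined $\bm\he\bm F$; this is immediate because the identity above is local in $\varphi$ and is inherited from the Koszul formula \eqref{Koszul D}, which holds verbatim on each $\mathscr{D}^s$.
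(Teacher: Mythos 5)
Your proposal is correct and follows exactly the route the paper takes: the paper's proof is the one-line observation that the equivalence is a direct consequence of the definition of the Hessian $\bm \he$, and your argument simply spells out that definitional matching term by term. Nothing further is needed.
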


\begin{proof}
The proof is a direct consequence of the definition of the Hessian $\bm \he$.
\end{proof}

\begin{defi}
	A continuous $\mathscr{D}$-valued process $(\varphi_t)_{t \geq 0}$ is said to be a \textit{semimartingale} if for all $\bm F \in \C^\infty(\mathscr{D})$, the process $(\bm F(\varphi_t))_{t \geq 0}$ is a real semimartingale.
\end{defi}

\begin{thm}[Elworthy, \cite{Elworthy1982}] \label{Existence unicité Difféos}
	Let $\bm A_0, \bm A_1, \dots, \bm A_N \in \Gamma^1_S(T\mathscr{D})$. The Itô's SDE
	
	$$
	\dd^{\bm \nabla} \Phi_t(\varphi) = \SN \bm A_i(\Phi_t(\varphi)) \d W^i_t + \bm A_0(\Phi_t(\varphi)) \d t, \quad \Phi_0(\varphi) = \varphi,
	$$
	admits a unique solution in $\mathscr{D}$.

\end{thm}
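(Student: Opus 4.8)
The plan is to descend from $\mathscr{D}$ to the scale of Hilbert manifolds $\mathscr{D}^s$, $s>m/2$, and, inside each $\mathscr{D}^s$, to the finite–dimensional compact manifold $M$ via the evaluation maps. Recall from Proposition \ref{P Evaluation map lisse} that $E_x:\varphi\mapsto\varphi(x)$ is smooth for every $x\in M$; writing $\bm A_i(\varphi)=A_i(\cdot,\varphi)\circ\varphi$ with $A_i\in\C^{\infty,1}(M\times\mathscr{D},TM)$, and using that the Levi–Civita connection $\bm\nabla$ on $\mathscr{D}$ is the pointwise lift of the one of $M$ (as in the formulas $\bm\nabla_{C\circ\varphi}\check A(\varphi)=(\nabla_CA)\circ\varphi$ and $\bm\nabla_{C\circ\varphi}(\nabla\phi\circ\varphi)=(\nabla_C\nabla\phi)\circ\varphi+(\nabla\mathcal L^{\mathscr{D}}_{C\circ\varphi}\phi)\circ\varphi$), the defining identity for a solution, tested against $\bm F=g\circ E_x$ with $g\in\C^\infty(M)$, is exactly Itô's formula for the $M$–valued process $\Phi_t(x)$ relative to the Levi–Civita connection of $M$:
\[
\d^{\nabla}\Phi_t(x)=\SN A_i\bigl(\Phi_t(x),\Phi_t\bigr)\,\d W^i_t+A_0\bigl(\Phi_t(x),\Phi_t\bigr)\,\d t .
\]
Hence $(\Phi_t)$ is a solution in $\mathscr{D}^s$ if and only if it is a continuous $\mathscr{D}^s$–semimartingale whose evaluations $(\Phi_t(x))$ solve the above for every $x$. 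Since we only assume $\bm A_i\in\Gamma^1(\mathscr{D})$ I work directly with this Itô form; were the fields in $\Gamma^2(\mathscr{D})$ one could first pass to Stratonovich by Proposition \ref{Equivalence Ito Strato difféos}.

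First I would prove local well–posedness on each $\mathscr{D}^s$, $s>m/2$. By the definition of $\Gamma^1(\mathscr{D})$ the map $\varphi\mapsto\bm A_i(\varphi)$ restricts to a $\C^1$ vector field on the Hilbert manifold $\mathscr{D}^s$, with locally bounded derivative: this follows from the usual composition (``$\omega$–lemma'') estimates — $\varphi\mapsto g\circ\varphi$ is $\C^1$ from $\mathscr{D}^s$ to $H^s$ for $g$ smooth, $H^s$ is a Banach algebra for $s>m/2$, and the $\C^{\infty,1}$–dependence of $A_i$ on its second slot supplies the remaining term; the bookkeeping of Sobolev exponents is exactly as in the Ebin–Marsden formalism and is routine. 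The Christoffel symbols of $\bm\nabla$ are of the same composition type by the right–trivialised formulas above, hence locally Lipschitz on $\mathscr{D}^s$. Embedding $M$ isometrically into some $\mathbb R^k$ realizes $\mathscr{D}^s$ as an open subset of a submanifold of $H^s(M,\mathbb R^k)$, and the standard Banach–space SDE theory (Picard iteration) then yields a unique maximal solution $(\Phi^s_t)_{t<\zeta^s}$ in $\mathscr{D}^s$. Equivalently one may run a contraction on $\C([0,T],\mathscr{D}^s)$ for small $T$: freeze the slow argument $\Psi$ in $A_i(\cdot,\Psi_t)$, solve the resulting $M$–valued SDE (globally in time, since $M$ is compact, with solution flow a flow of diffeomorphisms of $M$ by Kunita's theory), and use the Lipschitz dependence of $A_i$ on $\Psi$ to contract.

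It remains to upgrade this to a global solution lying in $\mathscr{D}=\cap_{s>m/2}\mathscr{D}^s$, and this is the main obstacle. A naive $H^s$ energy estimate for $\|\Phi^s_t\|_{H^s}^2$ does not close — the composition terms entering $\tfrac{d}{dt}\|\Phi^s_t\|_{H^s}^2$ are superlinear in the $H^s$–norm — so globality must instead be read off the pointwise picture. On $[0,\zeta^s)$, for each fixed $x$ the process $\Phi^s_t(x)$ solves an $M$–valued Itô SDE whose (random, adapted, time–dependent) coefficients $y\mapsto A_i(y,\Phi^s_t)$ are smooth in $y$ and continuous in $t$; since $M$ is compact these $M$–valued processes never explode, and by the theory of stochastic flows of Elworthy and Kunita \cite{Elworthy1982,kunita1997stochastic} (in its non–autonomous, adapted–coefficient version) the map $x\mapsto\Phi^s_t(x)$ is almost surely a flow of $\C^\infty$–diffeomorphisms of $M$, with all spatial derivatives in every $L^p$ and $t\mapsto\Phi^s_t$ continuous in each $\mathscr{D}^{s'}$. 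Hence $\zeta^s=+\infty$ and $\Phi^s_t\in\mathscr{D}^s$ for all $t\ge0$. Finally, if $s'>s$ then $\Phi^{s'}$, which takes values in $\mathscr{D}^{s'}\subset\mathscr{D}^s$, is also a solution in $\mathscr{D}^s$, so by uniqueness in $\mathscr{D}^s$ it coincides with $\Phi^s$; the common process $\Phi_t$ therefore takes values in $\mathscr{D}$, is the unique solution there, and is a $\mathscr{D}$–valued semimartingale because $\bm F(\Phi_t)$ inherits the semimartingale property from the $\mathscr{D}^s$–level for every $\bm F\in\C^\infty(\mathscr{D})$.
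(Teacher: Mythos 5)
Your overall route is the same as the paper's: reduce to the Hilbert manifolds $\mathscr{D}^s$, $s>m/2$, and use that $\mathscr{D}=\cap_{s>m/2}\mathscr{D}^s$ together with uniqueness to glue the levels. The difference is that the paper stops there and delegates everything to Elworthy — existence and uniqueness on $\mathscr{D}^s$ from \cite[Ch.~VII, Thm.~2E]{Elworthy1982} and completeness (non-explosion) from \cite[Ch.~VII, Cor.~6.1]{Elworthy1982}, the latter hinging on the fact that $\mathscr{D}^s$ admits a \emph{uniform cover} — whereas you reconstruct the argument. Your local well-posedness step (composition/$\omega$-lemma estimates making $\varphi\mapsto A_i(\cdot,\varphi)\circ\varphi$ a $\C^1$ field on $\mathscr{D}^s$, then Banach-space Picard iteration) is the right substitute for Theorem 2E, modulo the usual ILH caveat that the $\C^k$ extensions in the paper's definition live on $\mathscr{D}^{j(s)}$ with possibly $j(s)>s$, so the exponent bookkeeping you call routine is where the loss-of-derivatives issue must be checked.

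The genuine soft spot is your replacement for Corollary 6.1, i.e.\ the globality step. You deduce $\zeta^s=+\infty$ from the claim that $x\mapsto\Phi^s_t(x)$ is a flow of diffeomorphisms with all spatial derivatives in every $L^p$ and with $t\mapsto\Phi^s_t$ continuous in each $\mathscr{D}^{s'}$. But the Kunita/Elworthy moment estimates for the derivatives of the flow on $[0,T]$ are controlled by $\sup_{t\le T}\sup_{y\in M}|\nabla^k_y A_i(y,\Phi^s_t)|$, and since the coefficients depend on $\Phi^s_t$ itself through the $\varphi$-slot, keeping these quantities bounded as $T\uparrow\zeta^s$ is exactly what you are trying to prove: the argument is circular as written. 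It closes only if one has a uniform bound $\sup_{\varphi\in\mathscr{D}}\sup_{y}|\nabla^k_yA_i(y,\varphi)|<\infty$, which does not follow from $\bm A_i\in\Gamma^1(\mathscr{D})$ alone ($\mathscr{D}$, unlike $\P$, is not compact). This is precisely the gap that the paper's citation of the uniform-cover completeness criterion is meant to fill; in the paper's actual applications the fields factor through the compact space $\P_\infty$ (e.g.\ $\bm Z_i(\varphi)=\nabla\phi_i(p(\varphi))\circ\varphi$), and there your pointwise argument does close. Either add the uniform boundedness as a hypothesis, or invoke the uniform-cover criterion explicitly.
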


\begin{proof}
    It is sufficient to prove the result on $\mathscr{D}^s$ for all $s$ large enough since $\mathscr{D} = \cap_{s > m/2} \mathscr{D}^s$. This result can be found in \cite[Chapter VII, Theorem 2E and Corollary 6.1]{Elworthy1982}. More precisely, the existence and uniqueness is given by the first result and the completeness of the solution is given by the second one, since, as detailed in \cite[Chapter VIII, (C)]{Elworthy1982}, $\mathscr{D}^s$ admits a uniform cover for all $s > m/2$.
\end{proof}

\begin{rem}[SDEs on $G$] \label{Equivalence Ito Strato sur G}
    The notion of SDE on $G$ can be formalized exactly in the same way as on $\mathscr{D}$ by replacing $\mathcal{L}^\mathscr{D}$ by $\mathcal{L}^G$, $\bm \nabla$ by $\bm \nabla^G$ and $\bm \he$ by $\bm \he^G$. In particular, the following $G$-valued Stratonovich's SDE
    
    $$
\circ \dd g_t = \SN \bm Y_i(g_t) \circ \d W^i_t + \bm Y_0(g_t) \d t, \quad g_0 = g
$$
is equivalent to the following Itô's one:

$$
\dd^{\bm \nabla^G} g_t = \SN \bm Y_i(g_t) \d W^i_t + \bm Y_0(g_t)\d t + \frac{1}{2}\SN \bm \nabla^G_{\bm Y_i}\bm Y_i(g_t) \d t, \quad g_0 = g.
$$
\end{rem}

\begin{defi}
    Let $(\varphi_t)_{t \geq 0}$ be a semimartingale on $\mathscr{D}$ where $\varphi_0 = \varphi$ and $A \circ \varphi \in T_\varphi \mathscr{D}$. The stochastic parallel transport along $(\varphi_t)_{t \geq 0}$ is defined to be:
    
    $$
    \tau_{0,t} : A \circ \varphi \mapsto \sslash_{0,t}(A \circ \varphi),
    $$
    where for all $x \in M$, $\sslash_{0,t}(x)$ is the stochastic parallel transport along $(\varphi_s(x))_{0 \leq s \leq t}$. 
\end{defi}

As in the deterministic case, one can check that $(\tau_{0,t})_{t \geq 0}$ is an isometric isomorphism between $T_{\varphi} \mathscr{D}^s$ and $T_{\varphi_t}\mathscr{D}^s$ for all $s>m/2$ and, consequently, an isometric isomorphism between $T_\varphi \mathscr{D}$ and $T_{\varphi_t} \mathscr{D}$.

	\begin{defi}
	A solution to a Stratonovich's SDE of the following form:
	
	$$
	\circ \dd \Phi_t = \SN \bm A_i(\Phi_t) \circ \d W^i_t + \bm A_0(\Phi_t)dt, \quad \Phi_0(\varphi) \in \mathscr{D},
	$$
	is said to be an \textit{equivariant diffusion} if for all $g \in G$, $\Phi_t(\varphi \cdot g) = \Phi_t(\varphi) \cdot g$ for all $t \geq 0$.
	\end{defi}

		\begin{defi}
    A $T \mathscr{D}$-valued stochastic process $(\bm U_t)_{t \geq 0}$ along a semimartingale $(\varphi_t)_{t \geq 0}$ is said to be a \textit{semimartingale} if for all $s$, $(\tau_{0,t}^{-1} \bm U_t)_{t \geq 0}$ is a semimartingale on $T_{\varphi_0}\mathscr{D}^s$ where $(\tau_{0,t})_{t \geq 0}$ denote the stochastic parallel transport along $(\varphi_t)_{t \geq 0}$.
\end{defi}

\begin{prop} \label{Prop dérivée covariante champ vecteur lisse D}
    Let $\bm B \in \Gamma^1_S(T\mathscr{D})$ and $(\varphi_t)_{t \geq 0}$ be solution to \eqref{EDS strato D}. Then, $(\bm B(\varphi_t))_{t \geq 0}$ is a semimartingale and:
    
    $$
    \circ \d \tau_{0,t}^{-1}\bm B(\varphi_t) = \SN  \tau_{0,t}^{-1}\left(\bm \nabla_{\bm A_i} \bm B \right)(\varphi_t) \circ \d W^i_t +  \tau_{0,t}^{-1}\left( \bm \nabla_{\bm A_0} \bm B \right)(\varphi_t) \d t.
    $$
    
\end{prop}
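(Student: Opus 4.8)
The plan is to reduce the statement to a pointwise computation on $M$, using that both the stochastic parallel transport $\tau_{0,t}$ and the vector field $\bm A$ act fibrewise (below, $\bm A_0,\dots,\bm A_N$ denote the vector fields driving \eqref{EDS strato D}, i.e. the $\bm Z_i$ of the statement). Writing $\bm A(\varphi)=A(\varphi)\circ\varphi$ with $A(\varphi)\in\Gamma(M)$, one has, for every $x\in M$,
$$
\big(\tau_{0,t}^{-1}\bm A(\varphi_t)\big)(x)=\sslash_{0,t}(x)^{-1}\big(A(\varphi_t)(\varphi_t(x))\big),
$$
where $\sslash_{0,t}(x)$ is the stochastic parallel transport on $M$ along the $M$-valued process $(\varphi_t(x))_{t\ge0}$; this process is a genuine semimartingale because the evaluation map is smooth (Proposition \ref{P Evaluation map lisse}) and $(\varphi_t)$ solves \eqref{EDS strato D}, so that $\circ\d\varphi_t(x)=\SN A_i(\varphi_t)(\varphi_t(x))\circ\d W^i_t+A_0(\varphi_t)(\varphi_t(x))\,\d t$. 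By the very definition of the Stratonovich covariant derivative on $M$ one has $\circ\d\big[\sslash_{0,t}(x)^{-1}v_t\big]=\sslash_{0,t}(x)^{-1}\big[\circ D_t^M v_t\big]$ for any semimartingale $(v_t)$ with $v_t\in T_{\varphi_t(x)}M$, so everything comes down to identifying $\circ D_t^M\big(A(\varphi_t)(\varphi_t(x))\big)$.

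Here I would invoke that Stratonovich differentials obey the usual chain and Leibniz rules to differentiate the composition $t\mapsto A(\varphi_t)(\varphi_t(x))$, which depends on time both through the moving base point $\varphi_t(x)$ and through the $\Gamma(M)$-valued semimartingale $t\mapsto A(\varphi_t)$. This yields the two-term decomposition
$$
\circ D_t^M\big(A(\varphi_t)(\varphi_t(x))\big)=\big(\nabla_{\circ\d\varphi_t(x)}A(\varphi_t)\big)(\varphi_t(x))+\big(\circ\d A(\varphi_t)\big)(\varphi_t(x)),
$$
where the first term is the covariant derivative on $M$, in the base-point direction, of the field $A(\varphi_t)$ held frozen, which Proposition \ref{Bonne forme dérivée covariante} expresses coefficient by coefficient against $\circ\d W^i_t$ and $\d t$, and the second term is the Stratonovich differential of the $\Gamma(M)$-valued process $t\mapsto A(\varphi_t)$, equal by the definition of the Lie derivative on $\mathscr{D}$ and by \eqref{EDS strato D} to $\SN\mathcal{L}^{\mathscr{D}}_{\bm A_i(\varphi_t)}A(\varphi_t)\circ\d W^i_t+\mathcal{L}^{\mathscr{D}}_{\bm A_0(\varphi_t)}A(\varphi_t)\,\d t$ (the hypothesis $\bm A\in\Gamma^2(\mathscr{D})$ ensures that $\varphi\mapsto A(\varphi)$ is $\C^1$ into $\Gamma(M)$, hence that these objects are well defined and lie in $\Gamma^1(\mathscr{D})$ after composition with $\varphi_t$). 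Matching each coefficient with the identity $\bm\nabla_{C\circ\varphi}\bm A(\varphi)=(\nabla_C A(\varphi))\circ\varphi+(\mathcal{L}^{\mathscr{D}}_{C\circ\varphi}A(\varphi))\circ\varphi$ for the Levi-Civita connection on $\mathscr{D}$ (this is the connection formula established in the proof of Proposition \ref{Prop 3.7}) rewrites the right-hand side as $\SN\big((\bm\nabla_{\bm A_i}\bm A)(\varphi_t)\big)(x)\circ\d W^i_t+\big((\bm\nabla_{\bm A_0}\bm A)(\varphi_t)\big)(x)\,\d t$, so that
$$
\circ\d\big(\tau_{0,t}^{-1}\bm A(\varphi_t)\big)(x)=\SN\tau_{0,t}^{-1}\big((\bm\nabla_{\bm A_i}\bm A)(\varphi_t)\big)(x)\circ\d W^i_t+\tau_{0,t}^{-1}\big((\bm\nabla_{\bm A_0}\bm A)(\varphi_t)\big)(x)\,\d t
$$
for every $x\in M$. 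Since this holds pointwise with all terms depending continuously on $x$, it is an identity of $\T_{\varphi_0}\mathscr{D}$-valued Stratonovich differentials; in particular $(\tau_{0,t}^{-1}\bm A(\varphi_t))_{t\ge0}$ is a semimartingale on $\T_{\varphi_0}\mathscr{D}$, which is exactly the assertion.

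The step I expect to be delicate is making the two-term chain rule for the Stratonovich covariant derivative of a random, time-dependent vector field evaluated along a semimartingale fully rigorous: one has to work at the level of $\mathscr{D}^s$ for $s>m/2$ large, realize $t\mapsto A(\varphi_t)(\varphi_t(x))$ as the image of the $\mathscr{D}^s$-semimartingale $(\varphi_t)$ under a smooth $TM$-valued map built from the evaluation map and from $A$, apply the Stratonovich transfer principle there, and then justify interchanging $\int_M(\cdot)\,\d\vol$ with the stochastic differentials, which is harmless in view of the uniform bounds coming from $\C^{\infty,2}$-regularity and the compactness of $M$. Alternatively, one can avoid differentiating $\sslash_{0,t}(x)^{-1}$ altogether and instead characterize $\circ\bm D_t\bm A(\varphi_t)$ through the expansions of $\circ\d\langle\bm A(\varphi_t),\tau_{0,t}(B\circ\varphi_0)\rangle_{\varphi_t}$ against parallel fields, using metric compatibility of the parallel transport; this produces the same two contributions and hence the same conclusion.
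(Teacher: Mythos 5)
Your proof is correct and follows essentially the same route as the paper: the paper's proof simply states that one repeats the argument of Proposition \ref{Prop 3.7} (pointwise reduction via $\sslash_{0,t}(x)$, two-term decomposition into the Lie-derivative contribution of $\varphi\mapsto A(\varphi)$ and the covariant derivative in the base-point direction, then identification with $\bm\nabla$ on $\mathscr{D}$) with differential calculus replaced by its Stratonovich counterpart, which is exactly what you carry out. Your added remarks on where the rigor lies, and the alternative characterization by pairing against parallel fields, are consistent with the paper but not needed beyond what it records.
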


\begin{proof}
The proof is essentially the same as the one of Proposition \ref{Prop 3.7}, with the differential calculus replaced by its Stratonovich counterpart.
\end{proof}

\subsection{Stochastic differential equations on $\P$} \label{Subsection 4.2}

We say that a stochastic process $(\mu_t)_{t \geq 0}$ adapted to $(\mathcal{F}_t)_{t \geq 0}$ is a solution to the following Itô's SDE on $\P$ 

\begin{equation} \label{EDSIW}
\dw^{\grad}\mu_t = \SN \bar{Z}_i(\mu_t) \d W^i_t + \bar{Z}_0(\mu_t) \d t, \quad \mu_0 = \mu
\end{equation}
if for all $f \in \mathcal{C}^\infty(M)$, $(\mu_t)_t$ satisfies

$$
F_f(\mu_t) -F_f(\mu) = \SN \int_0^t \L_{\bar{Z}_i} F_f(\mu_s) \d W^i_s + \int_0^t \L_{\bar{Z}_0} F_f(\mu_s) \d s + \frac{1}{2} \SN \int_0^t \overline{\he }F_f(\mu_s)(\bar{Z}_i, \bar{Z}_i) \d s.
$$
In the same way, we say that $(\mu_t)_{t \geq 0}$ is a solution to the Stratonovich's SDE on $\P$

\begin{equation*}
\circ \dw\mu_t = \SN \bar{Z}_i(\mu_t) \circ \d W^i_t + \bar{Z}_0(\mu_t) \d t, \quad \mu_0 = \mu
\end{equation*}
if for all $f \in \mathcal{C}^\infty(M)$, $(\mu_t)_{t \geq 0}$ satisfies

$$
F_f(\mu_t) -F_f(\mu) =\SN \int_0^t \L_{\bar{Z}_i} F_f(\mu_s) \d W^i_s + \int_0^t \L_{\bar{Z}_0} F_f(\mu_s) \d s + \frac{1}{2} \SN \int_0^t \L_{\bar{Z}_i}^2 F_f(\mu_s) \d s.
$$

\begin{prop} \label{Equivalence Ito Strato Wasserstein}
    Let $\bar{Z}_0, \bar{Z}_1, \dots, \bar{Z}_N \in \Gamma^2(T\P)$. The Stratonovich's SDE
    
    $$
    \circ \dw\mu_t = \SN \bar{Z}_i(\mu_t) \circ \d W^i_t + \bar{Z}_0(\mu_t) \d t, \quad \mu_0 = \mu
$$
is equivalent to the following Itô's one:

$$
\dw^{\grad} \mu_t = \SN \bar{Z}_i(\mu_t) \d W^i_t + \bar{Z}_0(\mu_t) \d t + \frac{1}{2} \SN \grad_{\bar{Z}_i}\bar{Z}_i(\mu_t) \d t, \quad \mu_0 = \mu.
$$
\end{prop}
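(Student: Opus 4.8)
The plan is to reduce this equivalence to the definition of the Wasserstein Hessian, exactly as was done in the finite-dimensional Proposition~\ref{Conversion Ito strato dimension finie} and in its analogue on the diffeomorphism group, Proposition~\ref{Equivalence Ito Strato difféos}. First I would note that the hypothesis $\bar{Z}_0,\bar{Z}_1,\dots,\bar{Z}_N \in \Gamma^2(\P)$ guarantees that each $\grad_{\bar{Z}_i}\bar{Z}_i$ lies in $\Gamma^1(\P)$, so that the drift $\bar{Z}_0 + \tfrac12\sum_{i=1}^N \grad_{\bar{Z}_i}\bar{Z}_i$ of the candidate It\^o equation is an admissible vector field and the right-hand side of the It\^o SDE is meaningful.

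Next I would fix $f \in \C^\infty(M)$ and compare the two defining identities when tested against the potential energy functional $F_f$. The Brownian (martingale) parts coincide term by term, since both formulations carry $\sum_{i=1}^N \int_0^t \L_{\bar{Z}_i} F_f(\mu_s)\,\d W^i_s$ and the extra It\^o drift contributes only to the bounded-variation part; it remains to match the bounded-variation parts. For the Stratonovich equation this part is $\int_0^t \L_{\bar{Z}_0} F_f(\mu_s)\,\d s + \tfrac12\sum_{i=1}^N \int_0^t \L_{\bar{Z}_i}^2 F_f(\mu_s)\,\d s$, while for the It\^o equation with the indicated drift it is $\int_0^t \L_{\bar{Z}_0} F_f(\mu_s)\,\d s + \tfrac12\sum_{i=1}^N \int_0^t \L_{\grad_{\bar{Z}_i}\bar{Z}_i} F_f(\mu_s)\,\d s + \tfrac12\sum_{i=1}^N \int_0^t \overline{\he} F_f(\mu_s)(\bar{Z}_i,\bar{Z}_i)\,\d s$.

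These two expressions agree because, directly from the definition of the Hessian on $\P$,
\[
\L_{\bar{Z}_i}^2 F_f = \overline{\he} F_f(\bar{Z}_i,\bar{Z}_i) + \grad_{\bar{Z}_i}\bar{Z}_i\, F_f = \overline{\he} F_f(\bar{Z}_i,\bar{Z}_i) + \L_{\grad_{\bar{Z}_i}\bar{Z}_i} F_f .
\]
Summing over $i$, multiplying by $\tfrac12$ and integrating against $\d s$ yields the claimed identity of the drift terms. Since $f \in \C^\infty(M)$ is arbitrary and, by definition, solutions to both SDEs on $\P$ are characterised by testing against all such observables $F_f$, a process $(\mu_t)_{t\geq 0}$ solves the Stratonovich equation if and only if it solves the It\^o one.

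I do not expect a substantial obstacle here: the argument is essentially the bookkeeping identity above. The only point requiring care is keeping track of regularity classes, i.e.\ checking that every geometric object manipulated above indeed remains in the admissible classes $\Gamma^k(\P)$ so that the Hessian identity applies; and one should observe that, unlike in the finite-dimensional setting, one tests only against functionals of the form $F_f$ — but this is precisely how solutions on $\P$ are defined, so no further density argument is needed.
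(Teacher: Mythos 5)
Your argument is correct and is exactly the paper's proof spelled out: the paper disposes of this proposition in one line as "a direct consequence of the definition of the Hessian $\bar{\he}$," which is precisely the identity $\L_{\bar{Z}_i}^2 F_f = \overline{\he}F_f(\bar{Z}_i,\bar{Z}_i) + \L_{\grad_{\bar{Z}_i}\bar{Z}_i}F_f$ you use to match the bounded-variation parts. Your additional remarks on regularity and on testing only against the observables $F_f$ are sound and consistent with the paper's definitions of solutions on $\P$.
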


\begin{proof}
This is, as in the diffeomorphisms case, a direct consequence of the definition of the Hessian $\bar{\he}$.
\end{proof}

\begin{defi}
	A continuous $\P$-valued process $(\mu_t)_{t \geq 0}$ is said to be a \textit{semimartingale} if for all $f \in \C^\infty(M)$, the process $(F_f(\mu_t))_{t \geq 0}$ is a real semimartingale.
\end{defi}

In particular, solutions to SDEs on Wasserstein space are semimartingale on $\P$. We now state the main theorem of this section, whose proof will be given in Subsection \ref{Subsection 4.4}.

\begin{thm}\label{Existence Unicité}
	Let $\bar{Z}_0, \bar{Z}_1, \dots, \bar{Z}_N \in \Gamma^1_S(T\P)$ defined by $\bar{Z}_i = \nabla \phi_i(\cdot)$ for $i=0, 1, \dots, N$. The SDE
	
	\begin{equation}\label{Brave EDS sur Wasserstein}
	\dw^{\grad} M_t(\mu) = \SN \bar{Z}_i(M_t(\mu)) \d W^i_t + \bar{Z}_0(M_t(\mu)) \d t, \quad M_0(\mu) = \mu,
	\end{equation}
	admits a solution $(M_t(\mu))_{t \geq 0}$. Moreover, if $\mu \in \P_\infty$, for all $t \geq 0$, $M_t(\mu) \in \P_\infty$.
\end{thm}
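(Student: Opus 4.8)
The plan is to reduce the SDE on $\P$ to a conditional McKean--Vlasov equation on the base manifold $M$, exactly in the spirit of Proposition \ref{ODE on P} but in the stochastic setting. The natural ansatz is $M_t(\mu) = (X_t)_\sharp \mu$ where $(X_t)_{t\geq 0}$ is an $M$-valued process solving a stochastic flow equation whose coefficients depend on the (random) current law $M_t(\mu)$. Concretely I would look for a solution of
\begin{equation*}
\circ\d X_t(x) = \SN \nabla\phi_i(M_t(\mu))(X_t(x)) \circ \d W^i_t + \nabla\phi_0(M_t(\mu))(X_t(x))\,\d t, \quad X_0(x)=x,\ M_t(\mu)=(X_t)_\sharp\mu,
\end{equation*}
or rather its Itô form obtained via Proposition \ref{Conversion Ito strato dimension finie}. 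Then, given a smooth test function $f \in \C^\infty(M)$, applying Itô's formula pathwise to $f(X_t(x))$, integrating against $\mu$, and recognizing $\L_{\bar Z_i}F_f$ and $\overline{\he}F_f$ through \eqref{Equation hessienne constant} would verify that $(\mu_t)_{t\geq 0}:=((X_t)_\sharp\mu)_{t\geq 0}$ solves \eqref{Brave EDS sur Wasserstein} in the sense defined in Subsection \ref{Subsection 4.2}.

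\textbf{Key steps, in order.} First I would set up the fixed-point scheme: on a suitable space of $(\mathcal F_t)$-adapted continuous processes with values in $\P$ (which is compact, hence the space of such processes with the uniform-in-time Wasserstein metric is a complete metric space), define the map $\Theta$ sending a candidate law-flow $(\nu_t)_{t\geq 0}$ to the law-flow of the solution $(X_t^\nu)_{t\geq 0}$ of the \emph{decoupled} SDE on $M$ with coefficients $x\mapsto \nabla\phi_i(\nu_t)(x)$ frozen along $\nu$. Solvability of this decoupled equation on the compact manifold $M$ is the stochastic Cauchy--Lipschitz theorem quoted in Section \ref{Section 2}, once one checks the coefficients are smooth in $x$ uniformly in $t$; here the hypothesis $\bar Z_i = \nabla\phi_i(\cdot) \in \Gamma^1(\P)$, i.e. $\phi_i \in \C_w^{\infty,1}(M\times\P)$, is exactly what guarantees enough spatial regularity. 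Second, I would prove $\Theta$ is a contraction (or at least has a fixed point via an iteration + Grönwall argument): the crucial estimate is the global Lipschitz bound of Corollary \ref{PropLipschitzGeneralVectorField}, $|A(x,\mu)-A(x,\nu)|_x \leq L\,W_2(\mu,\nu)$, which controls $W_2\big((X_t^{\nu})_\sharp\mu, (X_t^{\nu'})_\sharp\mu\big)$ by a time-integral of $W_2(\nu_s,\nu'_s)$ via the coupling inherited from the common noise and the common initial condition. A standard Grönwall / Picard argument then yields existence (and uniqueness of the law-flow) on $[0,T]$ for every $T$, and by arbitrariness of $T$ a global solution. Third, I would verify that the fixed point indeed solves \eqref{Brave EDS sur Wasserstein} in the weak (test-function) sense: this is the Itô's-formula-against-$\mu$ computation described above, using $\grad F_f = V_f$ and \eqref{Equation hessienne constant} to match the Hessian term.

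\textbf{Propagation of smoothness.} For the last assertion, that $\mu\in\P_\infty$ implies $M_t(\mu)\in\P_\infty$ for all $t$, I would argue that when $\mu=\rho_0\vol$ with $\rho_0\in\C^\infty(M)$, $\rho_0>0$, the map $x\mapsto X_t(x)$ is (almost surely) a $\C^\infty$-diffeomorphism of $M$: this follows from the smooth-dependence-on-initial-conditions theory for SDEs with smooth coefficients on a compact manifold (Elworthy, as invoked in the excerpt via Theorem \ref{Existence unicité Difféos} and the surrounding remark), noting that the frozen coefficients $\nabla\phi_i(\mu_t)(\cdot)$ are smooth in $x$ with bounds uniform on $[0,T]$ because $\mu_t$ ranges over the compact set $\P$. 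Then $M_t(\mu) = (X_t)_\sharp\mu$ has density $\rho_t = (\rho_0\circ X_t^{-1})\cdot \mathrm{Jac}(X_t^{-1})$, which is smooth and strictly positive; alternatively one may identify $(X_t)_{t\geq0}$ with a $\mathscr D$-valued (conditional) diffusion and invoke Theorem \ref{Existence unicité Difféos}.

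\textbf{Main obstacle.} The delicate point is the contraction/stability estimate in the second step: because the coefficients depend on the \emph{law} of the solution (conditional on the Brownian filtration), one must carefully couple two copies of the $M$-valued flow driven by the \emph{same} Brownian motion and the \emph{same} initial measure $\mu$, and control $\mathbb E\big[\sup_{s\leq t} d_M(X_s(x),X_s'(x))^2\big]$ uniformly in $x$; the Lipschitz constant from Corollary \ref{PropLipschitzGeneralVectorField} enters here, but one also needs the curvature/second-derivative terms coming from writing the Stratonovich equation in Itô form (Proposition \ref{Conversion Ito strato dimension finie}) to be Lipschitz in the measure argument as well, which is where the full strength of $\phi_i\in\C^{\infty,1}_w(M\times\P)$ — derivatives of all orders in $x$, each weakly-$\C^1$ in $\mu$ — is used. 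Everything else is a fairly routine adaptation of the Euclidean conditional McKean--Vlasov theory (as in \cite{wang2020}) to the compact-manifold setting, or of \cite[Proposition 3.10]{ding2023stochasticdifferentialequationsstochastic} to non-constant vector fields.
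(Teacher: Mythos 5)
Your proposal follows essentially the same route as the paper: reduction to the conditional McKean--Vlasov equation $M_t(\mu)=(X_t)_\sharp\mu$, a Picard/Gr\"onwall argument based on the Lipschitz-in-$W_2$ estimates of Corollary \ref{PropLipschitzGeneralVectorField}, verification of \eqref{Brave EDS sur Wasserstein} by integrating It\^o's formula against $\mu$ via \eqref{Equation hessienne constant}, and propagation of smoothness through the diffeomorphism property of the flow obtained from Theorem \ref{Existence unicité Difféos}. The only notable technical difference is that the paper carries out the iteration after a Nash isometric embedding of $M$ into $\mathbb{R}^d$ (extending the vector fields and correcting the drift by the second fundamental form, which is the precise form of the ``curvature terms'' you flag as the main obstacle), but this is a device for implementing the same fixed-point scheme you describe.
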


\begin{rem}[SPDE aspect of $\P_\infty$-valued SDEs]
	Let $\mu \in \P_\infty$ and $(\mu_t)_{t \geq 0} = (M_t(\mu))_{t \geq 0}$ be solution to
	
	$$
\circ \d\mu_t = \SN \bar{Z}_i(\mu_t) \circ \d W^i_t + \bar{Z}_0(\mu_t) \d t, \quad \mu_0 = \mu.
$$ 
Then, the family of density $(\rho_t)_{t \geq 0}$ of $(\mu_t)_{t \geq 0}$ relative to $\vol$ is solution to the following SPDE :
	
	$$
	\d\rho_t = - \SN \mathrm{div}(\rho_t Z_i(\mu_t)) \d W^i_t - \mathrm{div}(\rho_t Z_0(\mu_t))\d t + \frac{1}{2} \SN \mathrm{div}\left(\mathrm{div} (\rho_t Z_i(\mu_t)) Z_i(\mu_t) \right)\d t,
	$$
	for a proof of a similar result see \cite[Proposition $3.3$]{ding2023stochasticdifferentialequationsstochastic}.
\end{rem} 

Let $(\mu_t)_{t \geq 0}$ be a solution to

\begin{equation} \label{EDSW}
\circ \dw\mu_t = \SN \bar{Z}_i(\mu_t) \circ \d W^i_t + \bar{Z}_0(\mu_t) \d t, \quad \mu_0 = \mu \in \P_\infty,
\end{equation}
with $\bar{Z}_i \in \Gamma^2_S(T\P)$ for $i=0,1,\dots,N$. We emphasize that, from this point on and throughout the remainder of the subsection, the initial condition will always be assumed to lie in $\P_\infty$.

\begin{defi} \label{Defi relevé horizontal}
    Let $(\mu_t)_{t \geq 0}$ be solution to \eqref{EDSW}. We define the \textit{horizontal lift} of $(\mu_t)_{t \geq 0}$ in $\varphi \in \mathscr{D}_\mu$ to be the unique solution to the following $\mathscr{D}$-valued SDE:
    
    \begin{equation} \label{E lift horizontal dim infinie}
    \circ \dd \Phi_t(\varphi) = \SN \bm Z_i(\Phi_t(\varphi)) \circ \d W^i_t + \bm Z_0(\Phi_t(\varphi)) \d t, \quad \Phi_0(\varphi) = \varphi,
    \end{equation}
    where $\bm Z_i(\psi) = \mathfrak{h}_\psi(\bar{Z}_i(\mu))$ is the horizontal lift of $\bar{Z}_i$ for all $i=0,1, \dots, N$.
\end{defi}

\begin{prop} \label{Prop equivariance horizontal lift}
    The horizontal lift of $(\mu_t)_{t \geq 0}$ is an equivariant diffusion.
\end{prop}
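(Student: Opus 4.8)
The plan is to mimic the finite-dimensional argument of Example \ref{Ex Horizontal lift equivariant} and Proposition \ref{P D2} in the ILH setting of $\mathscr{D}$, using the $G$-right invariance of the horizontal lift vector fields $\bm Z_i = \mathfrak{h}(\bar Z_i)$. First I would recall that, by Remark \ref{G-invariance des lifts horizontaux}, each $\bm Z_i$ is a $G$-right invariant vector field on $\mathscr{D}$, i.e. $TR_g(\bm Z_i(\psi)) = \bm Z_i(\psi \cdot g)$ for all $\psi \in \mathscr{D}$ and all $g \in G$. Fix $\varphi \in \mathscr{D}$ and $g \in G$, and let $(\Phi_t(\varphi))_{t \geq 0}$ be the solution to \eqref{E lift horizontal dim infinie}. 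Consider the process $(\Phi_t(\varphi) \cdot g)_{t \geq 0} = (R_g \circ \Phi_t(\varphi))_{t \geq 0}$. Since $R_g : \mathscr{D} \to \mathscr{D}$ is smooth (it acts on each $\mathscr{D}^s$ as a smooth map, being right composition by a fixed diffeomorphism), we may apply the Stratonovich chain rule, which behaves exactly as ordinary differential calculus (see the discussion around \eqref{Notation intro}): for any $\bm F \in \C^\infty(\mathscr{D})$, writing $\bm G = \bm F \circ R_g$, we get that $(\bm F(\Phi_t(\varphi)\cdot g))_{t\geq 0} = (\bm G(\Phi_t(\varphi)))_{t \geq 0}$ satisfies the Stratonovich SDE with vector fields $\mathcal{L}^{\mathscr{D}}_{\bm Z_i}$ applied to $\bm G$, which equals $(\mathcal{L}^{\mathscr{D}}_{TR_g \bm Z_i}\bm F)\circ R_g$. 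Hence $(\Phi_t(\varphi)\cdot g)_{t \geq 0}$ is a solution to
\begin{equation*}
\circ \dd \Psi_t = \SN TR_g(\bm Z_i)(\Psi_t) \circ \d W^i_t + TR_g(\bm Z_0)(\Psi_t) \d t, \quad \Psi_0 = \varphi \cdot g.
\end{equation*}

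By the $G$-right invariance $TR_g(\bm Z_i(\psi)) = \bm Z_i(\psi \cdot g)$, the pushed-forward vector field $TR_g(\bm Z_i)$ (viewed as a vector field on $\mathscr{D}$ via $\psi \mapsto TR_g(\bm Z_i(R_{g^{-1}}\psi)) = \bm Z_i(\psi)$) is just $\bm Z_i$ itself. Therefore $(\Phi_t(\varphi)\cdot g)_{t \geq 0}$ is a solution to the same SDE \eqref{E lift horizontal dim infinie} but started from $\varphi \cdot g$. On the other hand, $(\Phi_t(\varphi \cdot g))_{t \geq 0}$ is by definition the solution started from $\varphi \cdot g$. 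By the uniqueness part of Theorem \ref{Existence unicité Difféos} (applied after converting to the Itô formulation via Proposition \ref{Equivalence Ito Strato difféos}, noting that $\bm \nabla_{\bm Z_i}\bm Z_i$ is again $G$-right invariant by Proposition \ref{Invariance à droite connexion levi-civita difféos}, hence the Itô drift is also $G$-right invariant and the argument is unaffected), we conclude $\Phi_t(\varphi)\cdot g = \Phi_t(\varphi \cdot g)$ for all $t \geq 0$, almost surely, which is precisely the equivariance property.

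The main obstacle, and the only point requiring genuine care, is the justification that the Stratonovich chain rule applies to $R_g$ at the level of the ILH structure: one must check that solving \eqref{E lift horizontal dim infinie} in every $\mathscr{D}^s$ (for $s > m/2$) is compatible with post-composition by $R_g$, i.e. that $R_g$ maps $\mathscr{D}^{j(s)}$-valued semimartingales to $\mathscr{D}^{s}$-valued ones and intertwines the SDEs appropriately; this is standard since $R_g$ restricts to a smooth map $\mathscr{D}^s \to \mathscr{D}^s$ for each $s$, but it should be stated. An alternative, cleaner route would be to bypass the chain rule entirely and simply verify equivariance pointwise: since the evaluation maps $E_{x_0}$ are smooth (Remark after Theorem \ref{Existence unicité Difféos}) and $\bm Z_i(\psi) = Z_i(p(\psi)) \circ \psi$ with $Z_i(\mu) = \nabla\phi_i(\mu)$, the flow $\Phi_t$ is the stochastic flow of the (conditional McKean–Vlasov type) SDE on $M$ driven by $\SN \nabla\phi_i(\mu_t, \cdot)\circ \d W^i_t + \nabla\phi_0(\mu_t,\cdot)\d t$, and precomposing the initial diffeomorphism $\varphi$ by $g \in G$ does not change $p(\varphi) = p(\varphi\cdot g)$, hence does not change the driving measure flow $(\mu_t)_{t\geq 0}$; uniqueness of the flow on $M$ then gives $\Phi_t(\varphi\cdot g)(x) = \Phi_t(\varphi)(g(x))$, i.e. $\Phi_t(\varphi\cdot g) = \Phi_t(\varphi)\cdot g$.
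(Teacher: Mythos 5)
Your proof is correct and follows essentially the same route as the paper's: apply $TR_g$ to the Stratonovich SDE, use the $G$-right invariance of the $\bm Z_i$ to recognize $(\Phi_t(\varphi)\cdot g)_{t\geq 0}$ as a solution of \eqref{E lift horizontal dim infinie} started at $\varphi\cdot g$, and conclude by uniqueness (Theorem \ref{Existence unicité Difféos}). The paper's version is just a terser statement of this; your extra care about the chain rule for $R_g$ on the ILH structure and the alternative pointwise argument via evaluation maps are sound but not needed beyond what the paper already records.
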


\begin{proof}
    Since $\bm Z_i$ is $G$-right invariant for all $i=0,1,\dots,N$, we have:
    
    $$
    \circ \dd (R_g(\Phi_t(\varphi))) = \bm T R_g( \circ \dd \Phi_t(\varphi)) = \SN \bm Z_i(\Phi_t(\varphi) \cdot g) \circ \d W^i_t + \bm Z_0(\Phi_t(\varphi) \cdot g) \d t.
    $$
    The conclusion follows by uniqueness of the solution to \eqref{E lift horizontal dim infinie}.
\end{proof}

\begin{defi} \label{Def Transport parallele sto sur P}
Let $(\mu_t)_{t \geq 0}$ be a diffusion on $\P_\infty$ solution \eqref{EDSW} and let $(\Phi_t)_{t \geq 0}$ be the horizontal lift of $(\mu_t)_{t \geq 0}$. A $T \P$-valued stochastic process $(\bar{U}_t)_{t \geq 0}$ along $(\mu_t)_{t \geq 0}$ is said to be a \textit{semimartingale} if for all $\varphi \in \mathscr{D}_\mu$, $(\mathfrak{h}_{\Phi_t(\varphi)}(\bar{U}_t))_{t \geq 0}$ is a semimartingale on $T \mathscr{D}$.
\end{defi}
    
\begin{prop}
    Let $\bar{A} \in \Gamma^2_S(T\P)$ and $(\mu_t)_{t \geq 0}$ be solution to \eqref{EDSW}. Then, $(\bar{A}(\mu_t))_{t \geq 0}$ is a semimartingale.
    
\end{prop}

\begin{proof}
    This is a straightforward consequence of Proposition \ref{Prop dérivée covariante champ vecteur lisse D}.
\end{proof}

\subsection{Preliminaries}

To prove Theorem \ref{Existence Unicité}, we use the isometric embedding theorem of Nash in order to reduce ourselves to the case of a Riemannian submanifold of the Euclidean space.

\begin{thm}[Isometric embedding, Nash \cite{Nash1956TheIP}]
        Let $M$ be a smooth Riemannian manifold. Then, there exists $d \in \mathbb{N}$ and an isometric embedding $f : M \rightarrow \mathbb{R}^d$. For $x \in M$ and $u,v \in T_xM$ we have:
        
        \begin{equation*}
            \langle u,v \rangle_x = \langle D_xf(u), D_xf(v) \rangle^{\mathbb{R}^d},
        \end{equation*}
        where $\langle \cdot, \cdot \rangle^{\mathbb{R}^d}$ denotes the usual scalar product on $\mathbb{R}^d$.
        
    \end{thm}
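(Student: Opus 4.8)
The statement is Nash's $C^\infty$ isometric embedding theorem; since $M$ is a closed manifold in our setting, I will only outline the compact case, which is all that is needed here. The plan is as follows. First I would use Whitney's embedding theorem to obtain a smooth embedding $f_0 : M \to \mathbb{R}^{d_0}$, with induced metric $g_0 = f_0^*\langle\cdot,\cdot\rangle^{\mathbb{R}^{d_0}}$; by compactness there is $\varepsilon>0$ with $h := g - \varepsilon g_0$ still a smooth Riemannian metric. If a smooth map $f_1 : M \to \mathbb{R}^{d_1}$ with $f_1^*\langle\cdot,\cdot\rangle^{\mathbb{R}^{d_1}} = h$ can be constructed, then $F := (\sqrt{\varepsilon}\, f_0, f_1)$ satisfies $F^*\langle\cdot,\cdot\rangle = g$ and is an embedding (its first block already is), so it remains only to isometrically realise an arbitrary smooth metric $h$, injectivity now being free.

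For that, the plan is to linearise around a \emph{free} map. I would fix a free map $u_0 : M \to \mathbb{R}^q$ for $q$ large --- one for which, in local coordinates, the $\tfrac12 m(m+3)$ vectors $\partial_i u_0$, $\partial_i\partial_j u_0$ are everywhere linearly independent; such maps exist by a general-position argument. After rescaling so that $h - u_0^*\langle\cdot,\cdot\rangle$ is positive and small, I would seek $f_1 = u_0 + v$, turning $\partial_i f_1 \cdot \partial_j f_1 = h_{ij}$ into the linear-plus-quadratic system
\[
\partial_i u_0 \cdot \partial_j v + \partial_j u_0 \cdot \partial_i v + \partial_i v \cdot \partial_j v \;=\; \big(h - u_0^*\langle\cdot,\cdot\rangle\big)_{ij}.
\]
Freeness of $u_0$ makes the linear part $v \mapsto (\partial_i u_0\cdot\partial_j v + \partial_j u_0\cdot\partial_i v)_{ij}$ pointwise surjective onto symmetric $2$-tensors, so one can hope to invert the linearisation.

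The hard part is that inverting this linearisation naively loses a derivative, so a plain contraction argument fails, and this is the genuine obstruction. To get around it I would follow one of the two classical routes: either Nash's original Newton iteration equipped with smoothing operators (the Nash--Moser hard inverse function theorem), in which the quadratic error at each step is reabsorbed by the smoothing provided the scheme converges superlinearly; or Günther's reformulation, which uses freeness together with the Green operator $\Delta^{-1}$ on the compact manifold to recast the problem as a fixed point $v = \mathcal{T}(v)$ with $\mathcal{T}$ built from $\Delta^{-1}$ and smooth terms that are algebraic and bilinear in $\partial v$; by elliptic regularity $\mathcal{T}$ preserves a small ball of $C^{k,\alpha}(M,\mathbb{R}^q)$ and contracts there once $h - u_0^*\langle\cdot,\cdot\rangle$ is $C^{k,\alpha}$-small, so the ordinary Banach fixed-point theorem applies. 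Either way one first obtains a $C^{k,\alpha}$ solution and then bootstraps to $C^\infty$, and taking $k$ arbitrarily large yields the smooth isometric embedding; the precise value of $d$ is immaterial since any sufficiently large dimension works. I expect the quantitative estimates driving the convergence (Nash--Moser) or the contraction (Günther) to be the only substantial difficulty; everything else is soft. Full details are in \cite{Nash1956TheIP}.
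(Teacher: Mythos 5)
The paper does not prove this statement at all: it is imported as a classical black box with a citation to Nash's original article, so there is no internal argument to compare yours against. Your outline is, modulo that caveat, a faithful sketch of the standard proof in the compact case: the reduction $g=\varepsilon g_0+h$ via a Whitney embedding so that only the isometric \emph{immersion} of an auxiliary metric $h$ must be produced, the linearisation around a free map, and the resolution of the loss of derivatives either by Nash--Moser smoothing or by G\"unther's elliptic reformulation. The one place where your sketch is too glib is the claim that one can make $h-u_0^*\langle\cdot,\cdot\rangle$ ``positive and small'' merely by rescaling: rescaling $u_0$ shrinks $u_0^*\langle\cdot,\cdot\rangle$ but leaves the deficit close to $h$ itself, hence not small. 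The standard remedy is an intermediate step you omit --- writing the deficit as a finite sum $\sum_k a_k^2\, d\psi_k\otimes d\psi_k$ and adding highly oscillatory ``spiral'' components to $u_0$ that realise this sum up to an error which \emph{is} small, and only then running the perturbation argument. So the analytic estimates are not ``the only substantial difficulty''; the approximation step is also essential. Since the paper only ever invokes the theorem as a known result, none of this affects anything downstream, but if you intend your sketch to stand as a proof outline you should insert that approximation step.
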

    
        In the following, we consider $M$ to be a closed Riemannian sub-manifold $\mathbb{R}^d$ with dimension $\mathrm{dim}(M) = m$ for the sake of simplicity. Let $|\cdot|$ denotes the Euclidean norm on $\mathbb{R}^d$. This subsection is mainly devoted to the proof of the fact that we can extend the vector fields $Z_i(\mu)(\cdot)$ on $\mathbb{R}^d$ in a way that preserves the regularity in the parameter $\mu$.
    
        \begin{prop}[Uniform extension lemma for vector fields] \label{Uniform extension lemma for vector fields}
    		Let $A \in \C^{\infty,k}_w(M \times \P, TM)$ such that for all $\mu \in \P$, $A(\cdot,\mu) \in \Gamma(TM)$. Then, there exists an extension $ \tilde{A} \in \C^{\infty,k}_w(\mathbb{R}^d \times \P, \mathbb{R}^d)$ with compact support.
    \end{prop}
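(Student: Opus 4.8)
The plan is to extend $A$ off $M$ by means of a tubular neighbourhood of $M$ in $\mathbb{R}^d$ together with a cutoff function, exploiting the crucial fact that both the tubular retraction and the cutoff are independent of the measure variable $\mu$. Consequently the delicate part of the regularity — the dependence in $\mu$ — is inherited essentially for free from $A$, and only a chain-rule bookkeeping in the space variable is required.

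First I would invoke the tubular neighbourhood theorem: since $M$ is a closed (hence compact) submanifold of $\mathbb{R}^d$, there exist an open neighbourhood $U \supset M$ and a smooth retraction $\pi : U \to M$ with $\pi|_M = \mathrm{id}_M$ (the nearest-point projection on a sufficiently thin normal tube). I would then fix $\chi \in \C^\infty(\mathbb{R}^d,[0,1])$ with $\mathrm{supp}\,\chi$ compact and contained in $U$, and $\chi \equiv 1$ on a neighbourhood of $M$, and set
$$
\tilde A(y,\mu) = \chi(y)\, A(\pi(y),\mu), \qquad y \in \mathbb{R}^d,\ \mu \in \P,
$$
extended by $0$ outside $U$. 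By construction $\tilde A$ has compact support, and since $\pi(x)=x$ and $\chi(x)=1$ for $x\in M$ one gets $\tilde A(x,\mu)=A(x,\mu)$ (where $A(x,\mu)\in T_xM$ is regarded as an element of $\mathbb{R}^d$ via $T_xM\subset\mathbb{R}^d$); thus $\tilde A$ extends $A$.

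The remaining work is to check that $\tilde A \in \C^{\infty,k}(\mathbb{R}^d\times\P,\mathbb{R}^d)$. Smoothness in $y$ for fixed $\mu$ is immediate, $\pi,\chi$ and $A(\cdot,\mu)$ being smooth. For the $\mu$-variable I would argue as follows: for fixed $y$, $\pi(y)$ is a fixed point of $M$, so $\mu\mapsto \tilde A(y,\mu)=\chi(y)A(\pi(y),\mu)$ inherits the $\C^k_w$ regularity in $\mu$ of $\mu\mapsto A(\pi(y),\mu)$; and for any admissible $\bar Z=\nabla\phi(\cdot)$ one has $\L_{\bar Z}\tilde A(y,\mu)=\chi(y)\,(\L_{\bar Z}A)(\pi(y),\mu)$, the Lie derivative acting only on the measure slot which $\pi$ and $\chi$ ignore. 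For the conditions bearing on the $x$-derivatives, the chain rule (Faà di Bruno through $\pi$) expresses $T^k_y\tilde A(y,\mu)$ as a finite sum of products of $\mu$-independent smooth tensors (built from derivatives of $\chi$ and $\pi$) with the maps $(T^j_x A)(\pi(y),\mu)$, $j\le k$; each of these is $\C^k_w$ in $\mu$ by hypothesis, and the same bookkeeping yields $\L_{\bar Z}T^k\tilde A = T^k\L_{\bar Z}\tilde A$ from the corresponding identity for $A$. This establishes the claim.

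I expect the only non-routine step to be this last verification, i.e. confirming that the somewhat ad hoc notion of $\C^{\infty,k}$-regularity in $(x,\mu)$ used in the paper is stable under precomposition by the smooth map $\pi$ and multiplication by $\chi$. Because $\pi$ and $\chi$ carry no dependence on $\mu$, every commutation requirement between $\L_{\bar Z}$ and $T^k$ collapses onto the one already assumed for $A$, so this is careful bookkeeping rather than a genuine obstacle. If one prefers to avoid the chain-rule manipulation on vector fields, an alternative is to reduce to the scalar statement by writing $A(x,\mu)=\sum_{j=1}^d A^j(x,\mu)e_j$ in the ambient coordinates of $\mathbb{R}^d$ and extending each scalar map $A^j\in\C^{\infty,k}(M\times\P,\mathbb{R})$ via $\tilde A^j=\chi\,(A^j\circ\pi)$, which requires only the same tubular-neighbourhood argument applied to functions.
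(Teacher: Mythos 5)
Your construction is essentially identical to the paper's: the paper takes a tubular neighbourhood $M_\varepsilon$ with smooth nearest-point projection $P_\varepsilon$ and sets $\tilde{A}(x,\mu) = \chi(d(x,M))\,A(P_\varepsilon(x),\mu)$, which is the same retraction-plus-cutoff extension you propose (with the cutoff written as a function of the distance to $M$ rather than directly on $\mathbb{R}^d$). Your extra bookkeeping on why the $\mu$-regularity and the commutation conditions are inherited is more detail than the paper supplies, but it is the same argument.
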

    
    \begin{proof}
     Let $\varepsilon> 0$ and let $M_\varepsilon$ be a tubular neighborhood of $M$ such that $P_\varepsilon$ is smooth (see Proposition \ref{Existence voisinage tubulaire}) . Moreover, let $\chi \in \C^\infty(\mathbb{R})$ be a bump function such that $\chi(0) = 1$ with support $[-\varepsilon/2, \varepsilon/2]$. Then, the vector field defined as $\tilde{A}(x,\mu) = \chi(d(x,M)) A(P_\varepsilon(x),\mu)$ when $x \in M_\varepsilon$ and $0$ elsewhere satisfies the conditions of the proposition.
    \end{proof}
    
     In the following, let $\varepsilon > 0$ and let $\tilde{Z}_i$ denotes a $M_\varepsilon$-supported $\C^{\infty,1}$ extension of $Z_i$ for all $i=0,1,\dots, N$ with $M_\varepsilon$ a neighborhood of $M$ as in Proposition \ref{Existence voisinage tubulaire}.  We emphasize the fact that even if we extend the vector fields on $\mathbb{R}^d$, $\mu$ is still in $\P = \mathbb{P}(M)$.

        \begin{prop}\label{ConstanteLipschitz}
        For $i=0,1 \dots, N$ there exists $K_i > 0$ such that for all $(x,\mu), (y,\nu) \in \mathbb{R}^d \times \mathscr{P}$:
        
        \begin{equation*}
            |\tilde{Z}_i(x,\mu) - \tilde{Z}_i(y,\nu)|^2 \leq K_i(|x-y|^2 + W_2^2(\mu,\nu)).
        \end{equation*}
    \end{prop}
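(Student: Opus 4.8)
The plan is to establish the desired Lipschitz estimate by combining the smoothness and compact support of the extended vector fields $\tilde{Z}_i$ with the $\P$-regularity inherited from Corollary \ref{PropLipschitzGeneralVectorField}. The key point is that the estimate must be uniform in the base-point variable, which is exactly the kind of statement Corollary \ref{PropLipschitzGeneralVectorField} was designed to deliver (there the uniformity came from taking a supremum over $x \in M$ after bounding each coordinate functional by its $\grad$-sup-norm).

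First I would split the difference into a spatial part and a measure part: for $i$ fixed,
\begin{align*}
    |\tilde{Z}_i(x,\mu) - \tilde{Z}_i(y,\nu)| &\leq |\tilde{Z}_i(x,\mu) - \tilde{Z}_i(y,\mu)| + |\tilde{Z}_i(y,\mu) - \tilde{Z}_i(y,\nu)|.
\end{align*}
For the first term, since $\tilde{Z}_i \in \C^{\infty,1}(\mathbb{R}^d \times \P, \mathbb{R}^d)$ has compact support in the $\mathbb{R}^d$ variable (Proposition \ref{Uniform extension lemma for vector fields}), the map $x \mapsto \tilde{Z}_i(x,\mu)$ is $\C^\infty$ with derivative supported in the compact tubular neighbourhood $\overline{M_\varepsilon}$, and by compactness of $\overline{M_\varepsilon} \times \P$ (recall $\P$ is compact) the sup-norm of $\nabla_x \tilde{Z}_i$ is finite, giving a Lipschitz constant $L_i^{\mathrm{sp}}$ independent of $\mu$. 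For the second term, I would apply Corollary \ref{PropLipschitzGeneralVectorField} (or rather its proof, since here the ambient space is $\mathbb{R}^d$ and we work coordinate-wise): each scalar component $y \mapsto \tilde{Z}_i^j(y,\cdot)$ is weakly-$\C^1$ in $\mu$ with $|\grad \tilde{Z}_i^j(y,\cdot)|_\infty$ bounded uniformly in $y$ (again by the compact-support/compactness argument), so $|\tilde{Z}_i(y,\mu) - \tilde{Z}_i(y,\nu)| \leq L_i^{\mathrm{meas}} W_2(\mu,\nu)$ with $L_i^{\mathrm{meas}}$ independent of $y$. Setting $K_i = 2\bigl((L_i^{\mathrm{sp}})^2 + (L_i^{\mathrm{meas}})^2\bigr)$ and using $(a+b)^2 \leq 2a^2 + 2b^2$ yields the claim.

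The main obstacle I anticipate is making the uniformity in the base point fully rigorous when the measure variable lives on $M$ but the spatial variable ranges over $\mathbb{R}^d$: one has to check that the weak-$\C^1$ regularity of $A(\cdot,\mu)$ in $\mu$, which is hypothesized only for $x \in M$, genuinely transfers to $\tilde{Z}_i(y,\cdot)$ for $y$ in the tubular neighbourhood. This is handled by the explicit form of the extension in Proposition \ref{Uniform extension lemma for vector fields}, namely $\tilde{Z}_i(y,\mu) = \chi(d(y,M)) Z_i(P_\varepsilon(y),\mu)$, since the cutoff $\chi(d(y,M))$ does not depend on $\mu$ and $P_\varepsilon(y) \in M$, so $\grad_\mu \tilde{Z}_i(y,\mu) = \chi(d(y,M)) \grad_\mu Z_i(P_\varepsilon(y),\mu)$ and its sup-norm over $\mu$ is bounded by $\sup_{x \in M}|\grad_\mu Z_i(x,\cdot)|_\infty < \infty$, the finiteness being part of the definition of $\C^{\infty,1}_w$ together with compactness of $M$. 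Once this bookkeeping is in place, the estimate follows from the two elementary pieces above.
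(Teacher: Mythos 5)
Your proposal is correct and follows essentially the same route as the paper: split the difference into a spatial increment and a measure increment by the triangle inequality, bound the former by the sup-norm of the spatial derivative over the compact set $\bar{M}_\varepsilon \times \P$, bound the latter by the uniform-in-base-point Lipschitz constant coming from the argument of Corollary \ref{PropLipschitzGeneralVectorField}, and conclude with $(a+b)^2 \leq 2(a^2+b^2)$. Your extra bookkeeping on how the $\mu$-regularity transfers through the explicit extension $\chi(d(y,M))Z_i(P_\varepsilon(y),\mu)$ is a welcome refinement of a step the paper leaves implicit, but it is not a departure in method.
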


    \begin{proof}
    Using the fact that for all $i=0, \dots, N$, $\tilde{Z}_i \in \C^{\infty,1}(\mathbb{R}^d \times \P)$ and the same arguments as in Corollary \ref{PropLipschitzGeneralVectorField} we get that $\tilde{Z}_i(x,\cdot)$ is Lipschitz for all $x \in \tilde{M}$ with Lipschitz constant $L_i$ independent of $x$.
        
        \begin{equation*}
            |\tilde{Z}_i(x,\mu) - \tilde{Z}_i(y,\nu)| \leq |\tilde{Z}_i(x,\mu) - \tilde{Z}_i(y,\mu)| + | \tilde{Z}_i(y,\mu) - \tilde{Z}_i(y,\nu)| \leq C_i(|x-y| + W_2(\mu,\nu)),
        \end{equation*}
        where $C_i = \max \left( \sup \{ |D \tilde{Z}_i(z,m)| \, : \, z \in \bar{M}_\varepsilon, \, m \in \mathscr{P} \}, L_i \right ) $. Note that this quantity is well defined by compactness of both $\bar{M}_\varepsilon$ and $\P$ and by continuity of $D \tilde{Z}_i$. 
        
        By using the basic inequality $ (a+b)^2 \leq 2(a^2 + b^2)$ we get
        
        \begin{equation*}
            C_i^2(|x-y| + W_2(\mu,\nu))^2 \leq 2C_i^2 (|x-y|^2 + W_2^2(\mu,\nu)).
        \end{equation*}
    \end{proof}
    For a Riemannian submanifold $M$ of $\mathbb{R}^d$, let $\tilde{A}, \tilde{B}$ be smooth extensions of vector fields $A,B \in \Gamma(TM)$. The \textit{second fundamental form} in $\tilde{A}, \tilde{B}$ is 
    
    \begin{equation*}
        \mathrm{I\!I}(\tilde{A},\tilde{B}) = P_{TM^\perp}\left( \nabla^{\mathbb{R}^d}_{\tilde{A}}\tilde{B} \right).
    \end{equation*}
    This operator is in particular a symmetric tensor and its value on $\Gamma(TM)$ does not depend on the extensions chosen.
We define $\tilde{Z}_0'$ to be:
    
    \begin{equation} \label{Définition tilde Z0}
        \tilde{Z}_0'(\mu) = \tilde{Z}_0(\mu) - \frac{1}{2} \sum_{i=1}^N \mathrm{I\!I}(\tilde{Z}_i(\mu),\tilde{Z}_i(\mu)).
    \end{equation}
    In particular, $ \mathrm{I\!I}(\tilde{Z}_i,\tilde{Z}_i)$ is of class $\C^{\infty,1}$ by definition and Proposition \ref{Uniform extension lemma for vector fields}.
    Until the end of the section we will set $K_0, K > 0$ such that:
    
    \begin{align} 
        &| \tilde{Z}_0'(x,\mu) - \tilde{Z}_0'(y,\nu)|^2  \leq K_0 (|x-y|^2 + W_2^2(\mu,\nu)) \label{C1}, \\ 
        & 2 \left| \langle \tilde{Z}_0'(x,\mu) - \tilde{Z}_0'(y,\nu), x-y \rangle \right| + \underset{i=1}{\overset{N}{\sum}} | \tilde{Z}_i(x,\mu) - \tilde{Z}_i(y,\nu)|^2 \leq K(|x-y|^2 + W_2^2(\mu,\nu)). \label{C2}
    \end{align}
    
    \begin{rem}
        To get the second inequality, notice that
        
        \begin{equation*}
            2 \left| \langle \tilde{Z}_0'(x,\mu) - \tilde{Z}_0'(y,\nu), x-y \rangle \right| \leq |\tilde{Z}_0'(x,\mu) - \tilde{Z}_0'(y,\nu)|^2 + |x-y|^2
        \end{equation*}
        and we conclude by applying \eqref{C1} and Proposition \ref{ConstanteLipschitz}.
    \end{rem}

\subsection{Proof of Theorem \ref{Existence Unicité}} \label{Subsection 4.4}

	To get a solution to \eqref{Brave EDS sur Wasserstein}, it is known (see for example \cite{carmona2018probabilistic2}, \cite{wang2020}) that it is enough to solve a conditionnal Mckean-Vlasov SDE on $M$. In our case, this is the following:
	
	\begin{equation} \label{Conditionnal SDE}
	\d^\nabla X_t(x) = \SN Z_i( X_t(x), M_t(\mu)) \d W^i_t + Z_0( X_t(x), M_t(\mu)  ) \d t, \quad X_0(x) = x, \quad M_t(\mu) = (X_t)_* \mu.
	\end{equation}

	We present the definition of the solutions of such conditional McKean-Vlasov SDEs as in \cite{wang2020}.
	
	\begin{defi}
	A family of adapted processes $ \{ (t,x) \in \mathbb{R}_+ \times M \rightarrow X_t(x) \in M \} $ is called a solution of \eqref{Conditionnal SDE} if the following conditions hold a.s:
	
	\begin{enumerate}
	\item $X$ is continuous in $t$ and measurable in $x \in M$.
	\item The map $t \mapsto M_t(\mu) =(X_t)_* \mu$ is continuous. 
	\item For all $f \in \C^\infty(M)$,
	
	\begin{align*}
	f(X_t(x))-f(x) &= \SN \int_0^t Z_i(\cdot, M_s(\mu)) f(X_s(x)) \d W^i_s + \int_0^t Z_0(\cdot,M_s(\mu)) f(X_s(x)) \d s \\
	& + \frac{1}{2} \SN \int_0^t \he f(Z_i(X_s(x),M_s(\mu)),Z_i(X_s(x),M_s(\mu))) \d s.
	\end{align*}
	\end{enumerate}
	\end{defi}
	
		\begin{rem}[Itô-Stratonovich conversion for conditional McKean-Vlasov SDEs]
	    The Stratonovich's equivalent of Equation \eqref{Conditionnal SDE} can be obtained as follows. Let $(X_t)_{t \geq 0}$ be solution to \eqref{Conditionnal SDE}, then, for all $f \in \C^\infty(M)$, we have:
	    
	    \begin{align*}
	        f(X_t(x)) - f(x) &= \SN \int_0^t Z_i(\cdot,M_s(\mu)) f(X_s(x)) \d W^i_s + \int_0^t Z_i(\cdot, M_s(\mu))f(X_s(x)) \d s \\
	        & + \frac{1}{2} \SN \int_0^t \he f(Z_i(X_s(x),M_s(\mu)),Z_i(X_s(x),M_s(\mu))) \d s \\
	        & - \frac{1}{2} \SN \left[ Z_i(\cdot,M_\cdot(\mu)) f(X_\cdot(x)), W^i\right]_t.
	    \end{align*}
	    We have
	    
	    \begin{align} \label{E26}
	        Z_i(\cdot,M_t(\mu)) f(X_t(x)) &= \langle Z_i(X_t(x),M_t(\mu)), \nabla f(X_t(x)) \rangle \\
	        & =\int_0^t \circ \d \langle Z_i(X_s(x),M_s(\mu)), \nabla f(X_s(x)) \rangle \nonumber \\
	        & = \int_0^t \langle \nabla_{\circ \d X_s(x)} Z_i(X_s(x), M_s(\mu)), \nabla f(X_s(x)) \rangle + \langle Z_i(X_s(x),M_s(\mu)), \nabla_{\circ \d X_s(x)} \nabla f(X_s(x)) \rangle \nonumber \\
	        & + \int_0^t \langle \nabla \bar{\mathcal{L}}_{\circ \d M_s(\mu)} \phi_i(X_s(x),M_s(\mu)), \nabla f(X_s(x)) \rangle \nonumber.
	    \end{align}
	    Since the sum of the first two terms in the last equality of \eqref{E26} is $\int_0^t \mathcal{L}_{\circ \d X_s(x)} \mathcal{L}_{Z_i(X_s(x), M_s(\mu))} f(X_s(x))$, we obtain that
	    
	    $$
	    \left[ Z_i(\cdot,M_\cdot(\mu)) f(X_\cdot(x)), W^i\right]_t = \int_0^t \mathcal{L}_{Z_i(X_s(x),M_s(\mu))}^2f(X_s(x)) + \mathcal{L}_{\nabla \bar{\mathcal{L}}_{\bar{Z}_i(M_t(\mu))}\phi_i(X_s(x),M_s(\mu))} f(X_s(x)) \d s.
	    $$
	    Thus, by using the definition of $\he f$, Equation \eqref{Conditionnal SDE} is equivalent to
	    
	    \begin{align*}
	        \circ \d X_t(x) &= \SN Z_i( X_t(x), M_t(\mu)) \circ \d W^i_t + \left( Z_0( X_t(x), M_t(\mu)  ) - \frac{1}{2} \SN  \nabla_{Z_i(\cdot,M_s(\mu))}Z_i(X_s(x),M_s(\mu)) \right) \d t \\
	        & - \frac{1}{2} \SN \nabla \bar{\mathcal{L}}_{\bar{Z}_i(M_t(\mu))}\phi_i(X_t(x),M_t(\mu)) \d t , \quad X_0(x) = x, \quad M_t(\mu) = (X_t)_* \mu.
	    \end{align*}
	\end{rem}

        \begin{thm} \label{TheoremeExistenceUniciteMcKean}
    The conditional McKean-Vlasov equation
    
    \begin{equation} \label{SDE Mckean vlasov RN}
        \d^\nabla X_t(x) = \SN Z_i(M_t(\mu), X_t(x)) \d W^i_t + Z_0(M_t(\mu), X_t(x)) \d t, \quad X_0(x) = x, \quad M_t(\mu) = (X_t)_* \mu,
    \end{equation}
    admits a unique solution for initial conditions $x \in M$ and $\mu \in \P$, where $\mu$ can be a random variable on $\P$. Moreover, if $\mu \in \P_\infty$, the solution flow $(X_t)_{t \geq 0}$ is a stochastic flow of diffeomorphisms.
    \end{thm}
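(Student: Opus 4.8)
The plan is to reduce to a Euclidean setting via the Nash embedding theorem and then run a Picard scheme, using the one‑sided estimates \eqref{C1}--\eqref{C2} to close the argument. First I would view $M$ as a closed submanifold of $\mathbb{R}^d$ and replace the intrinsic coefficients by the compactly supported extensions $\tilde Z_1,\dots,\tilde Z_N,\tilde Z_0'$ of Proposition \ref{Uniform extension lemma for vector fields} and \eqref{Définition tilde Z0}; the point of the second fundamental form correction built into $\tilde Z_0'$ is that, along $M$, the $\mathbb{R}^d$‑valued It\^o equation
\[
\d X_t(x)=\SN \tilde Z_i\big(X_t(x),M_t(\mu)\big)\,\d W^i_t+\tilde Z_0'\big(X_t(x),M_t(\mu)\big)\,\d t
\]
coincides with the intrinsic It\^o equation \eqref{SDE Mckean vlasov RN}. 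The McKean--Vlasov coupling will be controlled through the elementary bound $W_2^2\big((X_t)_\sharp\mu,(X'_t)_\sharp\mu\big)\le C\int_M|X_t(x)-X'_t(x)|^2\,\d\mu(x)$, obtained by using $x\mapsto(X_t(x),X'_t(x))$ as a transport plan and comparing $d_M$ with the ambient distance on the compact manifold $M$.

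Next I would set up the iteration: $X^{(0)}_t(x)=x$, and given $X^{(n)}$, put $M^{(n)}_t=(X^{(n)}_t)_\sharp\mu$ — a continuous adapted $\P$‑valued process — and let $X^{(n+1)}$ be the solution flow \emph{on} $M$ of the non‑McKean equation whose coefficients are frozen along $(M^{(n)}_t)_{t\ge 0}$. Each step is well posed: for a fixed path of measures this is an SDE on the compact manifold $M$ with coefficients smooth in the space variable, so the classical theory of SDEs on compact manifolds (the stochastic Cauchy--Lipschitz theorem) yields a unique solution flow, which is moreover a stochastic flow of diffeomorphisms of $M$ by Elworthy's theory (cf.\ \cite{Elworthy1982}). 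Viewing the iterates inside $\mathbb{R}^d$ and applying It\^o's formula to $|X^{(n+1)}_t(x)-X^{(n)}_t(x)|^2$ together with the Burkholder--Davis--Gundy inequality, the one‑sided bound \eqref{C2} and the coupling estimate above, then integrating in $x$ against $\mu$, I expect to reach, for every $T>0$ and $t\le T$,
\[
D_{n+1}(t)\le C_T\int_0^t D_n(s)\,\d s,\qquad D_n(t):=\mathbb{E}\!\int_M\sup_{s\le t}\big|X^{(n+1)}_s(x)-X^{(n)}_s(x)\big|^2\,\d\mu(x),
\]
so that $\sum_n\sup_{t\le T}D_n(t)^{1/2}<\infty$ and the iterates converge, uniformly on $[0,T]$ and in $L^2(\mathbb{P}\otimes\mu)$, to a process $X$. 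Passing to the limit in the equation and in the definition and continuity of $M_t(\mu)$ gives a solution of \eqref{SDE Mckean vlasov RN} which stays on $M$ by construction; applying the very same estimate to two solutions produces a closed Gr\"onwall inequality, hence pathwise uniqueness. (If one prefers to work in $\mathbb{R}^d$ throughout, $M$‑valuedness can instead be recovered a posteriori by applying It\^o's formula to $y\mapsto d(y,M)^2$ up to the first exit time from a tubular neighbourhood.) Allowing $\mu$ to be $\mathcal{F}_0$‑measurable changes nothing, as all the above estimates are carried out for fixed $\mu$.

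For the flow statement: once existence is granted, $t\mapsto M_t(\mu)$ is a \emph{given} continuous adapted $\P$‑valued process, so $X_t(\cdot)$ now solves an honest (non‑McKean) Stratonovich SDE on the compact manifold $M$ with coefficients smooth in $x$, and Elworthy's theorem again shows that its solution flow is a stochastic flow of diffeomorphisms of $M$. When $\mu=\rho\,\vol\in\P_\infty$ with $\rho\in\CI{M}$ positive, the change‑of‑variables formula along this flow of diffeomorphisms then shows that $M_t(\mu)$ retains a smooth positive density, i.e.\ $M_t(\mu)\in\P_\infty$ — which is precisely the additional assertion feeding into Theorem \ref{Existence Unicité}.

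The hard part will be making the Picard loop close while simultaneously respecting the manifold constraint: the iterates must stay $M$‑valued (so that the intrinsic and extrinsic equations agree and the drift correction does its work), yet the a priori estimates have to be performed in $\mathbb{R}^d$, where the drift $\tilde Z_0'$ is only controlled by the one‑sided bound \eqref{C2} rather than by a genuine two‑sided Lipschitz estimate — this is exactly why \eqref{C1}--\eqref{C2} were singled out in the preliminaries. Secondary technical points are the joint measurability in $x$ of the solution flow and the continuity of $t\mapsto M_t(\mu)$ in $(\P,W_2)$, both of which should survive the limit thanks to the uniform‑in‑$n$ control provided by the Gr\"onwall estimate.
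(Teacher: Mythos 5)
Your existence and uniqueness argument is essentially the paper's: Nash embedding, the compactly supported extensions of Proposition \ref{Uniform extension lemma for vector fields} with the second-fundamental-form correction $\tilde Z_0'$ of \eqref{Définition tilde Z0}, a Picard scheme whose convergence is controlled by \eqref{C1}--\eqref{C2}, the coupling bound of Lemma \ref{LemmeTransportOptimal} combined with the distance comparison of Lemma \ref{Equivalence distances}, and BDG plus Gr\"onwall (the paper contracts geometrically on a small interval $[0,t_0]$ with $c(t_0)<1$ and then restarts, rather than extracting a factorial gain on all of $[0,T]$; this is only bookkeeping). Where you genuinely diverge is the diffeomorphism property. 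You freeze the measure path $(M_t(\mu))_{t \ge 0}$ produced by the existence step and invoke finite-dimensional stochastic flow theory for an SDE on the compact manifold $M$ with random, adapted, time-dependent coefficients smooth in $x$; this is a legitimate and more elementary route, but it leans on a version of the flow-of-diffeomorphisms theorem for coefficients depending on $\omega$ through the adapted process $M_t(\mu)$ (Kunita's framework of semimartingales with spatial parameter), whose uniform-in-$(t,\omega)$ regularity hypotheses you would need to verify from the compactness of $\P$ and the $\C^{\infty,1}$ regularity of the $\phi_i$. The paper instead lifts the equation to the diffeomorphism group: it solves the Stratonovich SDE \eqref{Equation dans le cas des difféos} on $\mathscr{D}$ driven by the horizontal lifts $\bm Z_i$ via Theorem \ref{Existence unicité Difféos}, projects through the evaluation map $E_{x_0}$ (Proposition \ref{P Evaluation map lisse}), and identifies the projection with the conditional McKean--Vlasov solution by uniqueness, so that the flow-of-diffeomorphisms property is inherited from membership in $\mathscr{D}$. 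The paper's detour through $\mathscr{D}$ has the additional payoff that the horizontal-lift SDE is precisely the object reused in Sections \ref{Section 5} and \ref{Section 6}, whereas your route is self-contained at the level of finite-dimensional stochastic analysis.
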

    The proof will be done below. Throughout the proof we will use the following results.
    
    \begin{prop} \label{Résultat régularité solutions}
    Let $M$ be a compact Riemannian submanifold of $\mathbb{R}^d$ and let $A_0,A_1, \dots A_n$ be random time-dependent vector fields being almost surely $\C^{0,\infty}(\mathbb{R}_+, M)$ on $M$, such that $A_i(t,\cdot)$ is $\mathcal{F}_t$-measurable for all $t \geq 0$ and all $i=0,1,\dots,N$. Let $\tilde{A}_0, \tilde{A}_1,  \dots, \tilde{A}_N$ be $\mathcal{C}^{0,\infty}(\mathbb{R}_+, \mathbb{R}^d)$ extensions on $\mathbb{R}^d$. There exists a unique solution to
    
    \begin{equation} \label{E30}
        \d X_t = \SN \tilde{A}_i(t,X_t(x)) \d W^i_t + \left( \tilde{A}_0(t,X_t(x)) - \frac{1}{2} \SN \mathrm{I\!I}(\tilde{A}_i(t,X_t(x)),\tilde{A}_i(t,X_t(x))) \right) \d t, \quad X_0(x) =x \in M.
    \end{equation}
    Moreover, $(X_t)_{t \geq 0}$ remains in $M$ and is the unique solution to
    
    \begin{equation} \label{E31}
        \d ^{\nabla} X_t(x) = \SN A_i(t,X_t(x)) \d W^i_t + A_0(t,X_t(x)) \d t.
    \end{equation}
    \end{prop}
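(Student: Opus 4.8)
The plan is to realise \eqref{E30} as a Euclidean Itô equation with (random, but locally Lipschitz) coefficients, solve it by the classical theory, show that its solution never leaves $M$ by a Grönwall estimate on the squared distance to $M$, and finally identify the restricted process with the intrinsic equation \eqref{E31} through the Gauss formula.

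First I would fix $\varepsilon>0$ and a tubular neighbourhood $M_\varepsilon$ of $M$ on which the nearest-point projection $P_\varepsilon$ and the squared distance $\rho:=d_M^2$ are smooth (Proposition \ref{Existence voisinage tubulaire}), and multiply each $\tilde A_i$ by a cut-off supported in $M_{\varepsilon/2}$: the resulting compactly supported fields still agree with $\tilde A_i$ near $M$, and replacing $\tilde A_i$ by them changes nothing in the statement once we know $X$ stays on $M$. With this reduction the coefficients $\sigma_i:=\tilde A_i$ and $b:=\tilde A_0-\tfrac12\SN\mathrm{I\!I}(\tilde A_i,\tilde A_i)$ of \eqref{E30} are, almost surely, continuous in $t$ and smooth with compact support in $x$; in particular they are progressively measurable and, on any compact time interval, uniformly bounded and globally Lipschitz in $x$ with a finite random constant $K_t(\omega)$ which is locally bounded in $t$. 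Localising over the stopping times $T_n:=\inf\{t\ge0:K_t\ge n\}$ and running the usual Picard--Burkholder--Davis--Gundy contraction on each $[0,T_n]$ (see e.g.\ \cite{kunita1997stochastic}) produces a unique strong solution on $[0,T_n]$; since $\sigma_i,b$ have at most linear growth in $x$ there is no explosion, and patching yields the unique global solution $(X_t)_{t\ge0}$ of \eqref{E30} with $X_0=x\in M$.

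The heart of the proof is the invariance of $M$. I would apply Itô's formula to $\rho(X_{t\wedge\tau})$, where $\tau$ is the first exit time of $M_{\varepsilon/2}$, further localised by the $T_n$ above. The key is that the correction term involving the second fundamental form in \eqref{E30} is precisely the one which makes \emph{the bounded-variation part of $\rho(X_t)$ controlled by $\rho(X_t)$ itself}. Indeed, along $M$ one has $\rho=0$, $\nabla^{\mathbb{R}^d}\rho=0$ and $\mathrm{Hess}^{\mathbb{R}^d}\rho(u,u)=0$ for $u$ tangent, while the normal component of $b$ along $M$ is compensated, through the Gauss equation, by the first normal jet of $\mathrm{Hess}^{\mathbb{R}^d}\rho$ along $M$; a first-order Taylor expansion about the foot point of the tubular neighbourhood then gives $|\langle\nabla\rho,\sigma_i\rangle|\lesssim\rho$ and $\langle\nabla\rho,b\rangle+\tfrac12\SN\mathrm{Hess}^{\mathbb{R}^d}\rho(\sigma_i,\sigma_i)\lesssim\rho$ on $M_{\varepsilon/2}$, with implicit constants controlled by the localised Lipschitz bounds of $\sigma_i,b$. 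Since $\rho(X_{\cdot\wedge\tau})\le(\varepsilon/2)^2$ is bounded, taking expectations and using a martingale moment estimate for the stochastic part gives $\mathbb{E}[\rho(X_{t\wedge\tau})]\le C\int_0^t\mathbb{E}[\rho(X_{s\wedge\tau})]\,\d s$, so Grönwall forces $\rho(X_{t\wedge\tau})\equiv0$; hence $X_{t\wedge\tau}\in M$, and therefore $\tau=+\infty$ and $X_t\in M$ for all $t\ge0$ after letting $n\to\infty$. (Equivalently, in Stratonovich form one checks via the Gauss formula that all the coefficient fields of \eqref{E30} are tangent to $M$ along $M$, and in Fermi coordinates the normal component of $X$ then solves a linear equation started from $0$.)

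Finally, I would identify $X$ on $M$. For $f\in\mathcal{C}^\infty(M)$ with an ambient extension $\tilde f$, Itô's formula for $\tilde f(X_t)$ combined with the Gauss relation between $\mathrm{Hess}^{\mathbb{R}^d}\tilde f|_{TM\times TM}$, the intrinsic Hessian $\he f$, and $\langle\nabla^{\mathbb{R}^d}\tilde f,\mathrm{I\!I}\rangle$ makes the $\mathrm{I\!I}$-correction in the drift and the normal part of the ambient Hessian cancel, leaving exactly
$$
f(X_t)=f(x)+\SN\int_0^t A_if(X_s)\,\d W^i_s+\int_0^t\Big(A_0f(X_s)+\tfrac12\SN\he f\big(A_i(X_s),A_i(X_s)\big)\Big)\,\d s,
$$
which is \eqref{E31}. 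Conversely, any $M$-valued solution $Y$ of \eqref{E31} solves \eqref{E30} in $\mathbb{R}^d$: testing $Y$ against the restrictions to $M$ of the ambient coordinate functions $y^1,\dots,y^d$ (whose intrinsic Hessian is expressed through $\mathrm{I\!I}$ alone, since they are ambient-affine) recovers exactly the coefficients of \eqref{E30}; pathwise uniqueness for \eqref{E30} then gives $Y=X$, whence uniqueness for \eqref{E31}. I expect the main obstacle to be the invariance step, precisely the point that the drift correction of \eqref{E30} keeps $X$ on $M$ — equivalently, that the bounded-variation part of $\rho(X_t)$ is $O(\rho(X_t))$ rather than merely $O(\rho(X_t)^{1/2})$ — which is exactly where the Gauss equation is used; the management of the random, time-dependent Lipschitz constants by localisation is routine.
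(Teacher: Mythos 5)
Your proposal is correct and follows essentially the same route as the paper, which simply cites the standard references for each of your three steps: existence and uniqueness of \eqref{E30} from \cite[Section 3.4]{kunita1997stochastic}, invariance of $M$ (via the squared-distance/Grönwall argument you spell out) from \cite[Section 1.2]{hsu2002stochastic}, and the identification of \eqref{E30} with \eqref{E31} through the cancellation of the second fundamental form against the normal part of the ambient Hessian. Your write-up supplies in full the arguments the paper leaves to those references, including the key point that the $\mathrm{I\!I}$-correction makes the drift of $d_{\mathbb{R}^d}(X_t,M)^2$ of order $O(\rho)$ rather than $O(\sqrt{\rho})$.
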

    
    \begin{proof}
    	For the existence and uniqueness of the solution to \eqref{E30}, the reader is refered to \cite[Section $3.4$]{kunita1997stochastic}. The fact that $X_t(x)$ stays in $M$ for all $x \in M$ can be found, for example, in \cite[Section $1.2$]{hsu2002stochastic}. The fact that the solution of \eqref{E30} is actually the solution to \eqref{E31} is clear since the second fundamental form term is exactly what we need to remove in order to obtain the Hessian on $M$.
    \end{proof}
    
    \begin{prop} \label{Processus mesure lisse}
    		Let $(X_t)_{t \geq 0}$ be a stochastic flow of diffeomorphisms. For all $\mu \in \P_\infty$, the stochastic process defined by $(X_t)_* \mu$ lies in $\P_\infty$.
    \end{prop}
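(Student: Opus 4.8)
The plan is to reduce the statement to the elementary fact that the pushforward of a measure with smooth, strictly positive density by a $\C^\infty$-diffeomorphism again has smooth, strictly positive density. Since ``$(X_t)_\sharp\mu \in \P_\infty$'' is a pointwise assertion in the pair $(\omega,t)$, it is enough to fix $\omega$ in the almost-sure event on which $s \mapsto X_s(\omega,\cdot)$ is a flow of $\C^\infty$-diffeomorphisms of $M$, fix $t \geq 0$, write $\psi := X_t(\omega,\cdot) \in \mathscr{D}$, and show that $\psi_\sharp\mu \in \P_\infty$ for an arbitrary $\psi \in \mathscr{D}$ and $\mu = \rho\,\vol \in \P_\infty$ (so $\rho \in \C^\infty(M)$, $\rho>0$).

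The quickest route uses the surjectivity of $p : \mathscr{D} \rightarrow \P_\infty$ already established via Moser's trick: there exists $\eta \in \mathscr{D}$ with $\mu = p(\eta) = \eta_\sharp\vol$. Since $\mathscr{D}$ is stable under composition, $\psi\circ\eta \in \mathscr{D}$, and
\[
\psi_\sharp\mu = \psi_\sharp(\eta_\sharp\vol) = (\psi\circ\eta)_\sharp\vol = p(\psi\circ\eta) \in \P_\infty ,
\]
because $p$ takes values in $\P_\infty$ by construction. Alternatively, and more concretely, one computes the density directly: for $f \in \C^\infty(M)$, the change of variables $y = \psi(x)$ gives
\[
\int_M f\,\d(\psi_\sharp\mu) = \int_M (f\circ\psi)\,\rho\,\d\vol = \int_M f(y)\,\rho(\psi^{-1}(y))\,J_{\psi^{-1}}(y)\,\d\vol(y),
\]
where $J_{\psi^{-1}}$ denotes the Jacobian of $\psi^{-1}$ relative to $\vol$ (in a chart, the absolute value of the coordinate Jacobian of $\psi^{-1}$ times the ratio of the local volume densities of $\vol$ at $\psi^{-1}(y)$ and at $y$). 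Hence $\psi_\sharp\mu = \rho_\psi\,\vol$ with $\rho_\psi = (\rho\circ\psi^{-1})\cdot J_{\psi^{-1}}$, which is smooth since $\psi^{-1}$, $\rho$ and $J_{\psi^{-1}}$ are smooth, and strictly positive since $\rho>0$ and $J_{\psi^{-1}}>0$ (recall that $\mathscr{D}$ consists of orientation-preserving diffeomorphisms). Therefore $\psi_\sharp\mu \in \P_\infty$, which is exactly the claim.

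I do not expect a genuine obstacle here: the result is a soft consequence of the change-of-variables formula together with the group structure of $\mathscr{D}$. The only point requiring minimal care is that the relevant Jacobian is the one relative to the Riemannian volume form $\vol$, not a bare coordinate Jacobian; but this merely introduces the smooth positive factor given by the ratio of the local volume densities, so it affects neither smoothness nor positivity. If it is needed later, one may additionally record that $\psi \mapsto \rho_\psi$ is continuous for the relevant topologies, which combined with the continuity of $t \mapsto X_t$ in $\mathscr{D}$ yields continuity of $t \mapsto (X_t)_\sharp\mu$ in $(\P_\infty,W_2)$; this is, however, not required for the statement as formulated.
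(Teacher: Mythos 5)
Your proposal is correct, and your ``more concrete'' second route is exactly the paper's own proof: the paper computes, via change of variables, that the density of $(X_t)_\sharp\mu$ is $\rho(X_t^{-1}(\cdot))\,|\mathrm{Det}(T X_t^{-1})|$, which is smooth and positive. The first route via Moser's trick is just a repackaging that defers the same computation to the fact that $p$ takes values in $\P_\infty$, so there is nothing substantively different to compare.
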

    
    \begin{proof}
    Note that, since for all $t \geq 0$, $X_t$ is a diffeomorphism, we have for all $f \in \C^\infty(M)$ :
    	
    	$$
    	\int_M f(x) \d\mu_t(x) = \int_M f(X_t(x)) \d\mu(x) = \int_M f(X_t(x)) \rho(x) \d\vol = \int_M f(x) \rho(X_t^{-1}(x)) |\text{Det} (T_x X_t^{-1})| \d\vol(x).
    	$$
    	Therefore, the assertion holds.
    \end{proof}
  
    We now prove Theorem \ref{TheoremeExistenceUniciteMcKean} by adapting the proof of \cite[Theorem $2.1$]{wang2020}.
    
    \begin{proof}[Proof of Theorem \ref{TheoremeExistenceUniciteMcKean}]
        \textbf{Existence:} Let $X^0(x) = x$ and $\mu^0 = \mu$. We define a sequence $(X_t^n(x),M_t^n(\mu))$ by induction. Let $n \in \mathbb{N}$ such that $(M_t^n(\mu))_{t \geq 0}$ is continuous in the time variable, we consider the following stochastic differential system (SDS) in $\mathbb{R}^d$:
        
        \begin{equation*}
            \d X^{n+1}_t(x) = \underset{i=1}{\overset{N}{\sum}}\tilde{Z}_i(X^{n+1}_t(x) ,M^n_t(\mu)  ) \d W_i^t + \tilde{Z}_0'( X^{n+1}_t(x), M^n_t(\mu)) \d t, \quad X_0^{n+1}(x)=x, \, M^n_t(\mu) = (X^n_t)_* \mu,
        \end{equation*}
        with $\tilde{Z}'_0$ defined by \eqref{Définition tilde Z0}. By Proposition \ref{Résultat régularité solutions} there exists a unique solution $(X^{n+1}_t)_{t \geq 0}$ to this SDS. Moreover, for an initial condition $x \in M$, the solution remains in $M$ and is solution to the following $M$-valued SDE:
        
        \begin{equation*} 
        \d^\nabla X^{n+1}_t(x) = \SN Z_i( X_s^{n+1}(x),M_s^n(\mu)) \d W^i_t + Z_0( X_s^{n+1}(x),M_s^n(\mu)) \d t.
        \end{equation*}
        In particular, we have:
        
        \begin{align}\label{Solution Induction}
            f(X^{n+1}_t(x))-f(x) &= \int_0^t Z_i(\cdot, M^n_s(\mu))f(X^{n+1}_s(x)) \d W^i_s +  \int_0^t Z_0(\cdot, M^n_s(\mu))f(X^{n+1}_s(x)) \d s \nonumber \\
    & + \frac{1}{2} \SN \int_0^t \he f(Z_i(X_s^{n+1}(x),M_s^n(\mu)),Z_i(X_s^{n+1}(x),M_s^n(\mu))) \d s, \quad \forall f \in \C^\infty(M).
        \end{align}
        On $\mathbb{R}^d$, it takes the following form:
        
        \begin{equation*} 
        X_t^{n+1}(x) = x + \int_0^t \SN \tilde{Z}_i(X_s^{n+1}(x), M_s^n(\mu)) \d W^i_s + \int_0^t \tilde{Z}_0'(X_s^{n+1}(x), M_s^n(\mu)) \d s.
        \end{equation*}
        Now, if we let $M_t^{n+1}(\mu)$ denotes $(X^{n+1}_t)_* \mu$, using Lemma \ref{LemmeTransportOptimal} we get
        
        \begin{equation*}
            \mathbb{E} \left[W_2^2(M_s^{n+1}(\mu), M_t^{n+1}(\mu)) \right] \leq \mathbb{E} \left[ \int_M |X^{n+1}_s(x) - X^{n+1}_t(x)|^2 \d\mu(x) \right].
        \end{equation*}
        Thus, when $s \to t$, the right-hand side goes to $0$ because of the (a.s.) time continuity of $(X^{n+1}_t)_t$. The proof of the following lemma can be found in Appendix \ref{Appendice preuve du lemme}.
    \begin{lem} \label{Lemme suite cauchy}
        There exists $t_0 > 0$ such that
        
        $$
        \lim_{n,l \to \infty} \mathbb{E} \left[\sup_{t \in [0,t_0]}d_M^2(X_t^{n}(x),X_t^l(x)) \right]^{1/2} = 0.
        $$
        Moreover, $t_0$ depends only on the vector fields $\tilde{Z}_i$ and may be chosen independently of the initial condition $(x,\mu)$.
    \end{lem}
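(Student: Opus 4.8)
The plan is to run a Picard-type contraction argument on the increments of the iteration $(X^n)$ appearing in the proof of Theorem \ref{TheoremeExistenceUniciteMcKean}, carried out in the ambient space $\mathbb{R}^d$ given by the Nash embedding and then transferred back to $M$. Since $M$ is a compact submanifold of $\mathbb{R}^d$, there is a constant $C_M \ge 1$ with $|x-y| \le d_M(x,y) \le C_M\,|x-y|$ for all $x,y \in M$, so it suffices to bound the Euclidean increments. For $n \in \mathbb{N}$ I would introduce the $x$-uniform quantity
$$
\Phi_n(t) := \sup_{x \in M}\, \mathbb{E}\Bigl[\, \sup_{s \le t}\, \bigl|X^{n+1}_s(x) - X^n_s(x)\bigr|^2 \,\Bigr],
$$
which is non-decreasing in $t$. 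The first step is to observe that $\Phi_0$ is finite and bounded uniformly in $(x,\mu)$: $(X^1_t)_{t \ge 0}$ solves an Itô SDE in $\mathbb{R}^d$ whose coefficients $\tilde Z_1,\dots,\tilde Z_N,\tilde Z_0'$ are bounded (they have compact support by Proposition \ref{Uniform extension lemma for vector fields}), so Doob's $L^2$-inequality yields $\Phi_0(t) \le C(1+t)\,t$ with $C$ depending only on $\max_i \|\tilde Z_i\|_\infty$.

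Next I would establish the recursive estimate. Writing $\Delta^n_t := X^{n+1}_t(x) - X^n_t(x)$ as the sum of a stochastic integral and a Lebesgue integral, applying Doob's $L^2$-inequality (or Burkholder--Davis--Gundy) to the martingale part, Cauchy--Schwarz to the drift part, and then the uniform Lipschitz bounds of Proposition \ref{ConstanteLipschitz} and \eqref{C1}, one gets for $t \le T$
$$
\mathbb{E}\Bigl[\sup_{s\le t}|\Delta^n_s|^2\Bigr] \;\le\; C(1+T)\int_0^t \Bigl( \mathbb{E}\bigl[\sup_{r\le s}|\Delta^n_r|^2\bigr] + \mathbb{E}\bigl[W_2^2(M^n_s(\mu),M^{n-1}_s(\mu))\bigr] \Bigr)\,\d s,
$$
with $C$ depending only on $N$ and the constants $K_1,\dots,K_N,K_0$. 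The McKean--Vlasov term is handled by Lemma \ref{LemmeTransportOptimal} together with the distance comparison above: since $(X^n_s, X^{n-1}_s)_\sharp\mu$ is a coupling of $M^n_s(\mu)$ and $M^{n-1}_s(\mu)$,
$$
\mathbb{E}\bigl[W_2^2(M^n_s(\mu),M^{n-1}_s(\mu))\bigr] \;\le\; C_M^2 \int_M \mathbb{E}\bigl[|X^n_s(x') - X^{n-1}_s(x')|^2\bigr]\,\d\mu(x') \;\le\; C_M^2\,\Phi_{n-1}(s).
$$
Inserting this and applying Grönwall's lemma in $t$ (for fixed $x$) yields $\mathbb{E}[\sup_{s\le t}|\Delta^n_s|^2] \le C'(T)\int_0^t \Phi_{n-1}(s)\,\d s$, and taking the supremum over $x \in M$ gives $\Phi_n(t) \le C'(T)\int_0^t \Phi_{n-1}(s)\,\d s$ for $t \le T$, where $C'(T)$ depends only on $N$, $K_i$, $K_0$, $C_M$ and the universal constant in Doob's inequality --- in particular not on $(x,\mu)$.

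To conclude I would fix $T = 1$ and set $t_0 := \min\bigl(1,\,(2C'(1))^{-1}\bigr)$, which depends only on $(\tilde Z_i)_{i=0}^N$. Since $\Phi_{n-1}$ is non-decreasing, the previous inequality gives $\Phi_n(t_0) \le C'(1)\,t_0\,\Phi_{n-1}(t_0) \le \tfrac12\,\Phi_{n-1}(t_0)$, hence $\Phi_n(t_0) \le 2^{-n}\Phi_0(t_0)$ for every $n$. Then, for $n < l$, the triangle inequality in $L^2(\mathbb{P})$ and the comparison $d_M \le C_M|\cdot|$ give
$$
\mathbb{E}\Bigl[\sup_{t\le t_0} d_M^2\bigl(X^n_t(x),X^l_t(x)\bigr)\Bigr]^{1/2} \;\le\; C_M \sum_{k=n}^{l-1} \mathbb{E}\Bigl[\sup_{t\le t_0}|X^{k+1}_t(x) - X^k_t(x)|^2\Bigr]^{1/2} \;\le\; C_M\,\Phi_0(t_0)^{1/2}\sum_{k=n}^{\infty} 2^{-k/2},
$$
which tends to $0$ as $n\to\infty$, uniformly in $l>n$ and in $(x,\mu)$; this proves the lemma with the stated uniformity of $t_0$. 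The only genuinely delicate point is the dependence on the measure argument: one cannot close the estimate pointwise in $x$ because $W_2^2(M^n_s,M^{n-1}_s)$ couples all the trajectories, so one must propagate the $x$-uniform quantities $\Phi_n$ and use that the Lipschitz constants in \eqref{C1}--\eqref{C2} are independent of $x$ (Proposition \ref{ConstanteLipschitz}); this is exactly what forces, and at the same time makes possible, the choice of a $t_0$ independent of the initial data. Everything else is the classical Picard scheme, and I do not anticipate further obstacles.
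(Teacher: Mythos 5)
Your proof is correct and follows essentially the same route as the paper's: Itô/BDG estimates on the Euclidean increments of the Picard iterates in $\mathbb{R}^d$, the uniform Lipschitz bounds of Proposition \ref{ConstanteLipschitz} and \eqref{C1}--\eqref{C2}, the coupling bound of Lemma \ref{LemmeTransportOptimal} combined with the distance comparison of Lemma \ref{Equivalence distances}, Grönwall, and a geometric series. The only (harmless) difference is bookkeeping: you propagate the $x$-uniform quantity $\sup_{x}\mathbb{E}\bigl[\sup_{s\le t}|X^{n+1}_s(x)-X^n_s(x)|^2\bigr]$, whereas the paper closes the recursion on $\sup_{t}\mathbb{E}\bigl[\int_{\mathbb{R}^d}|X^{n}_t-X^{n-1}_t|^2\,\d\mu\bigr]$; both dominate the Wasserstein term and yield a $t_0$ depending only on the $\tilde Z_i$.
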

    
    \noindent \textit{End of the proof of Theorem \ref{TheoremeExistenceUniciteMcKean}.} Using the fact that $L^2(\Omega, \C([0,t_0], M))$ is a complete space, there exists a unique limit 
    $$
    (X_t)_{t \in [0,t_0]} = \lim_{n \rightarrow \infty} (X^n_t)_{t \in [0,t_0]}
    $$ 
    which is measurable in $x$. Let $(M_t(\mu))_{0 \leq t \leq t_0} = ((X_t)_* \mu)_{0 \leq t \leq t_0}$, this process inherits the time continuity of $(X_t)_{0 \leq t \leq t_0}$ by lemma \ref{LemmeTransportOptimal}. Moreover, it is the limit of the sequence $(M_t^n(\mu))_{0 \leq t \leq t_0}$ since
    
    $$
    \mathbb{E} \left[\sup_{t \in [0,t_0]} W_2^2\left(M_t^n(\mu),M_t(\mu)\right) \right] \leq \mathbb{E}\left[ \sup_{t \in [0,t_0]}d_M^2(X_t^n(x),X_t(x)) \right] \underset{n \to \infty}{\longrightarrow} 0.
    $$
    By taking the limit in \eqref{Solution Induction}, we get that for all $f\in \C^\infty(M)$, $(X_t)_{0 \leq t \leq t_0} $ satisfies
    
    \begin{align*}
    f(X_t(x))-f(x) &= \int_0^t Z_i(\cdot, M_s(\mu))f(X_s(x)) \d W^i_s +  \int_0^t Z_0(\cdot, M_s(\mu))f(X_s(x)) \d s \\
    & + \frac{1}{2} \SN \int_0^t \he f(Z_i(X_s(x),M_s(\mu)),Z_i(X_s(x),M_s(\mu))) \d s.
    \end{align*}
    
    Now, to extend $(X_t)_{0 \leq t \leq t_0}$ to the interval [$0,2t_0]$ we can choose initial conditions $\bar{x} = X_{t_0}(x)$ and $\bar{\mu} = M_{t_0}(\mu)$. Since $(W^i_t - W^i_{t_0})_{t \geq t_0}$ is again a Brownian motion, and $t_0$ is independent of the initial conditions, we can construct as above a solution $(X_t(\bar{x}), M_t(\bar{\mu}))$ for $t_0 \leq t \leq 2t_0$ and it is easy to see that the process defined by
	
	$$
	X_t(x):= X_t(\bar{x}), \quad M_t(\mu) := M_t(\bar{\mu}), \quad \forall t \in [t_0,2t_0]
	$$
	solves the condtionnal McKean-Vlasov SDE \eqref{SDE Mckean vlasov RN}. We can iterate this argument to get the solution on any interval $[0,T]$.
	
	\textbf{Uniqueness :} Let $X,Y$ be two solutions of the conditional McKean-Vlasov SDE \eqref{SDE Mckean vlasov RN}. Then, $X$ and $Y$ are also solutions to the extended McKean-Vlasov SDE on $\mathbb{R}^d$. Using Itô's formula on the quantity $|X_t(x) - Y_t(x)|^2$, taking the expectation, using Lipschitz hypothesis and Grönwall's lemma we get the uniqueness by standard arguments of stochastic calculus, see for example \cite[Theorem $2.1$]{wang2020}. 
	
	\textbf{Diffeomorphism property.} Now, let $\mu \in \P_\infty$. For all $i=0,1, \dots, N$ we denote by $\bm Z_i(\psi)$ the horizontal lift $\mathfrak{h}_{\psi}(\bar{Z}_i)$. We consider the following SDE on $\mathscr{D}$:
	
	\begin{equation}\label{Equation dans le cas des difféos}
	    \circ \dd\varphi_t = \SN \bm Z_i(\varphi_t) \circ \d W^i_t + \bm Z_0'(\varphi_t) \d t, \quad \varphi_0 = \mathrm{id}.
	\end{equation}
	where $\bm Z_0' = \bm Z_0 - \frac{1}{2}\SN \bm \nabla_{\bm Z_i} \bm Z_i$. In particular, by using the Itô-Stratonovich conversion (Proposition \ref{Equivalence Ito Strato difféos}), the above Stratonovich's SDE is equivalent to the following Itô's one:
	
	$$
	\dd^{\bm \nabla}\varphi_t = \SN \bm Z_i(\varphi_t) \d W^i_t + \bm Z_0(\varphi_t) \d t, \quad \varphi_0 = \mathrm{id}.
	$$
	Since the driving vector fields belong to $\Gamma^1_S(T\mathscr{D})$, Theorem \ref{Existence unicité Difféos} ensure that there exists a unique solution to this SDE on $\mathscr{D}$. Let $E_{x_0} : \psi \mapsto \psi(x_0)$ denote the evaluation map for $x_0 \in M$. This map is smooth (see Proposition \ref{P Evaluation map lisse}) and one can easily check that $T_\varphi E_{x_0}(\bm Z_i(\varphi)) = Z_i(\varphi) \circ \varphi(x_0)$ for all $\psi \in \mathscr{D}$. By applying the chain rule, we obtain that
	
	\begin{equation*}
	    \circ \d \varphi_t(x_0)=\circ \d E_{x_0}(\varphi_t) = T_{\varphi_t}E_{x_0}(\circ \d \varphi_t) = \SN Z_i(p(\varphi_t)) \circ \varphi_t(x_0) \circ \d W^i_t + Z_0'(p(\varphi_t)) \circ \varphi_t(x_0) \d t.
	\end{equation*}
	Thus, $(\varphi_t)_{t \geq 0}$ is the solution flow to \eqref{Conditionnal SDE}. By using the uniqueness of the solution, $(X_t)_{t \geq 0}$ is a stochastic flow of diffeomorphisms. 
    \end{proof}
    
    Now that the proof of Theorem \ref{TheoremeExistenceUniciteMcKean} is complete, we can prove Theorem \ref{Existence Unicité}.
    
    \begin{proof}[Proof of Theorem \ref{Existence Unicité}]  We have to show that the $\P$-valued process $(M_t(\mu))_{t \geq 0}$ obtained in \ref{TheoremeExistenceUniciteMcKean} is solution to:
    
    \begin{equation}\label{EDS dans le cas mesure}
        \dw^{\grad} M_t(\mu) = \SN \bar{Z}_i(M_t(\mu)) \d W^i_t + \bar{Z}_0(M_t(\mu)) \d t, \quad M_0(\mu) = \mu.
    \end{equation}
	It is the case since, by Theorem \ref{TheoremeExistenceUniciteMcKean},
	
	\begin{align*}
	F_f(M_t(\mu)) & = \int_M f(X_t(x)) \d\mu(x) \\
	&=  \int_M f(x) \d\mu(x) + \int_M \SN \int_0^t Z_i(\cdot, M_s(\mu)) f(X_s(x)) \d W^i_s \d\mu(x) + \int_M \int_0^t Z_0(\cdot,M_s(\mu)) f(X_s(x)) \d s \d\mu(x) \\
	& + \int_M \frac{1}{2} \SN \int_0^t \he f(Z_i(X_s(x),M_s(\mu)),Z_i(X_s(x),M_s(\mu))) \d s \d\mu(x) \\
	& = \int_M f(x) \d\mu(x) +  \SN \int_0^t\int_M Z_i(\cdot, M_s(\mu)) f(X_s(x)) \d \mu(x) \d W^i_s + \int_0^t \int_M Z_0(\cdot,M_s(\mu)) f(X_s(x)) \d \mu(x) \d s \\
	& +  \frac{1}{2} \SN \int_0^t \int_M \he f(Z_i(X_s(x),M_s(\mu)),Z_i(X_s(x),M_s(\mu))) \d\mu(x) \d s  \\
	& = F_f(\mu) + \int_0^t \L_{\bar{Z}_i} F_f(M_s(\mu)) \d W^i_s + \int_0^t \L_{\bar{Z}_0} F_f(M_s(\mu)) \d s + \frac{1}{2} \SN \int_0^t \bar{\he }F_f(\bar{Z}_i(M_s(\mu)),\bar{Z}_i(M_s(\mu))) \d s,
	\end{align*}
	where the last equality comes from \eqref{Equation hessienne constant} and the fact that $M_s(\mu) = (X_s)_* \mu$. If $\mu \in \P_\infty$, the fact that $(M_t(\mu))_{t \geq 0}$ lies in $\P_\infty$ follows from Proposition \ref{Processus mesure lisse}. 
    \end{proof}
    
    \begin{rem}
        As noticed in the proof of Theorem \ref{TheoremeExistenceUniciteMcKean}, it appears that the unique solution to the horizontal lift equation is the flow $(X_t \circ \varphi)_{t \geq 0}$ where $(X_t)_{t \geq 0}$ is solution to the following conditional McKean-Vlasov equation:
    
    \begin{equation*}
    \circ \d X_t(x) = \SN Z_i(X_t(x),\mu_t) \circ \d W^i_t + Z_0(X_t(x), \mu_t) \d t, \quad X_0(x) = x, \quad \mu_t = (X_t)_* \mu.
    \end{equation*}
    \end{rem}
        
    	\subsection{Covariant SDEs} \label{Subsection 4.5}
    	
    	In this subsection we introduce a way to obtain semimartingales on $T\mathscr{D}$. We start by stating some results about Covariant SDEs on Hilbert vector bundles due to Elworthy, see \cite{Elworthy1982}.
    	
    	\vspace{10 pt}

    \noindent \textbf{Covariant SDEs on Hilbert vector bundles.} Let $\mathscr{M}$ be a Hilbert manifold, $\pi_1 : \mathscr{F}^1 \rightarrow \mathscr{M}$ and $\pi_2 : \mathscr{F}^2 \rightarrow \mathscr{M}$ be Hilbert vector bundles over $\mathscr{M}$. Let also $\mathscr{K}$ be a Hilbert space. 

    \begin{defi}
        Let $\Psi : \mathscr{M} \rightarrow \mathscr{M}$. A map $J : \mathscr{F}^1 \rightarrow \mathscr{F}^2$ is said to belong to $\mathrm{Hom}_\Psi(\mathscr{F}^1,\mathscr{F}^2)$ if the diagram
        \begin{center}
\begin{tikzpicture}[>=stealth]
  \node (E1) at (0,1) {$\mathscr{F}^1$};
  \node (E2) at (4,1) {$\mathscr{F}^2$};
  \node (E3) at (0,0)   {$\mathscr{M}$};
  \node (E4) at (4,0)   {$\mathscr{M}$};

  \draw[->, >=latex] (E1) -- (E2) node[midway,above]{$J$}; 
  \draw[->, >=latex] (E1) -- (E3) node[midway,left]{$\pi_1$};
  \draw[->, >=latex] (E3) -- (E4) node[midway,below]{$\Psi$}; 
  \draw[->, >=latex] (E2) -- (E4) node[midway,right]{$\pi_2$};
\end{tikzpicture}
\end{center}
is commutative and for all $x \in \mathscr{M}$, the induced map $J_x : \mathscr{F}^1_x \rightarrow \mathscr{F}^2_{\Psi(x)}$ is a continuous linear map. Let $k \in \mathbb{N}$, $J$ is said to belong to $\mathrm{Hom}_{\Psi}^k(\mathscr{F}^1,\mathscr{F}^2)$ if $\Psi \in \C^k(\mathscr{M},\mathscr{M})$ and for all $x \in \mathscr{M}$ there exists trivializing maps

$$
\theta : \pi_1^{-1}(U) \longrightarrow U \times E
$$
and

$$
\theta' : \pi_2^{-1}(U') \longrightarrow U' \times E'
$$
at $x_0$ and $\Psi(x_0)$ respectively, with $E, E'$ some Hilbert spaces, such that $\Psi(U) \subset U'$, and such that the map of $U$ into $\mathcal{L}_c(E,E')$ given by

$$
x \longmapsto \theta' \circ J_x \circ \theta^{-1}
$$
is a $\C^k$ map.
    \end{defi} 
    
    \begin{exemple}
    By construction (see \cite[Theorem 9.1]{EbinMarsden1970} or \cite{Eliasson1967}), for any vector field $\bm A \in \Gamma^k(T \mathscr{D}^s)$, $\bm \nabla^s \bm A : T\mathscr{D}^s \rightarrow T \mathscr{D}^s$ belongs to $\mathrm{Hom}_{\mathrm{id}}(T\mathscr{D}^s, T\mathscr{D}^s)$.
    \end{exemple}
    
    Let $(X_t)_{t \geq 0}$ be solution to 
	
	\begin{equation*} 
	    \circ \d X_t = \SN A_i(X_t) \circ \d W^i_t + A_0(X_t) \d t, \quad X_0 = x \in \mathscr{M},
	\end{equation*}
	with $A_0, A_1, \dots,  A_N \in \Gamma^2(T\mathscr{M})$. Let $\pi : \mathscr{F} \rightarrow \mathscr{M}$ be a smooth vector bundle endowed with a connection $ \nabla$ and a unique stochastic parallel transport $(\tau_{0,t})_{t \geq 0}$ along $(X_t)_{t \geq 0}$. 
	Let $J_0, J_1, \dots, J_N \in \mathrm{Hom}_{\mathrm{id}}^0(\mathscr{F},\mathscr{F})$ be continuous vector bundle morphisms. An adapted $\mathscr{F}$-valued stochastic process $(U_t)_{t \geq 0}$ along $(X_t)_{t \geq 0}$ is said to be solution to the following covariant SDE:
	
	\begin{equation} \label{EDS covariante générale}
	    \circ D_t U_t = \sum_{i=1}^{N} J_i(U_t) \circ \d W^i_t + \bm J_0(U_t) \d t, \quad U_0 = U,
	\end{equation}
	if $(\tau_{0,t}^{-1} U_t)_{t \geq 0}$ is solution to the following SDE on $\mathscr{F}_\varphi$:
	
	$$
	\circ \d \tau_{0,t}^{-1} U_t = \sum_{i=1}^{N} \tau_{0,t}^{-1} J_i(U_t) \circ \d W^i_t + \tau_{0,t}^{-1} J_0 (U_t) \d t, \quad U_0 = U.
	$$
	
	\begin{thm}[\cite{Elworthy1982}, Theorem 13.A] \label{Existence unicité Elworthy}
	Assume that $ J_i \in \mathrm{Hom}_{\mathrm{id}}^1(\mathscr{F},\mathscr{F})$ for all $i=0,1,\dots,N$. Then, there exists a unique continuous solution to \eqref{EDS covariante générale}.
	\end{thm}
	In \cite[Proposition 13B]{Elworthy1982}, the author proved that if a stochastic process $(U_t)_{t \geq 0}$ is solution to \eqref{EDS covariante générale}, then for all $\C^2$ linear form $\Phi : \mathscr{F} \rightarrow \mathscr{K}$ with $\mathscr{K}$ a Hilbert space, the following holds:
	
	\begin{equation*}
	    \circ \d \Phi(U_t) = (\nabla \Phi(\circ \d X_t))(U_t) + \Phi(\circ D_t A_t).
	\end{equation*}
	As a straightforward consequence, the following holds:
	
	\begin{prop} \label{Vector bundle chain rule}
	    Let $(U_t)_{t \geq 0}$ be solution to \eqref{EDS covariante générale} and $L : \mathscr{F} \rightarrow \mathscr{F}$ a $\C^2$ vector bundle morphism. Then, $( L(U_t))_{t \geq 0}$ is a continuous semimartingale solution to:
	    
	    \begin{equation*}
	        \circ D_t L(U_t) = (\nabla L(\circ \d X_t))(U_t) + L(\circ  D_t U_t).
	    \end{equation*}
	\end{prop}
	\vspace{10 pt}
    
    	\noindent \textbf{Covariant SDEs on $T\mathscr{D}$.} Let $s > m/2$, $j(s) \geq s$. One can easily check that the injection $\iota : \mathscr{D}^{j(s)} \rightarrow \mathscr{D}^s$ is a smooth map. We denote by $\mathscr{F}^s = \iota^\ast T\mathscr{D}^s = \sqcup_{\varphi \in \mathscr{D}^{j(s)}}T_\varphi \mathscr{D}^s$ the pullback vector bundle over $\mathscr{D}^{j(s)}$. We denote by $\bm \nabla$ the pullback connection $\iota^*(\bm \nabla)$ on $\mathscr{F}^s$ and for any continuous semimartingale $(\varphi_t)_{t \geq 0}$ on $\mathscr{D}^{j(s)}$, $\tau_{0,t}$ is the unique stochastic parallel transport on $\mathscr{F}^s$. Furthermore, for $\bm A \in \Gamma^k(T\mathscr{D}^{j(s)})$, we will denote by $\bm A$ the $\C^k$ section $\iota_*(\bm A) \in \Gamma^k(\mathscr{F}^s)$. Let
    	
    	\begin{equation*}
    	    \mathscr{J} := \left \{ j : \mathbb{N}^* \rightarrow \mathbb{N}^* \text{ increasing function} \, : \, j(s) \geq s, \, \forall s >m/2 \right\}.
    	\end{equation*}
    	Let $j \in \mathscr{J}$, then $T\mathscr{D}$ is endowed with the ILH manifold structure given by:
    	
    	$$
    	T\mathscr{D} = \bigcap_{s>m/2} \mathscr{F}^s.
    	$$
    	Note that for any $j \in \mathscr{J}$, and any $s \geq s' >m/2$, the injection $\mathscr{F}^s \rightarrow \mathscr{F}^{s'}$ is a smooth map. Indeed, in a neighborhood of $\varphi \in \mathscr{D}^s$, this injection is just the natural injection:
    	
    	$$
    	T_\varphi \mathscr{D}^{j(s)} \times T_\varphi \mathscr{D}^s \longrightarrow T_\varphi \mathscr{D}^{j(s')} \times T_\varphi \mathscr{D}^{s'}.
    	$$
    	In what follows, even when $j(s) \in \mathscr{J}$ is not mentioned explicitly, $\mathscr{F}^s$ is understood to be a vector bundle over $\mathscr{D}^{j(s)}$ with its natural projection.
	
	\begin{defi} \label{Defi vector bundle morphism}
	    Let $\Psi \in \C^k_S(\mathscr{D},\mathscr{D})$. A map $\bm J : T\mathscr{D} \rightarrow T\mathscr{D}$ is said to belong to $\mathrm{Hom}^k_{\Psi}(T\mathscr{D}, T\mathscr{D})$ if there exists $s_0 \in \mathbb{N}$ and $j\in \mathscr{J}$ such that for all $s>s_0$ there exists a $\C^k$ extension $\bm J^s \in \mathrm{Hom}_{\Psi}^k(\mathscr{F}^s, \mathscr{F}^s)$. 
	\end{defi}
	
	\begin{exemple}
	Let $k \in \mathbb{N}^*$, $\Psi \in \C^k_S(\mathscr{D},\mathscr{D})$. The differential $\bm T\Psi$ belong to $\mathrm{Hom}_{\Psi}^{k-1}(T\mathscr{D},T\mathscr{D})$.
	\end{exemple}
	Let $(\varphi_t)_{t \geq 0}$ be the unique solution to the following SDE:
	
	\begin{equation*} 
	    \circ \dd \varphi_t = \SN \bm Z_i(\varphi_t) \circ \d W^i_t + \bm Z_0(\varphi_t) \d t, \quad \varphi_0 = \varphi,
	\end{equation*}
	with $\bm Z_0, \bm Z_1, \dots,  \bm Z_N \in \Gamma^2_S(T\mathscr{D})$. Let $\bm J_0, \bm J_1, \dots, \bm J_N \in \mathrm{Hom}_{\Psi}^0(T\mathscr{D},T\mathscr{D})$. A $T\mathscr{D}$-valued stochastic process $(\bm A_t)_{t \geq 0}$ along $(\varphi_t)_{t \geq 0}$ is said to be solution to the following covariant SDE:
	
	\begin{equation} \label{EDS covariante générale ILH}
	    \circ \bm D_t \bm A_t = \SN \bm J_i(\bm A_t) \circ \d W^i_t + \bm J_0(\bm A_t) \d t, \quad \bm A_0 = A \in T_\varphi \mathscr{D},
	\end{equation}
	if there exists $s_0 \in \mathbb{N}$ such that for all $s > s_0$, $(\bm A_t)_{t \geq 0}$ is solution to
	
	\begin{equation*}
	    \circ \bm D_t \bm A_t = \SN \bm J^s_i(\bm A_t) \circ \d W^i_t + \bm J^s_0(\bm A_t) \d t, \quad \bm A_0 = A \in T_\varphi \mathscr{D},
	\end{equation*}
	with $\bm J_i^s \in \mathrm{Hom}_{\mathrm{id}}^0(\mathscr{F}^s, \mathscr{F}^s)$ a continuous extension of $\bm J_i$ as in Definition \ref{Defi vector bundle morphism} for all $i=0,1\dots,N$. The following results are consequences of their Hilbert vector bundle counterpart:
	
	\begin{prop} \label{P Existence unicité ILH covariante ILH}
	    Assume that $\bm J_i\in \mathrm{Hom}^1_{\mathrm{id}}(T\mathscr{D},T\mathscr{D})$ for all $i=0,1,\dots,N$. Then, there exists a unique continuous solution to \eqref{EDS covariante générale ILH}. 
	\end{prop}
	
	\begin{prop} \label{Vector bundle chain rule ILH}
	    Let $(\bm A_t)_{t \geq 0}$ be solution to \eqref{EDS covariante générale ILH}. Then, for all $\C^2$ linear form $\Phi : T\mathscr{D} \rightarrow \mathscr{K}$ with $\mathscr{K}$ a Hilbert space, the following holds:
	
	\begin{equation*}
	    \circ \dd \Phi(\bm A_t) = (\bm \nabla \Phi(\circ \dd \varphi_t))(\bm A_t) + \Phi_{\varphi_t}(\circ \bm D_t \bm A_t).
	\end{equation*}
	As a consequence, for $\bm L\in \mathrm{Hom}^2_{\Psi}(T\mathscr{D},T\mathscr{D})$ with $\Psi \in \C^2_S(\mathscr{D},\mathscr{D})$, $(\bm L_{\varphi_t}(\bm A_t))_{t \geq 0}$ is a semimartingale solution to:
	    
	    \begin{equation*}
	        \circ \bm D_t \bm L(\bm A_t) = (\bm \nabla\bm L_{\varphi_t}(\circ \dd \varphi_t))(\bm A_t) + \bm L_{\varphi_t}(\circ \bm D_t \bm A_t).
	    \end{equation*}
	\end{prop} 
	
	\vspace{10 pt}

	\noindent \textbf{Covariant SDEs on $T\P_\infty$.} Let $\mu \in \P_\infty$ and $(\mu_t)_{t \geq 0}$ be solution to
    
	\begin{equation*}
	    \circ \d\mu_t = \SN \bar{Z}_i(\mu_t) \circ \d W^i_t + \bar{Z}_0(\mu_t) \d t, \quad \mu_0 =\mu,
	\end{equation*}   
	where for all $i=0,1, \dots, N$, $\bar{Z}_i \in \Gamma^2_S(T\P)$. Let $(\Phi_t)_{t \geq 0}$ be the horizontal lift of $(\mu_t)_{t \geq 0}$, namely the solution to
	
	\begin{equation*}
	\circ \dd\Phi_t(\varphi) = \SN \bm Z_i(\Phi_t(\varphi)) \circ \d W^i_t + \bm Z_0(\Phi_t(\varphi)) \d t, \quad \Phi_0(\varphi) = \varphi \in \mathscr{D}_\mu,
	\end{equation*}
	where $\bm Z_i \in \Gamma^3_S(T\mathscr{D})$ is the horizontal lift of $\bar{Z}_i$ for all $i=0,1, \dots, N$.
	
	\begin{defi}
	    A map $\bar{J} : T \P_\infty \rightarrow T \P_\infty$ such that for all $\mu \in \P_\infty$, $\bar{J}_\mu \in \mathcal{L}_c(T_\mu \P_\infty, T_\mu \P_\infty)$ is said to belong to $\mathrm{Hom}_{\mathrm{id}}^k(T\P_\infty, T \P_\infty)$ if there exists $\bm J \in \mathrm{Hom}_{\mathrm{id}}^k(T\mathscr{D},T\mathscr{D})$ such that the diagram:
	    
	    \begin{center}
\begin{tikzpicture}[>=stealth]
  \node (E1) at (0,1) {$T\mathscr{D}$};
  \node (E2) at (4,1) {$T\mathscr{D}$};
  \node (E3) at (0,0)   {$T\P_\infty$};
  \node (E4) at (4,0)   {$T\P_\infty$};

  \draw[->, >=latex] (E1) -- (E2) node[midway,above]{$\bm J$}; 
  \draw[->, >=latex] (E1) -- (E3) node[midway,left]{$Tp$};
  \draw[->, >=latex] (E3) -- (E4) node[midway,below]{$\bar{J}$}; 
  \draw[->, >=latex] (E2) -- (E4) node[midway,right]{$Tp$};
\end{tikzpicture}
\end{center}
commutes.
	\end{defi}
	We will forget about the subscript $\mu$ when there is no ambiguity about the point considered.
	
	Let $\bar{J}_0, \bar{J}_1, \dots, \bar{J}_N \in \mathrm{Hom}_{\mathrm{id}}^0(T\P_\infty,T\P_\infty)$. A $T\P_\infty$-valued stochastic process $(\bar{A}_t)_{t \geq 0}$ along $(\mu_t)_{t \geq 0}$ is said to be solution to 
	
	$$
	\circ \bar{D}_t \bar{A}_t = \SN \bar{J}_i(\bar{A}_t) \circ \d W^i_t + \bar{J}_0(\bar{A}_t) \d t, \quad \bar{A}_0 = A \in \T_\mu \P_\infty,
	$$
	if for all $\varphi \in \mathscr{D}_\mu$, $(\bm A_t)_{t \geq 0} = (\mathfrak{h}_{\Phi_t(\varphi)}(\bar{A}_t))_{t \geq 0}$ is solution to
	
	\begin{equation*}
	    \circ \bm D_t \bm A_t = \SN \bm J_i(\bm A_t) \circ \d W^i_t + \bm J_0(\bm A_t) \d t, \quad \bm A_0 = \mathfrak{h}_{\varphi}(A) \in T_\varphi \mathscr{D},
	\end{equation*}
	with $\bm J_i \in \mathrm{Hom}^0_{\mathrm{id}}(T\mathscr{D}, T\mathscr{D})$ such that
	
	$$
	\bm T p \circ \bm J_i = \bar{J}_i \circ \bm T p, \quad i=0,1,\dots,N.
	$$
	\begin{rem}
	    For finite dimensional manifolds this definition is equivalent to the usual one. 
	\end{rem}

\section{Stochastic parallel transport on $\P_\infty$} \label{Section 5}

 This section is devoted to the proof of the existence and uniqueness of the stochastic parallel transport along diffusions on $\P_\infty$. Since the main idea of the proof is to adapt Theorem \ref{Transport parallel en dim finie} in our infinite-dimensional setting. The reader is referred to Subsections \ref{Subsection 2.2} and \ref{Subsection 2.4} for a presentation of the proof in the finite-dimensional case. 

We want to construct a horizontal transport $(\mathcal{T}_{0,t}^h)_{t \geq 0}$ which plays the same role as $(\tau_{0,t}^h)_{t \geq 0}$ in Subsection \ref{Subsection 2.2}. The starting point of the proof lies in the fact that, in the finite dimensional case, $(\tau_{0,t}^h)_{t \geq 0}$ is characterized by Equation \eqref{Horizontal parallel transport finite dimension}, as mentioned in Remark \ref{Remarque caractérosation du transport}. 

We begin by providing a precise definition of the stochastic parallel transport on $T\P_\infty$ together with some preliminary material that will be used to establish existence and uniqueness for the infinite-dimensional analogue of \eqref{Horizontal parallel transport finite dimension}. Then, we show that the horizontal and vertical projections are smooth vector bundle morphisms in Theorem \ref{Dérivées covariantes projection horizontale et verticale}. Let $(\mu_t)_{t \geq 0}$ be a solution to a $\P_\infty$-valued SDE. 

Given a $\P_\infty$-valued stochastic process $(\mu_t)_{t \geq 0}$ solving a SDE, we associate to it its horizontal lift $(\Phi_t(\varphi))_{t \geq 0}$ with initial datum $\varphi \in \mathscr{D}_\mu$ (see Definition \ref{Defi relevé horizontal}). The equation satisfied by $(\mathcal{T}_{0,t}^h)_{t \geq 0}$ is derived from Proposition \ref{Prop connexion levi civita releve} and solved in Proposition \ref{Existence et unicité transport parallele horizontal}. In the same proposition, we also show that if the initial condition is horizontal, the solution remains horizontal. In Theorem \ref{P Equivariance transport horizontal D} we show that the horizontal transport, $(\mathcal{T}_{0,t}^h(\varphi, \cdot))_{t \geq 0}$, is $G$-right invariant. In Theorem  \ref{Existence unicité transport parallèle}, we show that the process given by $\bm T p(\mathcal{T}_{0,t}^h)$ is the stochastic parallel transport along $(\mu_t)_{t \geq 0}$. 

Additionally, we construct a lift of the stochastic parallel transport on $\P_\infty$ which is equivariant on the $G \rtimes \mathfrak{g}$-principal bundle $T \mathscr{D}$ in Proposition \ref{P Transport equivariant}.

\subsection{Definitions and properties} \label{Subsection 5.1}
	In this subsection we introduce the definition of a stochastic parallel transport. Then, we introduce the normal tensor and the Riemann curvature operator on $\mathscr{D}$ and present some of their properties. The reader is referred to Subsections \ref{Subsection 4.1}, \ref{Subsection 4.2} and \ref{Subsection 4.5} for definitions and properties about semimartingales on $T\mathscr{D}$ or $T \P_\infty$. The end of the subsection is dedicated to the study of the regularity properties of the normal tensor.
	
\begin{defi} \label{Definition transport parallele}
	Let $\bar{Z}_0, \bar{Z}_1, \dots, \bar{Z}_N \in \Gamma^2_S(T\P)$ and $(\mu_t)_{t \geq 0}$ be a solution to 
	
	$$
	\circ \dw\mu_t = \SN \bar{Z}_i(\mu_t) \circ \d W^i_t + \bar{Z}_0(\mu_t) \d t, \quad \mu_0 =\mu \in \P_\infty.
	$$    
	A $\T\mathcal{\P}$-valued semimartingale $(\bar{A}_t)_{t \geq 0}$ along $(\mu_t)_{t \geq 0}$ is said to be a \textit{stochastic parallel transport} if it is solution to the following covariant SDE:
	
	\begin{equation} \label{Equation du transport para}
	\circ \bar{D}_t \bar{A}_t =  0.
	\end{equation}
\end{defi}

	\begin{defi}
	Let $s>m/2$. The \textit{Riemann curvature operator} on $\mathscr{D}^s$ evaluated in $\varphi \in \mathscr{D}^s$ is defined as
	
	\begin{align*}
	    \bm R^s: (T_{\varphi} \mathscr{D}^s)^3 & \longrightarrow T_{\varphi} \mathscr{D}^s \\
	    (A \circ \varphi, B \circ \varphi, C \circ \varphi) &\longmapsto \bm R^s(A \circ \varphi, B \circ \varphi) C \circ \varphi =  \left( R(A, B) C \right) \circ \varphi,
	\end{align*}
	where $R$ is the Riemann curvature tensor on $M$. The Riemann curvature tensor $\bm R$ on $\mathscr{D}$ is defined as the limit object of the $\bm R^s$.
	\end{defi}

	\begin{rem}[Comparison with the usual definition of the Riemann curvature operator.]
	    On a Riemannian manifold, the Riemann curvature operator is defined by a formula involving covariant derivatives of vectors fields. It can then be shown that this operator is indeed a tensor and thus admits a pointwise definition. Actually, one can check that $\bm R^s$ writes
	    
	    $$
	    \bm R^s(\bm A(\varphi), \bm B(\varphi)) \bm C(\varphi)  = \bm \nabla_{\bm A} \bm \nabla_{\bm B} \bm C(\varphi) -\bm \nabla_{\bm B} \bm \nabla_{\bm A} \bm C(\varphi) - \bm \nabla_{\left[\bm A, \bm B \right]} \bm C(\varphi),
	    $$
	    for $\bm A, \bm B, \bm C \in \Gamma^2(T\mathscr{D}^s)$. Hence, because of the symmetries, $\bm R$ is tensorial. In particular, we recover the usual definition of the curvature operator on a Riemannian manifold.
	    
	    Also, note that by construction, since $\bm \nabla^s$ is a smooth connection on $T\mathscr{D}^s$, $\bm R$ is a smooth section of the bundle of trilinear forms from $T \mathscr{D}^s$ into itself $L^3(T\mathscr{D}^s, T\mathscr{D}^s)$, see \cite[Chapter IX]{Lang1999} for instance. As a straightforward corollary, for $\bm A, \bm B \in \Gamma^k(T\mathscr{D}^s)$, the map $\bm R^s (\cdot, \bm A) \bm B : T\mathscr{D}^s \rightarrow T\mathscr{D}^s$ belongs to $\mathrm{Hom}^k_{\mathrm{id}}(T\mathscr{D}^s, T\mathscr{D}^s)$.
	\end{rem}

	\noindent \textbf{Normal tensor.} We introduce the \textit{normal tensor} (almost) in the same way as in \cite[Section 5]{gigli2011second}. It plays a role analogous to the O'Neill tensor appearing in Proposition \ref{Connexion Levi-Civita Horizontale dim finie} in the context of the diffeomorphisms group. In particular, it will be the main tool for the proof of the existence of the stochastic parallel transport. In this subsection, and unless explicitly stated otherwise, s denotes an integer satisfying
	
	$$
	s>m/2+3.
	$$
    
    \begin{defi}
		 Let $\varphi \in \mathscr{D}^{s+1}$. We define the \textit{normal operator} $\mathcal{N}_\varphi$ to be:
		
		\begin{align*}
		\mathcal{N}_\varphi : T_\varphi \mathscr{D}^{s+1} \times T_\varphi \mathscr{D}^s & \longrightarrow \V_\varphi \mathscr{D}^s \\
		(U \circ \varphi, A \circ \varphi) & \longmapsto \mathcal{N}_{\varphi} (U \circ \varphi, A \circ \varphi) = P_{\V_{\varphi}}\left(  ((\nabla U)^T \cdot A) \circ \varphi  \right),
		\end{align*}
		where $(\nabla U)^T$ is the adjoint of the linear operator $\nabla U : A \mapsto (\nabla U) \cdot A = \nabla_A U$ for the Riemannian metric on $M$.
	\end{defi}
	
	The reader is referred to \eqref{k symmetric product} for the definition of the symmetric product $\mathrm{SP}^k$. 
	\begin{prop}
		Let $j(s) > s$. For all $\varphi \in \mathscr{D}^{j(s)}$, the operator $\mathcal{N}_\varphi$ extends in a unique way to an antisymmetric bilinear operator:
		$$
		\mathcal{N}_{\varphi} : \mathrm{SP}^2\left( T_\varphi \mathscr{D}^{j(s)} \times T_\varphi \mathscr{D}^{s} \right) \rightarrow \V_\varphi \mathscr{D}^{s}.
		$$
	\end{prop}
	
	\begin{proof}
	The bilinearity is clear by definition. Let $A,B \in \Gamma^{(s)}(TM)$ and $\check{A}, \check{B}$ the associated right invariant vector fields. For all $C \circ \varphi \in T_\varphi \mathscr{D}^{s}$ we have the following equality:
	
	$$
	\mathcal{L}^\mathscr{D}_{C \circ \varphi} \langle \check{A}(\varphi), \check{B}(\varphi) \rangle_\varphi = \langle \bm \nabla_{C \circ \varphi} \check{A}(\varphi), \check{B}(\varphi) \rangle_\varphi + \langle \check{A}(\varphi), \bm \nabla_{C \circ \varphi} \check{B}(\varphi) \rangle_\varphi.
	$$
	It follows:
	
	\begin{align*}
	    &\langle \bm \nabla_{C \circ \varphi} \check{A}(\varphi), \check{B}(\varphi) \rangle_\varphi + \langle \check{A}(\varphi), \bm \nabla_{C \circ \varphi} \check{B}(\varphi) \rangle_\varphi \\
	    & \quad = \int_M \langle \nabla_{C \circ \varphi(x)}A\circ \varphi(x), B\circ \varphi(x) \rangle_{\varphi(x)} + \langle A\circ \varphi(x), \nabla_{C \circ \varphi(x)}B\circ \varphi(x) \rangle_{\varphi(x)}  \d \vol(x) \\
	    & \quad = \int_M \langle C \circ \varphi(x), ((\nabla A)^T \cdot B + (\nabla B)^T \cdot A) \circ \varphi(x) \rangle_{\varphi(x)} \d \vol(x) \\
	    & \quad = \int_M \langle C \circ \varphi(x), \nabla \langle A,B \rangle \circ \varphi(x) \rangle_{\varphi(x)} \d \vol(x),
	\end{align*}
	which lead us to the following equality:
	
	$$
	\bm \nabla \langle \check{A}(\varphi), \check{B} (\varphi) \rangle_\varphi = \left( \nabla \langle A, B \rangle \right) \circ \varphi =\left((\nabla A)^T \cdot B + (\nabla B)^T \cdot A \right) \circ \varphi.
	$$
	Since the vertical projection of the left hand term vanishes, we obtain the antisymmetry property:
	
	$$
	\mathcal{N}_{\varphi} (B,A) = -\mathcal{N}_{\varphi} (A,B).
	$$
	We naturally extend $\mathcal{N}_\varphi$ by antisymmetry to $\mathcal{N}_\varphi : \mathrm{SP}^2\left( T_\varphi \mathscr{D}^{s} \times T_\varphi \mathscr{D}^{s-1} \right) \rightarrow T_\varphi \mathscr{D}^{s-1}$.
	
	Since $\iota_* : T_\varphi \mathscr{D}^{j(s)} \rightarrow T_\varphi \mathscr{D}^s$ is a linear continuous map for $\varphi \in \mathscr{D}^{j(s)}$, $\mathcal{N}_\varphi(\iota_*(\cdot), \cdot)$ is an antisymmetric extension to $\mathrm{SP}^2\left( T_\varphi \mathscr{D}^{j(s)} \times T_\varphi \mathscr{D}^{s} \right)$.
	\end{proof} 
	
	\begin{prop} \label{Tenseur normal dans le cas horizontal}
	Let $\bm Z \in \Gamma^1(\mathscr{H}\mathscr{D}^{s})$ be a horizontal vector field and $A \circ \varphi \in T_\varphi \mathscr{D}^{s}$. Then, for all $\varphi \in \mathscr{D}^{s}$, the following holds:
	
	\begin{equation} \label{E 3.9}
	P_{\V_{\varphi}} \left( \bm \nabla_{A \circ \varphi} \bm Z(\varphi) \right) = \mathcal{N}_\varphi(\bm Z(\varphi), A \circ \varphi).
	\end{equation}
	As a consequence, the map
	
	\begin{align*}
	    \mathrm{SP}^2(\Gamma^1(\H\mathscr{D}^{s}) \times \Gamma(T\mathscr{D}^s)) &\longrightarrow \Gamma(\V\mathscr{D}^s) \\
	    \left(\bm Z, \bm A \right) & \longmapsto P_{\V}\left( \bm \nabla_{\bm A} \bm Z \right)
	\end{align*}
	is a tensor. In particular, for $\bm Z_1, \bm Z_2 \in \Gamma^1(\mathscr{H}\mathscr{D}^s)$, the following holds:
	
	\begin{equation} \label{E 3.10}
	P_{\V_{\varphi}} \left( \bm \nabla_{\bm Z_1} \bm Z_2(\varphi) \right) = \mathcal{N}_\varphi(\bm Z_1(\varphi), \bm Z_2(\varphi)) = -\mathcal{N}_\varphi(\bm Z_2(\varphi), \bm Z_1(\varphi)) = - P_{\V_{\varphi}} \left( \bm \nabla_{\bm Z_2} \bm Z_1(\varphi) \right).
	\end{equation}
	\end{prop}
	
	\begin{proof}
	Let $\bm Z \in \Gamma^1_S(\mathscr{H}\mathscr{D}^s)$ such that $ \bm Z(\varphi) = \nabla \phi(\varphi) \circ \varphi$ for all $\varphi \in \mathscr{D}^s$. We have
		
		$$
		\bm \nabla_{A \circ \varphi} \bm Z (\varphi) =  \nabla_{A \circ \varphi} \nabla \phi (\varphi) \circ \varphi + \left(\nabla  \mathcal{L}^\mathscr{D}_{A \circ \varphi} \phi(\varphi) \right) \circ \varphi.
		$$
		Since the second term on the right hand side is horizontal, we get
		
		$$
		P_{\V_{\varphi}} \left(\bm \nabla_{A \circ \varphi} \bm Z \right)(\varphi) = P_{\V_{\varphi}} \left(  \nabla_{A \circ \varphi} \nabla \phi(\varphi)  \circ \varphi \right) = P_{\V_{\varphi}} \left(  (\nabla \nabla \phi(\varphi)) \cdot A)  \circ \varphi \right).
		$$
		We conclude by using the fact that the Hessian on $M$ is symmetric. Note that \eqref{E 3.10} is a direct consequence of \eqref{E 3.9} and of the antisymmetry of the normal tensor.
		
	\end{proof}
	
	In the following definition we introduce the linear counterpart of the normal tensor together with its adjoint, following \cite[Definition 5.11]{gigli2011second}.

	\begin{defi}
		Let $\varphi \in \mathscr{D}^{s}$, $U \circ \varphi \in T_{\varphi} \mathscr{D}^{s+1}$. We define the linear operator $\mathcal{O}_{U \circ \varphi} : T_\varphi \mathscr{D}^{s} \rightarrow \V_{\varphi}\mathscr{D}^{s}$ to be:
		
		$$
		\mathcal{O}_{U \circ \varphi}(A\circ \varphi) = \mathcal{N}_{\varphi} (U \circ \varphi,A \circ \varphi), \quad \forall A \circ \varphi \in T_\varphi\mathscr{D}^s.
		$$
		We also define $\mathcal{O}_{U \circ \varphi}^\ast$ to be the adjoint of $\mathcal{O}_{U \circ \varphi}$ for the scalar product $\langle \cdot, \cdot \rangle_{\varphi}$.
	\end{defi}

	\begin{rem} \label{Adjoint de l'opérateur O}
	Let $U \circ \varphi \in T_\varphi \mathscr{D}^{s}$, $A \circ \varphi \in T_\varphi \mathscr{D}^s$. The following identity holds:
	
	\begin{equation*}
	    \mathcal{O}^\top_{U \circ \varphi}(A\circ \varphi) = \mathcal{O}^\top_{U \circ \varphi} \left( P_{\V_\varphi}\left(A \circ \varphi\right) \right).
	\end{equation*}
	Indeed, let $B \circ \varphi \in T_\varphi \mathscr{D}^s$, we have:
	
	\begin{align*}
	    \langle \mathcal{O}^\top_{U \circ \varphi}(A\circ \varphi), B \circ \varphi \rangle_\varphi &= \langle A \circ \varphi, \mathcal{O}_{U \circ \varphi}( B \circ \varphi) \rangle_\varphi \\
	    &= \langle P_{\V_\varphi}(A \circ \varphi), \mathcal{O}_{U \circ \varphi}( B \circ \varphi) \rangle_\varphi \\
	    & = \langle \mathcal{O}^\top_{U \circ \varphi}(P_{\V_\varphi}(A\circ \varphi)), B \circ \varphi \rangle_\varphi.
	\end{align*}
	From this remark, it becomes clear that for a $G$-right invariant vector field $\bm Z \in \Gamma^1(\H \mathscr{D}^s)$, the following property holds:
	
	$$
	\mathcal{O}^\top_{\bm Z} = \bm \nabla \bm Z \circ P_\V.
	$$
	\end{rem}
	The notion of normal tensor extends naturally for $\bm Z \in \Gamma^1_S(T\mathscr{D})$ to
    
    $$
    \mathcal{O}_{\bm Z} : T \mathscr{D} \longrightarrow \V \mathscr{D}. 
    $$ 

The following result will be the cornerstone of the proof of the existence of the stochastic parallel transport. This is the infinite-dimensional analogous of Proposition \ref{Connexion Levi-Civita Horizontale dim finie}.
    \begin{prop} \label{Prop connexion levi civita releve}
     Let $\varphi \in \mathscr{D}$, $\bar{Z}_1, \bar{Z}_2 \in \Gamma^1_S (T\P)$, and let $\bm Z_1,\bm Z_2$ denote their horizontal lift. The following identity holds:
     
     \begin{equation*}
    \bm \nabla_{\bm Z_2(\varphi)} \bm Z_1(\varphi) = \h_{\varphi} \left( \grad_{\bar{Z}_2(\mu)} \bar{Z}_1(\mu) \right) - \mathcal{O}_{\bm Z_1}(\bm Z_2)(\varphi).
    \end{equation*}
     
    \end{prop} 
    
    \begin{proof}
    The proof is identical to that of the finite-dimensional case (see Proposition \ref{Connexion Levi-Civita Horizontale dim finie}). More precisely, we apply Koszul's formula \eqref{Koszul D} to obtain the infinite-dimensional counterpart of \eqref{E23} and the conclusion follows by Proposition \ref{Tenseur normal dans le cas horizontal}.
    \end{proof}
    
    Let $s>m/2$, $j(s) \geq s$ and $\iota : \mathscr{D}^{j(s)} \rightarrow \mathscr{D}^s$ be the smooth injection. For the sake of readability, we shall frequently employ the following notations: for $\bm J \in \mathrm{Hom}(T \mathscr{D}^s, T\mathscr{D}^s)$, we will denote by $\bm J^*$ its pullback $\iota^*(\bm J)$. Until the end of the subsection, $\bm \nabla$ (resp. $\bm R$) will denote the Levi-Civita connection $\bm \nabla^s$ (resp. the Riemann tensor $\bm R^s$) on $T \mathscr{D}^s$. We denote by $\bm \nabla^{*}$ the pullback connection $\iota^*\left(\bm \nabla^s \right)$.

	    \begin{prop} \label{P derivee covariante pullback}
    Let $\bm A \in \Gamma^k(T\mathscr{D}^s)$. Then, $(\bm \nabla \bm A)^* \in \mathrm{Hom}_{\mathrm{id}}^{k-1}(\mathscr{F}^s, \mathscr{F}^s)$ and the following holds:
    
    $$
    \bm \nabla^{*} \left(\left( \bm \nabla\bm A \right)^*\right)( B \circ \varphi) =  \bm \nabla^2 \bm A (\cdot, B \circ \varphi) - \bm R(\cdot, B \circ \varphi) \bm A , \quad \forall \varphi \in \mathscr{D}^{j(s)}, \, \forall B \circ \varphi \in T_\varphi \mathscr{D}^s.
    $$
\end{prop}

\begin{proof}
    This is a straightforward corollary of Proposition \ref{pullback fonctorielle}.
\end{proof}
    
    An easy consequence of the preceding result is that for $\bm Z \in \Gamma^1(T\mathscr{D}^s)$, the tensor
    
    $$
    \mathcal{O}_{\bm Z} =  P^*_{\V} \circ ( \bm \nabla \bm Z)^*
    $$ 
    belongs to $\mathrm{Hom}_{\mathrm{id}}(\mathscr{F}^s, \mathscr{F}^s)$ for any $j \in \mathscr{J}$. We now turn to the study of the regularity properties of this morphism.

\subsection{Regularity of the projections}

In \cite[Appendix A, Lemma 2]{EbinMarsden1970}, the authors show, by an application of the method introduced by Ebin in \cite[Theorem 7.1]{Ebin1970}, that the datum of a linear differential operator $D$ of order $k$ between two vector bundles $\zeta, \eta$ induces a smooth vector bundle morphism $\bm D : H^s(M,\zeta) \rightarrow H^{s-k}(M,\eta)$ defined by the conjugation

$$
\bm D_\varphi : \tilde{R}_\varphi \circ D \circ \tilde{R}_{\varphi^{-1}},
$$
where $\tilde{R}_\varphi$ stands for $f \mapsto f \circ \varphi$. This result is noteworthy, since the operations $\varphi \mapsto \tilde{R}_\varphi$ and $\varphi \mapsto \tilde{R}_{\varphi^{-1}}$ are not smooth. In this subsection we show that this conjugation also have a smoothing effect on some families of linear differential operators parameterized by $\varphi$. By applying the method used in \cite[Appendix A]{EbinMarsden1970}, we will then show that the morphism $P_\H^* : \mathscr{F}^s \rightarrow \mathscr{F}^s$ is smooth. Until the end of the subsection we will consider $s>m/2+3$ and $j(s) = s+1$. We now state the following lemma, whose proof relies on standard arguments from differential calculus.

\begin{lem} \label{Lemme calcul diff}
    Let $f \in H^s(M,\mathbb{R})$ and $\varphi \in \mathscr{D}^{j(s)}$. Then, for all $A \in \Gamma^{(s)}(\varphi^*TM)$,
    
    \begin{equation} \label{Première formule}
        (T f( A \circ \varphi^{-1})) \circ \varphi = T_{\varphi(\cdot)}f(A) =  T (f \circ \varphi) ( (T\varphi)^{-1}(A))
    \end{equation}
    and
    
    \begin{equation*}
        \mathrm{div}(A \circ \varphi^{-1}) \circ \varphi = \mathrm{div}((T\varphi)^{-1}(A)) + \frac{1}{\mathrm{det}(T\varphi)} T \mathrm{det}((T\varphi))((T\varphi)^{-1}(A)).
    \end{equation*}
\end{lem}

\begin{proof}
The first point is a straightforward application of the chain-rule. For the second point, applying \eqref{Première formule} yields

\begin{align*}
        -\int_M f(x) \mathrm{div}(A \circ \varphi^{-1})(x) \d \vol(x) &= \int_M T_x f(A \circ \varphi^{-1}) \, \d \vol(x) \\
        & = \int_M T_{\varphi(x)}f(A \circ \varphi^{-1}) \cdot \mathrm{det}(T\varphi)(x) \, \d \vol(x) \\
        & = \int_M T_x (f\circ \varphi)( (T\varphi)^{-1}(A)) \cdot \mathrm{det}(T\varphi)(x) \d \vol(x) \\
        & = -\int_M f \circ \varphi(x) \mathrm{div}( \mathrm{det}(T\varphi) (T\varphi)^{-1}(A))(x) \d \vol(x) \\
        & = - \int_M f(x) \frac{1}{\mathrm{det}(T\varphi)(\varphi^{-1}(x))} \mathrm{div}(\mathrm{det}(T\varphi) (T\varphi)^{-1}(A))(\varphi^{-1}(x)).
    \end{align*}
    Thus, 
    
    $$
    \div(A \circ \varphi^{-1}) \circ \varphi = \frac{1}{\mathrm{det}(T\varphi)} \mathrm{div}(\mathrm{det}(T\varphi) (T\varphi)^{-1}(A)),
    $$
    and the conclusion follows from the Leibniz rule for the divergence.
\end{proof}
	 
	 \begin{lem}
	     The map $ \bm \div : \mathscr{F}^s \rightarrow \mathscr{D}^{j(s)} \times H^{s-1}(M,\mathbb{R})$ defined by
	 
	 $$
	 \bm \div_{\varphi} = \left( \varphi, \tilde{R}_\varphi \circ \div_{p(\varphi)} \circ \tilde{R}_{\varphi^{-1}}(\cdot)\right).
	 $$
	 belongs to $\mathrm{Hom}^\infty_{\mathrm{id}}(\mathscr{F}^s, \mathscr{D}^{j(s)} \times H^{s-1}(M,\mathbb{R}))$. The same result holds for $\bm \Grad : \mathscr{D}^{j(s)} \times H^{s+1}(M,\mathbb{R}) \rightarrow \mathscr{F}^s$ defined by
	 
	 $$
	 \bm \Grad_\varphi = \left( \varphi, \tilde{R}_\varphi \circ \nabla \circ \tilde{R}_{\varphi^{-1}}(\cdot) \right).
	 $$
	 \end{lem}
	 
	 \begin{proof}
	 Let $\mathcal{U}_{\varphi_0}$ be a chart around $\varphi_0 \in \mathscr{D}^{j(s)}$. After local trivialization, $\bm \div_\varphi \in \mathcal{L}_c(T_{\varphi_0}\mathscr{D}^s, H^{s-1}(M,\mathbb{R}))$ for all $\varphi \in \mathcal{U}$. Therefore, to show that $\varphi \mapsto \bm \div_\varphi$ is smooth it is sufficient to show that for all $A \in T_{\varphi_0}\mathscr{D}^s$, $\varphi \in \mathcal{U}_{\varphi_0} \mapsto \bm \div_\varphi(A)$ is smooth and its derivatives are bounded by constants depending only on the norm of $A$.
	 Let $A \in T_{\varphi_0}\mathscr{D}^s$. By applying Lemma \ref{Lemme calcul diff} with $f = \log \mathrm{det}(T\varphi^{-1})$, we obtain in local coordinates:
	 
	 $$
	 \bm \div_{\varphi}(A) = \mathrm{div}\left( (T\varphi)^{-1}(A) \right).
	 $$
	 Thus, $\bm \div_{p(\varphi)}(A)$ is a rational combination of first and second order derivatives of $\varphi$ multiplied with $A$ and its first order derivatives. The smoothness of $\varphi \mapsto \bm \div_\varphi(A)$ follows from \cite[Lemma 3.2]{Ebin1970}. Moreover, since $A$ and its first order derivative appear in a linear manner, the derivatives of $\varphi \mapsto \bm \div_\varphi(A)$ are bounded by constants depending only on the norm of $A$. 
	 
	 The smoothness of $\bm \Grad$ can be shown in the same way, see \cite[Appendix A, Lemma 2]{EbinMarsden1970} for instance.
	 \end{proof}
	 
	 \begin{lem}
	     The sets $\bm \Grad(\mathscr{D}^{j(s)} \times H^{s+1}(M,\mathbb{R}))$, $\ker(\bm \div)$, $\bm \div(\mathscr{F}^s)$ are smooth subbundles of $\mathscr{F}^s$, $\mathscr{F}^s$ and $\mathscr{D}^{j(s)} \times H^{s-1}(M,\mathbb{R})$ respectively.
	 \end{lem}
	 
	 \begin{proof}
	    The result for $\bm \div$ is a straightforward consequence of \cite[Appendix A, Lemma 1]{EbinMarsden1970} with the following exact sequence:
	    
	    $$
	    \mathscr{F}^s \overset{\bm \div}{\longrightarrow} \mathscr{D}^{j(s)} \times H^{s-1}(M,\mathbb{R}) \overset{\bm \Int}{\longrightarrow} \mathscr{D}^{j(s)} \times \mathbb{R},
	    $$
	    where $\bm \Int : \mathscr{D}^{j(s)} \times H^{s}(M,\mathbb{R}) \rightarrow \mathscr{D}^{j(s)} \times \mathbb{R}$ denote the following smooth bundle morphism:
	     
	     $$
	     \bm \Int_\varphi(f) = \left( \varphi, \int_M f \, \d \vol \right), \quad \forall f \in H^s(M,\mathbb{R}).
	     $$
	     The fact that $\bm \div(\mathscr{F}^s) = \ker(\bm \Int)$ follows from Stokes' theorem for the direct inclusion and by Hodge decomposition for the converse one.
	     The result for $\bm \Grad$ can be shown by duality and by applying \cite[Appendix A, Lemma 4]{EbinMarsden1970}.
	 \end{proof}

\begin{thm}\label{Dérivées covariantes projection horizontale et verticale} 
        The projections $P_\H^*$ and $P_\V^*$ are both elements of $\mathrm{Hom}^\infty_{\mathrm{id}}(\mathscr{F}^s, \mathscr{F}^s)$. Moreover, for $\varphi \in \mathscr{D}^{j(s)}$ and $ Z\circ \varphi \in \H_\varphi\mathscr{D}^{j(s)}$, the following holds:
	    
	    \begin{align}
	        &(\bm \nabla^{*} P^*_{\H_\varphi}) \cdot (Z \circ \varphi) = -\mathcal{O}_{ \iota_*( Z \circ \varphi)}\left(P_{\H_{\varphi}}(\cdot)\right) - P_{\H_{\varphi}}\left(\mathcal{O}^\top_{\iota_*(Z \circ \varphi)}(\cdot) \right), \label{E 56} \\
	        & (\bm \nabla^{*} P^*_{\V_\varphi}) \cdot (Z \circ \varphi) = \mathcal{O}_{\iota_*(Z \circ \varphi)}\left(P_{\H_{\varphi}}(\cdot)\right)  + P_{\H_{\varphi}}\left(\mathcal{O}^\top_{\iota_*(Z \circ \varphi)}(\cdot) \right) .\label{E 57}
	    \end{align}
\end{thm}

\begin{proof}
By Hodge decomposition, the horizontal projection $P_{\H}^* : \mathscr{F}^s \rightarrow \mathscr{F}^s$ can be written in the following form:

$$
P_{\H_\varphi}^* = \bm  \Upsilon_\varphi \circ \bm \div_\varphi,
$$
with $\bm \Upsilon : \mathscr{F}^s \rightarrow \mathscr{F}^s$ defined by:

$$
\bm \Upsilon_\varphi = \tilde{R}_\varphi \circ \nabla \circ \Delta^{-1}_{p(\varphi)} \circ \tilde{R}_{\varphi^{-1}}.
$$
It follows from Hodge decomposition that $\bm \div : \bm \Grad(\mathscr{D}^{j(s)} \times H^{s+1}(M,\mathbb{R})) \rightarrow \bm \div_{p(\varphi)}(\mathscr{F}^s)$ is a bijection. Since $\bm \div$ is a smooth vector bundle morphism, it follows from the inverse mapping theorem that $\Upsilon$ is also a smooth vector bundle morphism. As a consequence, $P_\H^*$ belongs to $\mathrm{Hom}_{\mathrm{id}}^\infty(\mathscr{F}^s, \mathscr{F}^s)$, and therefore, the same holds for $P_\V^* = \mathrm{Id} - P_\H^*$.

The computations of the covariant derivatives are performed in a more general setting in \cite[Proposition 5.18]{gigli2011second} for instance.
\end{proof}

In the following corollary and until the end of the paper, $\bm \nabla$ and $\bm R$ stand respectively for the Levi-Civita connection and the Riemann curvature tensor on $\mathscr{D}$.

\begin{coro}\label{Dérivée covariante Qt}
    Let $k \geq 1$, $\bm Z \in \Gamma^k_S(T \mathscr{D})$ be a $G$-right invariant stratified vector field on $\mathscr{D}$. Then, $\mathcal{O}_{\bm Z}$ and $\mathcal{O}^\top_{\bm Z}$ belong to $\mathrm{Hom}_{\mathrm{id}}^{k-1}(T\mathscr{D}, T\mathscr{D})$. Moreover, 
	     
	     \begin{align*}
	          (\bm\nabla \mathcal{O}_{\bm Z(\varphi)}) \cdot ( Z_2 \circ \varphi) & = \mathcal{O}_{\bm \nabla_{Z_2 \circ \varphi} \bm Z(\varphi)}\left(\cdot  \right)  -  P_{\V_{\varphi}} \left( \bm R( \cdot, Z_2 \circ \varphi ) \bm Z(\varphi) \right)\nonumber  \\
     &+ P_{\H_{\varphi}}\left( \mathcal{O}_{ Z_2\circ \varphi}^\top(\mathcal{O}_{\bm Z(\varphi)}(\cdot) \right) - \mathcal{O}_{ Z_2 \circ \varphi}(  \mathcal{O}_{\bm Z(\varphi)}(\cdot))
	     \end{align*}
	     and
	     
	     \begin{align*}
	          (\bm\nabla \mathcal{O}^\top_{\bm Z(\varphi)}) \cdot ( Z_2 \circ \varphi) & =  \mathcal{O}^\top_{\bm \nabla_{  Z_2 \circ \varphi} \bm Z(\varphi)}\left(\cdot   \right)  -    \bm R( P_{\V_{\varphi}}(\cdot),  Z_2 \circ \varphi ) \bm Z(\varphi) \nonumber  \\
     &+  \mathcal{O}_{Z_2 \circ \varphi}^\top(\mathcal{O}_{\bm Z(\varphi)}(P_{\H_{\varphi}}(\cdot)))  - \mathcal{O}^\top_{Z_2 \circ \varphi}(  \mathcal{O}^\top_{\bm Z(\varphi)}(\cdot)),
	     \end{align*}
	     for any $ Z_2 \circ \varphi \in \H_\varphi \mathscr{D}$.
\end{coro}

\begin{proof}
The fact that $\mathcal{O}_{\bm Z}$ (resp. $\mathcal{O}^\top_{\bm Z}$) belongs to
	 $ \mathrm{Hom}_{\mathrm{id}}^{k-1}(T\mathscr{D},T\mathscr{D})$ is a straightforward consequence of Proposition \ref{P derivee covariante pullback} and Theorem \ref{Dérivées covariantes projection horizontale et verticale} together with the fact that $\mathcal{O}_{\bm Z} = P_{\V} \circ \bm \nabla \bm Z$ (resp. $\mathcal{O}^\top_{\bm Z} =   \bm \nabla \bm Z  \circ P_{\V}$).

	 The computations of the covariant derivative are done in \cite[Theorem 5.21]{gigli2011second} or can be performed by using the chain-rule together with Proposition \ref{P derivee covariante pullback} and Theorem \ref{Dérivées covariantes projection horizontale et verticale}.
\end{proof}

\begin{rem}
    The $G$-right invariance assumption is here to ensure that $\bm O^\top_{\bm Z} = \bm \nabla \bm Z \circ P_{\V}$.
\end{rem}

	\subsection{Existence and uniqueness of the stochastic parallel transport on $\P_\infty$} \label{Subsection 5.2}
	
	Let $\mu \in \P_\infty$ and $(\mu_t)_{t \geq 0}$ be solution to
    
	\begin{equation} \label{E 517}
	    \circ \dw\mu_t = \SN \bar{Z}_i(\mu_t) \circ \d W^i_t + \bar{Z}_0(\mu_t) \d t, \quad \mu_0 =\mu,
	\end{equation}   
	where for all $i=0,1, \dots, N$, $\bar{Z}_i \in \Gamma^3_S(T\P)$. Let $(\Phi_t)_{t \geq 0}$ be the horizontal lift of $(\mu_t)_{t \geq 0}$, namely the solution to
	
	\begin{equation} \label{EDS du lift horizontal}
	\circ \dd\Phi_t(\varphi) = \SN \bm Z_i(\Phi_t(\varphi)) \circ \d W^i_t + \bm Z_0(\Phi_t(\varphi)) \d t, \quad \Phi_0(\varphi) = \varphi \in \mathscr{D}_\mu,
	\end{equation}
	where $\bm Z_i \in \Gamma^2_S(T\mathscr{D})$ is the horizontal lift of $\bar{Z}_i$ for all $i=0,1, \dots, N$. We will also denote by $(\varphi_t)_{t \geq 0}$ the process $(\Phi_t(\varphi))_{t \geq 0}$.
	
	To prove the existence of the stochastic parallel transport on $\P_\infty$, we aim to replicate the argument of Subsection \ref{Subsection 2.2} in the context of our infinite dimensional Riemannian submersion $p : \mathscr{D} \rightarrow \P_\infty$. To this end, the infinite-dimensional equivalent of \eqref{Horizontal parallel transport finite dimension} is needed. The comparison between Proposition \ref{Connexion Levi-Civita Horizontale dim finie} and Proposition \ref{Prop connexion levi civita releve} leads us to the idea that the normal tensor $\mathcal{O}$ plays the same role as the O'Neill tensor. Consequently, the infinite dimensional counterpart of \eqref{Horizontal parallel transport finite dimension} is given by:
	
	\begin{equation} \label{equation transport parallele horizontal sur les difféos} 
	    \circ \bm D_t \mathcal{T}_{0,t}^h(U \circ \varphi) = -\mathcal{O}_{\circ \dd \varphi_t}(\mathcal{T}_{0,t}^h(U \circ \varphi)), \quad \mathcal{T}_{0,0}^h(U \circ \varphi) = U\circ \varphi.
	\end{equation}

	    \begin{prop} \label{Existence et unicité transport parallele horizontal}
    		The $T\mathscr{D}$-valued covariant SDE
    		
        \begin{equation*} 
	        \circ \bm D_t \mathcal{T}_{0,t}^h(U \circ \varphi) = -\SN \mathcal{O}_{\bm Z_i(\varphi_t)}(\mathcal{T}_{0,t}^h(U \circ \varphi)) \circ \d W^i_t -\mathcal{O}_{\bm Z_0(\varphi_t)}(\mathcal{T}_{0,t}^h(U \circ \varphi)) \d t, \quad \mathcal{T}_{0,0}^h(U \circ \varphi) = U\circ \varphi \in T_\varphi \mathscr{D}.
	    \end{equation*}
    		admits a unique solution. Moreover, if $U \circ \varphi \in \H_\varphi \mathscr{D}$, $\mathcal{T}_{0,t}^h(U \circ \varphi)$ remains horizontal for all $t \geq 0$.
    \end{prop}

    \begin{proof}
    By Corollary \ref{Dérivée covariante Qt}, $\mathcal{O}_{\bm Z_i} \in \mathrm{Hom}_{\mathrm{id}}^1(T\mathscr{D},T\mathscr{D})$. The existence and uniqueness follow from Proposition \ref{P Existence unicité ILH covariante ILH}. Let us show that for a horizontal initial condition, the horizontal transport is actually a horizontal process. Let $U \circ \varphi \in \H_{\varphi} \mathscr{D} $ be a horizontal vector, $\bm U_t = \mathcal{T}_{0,t}^h(U \circ \varphi)$ and $\bm U_t^\V = P_{\V_{\varphi_t}}(\bm U_t)$. By Proposition \ref{Dérivées covariantes projection horizontale et verticale} $P_\V$ is a smooth bundle morphism, thus, Proposition \ref{Vector bundle chain rule ILH} yields: 
	
	$$
	\circ \bm D_t \bm U_t^\V = P_{\H_{\varphi_t}}(\mathcal{O}^\top_{\circ \dd \varphi_t}(\bm U_t^\V)) - \mathcal{O}_{\circ \dd \varphi_t}(\bm U_t^\V), \quad \bm U_0^\V = 0.
	$$
	Since $U_0^\V = 0$, $(\bm U_t^\V)_t = (0_{T_{\varphi_t}\mathscr{D}})_t$ is a solution. The conclusion follows from uniqueness of the solution to this covariant SDE.

    \end{proof}

     The following proposition is the infinite-dimensional analogous of Proposition \ref{P Equivariance transport paralelle horizontal dim finie}.

	\begin{prop} \label{P Equivariance transport horizontal D}
		Let $\varphi \in \mathscr{D}$ and $(\mathcal{T}_{0,t}^h(\varphi, \cdot))_{t \geq 0}$ be the unique solution to 
	
	\begin{equation}  \label{E Pour montrer l'unicité}
	    \circ \bm D_t \mathcal{T}_{0,t}^h(\varphi,A \circ \varphi) = -\mathcal{O}_{\circ \dd\Phi_t(\varphi)}(\mathcal{T}_{0,t}^h(\varphi,A \circ \varphi) ), \quad \mathcal{T}_{0,0}^h(A \circ \varphi) = A \circ \varphi, \quad \forall A \circ \varphi \in \H_\varphi \mathscr{D},
	\end{equation}
	along $(\Phi_t(\varphi))_{t \geq 0}$ given by Proposition \ref{Existence et unicité transport parallele horizontal}. Then, $(\mathcal{T}_{0,t}^h)_{t \geq 0}$ is $G$-right invariant, namely:
	
	\begin{equation*}
	\bm T R_g \left( \mathcal{T}_{0,t}^h(\varphi, A \circ \varphi ) \right) = \mathcal{T}_{0,t}^h (\varphi \cdot g, A \circ (\varphi \cdot g) ), \quad \forall A \circ \varphi \in \H_\varphi \mathscr{D}, \, \forall g \in G.
	\end{equation*}

	\end{prop}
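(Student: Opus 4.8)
The plan is to reproduce, essentially line for line, the argument of Proposition \ref{P Equivariance transport paralelle horizontal dim finie}, with the O'Neill tensor replaced by the normal tensor $\mathcal{N}$ and ordinary covariant derivatives replaced by Stratonovich covariant derivatives on $\mathscr{D}$. Fix $\varphi \in \mathscr{D}$, $g \in G$ and a horizontal vector $A \circ \varphi \in \bar{\H}_\varphi \mathscr{D}$, and write $\bm U_t = \mathcal{T}_{0,t}^h(\varphi, A \circ \varphi)$. First I would record, using Proposition \ref{P Propriétés invariances à gauche et à droite des fibrés}, that $TR_g$ restricts to an isometric isomorphism $\bar{\H}_\varphi \mathscr{D} \to \bar{\H}_{\varphi \cdot g} \mathscr{D}$, maps $\bar{\V}_\varphi \mathscr{D}$ onto $\bar{\V}_{\varphi \cdot g} \mathscr{D}$, and hence commutes with the orthogonal projections $P_{\V}$ and $P_{\H}$. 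In particular $TR_g(A \circ \varphi) = A \circ (\varphi \cdot g)$ is an admissible horizontal initial condition for \eqref{E Pour montrer l'unicité} along $(\Phi_t(\varphi \cdot g))_{t \geq 0}$, and by Proposition \ref{Existence et unicité transport parallele horizontal} the process $\bm U_t$ stays horizontal for all $t$; this is what keeps the equation meaningful.

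The three ingredients I would then assemble are the following. First, the $G$-equivariance of the normal tensor: for any $\psi \in \mathscr{D}$, $U \circ \psi \in T_\psi \mathscr{D}$ and $B \circ \psi \in \T_\psi \mathscr{D}$,
\begin{equation*}
TR_g\left( \mathcal{N}_\psi(U \circ \psi, B \circ \psi) \right) = P_{\V_{\psi \cdot g}}\left( \left( (\nabla U)^T \cdot B \right) \circ (\psi \cdot g) \right) = \mathcal{N}_{\psi \cdot g}\left( TR_g(U \circ \psi), TR_g(B \circ \psi) \right),
\end{equation*}
which follows from the definition $\mathcal{N}_\psi(U \circ \psi, B \circ \psi) = P_{\V_\psi}(((\nabla U)^T \cdot B) \circ \psi)$, the fact that $TR_g$ is right composition with $g$, and the commutation of $TR_g$ with $P_{\V}$; one extends it from $T_\psi \mathscr{D}$ to $\T_\psi \mathscr{D}$ by density. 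Second, the commutation of the Stratonovich covariant derivative with $TR_g$ along an equivariant diffusion driven by $G$-right invariant vector fields — the $\mathscr{D}$-analogue of Lemma \ref{L Commutation dérivée covariante Dim Finie}, namely Lemma \ref{L Commutation differentielle equivariant}, which rests on the right invariance of the Levi-Civita connection (Proposition \ref{Invariance à droite connexion levi-civita difféos}) — applicable here because $(\Phi_t)_{t \geq 0}$ is equivariant (Proposition \ref{Prop equivariance horizontal lift}) and the driving fields $\bm Z_i = \mathfrak{h}(\bar{Z}_i)$ are $G$-right invariant (Remark \ref{G-invariance des lifts horizontaux}). Third, from the same two facts, the relations $\Phi_t(\varphi \cdot g) = \Phi_t(\varphi) \cdot g$ and $\circ \dd \Phi_t(\varphi \cdot g) = TR_g(\circ \dd \Phi_t(\varphi))$.

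Putting these together, I would compute
\begin{align*}
\circ \bm D_t\, TR_g(\bm U_t) &= TR_g\left( \circ \bm D_t \bm U_t \right) = TR_g\left( \mathcal{N}_{\Phi_t(\varphi)}(\bm U_t, \circ \dd \Phi_t(\varphi)) \right) \\
&= \mathcal{N}_{\Phi_t(\varphi) \cdot g}\left( TR_g(\bm U_t), TR_g(\circ \dd \Phi_t(\varphi)) \right) = \mathcal{N}_{\Phi_t(\varphi \cdot g)}\left( TR_g(\bm U_t), \circ \dd \Phi_t(\varphi \cdot g) \right),
\end{align*}
so that $(TR_g(\bm U_t))_{t \geq 0}$ — which is a semimartingale along $(\Phi_t(\varphi \cdot g))_{t \geq 0}$, the stochastic parallel transport on $\mathscr{D}$ being itself $G$-right invariant — solves \eqref{E Pour montrer l'unicité} along $(\Phi_t(\varphi \cdot g))_{t \geq 0}$ with horizontal initial condition $A \circ (\varphi \cdot g)$. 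The uniqueness part of Proposition \ref{Existence et unicité transport parallele horizontal} then forces $TR_g(\mathcal{T}_{0,t}^h(\varphi, A \circ \varphi)) = \mathcal{T}_{0,t}^h(\varphi \cdot g, A \circ (\varphi \cdot g))$ for all $t \geq 0$, which is the claim.

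I do not expect a genuine obstacle here: the real work — establishing existence, uniqueness and, crucially, the preservation of horizontality for \eqref{E Pour montrer l'unicité} — has already been carried out in Proposition \ref{Existence et unicité transport parallele horizontal}, and each of the three ingredients above is a routine transcription of a finite-dimensional fact. The only points demanding a little care are verifying that Lemma \ref{L Commutation differentielle equivariant} indeed applies to the $\bar{\V}_{\varphi_t} \mathscr{D}$-valued Stratonovich semimartingale $\circ \bm D_t \bm U_t = \mathcal{N}_{\varphi_t}(\bm U_t, \circ \dd \varphi_t)$ (a priori only a formal element of $\T_{\varphi_t} \mathscr{D}$), and that the equivariance of $\mathcal{N}$ survives the density extension; both are settled by unwinding the definitions.
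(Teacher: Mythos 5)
Your proposal is correct and follows essentially the same route as the paper: commute $\circ \bm D_t$ with $TR_g$ via Lemma \ref{L Commutation differentielle equivariant}, show that the normal-tensor term is $G$-equivariant, and conclude by the uniqueness statement of Proposition \ref{Existence et unicité transport parallele horizontal}. The only (immaterial) difference is that you prove the equivariance of $\mathcal{N}$ directly from its pointwise definition, whereas the paper rewrites $\mathcal{N}_{\Phi_t(\varphi)}(\bm U_t, \circ \dd \Phi_t(\varphi))$ as $-P_{\V}\left(\bm \nabla_{\bm U_t} \circ \dd \Phi_t(\varphi)\right)$ and invokes the right invariance of the Levi-Civita connection (Lemma \ref{Invariance à droite dérivée covariante dim infinie}) together with the commutation of $TR_g$ with the vertical projection.
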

	
	\begin{proof}
		Let $\varphi \in \mathscr{D}$ and $A \circ \varphi \in \bar{\H}_\varphi \mathscr{D}$. First, notice that by Proposition \ref{Vector bundle chain rule ILH} and the first point of Proposition \ref{Invariance à droite connexion levi-civita difféos} we have:

	\begin{equation*}
	\circ \bm D_t \bm T R_g ( \mathcal{T}_{0,t}^h(\varphi, A \circ \varphi) ) = \bm T R_g ( \circ \bm D_t \mathcal{T}_{0,t}^h(\varphi, A \circ \varphi)).
	\end{equation*}
	Then, since
	
	$$
	\circ \bm D_t \mathcal{T}_{0,t}^h(\varphi, A \circ \varphi) = -\mathcal{O}_{\circ \dd \Phi_t(\varphi)}( \mathcal{T}_{0,t}^h(\varphi, A \circ \varphi) ),
	$$
	and since, for all horizontal vector field $(\bm U_t)_{t \geq 0}$ along $(\Phi_t(\varphi))_{t \geq 0}$ we have:
	
	\begin{align}
	\bm T R_g (\mathcal{O}_{\circ \dd \Phi_t(\varphi)}( \mathcal{T}_{0,t}^h(\varphi, A \circ \varphi) )) \nonumber & = \bm T R_g\left( P_{\V_{\Phi_t(\varphi)}} \left(  \bm \nabla_{\bm U_t} \circ \dd \Phi_t(\varphi) \right) \right) \nonumber \\
	    & = P_{\V_{\Phi_t(\varphi \cdot g)}} \left( \bm T R_g \left(  \bm \nabla_{\bm U_t} \circ \dd \Phi_t(\varphi) \right) \right) \label{E57} \\
	    & = P_{\V_{\Phi_t(\varphi \cdot g)}} \left(    \bm \nabla_{\bm T R_g(\bm U_t)} \bm T R_g(\circ \dd \Phi_t(\varphi)) \right)\label{E58} \\
	    & = \mathcal{O}_{\circ \dd \Phi_t(\varphi \cdot g)} \left( \bm T R_g(\bm U_t)  \right) \nonumber
	\end{align}
	where \eqref{E57} is due to the equivariance of $(\Phi_t)_{t \geq 0}$ and to the fact that $\bm T R_g(P_{\V_\psi}) = P_{\V_{ \psi \cdot g}}(\bm T R_g)$ for all $\psi \in \mathscr{D}$ (see Proposition \ref{P Propriétés invariances à gauche et à droite des fibrés}) and \eqref{E58} is due to Proposition \ref{Invariance à droite connexion levi-civita difféos}. Since $(\mathcal{T}_{0,t}^h(\varphi, A \circ \varphi))_{t \geq 0}$ is horizontal by Proposition \ref{Existence et unicité transport parallele horizontal}, we obtained that:
	
	\begin{equation*}
	\circ \bm D_t \bm T R_g ( \mathcal{T}_{0,t}^h(\varphi, A \circ \varphi) ) = -\mathcal{O}_{\circ \dd \Phi_t(\varphi \cdot g)}\left(\bm T R_g(\mathcal{T}_{0,t}^h(\varphi, A \circ \varphi)) \right).
	\end{equation*}
	The conclusion follows by uniqueness of the solution to \eqref{E Pour montrer l'unicité}.
	\end{proof}
    
    Now we can prove the main theorem of this section (the infinite-dimensional analogous of Theorem \ref{Transport parallel en dim finie}), namely, the existence and uniqueness of the stochastic parallel transport along a diffusion on $\P_\infty$. 
    \begin{thm} \label{Existence unicité transport parallèle}
    Let $(\mu_t)_{t \geq 0}$ be solution to \eqref{E 517} and $(\Phi_t)_{t \geq 0}$ be the horizontal lift of $(\mu_t)_{t \geq 0}$, namely the unique solution to \eqref{EDS du lift horizontal}. Let $(\mathcal{T}_{0,t}^h(\varphi, \cdot))_{t \geq 0}$ be the unique solution to 
	
	\begin{equation*}
	    \circ \bm D_t \mathcal{T}_{0,t}^h(\varphi,U \circ \varphi) = -\mathcal{O}_{\circ \dd\Phi_t(\varphi)}(\mathcal{T}_{0,t}^h(\varphi,U\circ \varphi)), \quad \mathcal{T}_{0,0}^h(U\circ \varphi) = U\circ \varphi, \quad U\circ \varphi \in \H_\varphi \mathscr{D},
	\end{equation*}
	along $(\Phi_t(\varphi))_{t \geq 0}$ given by Proposition \ref{Existence et unicité transport parallele horizontal}. Then, the process $(\mathcal{T}_{0,t})_{t \geq 0}$ defined by
    
    \begin{equation} \label{E Transport parallèle P}
    \mathcal{T}_{0,t}(v) =\bm T_{\varphi_t}p \left( \mathcal{T}_{0,t}^h(\varphi,\h_\varphi(v))  \right), \quad v \in T_\mu \P_\infty, \quad \varphi \in \mathscr{D}_\mu
    \end{equation}
    does not depend on the element of the fiber $\varphi$ and is the stochastic parallel transport along $(\mu_t)_{t\geq 0}$. 
    
    \end{thm}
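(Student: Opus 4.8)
The plan is to mirror, step by step, the proof of Theorem \ref{Transport parallel en dim finie} (the finite-dimensional statement), using Proposition \ref{Prop connexion levi civita releve} as the exact infinite-dimensional replacement of Proposition \ref{Connexion Levi-Civita Horizontale dim finie}, and Proposition \ref{Existence et unicité transport parallele horizontal} together with Proposition \ref{P Equivariance transport horizontal D} as the two technical pillars. Concretely, I would first verify that the formula \eqref{E Transport parallèle P} is well-defined, i.e. independent of the choice of $\varphi \in \mathscr{D}_\mu$. This is where the $G$-right invariance of $(\mathcal{T}_{0,t}^h)_{t \geq 0}$ enters: if $\varphi' = \varphi \cdot g$ for $g \in G$, then $\h_{\varphi'}(v) = TR_g(\h_\varphi(v))$ by Remark \ref{R invariance à droite relevé horizontal}, Proposition \ref{P Equivariance transport horizontal D} gives $\mathcal{T}_{0,t}^h(\varphi', \h_{\varphi'}(v)) = TR_g(\mathcal{T}_{0,t}^h(\varphi, \h_\varphi(v)))$, and since $\Phi_t(\varphi') = \Phi_t(\varphi) \cdot g$ by equivariance of the horizontal lift (Proposition \ref{Prop equivariance horizontal lift}) while $p \circ R_g = p$, applying $T_{\Phi_t(\varphi')}p$ yields the same element of $\T_{\mu_t}\P$. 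Hence $(\mathcal{T}_{0,t})_{t\geq 0}$ is a well-defined $\T\P$-valued process along $(\mu_t)_{t\geq 0}$, and it is a semimartingale in the sense of Definition-Proposition \ref{Def Transport parallele sto sur P} precisely because its horizontal lift $\h_{\Phi_t(\varphi)}(\mathcal{T}_{0,t}(v))$ equals $\mathcal{T}_{0,t}^h(\varphi, \h_\varphi(v))$ (using that $\mathcal{T}_{0,t}^h$ stays horizontal, again Proposition \ref{Existence et unicité transport parallele horizontal}), which is a semimartingale on $\T\mathscr{D}$.

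Next I would show $\circ \bar{D}_t \mathcal{T}_{0,t}(v) = 0$. Writing $\bm U_t := \mathcal{T}_{0,t}^h(\varphi, \h_\varphi(v))$, which is horizontal, the definition \eqref{E 52} of the Stratonovich covariant derivative on $\P_\infty$ gives $\circ \bar{D}_t \mathcal{T}_{0,t}(v) = T_{\varphi_t}p(\circ \bm D_t \bm U_t)$. By \eqref{E 58}, $\circ \bm D_t \bm U_t = \mathcal{N}_{\varphi_t}(\bm U_t, \circ \dd \varphi_t)$, which is a vertical vector by the very definition of the normal operator, hence lies in $\ker T_{\varphi_t}p$. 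Therefore $T_{\varphi_t}p(\circ \bm D_t \bm U_t) = 0$, which is exactly \eqref{Equation du transport para}; so $(\mathcal{T}_{0,t})_{t\geq 0}$ is a stochastic parallel transport along $(\mu_t)_{t\geq 0}$ with initial value $\mathcal{T}_{0,0}(v) = T_\varphi p(\h_\varphi(v)) = v$. Uniqueness is then immediate from Remark \ref{Remarque unicité et isométrie du transport para}: the defining covariant SDE \eqref{Equation du transport para} is linear, so the difference of two parallel transports with the same initial condition has constant ($=0$) norm.

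The step I expect to be the genuine obstacle — though most of its weight has been front-loaded into the auxiliary results — is the legitimacy of the identity $\circ \bar D_t \mathcal{T}_{0,t}(v) = T_{\varphi_t}p(\circ \bm D_t \bm U_t)$ together with the interchange of $Tp$ with the Stratonovich covariant derivative. This requires knowing that $\circ \bm D_t \bm U_t$ is itself $G$-right invariant (so that its $p$-image is well-defined and independent of $\varphi$), which is the content of the proof of Definition-Proposition \ref{Def Transport parallele sto sur P} and relies on Lemma \ref{L Commutation differentielle equivariant}; and it requires the compatibility of $Tp$ with the connections, which is the stochastic projection of Proposition \ref{Prop connexion levi civita releve}. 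Once these are in hand, everything reduces to the observation that the normal operator takes values in the vertical bundle. I would therefore organize the write-up as: (i) well-definedness of \eqref{E Transport parallèle P} via $G$-invariance; (ii) it is a semimartingale along $(\mu_t)$; (iii) computation $\circ \bar D_t \mathcal{T}_{0,t}(v) = T_{\varphi_t}p(\mathcal{N}_{\varphi_t}(\bm U_t, \circ \dd \varphi_t)) = 0$ and $\mathcal{T}_{0,0}(v)=v$; (iv) uniqueness by linearity. Remark \ref{Remarque caractérosation du transport} makes clear why this argument is the correct transposition of the finite-dimensional scheme: the horizontal transport is fully characterized by its covariant SDE, so it suffices to have solved that SDE (Proposition \ref{Existence et unicité transport parallele horizontal}) and projected.
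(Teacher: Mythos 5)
Your proposal is correct and follows essentially the same route as the paper: well-definedness of \eqref{E Transport parallèle P} via the $G$-right invariance of $(\mathcal{T}_{0,t}^h)_{t \geq 0}$ (Proposition \ref{P Equivariance transport horizontal D}) together with $\mathfrak{h}_{\varphi \cdot g}(v) = TR_g(\mathfrak{h}_\varphi(v))$ and $p \circ R_g = p$, and then the parallel-transport property from the fact that $\circ \bm D_t \mathcal{T}_{0,t}^h = \mathcal{N}_{\varphi_t}(\cdot,\cdot)$ is vertical-valued, hence killed by $T_{\varphi_t}p$. The paper compresses this last step into "follows directly from its construction" and leaves uniqueness to Remark \ref{Remarque unicité et isométrie du transport para}; your write-up simply makes both explicit.
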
 
    
    \begin{proof}
	Let $(\Phi_t(\varphi))_t$ be solution to \eqref{EDS du lift horizontal} with $\varphi \in \mathscr{D}$. To prove that the quantity defined by \eqref{E Transport parallèle P} is well defined, notice that, by Proposition \ref{P Equivariance transport horizontal D}, we have
	
	$$
	\mathcal{T}_{0,t}^h(\varphi \cdot g, A \circ (\varphi \cdot g)) = \bm T R_g \left(\mathcal{T}_{0,t}^h(\varphi , A \circ \varphi) \right).
	$$
	In particular, since $\mathfrak{h}_{\varphi \cdot g}(v) = \mathfrak{h}_\varphi(v) \circ g$, we have
	
	$$
	\bm T_{\Phi_t(\varphi \cdot g)}p \left( \mathcal{T}_{0,t}^h(\varphi \cdot g,\h_{\varphi \cdot g}(v))  \right) = \bm T_{\Phi_t(\varphi \cdot g)}p \left(\bm T R_g \left(\mathcal{T}_{0,t}^h(\varphi , \mathfrak{h}_{\varphi}(v) \right) \right) = \bm  T_{\Phi_t(\varphi)}p(\mathcal{T}_{0,t}^h(\varphi, \mathfrak{h}_\varphi(v)).
	$$
	Thus, the process $(\mathcal{T}_{0,t}(v))_{t \geq 0}$ defined by \eqref{E Transport parallèle P} does not depend on the element of the fiber $\varphi \in \mathscr{D}_\mu$ we chose.
	The stochastic parallel transport property of $(\mathcal{T}_{0,t})_{t \geq 0}$ follows directly from its construction.

    \end{proof} 
    
    \begin{rem}[Isometry property of the stochastic parallel transport]\label{Remarque unicité et isométrie du transport para}
    The stochastic parallel transport along a diffusion $(\mu_t)_{t \geq 0}$ is an isometry. Indeed, let $(\bar{A}_t)_{t \geq 0}$, $(\bar{B}_t)_{t \geq 0}$ be two stochastic parallel transports along a semimartingale $(\mu_t)_{t \geq 0}$ starting from $\mu \in \P_\infty$, and let $(\bm A_t)_{t \geq 0}$, $(\bm B_t)_{t \geq 0}$ be their horizontal lifts along $(\Phi_t(\varphi))_{t \geq 0}$ with $\varphi \in \mathscr{D}_\mu$. Then, the following holds:
    
    $$
    \circ \d \langle \bar{A}_t, \bar{B}_t \rangle_{\mu_t} = \circ \d \langle \bm A_t, \bm B_t \rangle_{\Phi_t(\varphi)} =  \left \langle \circ \bm D_t \bm A_t, \bm B_t \right \rangle_{\Phi_t(\varphi)} + \left \langle \bm A_t, \circ \bm D_t \bm B_t \right \rangle_{\Phi_t(\varphi)} =  0.
    $$
\end{rem}

    \begin{rem}[Parallel transport along flows of purely atomic measures] In \cite[Example 4.16]{gigli2011second}, the author observes that the limit of a (deterministic) parallel transport may fail to be a parallel transport. This is essentially due to the fact that, for a purely atomic measure $\nu$, the tangent space takes the form
    
    \begin{equation} \label{Tangent space purely atomic}
        \T_\nu \P = L^2(M, TM, \nu).
    \end{equation}
    More precisely, let $(\varphi_t)_{t \geq 0}$ be a smooth path in $\mathscr{D}$, $\mu \in \P_\infty$ and $(\mu^n)_n$ a sequence of purely atomic measures converging to $\mu$. Let also $\mu_t := (\varphi_t)_* \mu$ and $\mu^n_t := (\varphi_t)_* \mu^n$ for all $n \geq 0$. For all $v \in \T_\mu \P$, we can always find a sequence $(v_n)_{n \geq 0}$ such that $v_n \in \T_{\mu_n} \P$ for all $n \geq 0$ strongly converging (see \cite[Definition 1.8]{gigli2011second}) to $v$. Because of \eqref{Tangent space purely atomic}, it is easy to see that the parallel transport of $v_n$ along $(\mu^n_t)_{t \geq 0}$ is given by $\mathcal{T}_{0,t}(v_n) = \sslash_{0,t} v_n \circ \varphi_t^{-1}$, where $\sslash_{0,t}$ is the parallel transport along $(\varphi_t)_{t \geq 0}$. Consequently, when letting $n \to + \infty$, it can be easily seen that $\sslash_{0,t}(v_n)$ strongly converges to $\sslash_{0,t}(v)$ but not to $\mathcal{T}_{0,t}(v)$. The same result also holds in the stochastic case.
        
    \end{rem}

    \vspace{10pt}
    
    \noindent \textbf{Equivariant lift of the stochastic parallel transport.} We finish this section by stating the infinite-dimensional equivalent of Proposition \ref{P Processus equivariant TD}. The \textit{Lie algebra} $\mathfrak{g}$ of $G$ is defined to be
    
    \begin{equation*}
        \mathfrak{g} = \V_{\mathrm{id}} \mathscr{D}.
    \end{equation*}
    
    \begin{defi} \label{D Défi adjoint}
	Let $g \in G$, the \textit{adjoint action} of $g$ is the map $\mathrm{Ad}(g) : \mathfrak{g} \rightarrow \mathfrak{g}$ is the differential of the inner automorphism $ \varphi \mapsto g \cdot \varphi \cdot g^{-1}$ in $\varphi = \mathrm{id}$.
    \end{defi}
    
    Let $\varpi$ denote the connection form on $\mathscr{D}$, namely, $\varpi(A \circ \varphi) = \bm T L_{\varphi^{-1}}(P_{\V_\varphi}(A \circ \varphi))$ for all $A \circ \varphi \in T_\varphi \mathscr{D}$. One can easily check that
    
    \begin{equation*}
	    \varpi(\bm T R_g(A \circ \varphi)) = \mathrm{Ad}(g^{-1}) \varpi(A \circ \varphi).
	\end{equation*}
    
	As in the finite-dimensional case (see Remark \ref{R Equivariance of the horizontal stochastic parallel transport dim finie}), $\bm T p|_{\H \mathscr{D}} : \H\mathscr{D} \rightarrow T \P_\infty$ can be seen as a $G$-principal bundle where the action of $G$ on $\H \mathscr{D}$ is given $\bm T R_g$ for $g \in G$. In particular, Proposition \ref{P Equivariance transport horizontal D} can be interpreted as follows: the process $(\mathcal{T}_{0,t}^h)_{t \geq 0}$ is equivariant on the principal bundle $\H\mathscr{D}$.
	Note that we can also consider the group $G \rtimes \mathfrak{g}$ and its action on $T \mathscr{D}$ (the infinite-dimensional equivalent of \eqref{E Action de groupe}) given by
	
	\begin{equation*}
	\left( \varphi, A \circ \varphi \right) \cdot \left( g, \mathcal{Y} \right) = \left( \varphi \cdot g, A \circ (\varphi \cdot g) + \bm T R_g \bm T L_{\varphi}  \left( \mathrm{Ad}_g(\mathcal{Y}) \right) \right),
	\end{equation*}
	for $\varphi \in \mathscr{D}$, $A \circ \varphi \in T_\varphi \mathscr{D}$, $g \in G$ and $\mathcal{Y} \in \mathfrak{g}$. This action provides the tangent bundle $T \mathscr{D}$ with a natural principal bundle structure above $T \P_\infty$. We can, as in Proposition \ref{P Processus equivariant TD}, construct a lift of the horizontal transport which is equivariant on the principal bundle $(T\mathscr{D}, G \rtimes \mathfrak{g})$.

	\begin{prop}[Equivariant lift of the stochastic parallel transport] \label{P Transport equivariant}
	    Let $(\Phi_t)_{t \geq 0}$ be the horizontal lift of a $\P_\infty$-valued diffusion $(\mu_t)_{t \geq 0}$. For all $\varphi \in \mathscr{D}_{\mu_0}$ we define $\tilde{\mathcal{T}}_{0,t}(\varphi, \cdot)$ to be the stochastic flow of linear maps $\tilde{\mathcal{T}}_{0,t}(\varphi, \cdot) : T_{\Phi_0(\varphi)} \mathscr{D} \rightarrow T_{\Phi_t(\varphi)} \mathscr{D}$ such that
	    
	    \[
\begin{cases}
	        & \tilde{\mathcal{T}}_{0,t}(u,Z \circ \varphi) = \mathcal{T}_{0,t}^h(\varphi,Z \circ \varphi) \text{ and } \tilde{\mathcal{T}}_{0,t}(\varphi,Y \circ \varphi) =\bm T L_{\Phi_t(\varphi)}(\varpi(Y \circ \varphi)), \quad \forall Z \circ \varphi \in \H_\varphi \mathscr{D}, \, \forall Y \circ \varphi \in \V_\varphi \mathscr{D}, \\
	        & \tilde{\mathcal{T}}_{0,0}(u, \cdot)= \mathrm{id}_{T_{\Phi_0(\varphi)} \mathscr{D}}. \nonumber
	    \end{cases} \]
	    This process is equivariant on $T \mathscr{D}$ seen as a $G \rtimes \mathfrak{g}$ principal bundle, namely,
	    
	    \begin{equation*}
	        \tilde{\mathcal{T}}_{0,t}(\varphi,A\circ \varphi) \cdot (g, \mathcal{Y}) = \tilde{\mathcal{T}}_{0,t}( (\varphi, A \circ \varphi) \cdot (g,\mathcal{Y})), \quad \forall A \circ \varphi \in \T \mathscr{D}, \, \forall (g, \mathcal{Y}) \in G \rtimes \bar{\mathfrak{g}}.
	    \end{equation*}
	    Moreover, $(\tilde{\mathcal{T}}_{0,t})_{t \geq 0}$ is a lift of $(\bar{\mathcal{T}}_{0,t})_{t \geq 0}$, i.e. for any $Z \in T_{\mu_0} \P_\infty$:
        
        \begin{equation*} 
             \bm T p (\tilde{\mathcal{T}}_{0,t}(\varphi,A \circ \varphi))= \mathcal{T}_{0,t}(Z) , \quad  \forall A \circ \varphi \in \bm T p^{-1}(Z).
        \end{equation*}
	    
	\end{prop}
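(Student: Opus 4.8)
The plan is to transcribe the proof of Proposition~\ref{P Processus equivariant TD} into the infinite-dimensional setting, replacing the map $T\iota_u$ by the left translation $TL_\varphi$ and the bundles $\H P,\V P$ by the extended bundles $\bar{\H}\mathscr{D},\bar{\V}\mathscr{D}$. First I would check that $\tilde{\mathcal{T}}_{0,t}(\varphi,\cdot)$ is a well-defined $\T\mathscr{D}$-valued semimartingale flow of linear maps: on $\bar{\H}_\varphi\mathscr{D}$ it coincides with $\mathcal{T}^h_{0,t}(\varphi,\cdot)$, which exists and is a semimartingale by Proposition~\ref{Existence et unicité transport parallele horizontal}; on $\bar{\V}_\varphi\mathscr{D}$ it is $TL_{\Phi_t(\varphi)}\circ\varpi$, and $TL_{\Phi_t(\varphi)}$ is a continuous linear isomorphism from $\bar{\mathfrak{g}}=\bar{\V}_{\mathrm{id}}\mathscr{D}$ onto $\bar{\V}_{\Phi_t(\varphi)}\mathscr{D}$ by Proposition~\ref{P Propriétés invariances à gauche et à droite des fibrés}; and since $\T_\varphi\mathscr{D}=\bar{\H}_\varphi\mathscr{D}\oplus\bar{\V}_\varphi\mathscr{D}$ one extends by linearity.

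To prove the equivariance, I would fix $\varphi\in\mathscr{D}_{\mu_0}$, $A\circ\varphi\in\T_\varphi\mathscr{D}$ and $(g,\mathcal{Y})\in G\rtimes\bar{\mathfrak{g}}$, and split $A\circ\varphi=(A\circ\varphi)^{\H}+(A\circ\varphi)^{\V}$ along $\bar{\H}_\varphi\mathscr{D}\oplus\bar{\V}_\varphi\mathscr{D}$. By Proposition~\ref{P Propriétés invariances à gauche et à droite des fibrés}, $TR_g$ carries this to the decomposition of $A\circ(\varphi\cdot g)=TR_g(A\circ\varphi)$ at $\varphi\cdot g$, and $TR_g TL_\varphi(\mathrm{Ad}_g\mathcal{Y})\in\bar{\V}_{\varphi\cdot g}\mathscr{D}$; so by linearity of $\tilde{\mathcal{T}}_{0,t}(\varphi\cdot g,\cdot)$ it suffices to treat these three pieces one at a time. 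On the horizontal piece, Proposition~\ref{P Equivariance transport horizontal D} gives $\tilde{\mathcal{T}}_{0,t}(\varphi\cdot g,TR_g((A\circ\varphi)^{\H}))=TR_g(\mathcal{T}^h_{0,t}(\varphi,(A\circ\varphi)^{\H}))=TR_g(\tilde{\mathcal{T}}_{0,t}(\varphi,(A\circ\varphi)^{\H}))$. On the two vertical pieces I would use the transformation rule $\varpi(TR_g(\cdot))=\mathrm{Ad}(g^{-1})\varpi(\cdot)$, the equivariance $\Phi_t(\varphi\cdot g)=\Phi_t(\varphi)\cdot g$ from Proposition~\ref{Prop equivariance horizontal lift}, and the elementary identities $TL_{\psi\cdot g}=TL_\psi TL_g$, $\mathrm{Ad}(g)=TL_g TR_{g^{-1}}$, together with the fact that $\varpi\circ TL_\psi$ is the identity on $\bar{\mathfrak{g}}$; the resulting short computation, the exact analogue of~\eqref{E238}--\eqref{E239}, yields $\tilde{\mathcal{T}}_{0,t}(\varphi\cdot g,TR_g((A\circ\varphi)^{\V}))=TR_g(\tilde{\mathcal{T}}_{0,t}(\varphi,(A\circ\varphi)^{\V}))$ and $\tilde{\mathcal{T}}_{0,t}(\varphi\cdot g,TR_g TL_\varphi(\mathrm{Ad}_g\mathcal{Y}))=TR_g TL_{\Phi_t(\varphi)}(\mathrm{Ad}_g\mathcal{Y})$. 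Summing the three contributions gives $TR_g(\tilde{\mathcal{T}}_{0,t}(\varphi,A\circ\varphi))+TR_g TL_{\Phi_t(\varphi)}(\mathrm{Ad}_g\mathcal{Y})=\tilde{\mathcal{T}}_{0,t}(\varphi,A\circ\varphi)\cdot(g,\mathcal{Y})$, which is the claimed identity.

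For the lift property I would again write $A\circ\varphi=(A\circ\varphi)^{\H}+(A\circ\varphi)^{\V}$ with $(A\circ\varphi)^{\H}=\h_\varphi(Z)$ and $Z=T_\varphi p(A\circ\varphi)\in\T_{\mu_0}\P$. Then $\tilde{\mathcal{T}}_{0,t}(\varphi,A\circ\varphi)=\mathcal{T}^h_{0,t}(\varphi,\h_\varphi(Z))+TL_{\Phi_t(\varphi)}(\varpi((A\circ\varphi)^{\V}))$; the second summand is vertical, hence annihilated by $T_{\Phi_t(\varphi)}p$, so by the definition of $(\mathcal{T}_{0,t})_{t\geq 0}$ in Theorem~\ref{Existence unicité transport parallèle} one gets $Tp(\tilde{\mathcal{T}}_{0,t}(\varphi,A\circ\varphi))=T_{\Phi_t(\varphi)}p(\mathcal{T}^h_{0,t}(\varphi,\h_\varphi(Z)))=\mathcal{T}_{0,t}(Z)$, which depends only on $Z=Tp(A\circ\varphi)$ and not on the representative in $Tp^{-1}(Z)$, reproducing~\eqref{E240}--\eqref{E242}.

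I expect no genuinely new difficulty here: the whole argument is the infinite-dimensional transcription of~\eqref{E238}--\eqref{E242}. The points that require some care are the well-definedness of $TL_{\Phi_t(\varphi)}$ on the extended Lie algebra $\bar{\mathfrak{g}}$ and the verification that $(\tilde{\mathcal{T}}_{0,t})_{t\geq 0}$ is a legitimate $\T\mathscr{D}$-valued semimartingale flow of linear maps --- both already guaranteed by Propositions~\ref{Existence et unicité transport parallele horizontal} and~\ref{P Propriétés invariances à gauche et à droite des fibrés} --- together with the bookkeeping with $\mathrm{Ad}$, $TL_\psi$ and $TR_g$ on the vertical components, which is the only mildly delicate step of the computation.
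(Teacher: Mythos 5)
Your proposal is correct and follows exactly the route the paper intends: the paper's own proof of Proposition~\ref{P Transport equivariant} simply states that the argument is performed in the exact same way as the finite-dimensional case (Proposition~\ref{P Processus equivariant TD}), which is precisely the transcription you carry out, with the horizontal piece handled by Proposition~\ref{P Equivariance transport horizontal D} and the vertical pieces by the $\mathrm{Ad}$/$TL$/$TR$ bookkeeping analogous to \eqref{E238}--\eqref{E239} and \eqref{E240}--\eqref{E242}. Your added checks on the well-definedness of $TL_{\Phi_t(\varphi)}$ on $\bar{\mathfrak{g}}$ via Proposition~\ref{P Propriétés invariances à gauche et à droite des fibrés} are consistent with the paper and require no further justification.
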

	
	\begin{proof}
	The proof can be performed in the exact same way as in the finite-dimensional case, see the proof of Proposition \ref{P Processus equivariant TD}.
	\end{proof}

\section{Decomposition of equivariant diffusions on $\mathscr{D}$} \label{Section 6}

    In this section we consider equivariant diffusions on $\mathscr{D}$, namely diffusions $(\Phi_t)_{t \geq 0}$ satisfying the following property:
    
    $$
    \Phi_t(\varphi \cdot g) = \Phi_t(\varphi) \cdot g, \quad \forall \varphi \in \mathscr{D}, \, \forall g \in G, \, \forall t \geq 0.
    $$
	
	A diffusion $(\Phi_t)_{t \geq 0}$ is said to be \textit{horizontal} (resp. \textit{vertical}) if $\circ \dd \Phi_t$ is horizontal (resp. vertical) for all $t \geq 0$. In particular, if $(\Phi_t)_{t \geq 0}$ is vertical, it lies in the same fiber as the initial condition. Note that the choice to write the considered SDE in Stratonovich's form is not insignificant since a Stratonovich's derivative of a process holds, at least formally, in the tangent space, which is not the case of the Itô's derivative because of the Hessian term.
	
	Since $\mathscr{D}$ has a structure of $G$-principal bundle over $\P_\infty$, a path $(\varphi_t)_{t \geq 0}$ writes in a unique way as
	
	\begin{equation} \label{Decompo fibré principal D}
	    \varphi_t = h_t \cdot g_t,
	\end{equation}
	where $(h_t)_{t \geq 0}$ is the horizontal lift of the projection $(p(\varphi_t))_{t \geq 0}$ and $(g_t)_{t \geq 0}$ is a path in $G$ with $g_0=id$.
	
	We show that, in the case of an equivariant diffusion $(\Phi_t)_{t \geq 0}$, the decomposition \eqref{Decompo fibré principal D} enjoys good properties. More precisely, the process $(h_t)_{t \geq 0}$ is an equivariant horizontal diffusion and the horizontal lift of a diffusion on $\P_\infty$ and $(g_t)_{t \geq 0}$ is the right exponential of a stochastic process in the Lie Algebra $\mathfrak{g}$ of $G$.
We prove this result for $(\Phi_t)_{t \geq 0}$ being solution to an ODE on $\mathscr{D}$, the proof being exactly the same when one comes to consider Stratonovich's SDEs. In the beginning of this section we will restrict ourselves to the case where $\mathscr{D}$ is a principal bundle over $\P_\infty$. In particular, for all $\varphi \in \mathscr{D}$ we denote by $\iota_{\varphi} : G \rightarrow \mathscr{D}$ the left multiplication $\iota_{\varphi}(g) = \varphi \cdot g$ for all $g \in G$. There also exist a connection form (given by $\varpi(A \circ \varphi) = \bm T L_{\varphi^{-1}}(P_{\V_\varphi}(A \circ \varphi))$ for $A \circ \varphi \in T_\varphi \mathscr{D}$). Recall that, in this case, for all $\varphi \in \mathscr{D}$, $g \in G$ and $\bm Y$ right invariant vector fields we have

$$
\varpi(\bm T R_g(Y \circ \varphi)) = \mathrm{Ad}(g^{-1})\left(\varpi(Y\circ \varphi) \right),
$$
where $\mathrm{Ad}$ is defined in Definition \ref{D Défi adjoint}.

The choice to see $\mathscr{D}$ merely as a principal bundle over $\P_\infty$ emphasize the fact that these results are not specific to the group of diffeomorphisms over $\P_\infty$ setting, but can be readily extended to any setting where a structure of principal bundle is involved.

At the end of the section, we will come back to the case of $\mathscr{D}$ seen as a Riemannian submersion onto $\P_\infty$ to study a specific case of Itô's equivariant equations. As in the previous section, the finite-dimensional case is treated in Subsections \ref{subsection 2.3} and \ref{Subsection 2.4}.

    The following proposition is the infinite-dimensional equivalent of Proposition \ref{P D1}.
    \begin{prop}
    Let $\bm A \in \Gamma^1_S(T\mathscr{D})$ with $\bm A = \bm Z \oplus \bm Y \in \Gamma^1_S(\H \mathscr{D}) \oplus \Gamma^1_S(\V \mathscr{D})$ be a right invariant vector field for the action of $G$ on $\mathscr{D}$. Let $(\Phi_t)_{t \geq 0}$ be the flow of maps solution to
    
    \begin{equation} \label{EDO equivariante D}
    \dot{\Phi}_t(\varphi) = \bm A (\Phi_t(\varphi)), \quad \Phi_0(\varphi) \in \mathscr{D},
    \end{equation}
    with $\Phi_0$ an equivariant map and let $M_t = p(\Phi_t)$ for all $t \geq 0$. Then, $(M_t)_{t\geq 0}$ is solution to the following autonomous equation
    
    \begin{equation*}
        \dot{M}_t(\mu) = \bar{Z}(M_t(\mu)), \quad M_0(\mu) = p(\Phi_0(\varphi)), \, \forall \varphi \in \mathscr{D}_\mu,
    \end{equation*}
    with $\bar{Z} = \bm T p(\bm Z)$. Moreover, the flow $(\Phi_t)_{t \geq 0}$ is equivariant and the decomposition \eqref{Decompo fibré principal D} takes the form $\Phi_t = h_t \cdot g_t$, where $(h_t)_{t \geq 0}$ is the horizontal lift of $(M_t)_{t \geq 0}$ and $(g_t)_{t \geq 0}$ is vertical such that
    
    \begin{equation*}
    \dot{h}_t(\varphi) = \h_{h_t(\varphi)}\left( \dot{M}_t(p(\varphi)) \right) = \bm Z(h_t(\varphi)) , \quad h_0(\varphi) = \Phi_0(\varphi)
    \end{equation*}
    and 
    
    \begin{equation} \label{E g_t}
    \dot{g}_t(\varphi) = \bm T R_{g_t(\varphi)} \varpi(\bm Y(h_t(\varphi))), \quad g_0(\varphi) = \mathrm{id}.
    \end{equation}
    In particular, $(h_t)_{t \geq 0}$ is a horizontal equivariant autonomous flow and $(g_t)_{t \geq 0}$ satisfies $g_t(\varphi \cdot g) = g^{-1} g_t(\varphi) \cdot g$ for all $g \in G$ and is a right exponential of a path in the Lie algebra $\mathfrak{g}$ of $G$, namely:
    
    \begin{equation} \label{E exponentielle droite deterministe D}
    g_t(\varphi) = \mathcal{E}^R\left( \int_0^\cdot \varpi(\bm Y(h_s(\varphi))) \d s \right)_t.
    \end{equation}
    \end{prop}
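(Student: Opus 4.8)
The plan is to reproduce, step by step, the proof of Proposition \ref{P D1}, replacing the finite-dimensional manifold by $\mathscr{D}$ equipped with its ILH Lie group and $G$-principal bundle structure, and invoking Proposition \ref{ODE D} wherever Cauchy--Lipschitz was used in the finite-dimensional case. A preliminary remark is that $\bm Z = P_\H(\bm A)$ and $\bm Y = P_\V(\bm A)$ inherit the $\C^1$ regularity of $\bm A$ (this follows from the regularity of the Hodge/orthogonal projections) and are again $G$-right invariant by Proposition \ref{Invariance à droite connexion levi-civita difféos}; hence both generate flows on $\mathscr{D}$.

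First I would show that $(M_t)_{t\geq 0}=(p(\Phi_t))_{t\geq 0}$ solves the claimed autonomous equation. Applying the chain rule to $p$ along $(\Phi_t(\varphi))_{t\geq 0}$ and using $\ker Tp = \V\mathscr{D}$ gives $\frac{\d}{\d t} p(\Phi_t(\varphi)) = T_{\Phi_t(\varphi)}p\left(\bm Z(\Phi_t(\varphi)) + \bm Y(\Phi_t(\varphi))\right) = \bar Z(p(\Phi_t(\varphi)))$ with $\bar Z = Tp(\bm Z)$; this vector field is well defined and the equation is autonomous because $\bm Z$ is $G$-right invariant, so it descends to a genuine vector field on $\P_\infty$. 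Next, equivariance of $(\Phi_t)_{t\geq 0}$: for $g\in G$, right invariance of $\bm A$ yields $\frac{\d}{\d t}(\Phi_t(\varphi)\cdot g) = TR_g(\bm A(\Phi_t(\varphi))) = \bm A(\Phi_t(\varphi)\cdot g)$, so $(\Phi_t(\varphi)\cdot g)_{t\geq 0}$ and $(\Phi_t(\varphi\cdot g))_{t\geq 0}$ solve \eqref{EDO equivariante D} with the same initial datum (here $\Phi_0$ equivariant is used), hence agree by uniqueness in Proposition \ref{ODE D}.

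Then I would introduce $(h_t)_{t\geq 0}$ as the horizontal lift of $(M_t)_{t\geq 0}$ starting at $\Phi_0(\varphi)$, i.e.\ the solution of $\dot h_t(\varphi) = \h_{h_t(\varphi)}(\dot M_t(p(\varphi))) = \bm Z(h_t(\varphi))$, $h_0(\varphi)=\Phi_0(\varphi)$; since it is the horizontal lift of an autonomous equivariant flow, it is itself autonomous, horizontal and equivariant (Example \ref{Ex Horizontal lift equivariant}). As $G$ acts freely and transitively on the fibers of $p$ and $h_t$ and $\Phi_t$ project to the same $M_t$, there is a unique $G$-valued path $(g_t)_{t\geq 0}$ with $g_0=\mathrm{id}$ such that $\Phi_t = h_t\cdot g_t$, namely $g_t = \iota_{h_t}^{-1}(\Phi_t)$, which is $\C^1$ in $t$. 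Comparing $\Phi_t(\varphi\cdot g) = h_t(\varphi\cdot g)\cdot g_t(\varphi\cdot g) = h_t(\varphi)\cdot g\cdot g_t(\varphi\cdot g)$ with $\Phi_t(\varphi)\cdot g = h_t(\varphi)\cdot g_t(\varphi)\cdot g$ and using freeness gives $g_t(\varphi\cdot g) = g^{-1}g_t(\varphi)\cdot g$.

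Finally I would read off the equation for $(g_t)_{t\geq 0}$ exactly as in the proof of Proposition \ref{P D1}: differentiating $\Phi_t = h_t\cdot g_t$ gives $\dot\Phi_t = TR_{g_t}(\dot h_t) + T\iota_{h_t}(\dot g_t)$, and since $TR_{g_t}$ preserves $\H\mathscr{D}$ and $\V\mathscr{D}$ while $T\iota_{h_t}$ lands in $\V\mathscr{D}$, identification of horizontal and vertical parts of $\dot\Phi_t = \bm Z(\Phi_t)\oplus\bm Y(\Phi_t)$ yields $T\iota_{h_t}(\dot g_t) = \bm Y(\Phi_t)$. Writing $\bm Y(\Phi_t) = \bm Y(h_t\cdot g_t) = T\iota_{h_t\cdot g_t}(\varpi(\bm Y(h_t\cdot g_t)))$ and using $T\iota_u TL_g = T\iota_{u\cdot g}$, $TL_{g^{-1}}TR_g = \mathrm{Ad}(g)$ together with $\varpi(TR_g(Y\circ h_t)) = \mathrm{Ad}(g^{-1})\varpi(Y\circ h_t)$ for the right-invariant field $\bm Y$, one gets $\bm Y(\Phi_t) = T\iota_{h_t}(TR_{g_t}\varpi(\bm Y(h_t)))$, hence $\dot g_t = TR_{g_t}\varpi(\bm Y(h_t))$, and \eqref{E exponentielle droite deterministe D} is then the definition of the right exponential on $G$. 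I do not expect a genuinely new difficulty beyond the recurring point of care in this setting, namely keeping every operation inside the ILH category: the only thing to verify carefully is that $\bm Z$ and $\bm Y$ are $\C^1$ (so that Proposition \ref{ODE D} applies to $\bm Z$) and that $g_t = \iota_{h_t}^{-1}(\Phi_t)$ is a $\C^1$ path in $G$, which is precisely what the regularity of the horizontal/vertical projections and of $\iota_\varphi$ provide.
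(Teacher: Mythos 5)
Your proposal is correct and follows essentially the same route as the paper's proof: chain rule/orthogonality for the projected autonomous equation, right invariance plus uniqueness for equivariance, the free transitive action for the decomposition $\Phi_t = h_t \cdot g_t$, and identification of vertical parts together with the properties of $\varpi$ and $\mathrm{Ad}$ for the equation of $(g_t)_{t \geq 0}$. The only cosmetic difference is that the paper verifies the projected equation by testing against the functionals $F_f$ (which is how solutions of ODEs on $\P_\infty$ are defined there), using that $\bm \nabla \bm F_f$ is horizontal and hence orthogonal to $\bm Y$, rather than invoking $\ker Tp = \V\mathscr{D}$ directly.
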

    
    \begin{proof}
    Let $f \in \C^\infty(M)$. Since $\bm A$ is right invariant, this is also the case for $\bm Z$ and $\bm Y$. In particular, $\bm Z(\psi)$ writes as $Z(p(\psi)) \circ \psi$. Then, since $\bm \nabla \bm F_f(\varphi) = \nabla f \circ \varphi$, we obtain by orthogonality:
    
    \begin{align*}
    F_f(M_t(\mu))- F_f(\mu) &= \int_0^t \langle \bm \nabla \bm F_f( \Phi_t(\varphi)), \bm A(\Phi_t(\varphi)) \rangle_{\Phi_t(\varphi)} \\
    &= \int_0^t \langle \bm \nabla \bm F_f( \Phi_t(\varphi)), \bm Z(\Phi_t(\varphi)) \rangle_{\Phi_t(\varphi)} \\
    & = \int_0^t \int_M \langle \nabla f \circ \Phi_s(\varphi), Z(M_s(p(\varphi)) \circ \Phi_s(\varphi) \rangle \d \vol \, \d s \\
    & = \int_0^t \int_M \langle \nabla f(x), Z(M_s(p(\varphi)))(x) \rangle \d M_s(p(\varphi)) \, \d s
    \end{align*}
    In particular, $(M_t)_{t \geq 0}$ is solution to the following autonomous equation:
	
	$$
	\dot{M}_t(p(\varphi)) = \bar{Z} (M_t(p(\varphi))), \quad M_0(p(\varphi)) = p(\varphi).
	$$    
	To see that $(\Phi_t)_{t \geq 0}$ is equivariant, note that, by $G$-right invariance of $\bm A$, we have:
	
	$$
	\frac{\d}{\d t} \left( \Phi_t(\varphi) \cdot g \right) = \bm T R_g( \dot{\Phi}_t(\varphi)) = A(\Phi_t(\varphi) \cdot g).
	$$
	By uniqueness of the solution to \eqref{EDO equivariante D}, we obtain that $\Phi_t(\varphi \cdot g) = \Phi_t(\varphi) \cdot g$.
	
    Let $(h_t)_{t \geq 0}$ be the unique solution to
    
    $$
    \dot{h}_t(\varphi) = \h_{h_t(\varphi)}\left( \dot{M}_t(p(\varphi)) \right) = \bm Z(h_t(\varphi)) , \quad h_0(\varphi) = \Phi_0(\varphi).
    $$   
    Since $(M_t)_{t \geq 0}$ is autonomous, it is clear that $(h_t)_{t \geq 0}$ is an equivariant autonomous horizontal flow. Moreover, since $\Phi_t$ and $h_t$ are on the same fibers and that $G$ acts freely and transitively on the fibers, there exists a stochastic process $(g_t)_{t \geq 0}$ such that

$$
\Phi_t = h_t \cdot g_t.
$$    
	Observe that $(h_t)_{t \geq 0}$ is equivariant as a horizontal lift of a flow on $\P_\infty$ (see Proposition \ref{Prop equivariance horizontal lift}). Then, we have
    
    $$
	\Phi_t(\varphi \cdot g) = h_t(\varphi \cdot g) \cdot g_t(\varphi \cdot g) \text{ and } \Phi_t(\varphi \cdot g) = \Phi_t(\varphi) \cdot g = h_t(\varphi) \cdot g_t(\varphi) \cdot g.
    $$
    Hence, since $G$ acts freely on the fibers of $\mathscr{D}$ we get $g_t(\varphi \cdot g) = g^{-1} \cdot g_t(\varphi) \cdot g$ for all $t \geq 0$.
    Since $\bm A$ is right invariant for the action of $G$, it is also the case for its horizontal and vertical parts. Taking the time derivative yields
    
    $$
    \dot{\Phi}_t = \bm T R_{g_t}(\dot{h}_t) + T\iota_{h_t}(\dot{g}_t).
    $$
    Identifying the horizontal and vertical parts we obtain that $\bm T R_{g_t}(\dot{h}_t)= \bm Z (\Phi_t)$ and
    
    $$
    T\iota_{h_t}\dot{g}_t = \bm Y(\Phi_t) = T\iota_{\Phi_t} \varpi(\bm Y( \Phi_t)).
    $$
    Using the properties of the connection form, this yields:
    
    $$
    T\iota_{h_t}\dot{g}_t = T\iota_{h_t \cdot g_t} \varpi (\bm T R_{g_t}(\bm Y(h_t))) = T\iota_{h_t} \bm T L_{g_t} \mathrm{Ad}(g_t^{-1}) \varpi (\bm Y(h_t)) = T\iota_{h_t} \bm T R_{g_t} \varpi( \bm Y(h_t))  .
    $$
    Formula \eqref{E exponentielle droite deterministe D} is a direct consequence of \eqref{E g_t}.
    
    \end{proof}
    
    Following the exact same arguments in the case of Stratonovich's SDEs we get the following proposition, which is the infinite-dimensional equivalent of Proposition \ref{P D2}.
    
\begin{prop} \label{Proposition diffusion equivariantes D}
	Let $\bm A_0, \bm A_1, \dots, \bm A_N \in \Gamma^2_S(T\mathscr{D})$ be right invariant vector fields with $\bm A_i = \bm Z_i + \bm Y_i \in \Gamma^2_S(\H \mathscr{D}) \oplus \Gamma^2_S(\V \mathscr{D})$ for all $i=0,1, \dots, N$. Let $(\Phi_t)_{t \geq 0}$ be the diffusion on $\mathscr{D}$ solution to
	
	$$
	\circ \dd \Phi_t = \SN \bm A_i(\Phi_t) \circ \d W^i_t + \bm A_0(\Phi_t) \d t, \quad \Phi_0(\varphi) \in \mathscr{D}
	$$
	with $\Phi_0$ an equivariant map and let $M_t = p(\Phi_t)$ for all $t \geq 0$. Then, $(M_t)_{t \geq 0}$ is a diffusion solution to the following Stratonovich's SDE
	
	\begin{equation*}
	\circ \dw M_t = \SN \bar{Z}_i(M_t) \circ \d W^i_t + \bar{Z}_0(M_t) \d t, \quad M_0(\mu) =  p(\Phi_0(\varphi)), \, \forall \varphi \in \mathscr{D}_\mu
	\end{equation*}
	 Moreover, the stochastic flow $(\Phi_t)_{t \geq 0}$ is equivariant and the decomposition \eqref{Decompo fibré principal D} takes the form $\Phi_t = h_t \cdot g_t$ where $(h_t)_{t \geq 0}$ is the horizontal lift of $(M_t)_{t \geq 0}$ and $(g_t)_{t \geq 0}$ is a vertical process such that for all $\varphi \in \mathscr{D}$:
	
	\begin{equation*}
	\circ \dd h_t(\varphi) = \h_{h_t(\varphi)}(\circ \d M_t(p(\varphi))) = \bm Z(h_t(\varphi)), \quad h_0(\varphi) = \Phi_0(\varphi),
	\end{equation*}
	and
	
	\begin{equation*}
	\circ \dd g_t(\varphi) = \SN \bm T R_{g_t(\varphi)} \varpi( \bm Y_i(h_t(\varphi)))  \circ \d W^i_t + \bm T R_{g_t(\varphi)} \varpi( \bm Y_0(h_t(\varphi))) \d t, \quad g_0(\varphi) = \mathrm{id}.
	\end{equation*}
	In particular, $(h_t)_{t \geq 0}$ is a horizontal equivariant diffusion and $(g_t)_{t \geq 0}$ satisfies $g_t(\varphi \cdot g) = g^{-1} g_t(\varphi) \cdot g$ for all $g \in G$ and is a right stochastic exponential of a diffusion in the Lie algebra $\mathfrak{g}$ of $G$, namely:
	
	\begin{equation*}
	g_t(\varphi) = \mathcal{E}^R \left( \SN \int_0^\cdot \varpi \bm Y_i(h_s(\varphi))) \circ \d W^i_s + \varpi (\bm Y_0(h_s(\varphi))) \d s \right)_t.
	\end{equation*}

\end{prop}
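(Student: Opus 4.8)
The plan is to run, step by step, the proof of the deterministic infinite-dimensional statement (and of Propositions \ref{P D1}--\ref{P D2}), replacing every time derivative by a Stratonovich differential; the only new analytic input is that one invokes the well-posedness theory for SDEs on $\mathscr{D}$ (Theorem \ref{Existence unicité Difféos}) instead of Cauchy--Lipschitz, and its counterpart on $G$ (Remark \ref{Equivalence Ito Strato sur G}). First I would show that $(M_t)_{t\ge0}=(p(\Phi_t))_{t\ge0}$ is a diffusion solving $\circ \dw M_t=\SN \bar Z_i(M_t)\circ\d W^i_t+\bar Z_0(M_t)\d t$ with $\bar Z_i=Tp(\bm Z_i)$. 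Since $\bm A_i$ is $G$-right invariant, so are $\bm Z_i$ and $\bm Y_i$ (Proposition \ref{Invariance à droite connexion levi-civita difféos}), hence $\bm Z_i(\psi)=Z_i(p(\psi))\circ\psi$ for suitable $Z_i(\mu)\in\Gamma(M)$. Testing against $\bm F_f=F_f\circ p$, using that the Stratonovich differential obeys the ordinary chain rule and that $Tp$ annihilates $\bm Y_i(\Phi_t)$ (consistently with \eqref{Notation intro 2}), gives the stated equation for $(M_t)$; in particular it is autonomous, hence a diffusion.

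Next I would check the equivariance of $(\Phi_t)$: applying $TR_g$ to the SDE and using $TR_g(\bm A_i(\Phi_t(\varphi)))=\bm A_i(\Phi_t(\varphi)\cdot g)$ shows $(\Phi_t(\varphi)\cdot g)_{t\ge0}$ solves the same equation started at $\varphi\cdot g$, so by the uniqueness part of Theorem \ref{Existence unicité Difféos}, $\Phi_t(\varphi\cdot g)=\Phi_t(\varphi)\cdot g$. Then I would define $(h_t)$ as the horizontal lift of $(M_t)$, i.e. the unique solution of $\circ\dd h_t(\varphi)=\SN\bm Z_i(h_t(\varphi))\circ\d W^i_t+\bm Z_0(h_t(\varphi))\d t$ with $h_0=\Phi_0$, which is an equivariant horizontal diffusion by Proposition \ref{Prop equivariance horizontal lift}. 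Since $p(h_t)=p(\Phi_t)=M_t$ and $G$ acts freely and transitively on the fibers of $\mathscr{D}$, there is a unique $G$-valued process $(g_t)$ with $\Phi_t=h_t\cdot g_t$ and $g_0=\mathrm{id}$; comparing $\Phi_t(\varphi\cdot g)=h_t(\varphi)\cdot g\cdot g_t(\varphi\cdot g)$ with $\Phi_t(\varphi)\cdot g=h_t(\varphi)\cdot g_t(\varphi)\cdot g$ and using freeness yields $g_t(\varphi\cdot g)=g^{-1}g_t(\varphi)g$.

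Finally I would extract the SDE for $(g_t)$. Applying the Stratonovich chain rule to the multiplication map gives $\circ\dd\Phi_t=TR_{g_t}(\circ\dd h_t)+T\iota_{h_t}(\circ\dd g_t)$. Splitting the right-hand side of the $\Phi$-equation into its horizontal and vertical parts, using uniqueness of the $\bar\H\mathscr{D}\oplus\bar\V\mathscr{D}$ decomposition (Proposition \ref{P Propriétés invariances à gauche et à droite des fibrés}) together with $TR_{g_t}(\bm Z_i(h_t))=\bm Z_i(\Phi_t)$, identifies $T\iota_{h_t}(\circ\dd g_t)$ with the vertical part $\SN\bm Y_i(\Phi_t)\circ\d W^i_t+\bm Y_0(\Phi_t)\d t$. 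Writing $\bm Y_i(\Phi_t)=T\iota_{\Phi_t}\varpi(\bm Y_i(\Phi_t))$, using right invariance $\bm Y_i(\Phi_t)=TR_{g_t}(\bm Y_i(h_t))$, the identity $\varpi\circ TR_g=\mathrm{Ad}(g^{-1})\circ\varpi$, and $T\iota_{h_t\cdot g_t}=T\iota_{h_t}\circ TL_{g_t}$ with $TL_{g_t}\circ\mathrm{Ad}(g_t^{-1})=TR_{g_t}$, one obtains $\circ\dd g_t=\SN TR_{g_t}\varpi(\bm Y_i(h_t))\circ\d W^i_t+TR_{g_t}\varpi(\bm Y_0(h_t))\d t$. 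Since this has the form $\circ\dd g_t=TR_{g_t}(\circ\d\xi_t)$ with $\circ\d\xi_t=\SN\varpi(\bm Y_i(h_t))\circ\d W^i_t+\varpi(\bm Y_0(h_t))\d t$ and $g_0=\mathrm{id}$, by definition $(g_t)$ is the right stochastic exponential $\mathcal{E}^R(\SN\int_0^\cdot\varpi(\bm Y_i(h_s))\circ\d W^i_s+\varpi(\bm Y_0(h_s))\d s)$, which is the last assertion.

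The main obstacle, which the authors deliberately sidestep by reducing to the already-established Propositions \ref{P D1}--\ref{P D2}, is purely analytic: every manipulation must be legitimate on the infinite-dimensional ILH group $\mathscr{D}$ and on $G$, in particular the Stratonovich chain rule for the composition map $\mathscr{D}\times G\to\mathscr{D}$ and the fact that $(g_t)$, defined a priori only fiberwise through the free transitive action, is genuinely a $G$-semimartingale for which $\mathcal{E}^R$ is well defined. This is why the statement is phrased with $\mathscr{D}$ viewed merely as a principal bundle, the heavy lifting being packaged into Theorem \ref{Existence unicité Difféos} and Remark \ref{Equivalence Ito Strato sur G}; modulo these, the argument is formally identical to the finite-dimensional one.
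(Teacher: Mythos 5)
Your proposal is correct and follows essentially the same route as the paper, which simply invokes the deterministic argument (the infinite-dimensional analogue of Proposition \ref{P D1}) with time derivatives replaced by Stratonovich differentials: projection via $F_f\circ p$ and orthogonality, equivariance by uniqueness, the free transitive action to define $(g_t)$, and the connection-form identities $\varpi\circ TR_g=\mathrm{Ad}(g^{-1})\circ\varpi$ and $T\iota_{h_t}\circ TL_{g_t}\circ\mathrm{Ad}(g_t^{-1})=T\iota_{h_t}\circ TR_{g_t}$ to extract the equation for $(g_t)$. Your closing remarks on the ILH-analytic caveats match the reason the paper packages the well-posedness into Theorem \ref{Existence unicité Difféos}, so nothing is missing.
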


We now return to the case where $\mathscr{D}$ is endowed with the Riemannian structure we already presented in Section $4$. Each fiber is seen as a submanifold of $\mathscr{D}$ fitted with the induced Riemannian structure. The following propositions are respectively the infinite-dimensional analogous of Proposition \ref{P D 4} and Proposition \ref{P D3}.

    \begin{prop} 
    Let $(\Phi_t)_{t \geq 0}$ be a diffusion on $\mathscr{D}$ such that
	
	\begin{equation*}
	\dd^{\nabla}\Phi_t = \SN \bm Z_i(\Phi_t) \d W^i_t + \bm Z_0(\Phi_t) \d t, \quad \Phi_0(\varphi) \in \mathscr{D},
	\end{equation*}
	where $\bm Z_i \in \Gamma^2_S(\H\mathscr{D})$ is a horizontal right invariant vector field for all $i=0,1, \dots, N$ with $\Phi_0$ an equivariant map and let $(M_t)_{t \geq 0} = (p(\Phi_t))_{t \geq 0}$. Then, $(M_t)_{t \geq 0}$ is a diffusion solution to the following Itô's SDE
    
    \begin{equation*} 
        \dw^{\grad} M_t(\mu) = \SN \bar{Z}_i(M_t(\mu)) \d W^i_t + \bar{Z}_0(M_t(\mu)) \d t, \quad M_0(\mu) = p(\Phi_0(\varphi)), \, \varphi \in \mathscr{D}_\mu,
    \end{equation*}    
    where $\bar{Z}_i =\bm T p( \bm Z_i)$ for all $i=0,1,\dots,N$. Moreover, the stochastic flow $(\Phi_t)_{t \geq 0}$ is equivariant and $(\Phi_t)_{t \geq 0}$ is horizontal. In particular, decomposition \eqref{decomposition} writes $\Phi_t = h_t$.
    \end{prop}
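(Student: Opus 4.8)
The plan is to follow the proof of the finite-dimensional Proposition \ref{P D 4} essentially verbatim, the only place where the infinite-dimensional geometry enters being the vanishing of the vertical part of $\bm \nabla_{\bm Z_i}\bm Z_i$, which in finite dimensions is the identity $P_{\V}(\nabla_{A_i^\H}A_i^\H)=\frac{1}{2}[A_i^\H,A_i^\H]=0$ and here will follow from the antisymmetry of the normal tensor. First I would convert the Itô equation to its Stratonovich form: by Proposition \ref{Equivalence Ito Strato difféos}, $(\Phi_t)_{t\geq 0}$ is also a solution of
$$\circ \dd \Phi_t = \SN \bm Z_i(\Phi_t)\circ\d W^i_t + \bm Z_0'(\Phi_t)\,\d t, \qquad \bm Z_0' := \bm Z_0 - \frac{1}{2}\SN \bm \nabla_{\bm Z_i}\bm Z_i .$$
Since each $\bm Z_i$ is $G$-right invariant, Proposition \ref{Invariance à droite connexion levi-civita difféos}\,(1) shows that $\bm \nabla_{\bm Z_i}\bm Z_i$ is $G$-right invariant, hence so is $\bm Z_0'$. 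Therefore Proposition \ref{Proposition diffusion equivariantes D} applies to this Stratonovich system, with $\bm A_i = \bm Z_i$ for $i=1,\dots,N$ and $\bm A_0 = \bm Z_0'$ (assuming, as throughout, enough regularity on the $\bm Z_i$ to invoke it).

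Next I would read off the horizontal/vertical splitting of the driving fields. For $i\geq 1$, $\bm Z_i$ is horizontal, so its vertical part is zero; for $i=0$, since $\bm Z_0$ is horizontal, the vertical part of $\bm Z_0'$ equals $-\frac{1}{2}\SN P_{\V}(\bm \nabla_{\bm Z_i}\bm Z_i)$, which vanishes because equation \eqref{E 3.10} in Proposition \ref{Tenseur normal dans le cas horizontal}, together with the antisymmetry of $\mathcal{N}$, gives $P_{\V_\varphi}(\bm \nabla_{\bm Z_i}\bm Z_i(\varphi)) = \mathcal{N}_\varphi(\bm Z_i(\varphi),\bm Z_i(\varphi)) = 0$. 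Consequently every driving field is horizontal, all the $\bm Y_i$ appearing in Proposition \ref{Proposition diffusion equivariantes D} vanish, and the vertical factor satisfies $\circ\dd g_t = 0$ with $g_0 = \mathrm{id}$, so $g_t = \mathrm{id}$ for every $t$. Hence $\Phi_t = h_t$, which is a horizontal equivariant diffusion; equivariance also follows directly from $G$-right invariance of the $\bm Z_i$ together with uniqueness, exactly as in Proposition \ref{Prop equivariance horizontal lift}.

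Finally, for the projected process, Proposition \ref{Proposition diffusion equivariantes D} gives that $(M_t)_{t\geq 0}$ solves $\circ\dw M_t = \SN \bar Z_i(M_t)\circ\d W^i_t + Tp(\bm Z_0')(M_t)\,\d t$. By Proposition \ref{Prop connexion levi civita releve}, $\bm \nabla_{\bm Z_i}\bm Z_i$ and $\grad_{\bar Z_i}\bar Z_i$ are $p$-related (the normal-tensor correction being zero), so $Tp(\bm Z_0') = \bar Z_0 - \frac{1}{2}\SN \grad_{\bar Z_i}\bar Z_i$, and the Itô--Stratonovich conversion on $\P$ (Proposition \ref{Equivalence Ito Strato Wasserstein}) turns this Stratonovich SDE into the announced Itô SDE $\dw^{\grad} M_t = \SN \bar Z_i(M_t)\d W^i_t + \bar Z_0(M_t)\,\d t$. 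The only genuinely delicate point is the bookkeeping of the horizontal and vertical parts of the Itô correction term and checking that the regularity hypotheses needed for Proposition \ref{Proposition diffusion equivariantes D} are met; beyond that, nothing in the argument presents a real difficulty.
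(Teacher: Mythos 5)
Your argument is correct and follows essentially the same route as the paper's own proof: Itô--Stratonovich conversion on $\mathscr{D}$, $G$-right invariance of $\bm \nabla_{\bm Z_i}\bm Z_i$ via Proposition \ref{Invariance à droite connexion levi-civita difféos}, an appeal to Proposition \ref{Proposition diffusion equivariantes D}, and the vanishing $P_{\V}(\bm \nabla_{\bm Z_i}\bm Z_i)=\mathcal{N}(\bm Z_i,\bm Z_i)=0$ from the antisymmetry of the normal tensor. The only difference is that you spell out the final bookkeeping identifying the projected Stratonovich drift with $\bar Z_0-\frac{1}{2}\SN\grad_{\bar Z_i}\bar Z_i$ and converting back to Itô form on $\P$, which the paper leaves implicit.
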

    
    \begin{proof}
    By using the Itô-Stratonovich equivalence on $\mathscr{D}$ (Proposition \ref{Equivalence Ito Strato difféos}), we have:
    
    \begin{equation*}
    \circ \dd \Phi_t = \SN \bm Z_i( \Phi_t) \circ \d W^i_t + \bm Z_0(\Phi_t) \d t - \frac{1}{2} \SN \bm \nabla_{\bm Z_i} \bm Z_i (\Phi_t) \d t, \quad \Phi_0(\varphi) \in \mathscr{D}.
    \end{equation*}
    Since $\bm Z_i$ is $G$-right invariant for $i=0,1, \dots, N$, it is also the case for $\bm \nabla_{\bm Z_i} \bm Z_i$ by Proposition \ref{Invariance à droite connexion levi-civita difféos}. Thus, we can apply Proposition \ref{Proposition diffusion equivariantes D}. Since $\varpi(\H \mathscr{D}) = \{0\}$ and $P_{\V}(\bm \nabla_{\bm Z_i} \bm Z_i) = \mathcal{N}_\varphi(\bm Z_i, \bm Z_i) = 0$ the conclusion follows from Proposition \ref{Tenseur normal dans le cas horizontal}.
    \end{proof} 

\begin{prop} \label{P diffusion ito verticale}
	Let $(\Phi_t)_{t \geq 0}$ be a diffusion on $\mathscr{D}$ such that
	
	\begin{equation*}
	\dd^{\bm \nabla}\Phi_t = \SN \bm Y_i(\Phi_t) \d W^i_t + \bm Y_0(\Phi_t) \d t, \quad \Phi_0(\varphi) \in \mathscr{D},
	\end{equation*}
	where $\Phi_0$ is an equivariant map and $\bm Y_i \in \Gamma^2_S(\V\mathscr{D})$ is a vertical $G$-right invariant vector field for all $i=0,1, \dots, N$. Then, $(\Phi_t)_{t \geq 0}$ is an equivariant diffusion. Moreover, the decomposition \eqref{Decompo fibré principal D} is given by a finite variation process $(h_t)_{t \geq 0}$ solution to:
	
	\begin{equation*}
	\dd h_t = -\frac{1}{2} \SN \mathrm{I\!I}^{h_t \cdot G}(\bm Y_i, \bm Y_i)(h_t) \d t, \quad h_0(\varphi) = \Phi_0(\varphi)
	\end{equation*}
	and
	
\begin{align} \label{E g en ito}
    \dd^{\bm \nabla^G} g_t &= \SN \bm T R_{g_t}\varpi\left( \bm Y_i(h_t) \right) \circ \d W^i_t + \bm T R_{g_t}\varpi\left(\bm Y_0(h_t)\right) \d t  \\
    & + \frac{1}{2} \SN \left( \bm T R_{g_t} \left( \bm \nabla^G_{\varpi(\bm Y_i(h_t))} \varpi(\bm Y_i(h_t)) -  \varpi \left( \bm \nabla^{G\cdot h_t}_{\bm Y_i} \bm Y_i (h_t) \right) \right) \right) \d t, \quad g_0(\varphi) = \mathrm{id}.\nonumber
\end{align}

\end{prop}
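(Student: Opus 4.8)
The plan is to follow the finite‑dimensional argument of Proposition \ref{P D3} essentially verbatim, the only genuinely new ingredient being an Itô--Stratonovich conversion on the (time‑dependent) fibre. First I would apply the Itô--Stratonovich equivalence on $\mathscr{D}$ (Proposition \ref{Equivalence Ito Strato difféos}) to put the hypothesis in Stratonovich form,
$$
\circ \dd \Phi_t = \SN \bm Y_i(\Phi_t) \circ \d W^i_t + \bm Y_0(\Phi_t)\,\d t - \frac{1}{2} \SN \bm \nabla_{\bm Y_i} \bm Y_i(\Phi_t)\,\d t .
$$
Since each $\bm Y_i$ is $G$‑right invariant, so is $\bm \nabla_{\bm Y_i}\bm Y_i$ by Proposition \ref{Invariance à droite connexion levi-civita difféos}(1); its decomposition along the fibres into vertical and horizontal parts, which reads $\bm \nabla_{\bm Y_i}\bm Y_i = \bm \nabla^G_{\bm Y_i}\bm Y_i + \mathrm{I\!I}^G(\bm Y_i,\bm Y_i)$ (the induced connection and second fundamental form being extended, by right invariance, to every fibre, as in Remark \ref{Remarque fibres totalement géodésiques dim finie}), then also consists of $G$‑right invariant vector fields by Proposition \ref{Invariance à droite connexion levi-civita difféos}(2) and (4). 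Hence the Stratonovich SDE above falls into the scope of Proposition \ref{Proposition diffusion equivariantes D}, with horizontal driving fields $\bm Z_0 = -\frac{1}{2}\SN \mathrm{I\!I}^G(\bm Y_i,\bm Y_i)$ and $\bm Z_i = 0$ for $i\geq 1$, and with vertical driving fields $\bm Y_i$ for $i\geq 1$ together with the drift $\bm Y_0 - \frac{1}{2}\SN \bm \nabla^G_{\bm Y_i}\bm Y_i$.

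Applying Proposition \ref{Proposition diffusion equivariantes D} then immediately gives that $(\Phi_t)_{t\geq0}$ is equivariant, that the decomposition \eqref{Decompo fibré principal D} holds with $(h_t)_{t\geq0}$ the horizontal lift of $(p(\Phi_t))_{t\geq0}$ and $g_t(\varphi\cdot g) = g^{-1} g_t(\varphi)\cdot g$, and it produces the Stratonovich equations
$$
\circ \dd h_t = \bm Z_0(h_t)\,\d t = -\frac{1}{2}\SN \mathrm{I\!I}^{h_t\cdot G}(\bm Y_i,\bm Y_i)(h_t)\,\d t ,
$$
$$
\circ \dd g_t = \SN TR_{g_t}\varpi\big(\bm Y_i(h_t)\big)\circ \d W^i_t + TR_{g_t}\varpi\big(\bm Y_0(h_t)\big)\,\d t - \frac{1}{2}\SN TR_{g_t}\varpi\big(\bm \nabla^{h_t\cdot G}_{\bm Y_i}\bm Y_i(h_t)\big)\,\d t ,
$$
where one uses $P_{\V}(\bm \nabla_{\bm Y_i}\bm Y_i) = \bm \nabla^{h_t\cdot G}_{\bm Y_i}\bm Y_i$ and that the horizontal part of $\bm \nabla_{\bm Y_i}\bm Y_i$ along the fibre through $h_t$ is $\mathrm{I\!I}^{h_t\cdot G}(\bm Y_i,\bm Y_i)$. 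Because $(h_t)_{t\geq0}$ has finite variation its Stratonovich and Itô differentials coincide, which is exactly the claimed equation for $(h_t)_{t\geq0}$.

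It then remains to rewrite the $g_t$‑equation in the Itô form \eqref{E g en ito} by means of the Itô--Stratonovich conversion on $G$ (Remark \ref{Equivalence Ito Strato sur G}). At each fixed $t$ the coefficient maps $g\mapsto TR_g\varpi(\bm Y_i(h_t))$ are the $G$‑right invariant vector fields on $G$ with value $\varpi(\bm Y_i(h_t))$ at the identity; since $(h_t)_{t\geq0}$ has finite variation the only source of quadratic variation in these coefficients is their $g_t$‑dependence, so the Stratonovich correction equals $\frac{1}{2}\SN TR_{g_t}\big(\bm \nabla^G_{\varpi(\bm Y_i(h_t))}\varpi(\bm Y_i(h_t))\big)$, using that the Levi‑Civita connection on $G$ of right invariant vector fields is right invariant (the $G$‑counterpart of Proposition \ref{Invariance à droite connexion levi-civita difféos}(1)). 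Adding this correction to the drift obtained above yields precisely \eqref{E g en ito}. Finally, equivariance of $(\Phi_t)_{t\geq0}$ and the identity $g_t(\varphi\cdot g) = g^{-1} g_t(\varphi)\cdot g$ also follow, exactly as in Proposition \ref{Proposition diffusion equivariantes D}, from the $G$‑right invariance of all $\bm Y_i$ and the uniqueness statement in Theorem \ref{Existence unicité Difféos}.

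The step I expect to be the main obstacle is this last conversion on the moving fibre: one must justify carefully that the induced Levi‑Civita connection $\bm \nabla^{h_t\cdot G}$ on the time‑varying fibre through $h_t$ agrees, thanks to the $G$‑right invariance of the $\bm Y_i$, with the right invariant extension of $\bm \nabla^G$, so that every term appearing in \eqref{E g en ito} is a genuine adapted process, and that the finite variation of $(h_t)_{t\geq0}$ indeed excludes any extra cross‑variation between $(h_t)_{t\geq0}$ and $(g_t)_{t\geq0}$ in the Stratonovich‑to‑Itô passage on $G$.
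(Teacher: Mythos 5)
Your proposal is correct and follows essentially the same route as the paper's own proof: Itô--Stratonovich conversion on $\mathscr{D}$, right invariance of $\bm \nabla_{\bm Y_i}\bm Y_i$ and its fibrewise decomposition into $\bm \nabla^{G\cdot h_t}_{\bm Y_i}\bm Y_i + \mathrm{I\!I}^{h_t\cdot G}(\bm Y_i,\bm Y_i)$, an application of Proposition \ref{Proposition diffusion equivariantes D}, and a final Itô--Stratonovich conversion on $G$ using the right invariance of $\bm \nabla^G$. The cautionary remarks at the end are reasonable but do not flag anything the paper handles differently.
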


\begin{proof}
Using the Itô-Stratonovich equivalence on $\mathscr{D}$ (Proposition \ref{Equivalence Ito Strato difféos}), it follows that

$$
\circ \dd\Phi_t = \SN \bm Y_i(\Phi_t) \circ \d W^i_t + \bm Y_0(\Phi_t) \d t - \frac{1}{2} \SN \bm \nabla_{\bm Y_i} \bm Y_i(\Phi_t) \d t.
$$
Since for all $i=0,1,\dots,N$, $\bm Y_i$ is right invariant, this is also the case for $\bm \nabla_{\bm Y_i} \bm Y_i$  and $\mathrm{I\!I}^G(\bm Y_i, \bm Y_i)$ by Proposition \ref{Invariance à droite connexion levi-civita difféos}. Thus, we can apply Proposition \ref{Proposition diffusion equivariantes D}. Using the fact that

$$
P_{\H_{\Phi_t}}(\circ \dd \Phi_t) = - \frac{1}{2} \SN \mathrm{I\!I}^{ h_t \cdot G}(\bm Y_i, \bm Y_i) (\Phi_t) \dd t = \bm T R_{g_t} (\circ \d_t h_t),
$$
we deduce the equation for $(h_t)_{t \geq 0}$. The equation of $(g_t)_{t \geq 0}$ is given by

\begin{align*}
    \circ \dd g_t & = \SN \bm T R_{g_t}\varpi\left( \bm Y_i(h_t) \right) \circ \d W^i_t + \bm T R_{g_t}\varpi\left(\bm Y_0(h_t)\right) \d t - \frac{1}{2} \SN \bm T R_{g_t} \left( \varpi \left( P_{\V_{g_t}} \left( \bm \nabla_{\bm Y_i} \bm Y_i (h_t) \right) \right) \right) \\
    & =\SN \bm T R_{g_t}\varpi\left( \bm Y_i(h_t) \right) \circ \d W^i_t + \bm T R_{g_t}\varpi\left(\bm Y_0(h_t)\right) \d t - \frac{1}{2} \SN \bm T R_{g_t} \left( \varpi \left( \bm \nabla^{G\cdot h_t}_{\bm Y_i} \bm Y_i (h_t) \right) \right).
\end{align*}
Applying Itô-Stratonovich conversion on $G$ (see Remark \ref{Equivalence Ito Strato sur G}), we obtain that the above Stratonovich's SDE is equivalent to the following Itô's one:

\begin{align*}
    \dd^{\bm \nabla^G} g_t &= \SN \bm T R_{g_t}\varpi\left( \bm Y_i(h_t) \right) \circ \d W^i_t + \bm T R_{g_t}\varpi\left(\bm Y_0(h_t)\right) \d t \\
    & + \frac{1}{2} \SN \left( \bm \nabla^G_{\bm T R_{\cdot}\varpi(\bm Y_i(h_t))} \bm T R_{\cdot} \varpi(\bm Y_i(h_t))(g_t) - \bm T R_{g_t} \left( \varpi \left( \bm \nabla^{G\cdot h_t}_{\bm Y_i} \bm Y_i (h_t) \right) \right) \right) \d t.
\end{align*}
We conclude by applying Proposition [\ref{Invariance à droite connexion levi-civita difféos}, (1)] to $\bm \nabla^G$.
\end{proof}

\begin{rem}
    Note that Proposition \ref{P diffusion ito verticale} is slightly different from Proposition \ref{P D3} since $(g_t)_{t \geq 0}$ is presented as a solution of a Itô's SDE rather than a Stratonovich's one. Actually, this is due to the fact that $G$ is both the group acting on $\mathscr{D}$ and the fiber $\mathscr{D}_\vol$. Consequently, we can endow $G$ with a canonical Riemannian structure, which is the one inherited of $\mathscr{D}$. In particular, the term 
    
    $$
    \left( \bm \nabla^G_{\varpi(\bm Y_i(h_t))} \varpi(\bm Y_i(h_t)) -  \varpi \left( \bm \nabla^{G\cdot h_t}_{\bm Y_i} \bm Y_i (h_t) \right) \right),
    $$
    in \eqref{E g en ito} can be interpreted as a difference of volume between the fibers $\mathscr{D}_\vol$ and $\mathscr{D}_{p(h_t)}$.
\end{rem}

\begin{rem}
	As in the finite dimensional case, the process $(\Phi_t)_t$ is vertical if the second fundamental form vanishes. This is the case if and only if the fibers are totally geodesic (see Remark \ref{Remarque fibres totalement géodésiques dim finie}), which is not the case in general. It is due to the fact that the Riemannian exponential depends on the geometry of the variety induced by the metric and is not, in general, the flow of the considered vector field. In \cite{EbinMarsden1970}, the authors show that geodesic in fibers $\mathscr{D}_\mu$ are solutions to incompressible Euler equations. The questions of regularity and existence of solutions for all time $t \geq 0$ to these equations for general manifolds are still open.
\end{rem}

\begin{rem}
	Such decompositions could also be interesting in the context of stochastic partial differential equations since, as said in \cite{EbinMarsden1970}, the group $G$ is the appropriate configuration space for the hydrodynamics of an incompressible fluid, while SDEs with values on $\P_\infty$ give rise to mass-preserving SPDEs by considering the associated densities.
\end{rem}

\appendix 

\section{} 

\subsection{Hodge decomposition} \label{Annexe Hodge}

Let $(M, \langle \bullet, \bullet \rangle)$ be an $m$-dimensional Riemannian manifold, $s>m/2$ and $\mu \in \P$ with a $\C^2$ positive density. Let $\Omega^k(M) = \Lambda^k(T^*M)$ and $\Gamma^{(s)}_\mu(\Omega^k(M))$ (resp. $\Gamma_\mu^{(s)}(TM)$) the vector space of sections of $\Omega^k(M)$ (resp. of $TM$) with regularity $H^s(\mu)$ with $s \in \mathbb{N}$.

\begin{prop}
     The exterior derivative $\d$ admits a unique adjoint $\d_\mu^*$ in $L^2(M,T^*M,\mu)$. Moreover, any differential form $\Gamma^{(s)}_\mu(\Omega^k(M))$ with $k \geq 1$ decomposes in a unique way as 
    
    \begin{equation} \label{E décompo hodge}
        \alpha = \d \omega_1 + \d_\mu^\ast \omega_2 + \omega_3,
    \end{equation}
    where $\omega_1 \in \Gamma^{(s+1)}_\mu(\Omega^{k-1}(M))$, $\omega_2 \in \Gamma^{(s+1)}_\mu(\Omega^{k+1}(M))$ and $\omega_3 \in \Gamma^{(s)}_\mu(\Omega^k(M))$ is a harmonic form.
\end{prop}

\begin{proof}
Let $\rho = \d \mu / \d \vol$ and $\xi \in \C^2(M)$ such that $\xi^{2+m} = \rho$. The volume measure of the Riemannian manifold $(M, \xi^2 \langle \bullet, \bullet \rangle)$ is given by $\xi^m \d \vol$. Consequently, $\d_\mu^*$ can be constructed in the usual way as the adjoint of $\d$ on the Riemannian manifold $(M, \xi^2\langle \bullet, \bullet \rangle)$. In particular, the Hodge decomposition on a Riemannian manifold (see \cite{Wells1980} for instance) gives us \eqref{E décompo hodge}.
\end{proof}

Let $\sharp$ denote the musical isomorphism, we define the operator $\mathrm{div}_\mu : \Gamma^{(s+1)}(\Omega^{k+1}(M)) \rightarrow \Gamma^{(s)}(\Omega^k(M))$ for $k \geq 0$ as follows:

$$
\sharp \left( \d_\mu^* \alpha \right) = \mathrm{div}_\mu \left( \sharp \alpha \right), \quad \forall \alpha \in \Gamma^{(s+1)}(\Omega^{k+1}(M)).
$$
In particular, one can check that for $A \in \Gamma^{(s)}(M)$, $\mathrm{div}_\mu$ satisfies:

$$
\mathrm{div}_\mu(A) = \mathrm{div}(A) + \langle \nabla \log \rho, A \rangle,
$$
where $\rho =\d \mu / \d \vol$.

\begin{coro} \label{Coro décompo hodge}
    Let $A \in \Gamma^{(s)}_\mu(TM)$ be a $H^s(\mu)$ vector field. Then, $A$ decomposes in a unique way as
    
    \begin{equation*}
        A = \nabla \phi(\mu) + Y(\mu), 
    \end{equation*}
    where $\phi(\mu) \in H^{s+1}(M,\mathbb{R},\mu)$ and $Y(\mu) \in \Gamma^{(s)}_\mu(TM)$ with $\mathrm{div}_\mu(Y) = 0$. This decomposition is orthogonal in $L^2(M,TM,\mu)$.
\end{coro}

\begin{proof}
Let $A \in \Gamma^{(s)}_\mu(TM)$, by applying \eqref{E décompo hodge} to $\sharp^{-1}(A)$ we obtain:

$$
\sharp^{-1}(A) = \d \phi + \d_\mu^* \omega_2 + \omega_3,
$$
where $\phi \in \Gamma^{(s+1)}_\mu(TM)$, $\omega_2 \in \Gamma^{(s+1)}_\mu(\Omega^2(M))$ and $\omega_3 \in \Gamma^{(s)}_\mu(\Omega^1(M))$ is a harmonic form. Hence,

$$
A = \nabla f + \mathrm{div}_\mu ( \sharp \omega_2) + \sharp \omega_3.
$$
Since the space of harmonic forms is included in $\ker (\mathrm{d}_\mu^*)$ (see \cite[Lemma 9.1.1]{Petersen16} for instance) and $(\mathrm{d}_\mu^*)^2 = 0$, the vector field $Y = \mathrm{div}_\mu ( \sharp \omega_2) + \sharp \omega_3$ satisfies $\mathrm{div}_\mu(Y) = 0$.
\end{proof}

The projection on the vector space of $\mu$-divergence free vector fields is also known as the Leray projection and writes

$$
P(\mu) : \mathrm{Id} - \nabla \Delta^{-1}_{\mu}(\mathrm{div}_\mu(\cdot)).
$$
This is well known that $P(\mu)$ is a pseudo-differential operator of order $0$. As a consequence, $P(\mu)$ extends in a unique way as a continuous linear operator from $\Gamma^{(s)}_\mu(TM)$ to $\Gamma^{(s)}_\mu(TM)$ for any $s \in \mathbb{Z}$. The same result holds directly for $P(\mu)^\perp$. 

\subsection{Lemmas related to Section \ref{Section 4}} \label{Appendice preuve du lemme}

\begin{lem}\label{LemmeTransportOptimal}
    Let $(U_t)_{t \geq 0}$ be a family of measurable maps from $M$ to $M$. Then,
    
    \begin{equation*}
        W_2^2(\mu_t,\mu_s) \leq \int_M d_M(U_s(x),U_t(x))^2 \d\mu(x),
    \end{equation*}
    where $\mu_t = (U_t)_* \mu$.
    \end{lem} 
    
    \begin{prop} \label{Existence voisinage tubulaire}
        Let $M$ be a compact submanifold $M$ of $\mathbb{R}^d$. Then, there exists $\varepsilon> 0$ and a tubular neighborhood $M_\varepsilon := \{ x \in \mathbb{R}^d \, : \, d(x,M) < \varepsilon \}$ such that the projection $P_\varepsilon : M_\varepsilon \rightarrow M$ which minimize the distance, i.e. $| P(x) - x| = \inf_{y \in M}| y -x|$ for all $x \in M_\varepsilon$, is smooth.
    \end{prop}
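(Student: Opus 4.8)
The plan is to realize $M_\varepsilon$ as the diffeomorphic image of a disk subbundle of the normal bundle of $M$ in $\mathbb{R}^d$, and then to identify the nearest-point projection with the bundle projection read through this diffeomorphism. First I would introduce the normal bundle
$$
NM = \left\{ (x,v) \in M \times \mathbb{R}^d \, : \, v \perp T_xM \right\},
$$
a smooth $d$-dimensional submanifold of $M \times \mathbb{R}^d$, together with the smooth endpoint map $E : NM \to \mathbb{R}^d$, $E(x,v) = x+v$. Along the zero section $\{(x,0) : x \in M\}$, under the canonical identifications $T_{(x,0)} NM \cong T_xM \oplus (T_xM)^\perp$ and $\mathbb{R}^d = T_xM \oplus (T_xM)^\perp$, the differential $T_{(x,0)}E$ is the identity, hence an isomorphism. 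By the inverse function theorem, $E$ is a local diffeomorphism near each point of the zero section.

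Using the compactness of $M$, I would then upgrade this to a global statement: there exists $\varepsilon > 0$ such that $E$ restricts to a diffeomorphism from the open disk bundle $N_{<\varepsilon} M = \{ (x,v) \in NM : |v| < \varepsilon \}$ onto an open neighbourhood of $M$ in $\mathbb{R}^d$, and, after possibly shrinking $\varepsilon$, $M_\varepsilon$ is contained in this neighbourhood. To identify $P_\varepsilon$, take $y \in M_\varepsilon$ and a point $x \in M$ achieving $|y-x| = d(y,M) < \varepsilon$ (which exists by compactness of $M$); the first-order optimality condition for $z \mapsto |y-z|^2$ on $M$ forces $y - x \perp T_xM$, so $(x, y-x) \in N_{<\varepsilon}M$ with $E(x, y-x) = y$. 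Injectivity of $E$ on $N_{<\varepsilon}M$ shows that such $x$ is unique, so $P_\varepsilon : M_\varepsilon \to M$ is well defined and satisfies $P_\varepsilon = \pi \circ \left( E|_{N_{<\varepsilon}M} \right)^{-1}$, where $\pi : NM \to M$ is the bundle projection. As a composition of smooth maps, $P_\varepsilon$ is then smooth.

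The only delicate step is the passage from local to global, i.e. producing the uniform $\varepsilon$: local injectivity near each point of the zero section does not by itself give injectivity on a tubular neighbourhood, and one must exploit compactness. The standard argument is by contradiction: otherwise there are sequences $(x_n, v_n) \ne (x_n', v_n')$ in $NM$ with $|v_n|, |v_n'| \to 0$ and $E(x_n, v_n) = E(x_n', v_n')$; since $|x_n - x_n'| = |v_n' - v_n| \le |v_n| + |v_n'| \to 0$, one may pass to subsequences with $x_n, x_n' \to x_\infty$, contradicting the fact that $E$ is a diffeomorphism on a fixed neighbourhood of $(x_\infty, 0)$. Alternatively one may simply invoke the tubular neighbourhood theorem from a standard reference and only add the elementary verification, carried out above, that the tubular coordinate of $y$ coincides with its nearest point on $M$ whenever $d(y, M) < \varepsilon$.
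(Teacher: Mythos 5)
Your proof is correct. The paper does not actually prove this proposition; it simply records it as a classical fact and cites a standard reference (Lee), so there is nothing to compare against beyond noting that your argument is precisely the standard tubular neighbourhood construction that such a reference contains: normal bundle, endpoint map $E(x,v)=x+v$, invertibility of $T_{(x,0)}E$ along the zero section, compactness to get a uniform $\varepsilon$, and the first-order optimality condition $y-P_\varepsilon(y)\perp T_{P_\varepsilon(y)}M$ to identify the nearest-point projection with $\pi\circ\bigl(E|_{N_{<\varepsilon}M}\bigr)^{-1}$. You also correctly flag and handle the one genuinely delicate step, namely that local injectivity of $E$ must be upgraded to injectivity on a uniform disk bundle via the compactness/contradiction argument.
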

    
    \begin{proof}
    This is a classical result in differential geometry, see for example \cite{Lee1997}.
    \end{proof}

    \begin{lem} \label{Equivalence distances}
    		Let $M$ be a closed Riemannian submanifold of $\mathbb{R}^d$. There exists a constant $\kappa>0$ such that
    		
    		$$
    		|x-y| \leq d_M(x,y) \leq \kappa |x-y|, \quad \forall x,y \in M.
    		$$
    \end{lem}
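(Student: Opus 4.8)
The final statement to prove is Lemma~\ref{Equivalence distances}: for a closed Riemannian submanifold $M$ of $\mathbb{R}^d$, there is $\kappa>0$ with $|x-y| \leq d_M(x,y) \leq \kappa|x-y|$ for all $x,y \in M$. Here $d_M$ is the intrinsic (Riemannian) distance and $|\cdot|$ the ambient Euclidean norm.

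The plan is as follows. The left inequality $|x-y|\le d_M(x,y)$ is immediate: any smooth curve $\gamma:[0,1]\to M$ joining $x$ to $y$ is also a curve in $\mathbb{R}^d$, and since the Riemannian metric on $M$ is the restriction of the Euclidean one, the Riemannian length of $\gamma$ equals its Euclidean length, which is at least $|x-y|$ by the triangle inequality (straight line is shortest in $\mathbb{R}^d$). Taking the infimum over all such $\gamma$ gives $|x-y|\le d_M(x,y)$.

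For the right inequality, the first step is a local statement: around each point $p\in M$ there is a neighborhood $U_p$ and a constant $\kappa_p$ such that $d_M(x,y)\le \kappa_p |x-y|$ for all $x,y\in U_p$. This follows from working in a coordinate chart (or using the exponential map at $p$): in normal coordinates the metric coefficients $g_{ij}$ are close to $\delta_{ij}$, so intrinsic and coordinate-Euclidean distances are comparable, and coordinate-Euclidean distance is in turn comparable to the ambient distance since the chart is a smooth embedding with nonsingular differential. Then I would pass to a global statement by a compactness/covering argument. The subtlety — and the main obstacle — is that two points of $M$ can be Euclidean-close without lying in a common small chart (e.g. if $M$ is ``folded''). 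To handle this, let $\varepsilon_0>0$ be a Lebesgue-type number: by compactness of $M$ one can choose $\varepsilon_0$ so that any two points $x,y\in M$ with $|x-y|<\varepsilon_0$ lie in a common chart of the finite subcover, hence $d_M(x,y)\le \kappa_1|x-y|$ with $\kappa_1=\max_p\kappa_p$. For points with $|x-y|\ge \varepsilon_0$, use that $M$ is compact, hence $d_M$ is bounded: $\mathrm{diam}_{d_M}(M)=:D<\infty$, and then $d_M(x,y)\le D = \frac{D}{\varepsilon_0}\varepsilon_0 \le \frac{D}{\varepsilon_0}|x-y|$. Taking $\kappa=\max\big(\kappa_1,\tfrac{D}{\varepsilon_0}\big)$ finishes the argument.

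I expect the main obstacle to be making the Lebesgue-number argument fully rigorous, i.e. justifying that $\varepsilon_0$ can be chosen uniformly so that $|x-y|<\varepsilon_0$ forces $x,y$ into a common chart. One clean way: cover $M$ by finitely many charts $(\mathcal{U}_i,\theta_i)$; for each $i$ pick a compact set $K_i\subset\mathcal{U}_i$ with $\bigcup_i \mathrm{int}(K_i)=M$ (possible by compactness); then the function $x\mapsto \max_i\{\mathrm{dist}_{\mathbb{R}^d}(x,M\setminus\mathcal{U}_i)\,:\,x\in K_i\}$ is positive and lower semicontinuous on the compact $M$, so it has a positive minimum $\varepsilon_0$; if $|x-y|<\varepsilon_0$ and $x\in K_i$ then $y\in\mathcal{U}_i$. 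The boundedness of $d_M$ uses that $M$ is connected (implicit in $M$ being a ``connected closed Riemannian manifold'' as fixed at the start of Section~\ref{Section 3}) together with the Hopf--Rinow theorem. Everything else is routine estimate-chasing in charts.
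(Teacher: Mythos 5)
Your proof is correct, but it handles the key case (Euclidean-close points) differently from the paper. The paper exploits the extrinsic structure: it fixes a tubular neighborhood $M_\varepsilon$ with smooth nearest-point projection $P_\varepsilon$ (Proposition \ref{Existence voisinage tubulaire}, already needed elsewhere), and for $|x-y|<\varepsilon$ simply projects the straight chord $\gamma_t(x,y)=(1-t)x+ty$ onto $M$; the length of $P_\varepsilon\circ\gamma$ is at most $|TP_\varepsilon|_\infty\,|x-y|$, giving the local constant in one line. Your route is intrinsic: a chart-by-chart local bi-Lipschitz estimate plus a Lebesgue-number argument to force Euclidean-close points into a common chart. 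This works, but it carries more overhead — you must justify that the embedding is locally bi-Lipschitz (injectivity of the differential plus compactness, with the constant possibly degenerating near chart boundaries, so charts must be shrunk to precompact ones), and the Lebesgue-number step is exactly the "folding" subtlety you identify; the projection trick sidesteps all of this because $P_\varepsilon$ is a single globally defined smooth retraction. For the far-apart case the two arguments are essentially the same: the paper takes $\kappa_1=\sup\{d_M(x,y)/|x-y| : |x-y|\ge\varepsilon\}$, finite by compactness and continuity of $d_M$, while you use $\mathrm{diam}_{d_M}(M)/\varepsilon_0$; both rest on $d_M$ being finite, i.e. on the connectedness of $M$, which you rightly note is a standing assumption from Section \ref{Section 3} and is used implicitly in the paper's proof as well.
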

    
    \begin{proof}
    		The left inequality is clear. For the second one, let $\varepsilon > 0$ and $M_\varepsilon := \{ x \in \mathbb{R}^d \, : \, d(x,M) < \varepsilon \}$ such that $P_\varepsilon$ is smooth (see Proposition \ref{Existence voisinage tubulaire}).  If $x,y \in M$ are such that $|x-y| \geq \varepsilon$, then the quantity 
    		
    	$$
    	\kappa_1 := \sup \left\{ \frac{d_M(x,y)}{|x-y|} \, : \, |x-y| \geq \varepsilon  \right\}
    	$$
    	is finite by continuity and compactness. Suppose that $|x-y| < \varepsilon$. Let $\gamma_t(x,y) := (1-t)x + ty$, it is clear that
    	
    	$$
    	d_M(x,y) \leq \int_0^1 \left|\frac{\d}{\d t} P_\varepsilon(\gamma_t(x,y)) \right| \d t.
    	$$
    	But,
    	
    	$$
    	\left| \frac{\d}{\d t}P_\varepsilon (\gamma_t(x,y)) \right| = \left| {TP_\varepsilon}_{ \gamma_t(x,y)}(y-x) \right| \leq |TP_\varepsilon|_\infty |x-y|.
    	$$
    	By denoting $\kappa_2 = |TP_\varepsilon|_\infty$ we obtain the inequality by choosing $\kappa= \max(\kappa_1,\kappa_2)$.
    \end{proof}

We include a proof of Lemma \ref{Lemme suite cauchy} in this appendix.

    \begin{lem}[= Lemma \ref{Lemme suite cauchy}]
        There exists $t_0 > 0$ such that
        
        $$
        \lim_{n,l \to \infty} \mathbb{E} \left[\sup_{t \in [0,t_0]}d_M^2(X_t^{n}(x),X_t^l(x)) \right]^{1/2} = 0.
        $$
        Moreover, $t_0$ depends only on the vector fields $\tilde{Z}_i$ and may be chosen independently of the initial condition $(x,\mu)$.
    \end{lem}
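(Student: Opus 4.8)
The plan is to transfer the estimate to $\mathbb{R}^d$, where the iterates solve genuine (extended) stochastic differential equations, and then run the classical Picard contraction argument; the only real subtlety is the McKean--Vlasov coupling through the measures $M^n_t(\mu)$.

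First I would use Lemma \ref{Equivalence distances} to replace $d_M$ by the Euclidean distance: since $d_M(x,y)\le \kappa|x-y|$ for $x,y\in M$, it suffices to bound $\mathbb{E}\big[\sup_{t\in[0,t_0]}|X^n_t(x)-X^l_t(x)|^2\big]$. On $\mathbb{R}^d$ the iterates satisfy
\[
X^{n+1}_t(x)-X^{l+1}_t(x)=\SN\int_0^t\big(\tilde{Z}_i(X^{n+1}_s(x),M^n_s(\mu))-\tilde{Z}_i(X^{l+1}_s(x),M^l_s(\mu))\big)\d W^i_s+\int_0^t\big(\tilde{Z}_0'(X^{n+1}_s(x),M^n_s(\mu))-\tilde{Z}_0'(X^{l+1}_s(x),M^l_s(\mu))\big)\d s.
\]
I would apply the Burkholder--Davis--Gundy inequality to the martingale part and the Cauchy--Schwarz inequality to the drift, take the supremum over $t\in[0,t_0]$ and then expectations, and invoke the uniform Lipschitz bounds \eqref{C1} and \eqref{C2} to dominate the integrand by a constant times $|X^{n+1}_s(x)-X^{l+1}_s(x)|^2+|X^n_s(x)-X^l_s(x)|^2+W_2^2(M^n_s(\mu),M^l_s(\mu))$, the constants depending only on the fields $\tilde Z_i$ and on the universal BDG constant.

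Next, the McKean--Vlasov term is handled with Lemma \ref{LemmeTransportOptimal}: since $M^n_s(\mu)=(X^n_s)_\sharp\mu$, we have $W_2^2(M^n_s(\mu),M^l_s(\mu))\le\int_M|X^n_s(y)-X^l_s(y)|^2\d\mu(y)$; integrating the whole inequality in $x$ against $\mu$ and using Fubini turns the bound into a closed recursion for $u_n(t):=\mathbb{E}\big[\sup_{s\in[0,t]}\int_M|X^n_s(y)-X^l_s(y)|^2\d\mu(y)\big]$. A Grönwall argument applied to the $X^{n+1}-X^{l+1}$ contribution then yields $u_{n+1}(t_0)\le C(t_0)\,u_n(t_0)$ with $C(t_0)\to 0$ as $t_0\to 0$ and $C(t_0)$ depending only on $K,K_0$ (hence only on the $\tilde Z_i$), so choosing $t_0$ with $C(t_0)\le\tfrac12$ gives geometric decay of $u_n(t_0)$; the same computation bounds $\mathbb{E}\big[\sup_{t\le t_0}|X^n_t(x)-X^l_t(x)|^2\big]$ by $C'u_{n-1}(t_0)$ uniformly in $x\in M$, whence the claimed Cauchy property. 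Since none of the constants or the choice of $t_0$ depend on the initial data, $t_0$ is uniform in $(x,\mu)\in M\times\P$.

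The main obstacle is the index bookkeeping in the recursion: unlike the measure-independent Picard scheme, the difference $X^{n+1}-X^{l+1}$ is driven both by itself and by $X^n-X^l$ through the measure argument, so one must work with the doubly-indexed quantities $\mathbb{E}\big[\sup_{s\le t}\int_M|X^{n}_s-X^{l}_s|^2\d\mu\big]$ and extract a genuine contraction only after the Grönwall step, while keeping all constants uniform in $\mu\in\P$. Both points are routine once the structure above is in place, and the argument is otherwise identical to that of \cite[Theorem 2.1]{wang2020}.
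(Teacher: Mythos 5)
Your proposal is correct and follows essentially the same route as the paper: reduce to the Euclidean setting via Lemma \ref{Equivalence distances}, apply BDG, Cauchy--Schwarz and the uniform Lipschitz bounds \eqref{C1}--\eqref{C2}, control the measure coupling with Lemma \ref{LemmeTransportOptimal}, integrate against $\mu$, and close a Gr\"onwall/contraction argument for $t_0$ small depending only on the $\tilde Z_i$. The only (cosmetic) difference is that you run a doubly-indexed contraction on $X^n-X^l$ directly, whereas the paper contracts consecutive differences $X^{n+1}-X^n$ and concludes by summability of the resulting geometric series in $L^2(\Omega,\C([0,t_0],M))$; your variant just additionally needs the trivial uniform bound $u_{m,0}\le \mathrm{diam}(M)^2$ from compactness to terminate the iteration.
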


    \begin{proof}[Proof of Lemma \ref{Lemme suite cauchy}]
    Let $n \geq 0$. Let $X_t^n$ denote $X_t^n(x)$ and $M_t^n$ denote $M_t^n(\mu)$.
    Using Itô's formula and inequalities \eqref{C1}, \eqref{C2} we get
    
    \begin{align*}
    \d |X_t^{n+1}&-X_t^n|^2 = 2 \langle \d (X_t^{n+1} -X_t^n),X_t^{n+1} -X_t^n\rangle + |\d (X_t^{n+1} -X_t^n)|^2 \\
    & \leq K \left(|X_t^{n+1}-X_t^n|^2 + W_2^2(M^n_t, M^{n-1}_t) \right) \d t + 2 \SN \left\langle \tilde{Z}_i(X^{n+1}_t, M^n_t) - \tilde{Z}_i(X^{n}_t, M^{n-1}_t), X_t^{n+1} -X_t^n \right \rangle \d W^i_t.
    \end{align*}
    Now, by using Burkholder-Davis-Gundy inequality, Cauchy-Schwarz inequality,  \eqref{C2} and the fact that $2ab \leq a^2 + b^2$, we obtain
    
    \begin{align} \label{Grosse inégalité}
    &\mathbb{E} \left[ \sup_{t \in [0,t_0]} \left| \int_0^t 2 \SN \left\langle \tilde{Z}_i(X^{n+1}_s, M^n_s) - \tilde{Z}_i(X^{n}_s, M^{n-1}_s), X_s^{n+1} -X_s^n \right \rangle \d W^i_s \right| \right] \nonumber \\
    & \leq c_1 \mathbb{E} \left [ \left(\int_0^{t_0}  \SN \left\langle \tilde{Z}_i(X^{n+1}_s, M^n_s) - \tilde{Z}_i(X^{n}_s, M^{n-1}_s), X_s^{n+1} -X_s^n \right \rangle^2 \d t \right)^{\frac{1}{2}}  \right]\nonumber \\
    & \leq c_1 \mathbb{E} \left [ \left(\int_0^{t_0}  \SN \left| \tilde{Z}_i(X^{n+1}_s, M^n_s) - \tilde{Z}_i(X^{n}_s, M^{n-1}_s)\right|^2 \, \left| X_s^{n+1} -X_s^n \right |^2 \d t \right)^{\frac{1}{2}}  \right] \nonumber \\
    & \leq \mathbb{E} \left [ \left(  \sup_{t \in [0,t_0]} \left| X_t^{n+1} -X_t^n \right |^2 \int_0^{t_0} c_1K \left(|X_s^{n+1}-X_s^n|^2 + W_2^2(M^n_s, M^{n-1}_s) \right) \d s \right)^{\frac{1}{2}}  \right] \nonumber \\
    & \leq \frac{1}{2} \mathbb{E} \left [  \sup_{t \in [0,t_0]} \left| X_t^{n+1} -X_t^n \right |^2 \right] + \frac{c_1}{2} \mathbb{E}\left[ \int_0^{t_0} K \left(|X_s^{n+1}-X_s^n|^2 + W_2^2(M^n_s, M^{n-1}_s) \right) \d s \right], 
    \end{align}
    where $c_1$ is the constant in the Burkholder-Davis-Gundy inequality.
    Thus, using Lemmas \ref{LemmeTransportOptimal} and \ref{Equivalence distances}  there exists $\kappa >0$ such that:
    
    $$
    W_2^2(M^n_s, M^{n-1}_s) \leq \int_M d_M^2(X^n_s(x), X^{n-1}_s(x)) \d\mu(x) \leq\kappa \int_{\mathbb{R}^d} |X^n_s(x) - X^{n-1}_s(x)|^2 \d\mu(x).
    $$
    Hence, by implementing this inequality in \eqref{Grosse inégalité}, we get
    
    \begin{align*}
    \mathbb{E}\left[\sup_{t \in [0,t_0]} |X_t^{n+1}-X_t^n|^2 \right] &\leq c_2 \mathbb{E}\left[ \int_0^{t_0} K \left(|X_s^{n+1}-X_s^n|^2 + \kappa \int_{\mathbb{R}^d} |X_s^n- X^{n-1}_s|^2 d\mu \right) \d s \right] \\
    & \leq  \int_0^{t_0} c_2K \mathbb{E}\left[ \sup_{r \in [0,s]}|X_r^{n+1}-X_r^n|^2\right]\d s + c_2K \kappa \int_0^{t_0} \mathbb{E} \left[ \int_{\mathbb{R}^d} |X_s^n- X^{n-1}_s|^2 d\mu \right] \d s,
    \end{align*}
    where $c_2 = c_1 + 1$. By using Grönwall's lemma, we obtain
    
    \begin{align} 
    \mathbb{E}\left[\sup_{t \in [0,t_0]} |X_t^{n+1}-X_t^n|^2 \right] &\leq  c_2K \kappa e^{t_0c_2K} \int_0^{t_0} \mathbb{E} \left[ \int_{\mathbb{R}^d} |X_s^n- X^{n-1}_s|^2 d\mu \right] \d s \nonumber \\
     &\leq c(t_0) \sup_{t \in [0,t_0]} \mathbb{E}\left[ \int_{\mathbb{R}^d} |X_t^n- X^{n-1}_t|^2 \d\mu  \right], \label{GronwallChpsDeVecteursGeneraux}
    \end{align}
    where $c(t_0) = t_0c_2K \kappa e^{t_0c_2K}$. Notice that $c(t_0)$ depends only on $t_0$ because the constants in BDG inequality are independent of the martingales, is continuous in $t_0$, is increasing and $c(0) =0$. Note that
    
    \begin{equation} \label{E A6}
        \sup_{t \in [0,t_0]}\mathbb{E}\left[ |X_t^{n+1}-X_t^n|^2 \right]  \leq \mathbb{E}\left[\sup_{t \in [0,t_0]} |X_t^{n+1}-X_t^n|^2 \right].
    \end{equation}
    By implementing \eqref{E A6} in \eqref{GronwallChpsDeVecteursGeneraux} and integrating with respect to $\mu$, we obtain
    
    $$
    \sup_{t \in [0,t_0]}\mathbb{E}\left[ \int_{\mathbb{R}^d} |X_t^{n+1}-X_t^n|^2  \d\mu \right] \leq c(t_0) \sup_{t \in [0,t_0]} \mathbb{E}\left[ \int_{\mathbb{R}^d} |X_t^n- X^{n-1}_t|^2 \d\mu \right].
    $$
    Iterating this inequality $n$-times and implementing it in \eqref{GronwallChpsDeVecteursGeneraux} leads us to
	
	$$
	\mathbb{E}\left[\sup_{t \in [0,t_0]} |X_t^{n+1}-X_t^n|^2 \right]  \leq c(t_0)^n \sup_{t \in [0,t_0]} \mathbb{E}\left[ \int_{\mathbb{R}^d} |X_t^1- x|^2 \d\mu \right].
	$$
	By choosing $t_0 > 0$ such that $c(t_0) < 1$ (we can always do it because of the properties of the function c mentioned above), we obtain that the series with general term
	
	$$
	\mathbb{E}\left[ \sup_{t \in [0,t_0]} \int_{\mathbb{R}^d} |X_t^{n+1}-X_t^n|^2  \d\mu \right]^{1/2}
	$$
	is convergent. The conclusion follows from Lemma \ref{Equivalence distances}.

    \end{proof}

\subsection{General results about the diffeomorphism group}

We recall that we consider $\mathscr{D}$ as the following Inverse Limit Hilbert Lie group:

$$
\mathscr{D} = \cap_{s > m/2} \mathscr{D}^s,
$$
and that, for all $s > m/2$, the tangent space is given by:

$$
T_\varphi \mathscr{D}^s = \left \{ A \circ \varphi \, : \, A \in \Gamma(TM), \, A \circ \varphi \in H^s(M,TM) \right \}.
$$
In particular, the map

\begin{align*}
    \bm \exp_\varphi : T_\varphi \mathscr{D}^s &\longrightarrow \mathscr{D}^s  \\
     A \circ \varphi &\longmapsto \bm \exp_\varphi(A \circ \varphi) = \exp_{\cdot}(A (\cdot)) \circ \varphi,
\end{align*}
defines a smooth chart on some neighbourhood of the zero vector in $T_\varphi \mathscr{D}^s$. Let us define the map

\begin{align*}
E_{x_0} : \mathscr{D}^s &\longrightarrow M \\
\varphi &\longmapsto \varphi(x_0),
\end{align*}
for $s > m/2$.

 We have the following property:

\begin{prop} \label{P Evaluation map lisse}
	Let $x_0 \in M$. The map $E_{x_0} : \mathscr{D}^s \rightarrow M$ is of class $\C^\infty$ for all $s >m/2$. In particular, $E_{x_0} : \mathscr{D} \rightarrow M$ belongs to $\C^\infty(\mathscr{D},M)$.
\end{prop}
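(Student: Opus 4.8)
The plan is to reduce the statement, via the exponential charts of the Hilbert manifold $\mathscr{D}^s$, to the smoothness of a composition of a bounded linear map with the Riemannian exponential of $M$; the passage from $\mathscr{D}^s$ to $\mathscr{D}$ is then immediate from the definition of $\C^\infty(\mathscr{D},M)$ given in Subsection \ref{Subsection 3.2}.

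First I would fix $s > m/2$ and $\varphi \in \mathscr{D}^s$, and recall that $\bm \exp_\varphi$ restricts to a chart from a neighbourhood $\mathscr{U}$ of the zero section in $T_\varphi \mathscr{D}^s$ onto a neighbourhood of $\varphi$ in $\mathscr{D}^s$. Unwinding the definition of $\bm \exp_\varphi$, for $A \circ \varphi \in \mathscr{U}$ one gets
$$
E_{x_0}(\bm \exp_\varphi(A\circ\varphi)) = (\exp_{\cdot}(A(\cdot))\circ\varphi)(x_0) = \exp_{\varphi(x_0)}(A(\varphi(x_0))).
$$
Thus, read off in this chart, $E_{x_0}$ is the composition $\exp_{\varphi(x_0)} \circ \mathrm{ev}_{x_0}$, where $\mathrm{ev}_{x_0} : T_\varphi \mathscr{D}^s \rightarrow T_{\varphi(x_0)} M$ sends $A\circ\varphi$ to $A(\varphi(x_0))$. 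Since $\mathscr{D}^s$ is a Hilbert manifold, $E_{x_0}$ is smooth near $\varphi$ if and only if $E_{x_0} \circ \bm \exp_\varphi$ is smooth near the zero section, and it is this latter fact that I would establish.

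The map $\mathrm{ev}_{x_0}$ is linear, and it is bounded: the elements of $T_\varphi \mathscr{D}^s$ are $H^s$ sections (along $\varphi$) of $TM$, and since $s > m/2$ the Sobolev embedding $H^s \hookrightarrow \C^0$ is continuous, so evaluation at a point is continuous. A bounded linear map between (the underlying vector spaces of) Hilbert spaces is $\C^\infty$; moreover $M$ is compact, hence complete, so $\exp_{\varphi(x_0)}$ is $\C^\infty$ on all of $T_{\varphi(x_0)}M$. Therefore $E_{x_0} \circ \bm \exp_\varphi = \exp_{\varphi(x_0)} \circ \mathrm{ev}_{x_0}$ is $\C^\infty$ on $\mathscr{U}$, and since $\varphi$ was arbitrary and smoothness is a local property, $E_{x_0} \in \C^\infty(\mathscr{D}^s, M)$ for every $s > m/2$. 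For the last assertion I would simply invoke the definition of $\C^\infty(\mathscr{D},M)$ with $j(s) = s$ and the $\C^\infty$ extension $\Psi^s := E_{x_0} : \mathscr{D}^s \rightarrow M$.

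No serious obstacle is expected here; the only two points requiring some care are (i) correctly identifying $E_{x_0}$ in the chart, which hinges on unwinding the definition of $\bm \exp_\varphi$, and (ii) the continuity of point evaluation, which is precisely where the hypothesis $s > m/2$ enters, through the Sobolev embedding. One should also observe that the chart domain is a genuine neighbourhood of the zero section, but this is used only to make $\bm \exp_\varphi$ a chart and plays no further role.
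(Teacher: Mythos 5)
Your proof is correct and follows essentially the same route as the paper: read $E_{x_0}$ in the exponential chart, observe that it becomes $\exp_{\varphi(x_0)}$ composed with the pointwise evaluation $A\circ\varphi \mapsto A(\varphi(x_0))$, and note that the latter is a bounded linear map precisely because $s>m/2$ gives the Sobolev embedding $H^s\hookrightarrow \C^0$. The only cosmetic difference is that the paper also composes with a chart $\theta$ on the target $M$, whereas you invoke smoothness of $\exp_{\varphi(x_0)}$ directly; this changes nothing.
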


\begin{proof}
Let $s > m/2$, $x_0 \in M$ and $\varphi \in \mathscr{D}^s$. Let $(\mathcal{U}_0, \bm \exp_\varphi^{-1})$ be a chart of $\varphi \in \mathscr{D}^s$ with $\tilde{U}_0 = \bm \exp_\varphi^{-1}(\mathcal{U}_0)$ and $(\mathcal{U}_1, \theta)$ be a chart of $x_0 \in M$ with $\tilde{\mathcal{U}}_1= \theta(\mathcal{U}_1)$. The induced map

$$
\tilde{E}_{x_0} : \tilde{\mathcal{U}}_0 \longrightarrow \tilde{\mathcal{U}}_1,
$$
defined to make the following diagram commutative

\begin{center}
\begin{tikzpicture}[>=stealth]
  \node (E1) at (0,1) {$\mathcal{U}_0$};
  \node (E2) at (4,1) {$\mathcal{U}_1$};
  \node (E3) at (0,0)   {$\tilde{\mathcal{U}}_0$};
  \node (E4) at (4,0)   {$\tilde{\mathcal{U}}_1$};

  \draw[->, >=latex] (E1) -- (E2) node[midway,above]{$E_{x_0}$}; 
  \draw[->, >=latex] (E3) -- (E1) node[midway,left]{$\bm \exp_\varphi$};
  \draw[->, >=latex] (E3) -- (E4) node[midway,below]{$\tilde{E}_{x_0}$}; 
  \draw[->, >=latex] (E2) -- (E4) node[midway,right]{$\theta$};
\end{tikzpicture}
\end{center}
writes

$$
\tilde{E}_{x_0}(A \circ \varphi) = \theta\left( \exp_{\varphi(x_0)}(A(\varphi(x_0))) \right) = \theta \left( \exp_{\varphi(x_0)}(E'_{x_0}(A \circ \varphi))  \right),
$$
where 

\begin{align*}
E'_{x_0} : T_\varphi \mathscr{D}^s & \longrightarrow T_{\varphi(x_0)}M \\
A \circ \varphi & \longmapsto A(\varphi(x_0)).
\end{align*}
The map $E'_{x_0}$ is obviously linear and by Sobolev injection, we have:

$$
|E'_{x_0}(A \circ \varphi)|_{\varphi(x_0)} = |A(\varphi(x_0))|_{\varphi(x_0)} \leq \sup_{x \in M}|A(\varphi(x_0))| \leq C | A \circ \varphi |_{H^s}.
$$
Consequently, $E'_{x_0}$ is smooth as a continuous linear map, thus, so is $\tilde{E}_{x_0}$ as a composition of smooth maps. Hence, $E_{x_0}$ is smooth.
\end{proof}

 The following result is due to the functoriality of the pullback.
    
    \begin{prop} \label{pullback fonctorielle}
        Let $\mathscr{M}, \mathscr{N}$ be Hilbert manifolds, $k \geq 1$ and $\nabla$ be a connection on $T\mathscr{N}$. Let $\iota : \mathscr{M} \rightarrow \mathscr{N}$ be a smooth map and $A \in \Gamma^k(T\mathscr{N})$ (resp. $J \in \mathrm{Hom}_{\mathrm{id}}^k(T\mathscr{N},T\mathscr{N})$). Then, $\iota^*(A) \in \Gamma^k(\iota^*(T\mathscr{N}))$ (resp. $\iota^* J \in \mathrm{Hom}^k_{\mathrm{id}}(\iota^*(T\mathscr{N})), \iota^*(T\mathscr{N})$). Moreover, the following holds:
        
        $$
        \nabla_B^*(\iota^*(A)) = \iota^*(\nabla_{\iota_*(B)}A) \quad \text{and} \quad \nabla^*_{B}(\iota^*(J)) = \iota^*(\nabla_{\iota_*(B)}J), \quad \forall B \in \Gamma^0(T\mathscr{M}).
        $$
    \end{prop}

	\bibliographystyle{amsalpha}
	\bibliography{sample}

@article{wang2020,
author = {Wang, Feng-Yu},
year = {2021},
month = {06},
pages = {},
title = {Image-dependent conditional {McKean–Vlasov} {SDEs} for measure-valued diffusion processes},
volume = {21},
journal = {Journal of Evolution Equations},
doi = {10.1007/s00028-020-00665-z}
}

@article{Nash1956TheIP,
  title={{The imbedding problem for Riemannian manifolds}},
  author={John F. Nash},
  journal={Annals of Mathematics},
  year={1956},
  volume={63},
  pages={20-63},
  url={https://api.semanticscholar.org/CorpusID:121484824}
}

@book{hsu2002stochastic,
  title={{Stochastic Analysis on Manifolds}},
  author={Hsu, E.P.},
  isbn={9780821883884},
  series={Contemporary Mathematics},
  url={https://books.google.fr/books?id=2NM0Z7svRmEC},
  publisher={American Mathematical Soc.},
  year={2002}
}

@book{gigli2011second,
  title={{Second Order Analysis on (P2(M), W2)}},
  author={Gigli, N.},
  isbn={9780821885291},
  lccn={2011047060},
  series={Memoirs of the American Mathematical Society},
  url={https://books.google.fr/books?id=C_s2xgEACAAJ},
  year={2011},
  publisher={American Mathematical Society}
}

@book{kunita1997stochastic,
  added-at = {2016-12-30T18:06:20.000+0100},
  author = {Kunita, H.},
  biburl = {https://www.bibsonomy.org/bibtex/26b289c91df0ad682add1c9f2d14a272b/peter.ralph},
  interhash = {47d1dce151abe96292c2ab5040d22e7a},
  intrahash = {6b289c91df0ad682add1c9f2d14a272b},
  isbn = {9780521599252},
  keywords = {SDE book random_walk_in_random_environment stochastic_flows stochastic_processes},
  lccn = {89070813},
  publisher = {Cambridge University Press},
  series = {Cambridge Studies in Advanced Mathematics},
  timestamp = {2016-12-30T18:06:20.000+0100},
  title = {Stochastic Flows and Stochastic Differential Equations},
  url = {https://books.google.de/books?id=\_S1RiCosqbMC},
  year = 1997
}

@book{ELJW,
author = {Elworthy, K.D. and LeJan, Y. and Li, Xue-Mei},
year = {2008},
month = {11},
pages = {},
title = {{The Geometry of Filtering}},
publisher = {Birkhäuser Basel},
volume = {2010},
isbn = {978-3-0346-0175-7},
journal = {Frontiers in Mathematics},
doi = {10.1007/978-3-0346-0176-4}
}

@book{cheeger1975comparison,
 title = {{Comparison theorems in Riemannian geometry}},
keywords = "Riemannian manifolds, Riemann, G{\'e}om{\'e}trie de, Riemann, Vari{\'e}t{\'e}s de, Geometry, Riemannian",
author = "Jeff Cheeger and Ebin, {David G} and {American mathematical society}",
note = "Bibliogr. p. 149-156. Index",
year = "2008",
language = "English (US)",
isbn = "9780821844175",
publisher = "AMS Chelsea Publishing",

}

@article{ding2023stochasticdifferentialequationsstochastic,
title = {Stochastic parallel translations and diffusions on the {W}asserstein space over {T}},
journal = {Stochastic Processes and their Applications},
volume = {184},
pages = {104602},
year = {2025},
issn = {0304-4149},
author = {Hao Ding and Shizan Fang and Xiang-Dong Li},
}

@article{ding2021geometrywassersteinspacecompact,
author = {Ding, Hao and Fang, Shizan},
year = {2021},
month = {11},
pages = {1959-1984},
title = {Geometry on the {W}asserstein {S}pace {O}ver a {C}ompact {R}iemannian {M}anifold},
volume = {41},
journal = {Acta Mathematica Scientia},
doi = {10.1007/s10473-021-0612-4}
}

@article{Otto31012001,
author = {Otto, Felix},
year = {2000},
month = {04},
pages = {},
title = {{The Geometry of Dissipative Evolution Equations: The Porous Medium Equation}},
volume = {26},
journal = {Comm Partial Differential Equations},
doi = {10.1081/PDE-100002243}
}

@article{brenier1991polar,
  title={{Polar Factorization and Monotone Rearrangement of Vector-Valued Functions}},
  author={Yann Brenier},
  journal={Communications on Pure and Applied Mathematics},
  year={1991},
  volume={44},
  pages={375-417},
  url={https://api.semanticscholar.org/CorpusID:123428953}
}

@article{mccann2001polar,
author = {McCann, Robert},
year = {2001},
month = {08},
pages = {589-608},
title = {{Polar factorization of maps on Riemannian manifolds}},
volume = {11},
journal = {Geometric and Functional Analysis},
doi = {10.1007/PL00001679}
}

@article{benamou2000computational,
  title = {{A computational fluid mechanics solution to the Monge-Kantorovich mass transfer problem}},
  author = {Jean-David Benamou and Yann Brenier},
  year = {2000},
  doi = {10.1007/s002110050002},
  url = {http://dx.doi.org/10.1007/s002110050002},
  researchr = {https://researchr.org/publication/BenamouB00},
  cites = {0},
  citedby = {0},
  journal = {Numerische Mathematik},
  volume = {84},
  number = {3},
  pages = {375-393},
}

@book{ambrosioGradientFlowsMetric2008,
  langid = {english},
  location = {{Basel}},
  title = {{Gradient Flows in Metric Spaces and in the Space of Probability Measures}},
  edition = {2. ed},
  isbn = {978-3-7643-8722-8 978-3-7643-8721-1},
  pagetotal = {334},
  series = {Lectures in Mathematics {{ETH Zürich}}},
  publisher = {{Birkhäuser}},
  year = {2008},
  author = {Ambrosio, Luigi and Gigli, Nicola and Savaré, Giuseppe},
  file = {/home/dimitri/Nextcloud/Zotero/storage/AJXKRTAG/gradient-flows-2008.pdf},
  note = {OCLC: 254181287}
}

@book{carmona2018probabilistic2,
 title = "Probabilistic Theory of Mean Field Games with Applications II Mean Field Games with Common Noise and Master Equations",
author = "Ren{\'e} Carmona and Fran{\c c}ois Delarue",
year = "2018",
language = "English (US)",
series = "Probability Theory and Stochastic Modelling",
publisher = "Springer Nature",
pages = "1--679",
booktitle = "Probability Theory and Stochastic Modelling",
address = "United States",

}

@article{gassiat2022long,
author = {Gassiat, Paul and Gess, Benjamin and Lions, Pierre-Louis and Souganidis, Panagiotis},
year = {2023},
month = {11},
pages = {110269},
title = {{Long-time behavior of stochastic Hamilton-Jacobi equations}},
volume = {286},
journal = {Journal of Functional Analysis},
doi = {10.1016/j.jfa.2023.110269}
}

@article{lott2007geometriccalculationswassersteinspace,
author = {Lott, John},
year = {2007},
month = {01},
pages = {},
title = {Some Geometric Calculations on Wasserstein Space},
volume = {277},
journal = {Communications in Mathematical Physics},
doi = {10.1007/s00220-007-0367-3}
}

@article{MaTrudingerWang2005,
author = {Ma, Xi-Nan and Wang, Xu-Jia},
year = {2005},
month = {01},
pages = {151-183},
title = {{Regularity of Potential Functions of the Optimal Transportation Problem}},
volume = {177},
journal = {Archive for Rational Mechanics and Analysis},
doi = {10.1007/s00205-005-0362-9}
}

@book{Elworthy1982,  series={London Mathematical Society Lecture Note Series}, title={{Stochastic Differential Equations on Manifolds}}, publisher={Cambridge University Press}, author={Elworthy, K. D.}, year={1982}, collection={London Mathematical Society Lecture Note Series}}

@article{EbinMarsden1970,
author = {Ebin, David and Marsden, Jerrold},
year = {1970},
month = {07},
pages = {},
title = {{Groups of Diffeomorphisms and the Motion of an Incompressible Fluid}},
volume = {92},
journal = {Annals of Mathematics},
doi = {10.2307/1970699}
}

@book{Sontz2015,
  title={{Principal Bundles: The Classical Case}},
  author={Sontz, S.B.},
  isbn={9783319147659},
  series={Universitext},
  url={https://books.google.fr/books?id=rW-6CAAAQBAJ},
  year={2015},
  publisher={Springer International Publishing}
}

@book{villani2009optimal,
  title={{Optimal Transport: Old and New}},
  author={Villani, C.},
  isbn={9783540710493},
  lccn={2008932183},
  series={Grundlehren der mathematischen Wissenschaften},
  url={https://books.google.fr/books?id=NZXiNAEACAAJ},
  year={2008},
  publisher={Springer Berlin Heidelberg}
}

@inproceedings{omori_group_1970,
	title = {On the group of diffeomorphisms on a compact manifold},
	booktitle = {Proc. {Symp}. {Pure} {Appl}. {Math}., {XV}, {Amer}. {Math}. {Soc}},
	author = {Omori, Hideki},
	year = {1970},
	keywords = {\#nosource},
	pages = {167--183},
}

@article{Angst2020,
author = {Angst, Jürgen and Bailleul, Ismael and Perruchaud, Pierre},
year = {2023},
month = {04},
title = {Kinetic Brownian motion on the diffeomorphism group of a closed Riemannian manifold},
journal = {Annali Scuola Normale Superiore - Classe Di Scienze},
doi = {10.2422/2036-2145.202209_010}
}

@article{oneill1966fundamental,
  title={The fundamental equations of a submersion.},
  author={B. V. O'Neill},
  journal={Michigan Mathematical Journal},
  year={1966},
  volume={13},
  pages={459-469},
  url={https://api.semanticscholar.org/CorpusID:123401537}
}

@book{Lee1997,
title = {Riemannian manifolds, an introduction to curvature},
author = {John M. Lee},
Publisher = {Springer New York, NY},
series = {Graduate texts in mathematics},
year = {1997},
}

@book{Schwarz1995,
title={Hodge Decomposition - A method for solving boundary value problems},
author = {Günter Schwarz},
Publisher = {Springer New York, NY},
series = {Lecture notes in mathematics},
year = {1995},
}

@article{ElLeLi04, 
title = {Equivariant diffusions on principal bundles},
author={K. David Elworthy, Yves Le Jan, Xue-Mei Li},
journal ={Stochastic Analysis and Related Topics in Kyoto},
Publisher ={Mathematical society of Japan},
series={Advanced Studies in Pure Mathematics},
year ={2004},
}

@article{Ito63,
title = {The Brownian motion and tensor fields on {Riemannian} manifolds},
author = {Kioshy Ito},
year = {1963},
journal= {Proceedings of the International Congress of Mathematicians},
pages= {536--539},
}

@article{Ebin70,
author = {David Ebin},
year = {1968},
month = {01},
pages = {11-40},
title = {The manifold of Riemannian metrics; in Global Analysis (Proc. Sympos. Pure Math. 15, Berkeley, Calif., 1968)},
journal = {Amer. Math. Soc., Providence, RI.},
doi = {10.1090/pspum/015/0267604}
}

@article{KimMcCann2008,
title={Continuity, curvature, and the
general covariance of optimal transportation},
journal = {J. Eur. Math. Soc},
year = {2008},
author = {Young-Heon Kim and Robert J. McCann},
}

@article{Loep2006,
title= {On the regularity of maps solutions of optimal
transportation problems},
author = {Grégoire Loeper},
year={2006},
journal={Acta Mathematica},
}

@incollection{Fig2010,
     author = {Figalli, Alessio},
     title = {Regularity of optimal transport maps [after {Ma-Trudinger-Wang} and {Loeper]}},
     booktitle = {S\'eminaire Bourbaki : volume 2008/2009 expos\'es 997-1011  - Avec table par noms d'auteurs de 1848/49 \`a 2008/09},
     series = {Ast\'erisque},
     note = {talk:1009},
     pages = {341--368},
     publisher = {Soci\'et\'e math\'ematique de France},
     number = {332},
     year = {2010},
     mrnumber = {2648684},
     zbl = {1211.49054},
     language = {en},
     url = {https://www.numdam.org/item/AST_2010__332__341_0/}
}

@article{LoeperVillani2010,
title={Regularity of optimal transport in curved geometry: The Nonfocal Case},
author={Grégoire Loeper and Cédric Villani},
year={2010},
journal = {Duke Math. J.},
}

@book{Emery89,
title={Stochastic calculus in manifolds},
author={Michel Emery},
series ={Universitext},
publisher ={Springer Berlin, Heidelberg},
year={1989},

}

@article{Ito75,
title ={Stochastic parallel displacement},
author={Kioshy Ito},
year={1975},
pages={1-7},
publisher={Springer-Verlag, Berlin},
journal ={Probabilistic methods in differential equations},
}

@book {IkedaWatanabe81,
    AUTHOR = {Ikeda, Nobuyuki and Watanabe, Shinzo},
     TITLE = {Stochastic differential equations and diffusion processes},
    SERIES = {North-Holland Mathematical Library},
    VOLUME = {24},
 PUBLISHER = {North-Holland Publishing Co.},
      YEAR = {1981},
}

@book{Petersen16,
author = {Peter Petersen},
title = {Riemannian geometry},
series={Graduate texts in mathematics},
year={2016},
publisher={Springer charm},
}

@article{KunitaHiroshi1981,
     author = {Kunita, Hiroshi},
     title = {Some extensions of {Ito's} formula},
     journal = {S\'eminaire de probabilit\'es},
     pages = {118--141},
     publisher = {Springer - Lecture Notes in Mathematics},
     volume = {15},
     year = {1981},
     mrnumber = {622557},
     zbl = {0471.60061},
     language = {en},
     url = {https://www.numdam.org/item/SPS_1981__15__118_0/}
}

@article{Hakim1986,
author = {Hakim-Dowek, M. and Lépingle, D.},
journal = {Séminaire de probabilités de Strasbourg},
keywords = {stochastic exponents; Lie groups; semi-martingales},
language = {fre},
pages = {352-374},
publisher = {Springer - Lecture Notes in Mathematics},
title = {{L'exponentielle stochastique des groupes de Lie}},
url = {http://eudml.org/doc/113556},
volume = {20},
year = {1986},
}

@article{Eliasson1967,
  title={Geometry of manifolds of maps},
  author={Halld{\'o}r I. El{\'i}asson},
  journal={Journal of Differential Geometry},
  year={1967},
  volume={1},
  pages={169-194},
  url={https://api.semanticscholar.org/CorpusID:122658965}
}

@book{Wells1980,
author = {Wells, R.O.},
address = {New York, N.Y., [etc},
booktitle = {Differential analysis on complex manifolds},
edition = {[2nd ed.]},
isbn = {0387904190},
keywords = {Complex manifolds ; Differentiable manifolds ; Differential geometry ; Manifolds ; 31.52 differential geometry},
language = {eng},
publisher = {Springer-Verlag},
series = {Graduate texts in mathematics ; 65},
title = {Differential analysis on complex manifolds},
year = {1980},
}

@book{Lang1999,
author ={Lang, Serge},
title = {{Fundamentals of Differential Geometry}},
year= {1999},
publisher = {Springer New York, NY},
}

@book{Palais66,
  title={Foundations of Global Non-linear Analysis},
  author={Palais, R.S.},
  url={https://books.google.fr/books?id=SvzzAAAAMAAJ},
  year={1966},
  publisher={Department of Mathematics, Brandeis University}
}

@article{Ebin1970,
author = {Ebin, David},
year = {1970},
month = {01},
pages = {11-40},
title = {The manifold of Riemannian metrics; in Global Analysis (Proc. Sympos. Pure Math. 15, Berkeley, Calif., 1968)},
journal = {Amer. Math. Soc., Providence, RI.},
doi = {10.1090/pspum/015/0267604}
}

\end{document}